	\newcommand{\Number}{section} 	
	\theoremstyle{plain}
		\newtheorem{thm}{Theorem}[\Number]
		\newtheorem{lem}[thm]{Lemma}
		\newtheorem{prp}[thm]{Proposition}
		\newtheorem{cor}[thm]{Corollary}
		\newtheorem{dfn}[thm]{Definition}
		\newtheorem{ex}[thm]{Example}
	\providecommand{\Frac}[2]{\displaystyle\frac{#1}{#2}}
	\providecommand{\norm}[1]{\lVert#1\rVert}
	\providecommand{\removeme}[1][default]{}
	\providecommand{\C}{\mathbb{C}}
	\providecommand{\N}{\mathbb{N}}
	\providecommand{\Q}{\mathbb{Q}}
	\providecommand{\R}{\mathbb{R}}
	\providecommand{\T}{\mathbb{T}}
	\providecommand{\Z}{\mathbb{Z}}
	\providecommand{\CalC}{\mathcal{C}}
	\providecommand{\CalF}{\mathcal{F}}
	\providecommand{\CalG}{\mathcal{G}}
	\providecommand{\CalP}{\mathcal{P}}
	\providecommand{\CalR}{\mathcal{R}}
	\providecommand{\claim}{\textbf{Claim: \,}}
	\providecommand{\coker}{\mbox{coker}}
	\providecommand{\onemm}{\vspace{1mm}}
	\providecommand{\twomm}{\vspace{2mm}}
	\providecommand{\onecm}{\vspace{1cm}}
	\newcommand{\af}{\alpha}
	\newcommand{\bt}{\beta}
	\newcommand{\ep}{\varepsilon}
	\newcommand{\ph}{\varphi}
	\newcommand{\id}{{\mathrm{id}}}
	\newcommand{\diam}{{\mathrm{diam}}}
	\newcommand{\dist}{{\mathrm{dist}}}
	\newcommand{\diag}{{\mathrm{diag}}}
	\newcommand{\ca}{C*-algebra}
\begin{document}


\title[Crossed Product C*-algebras of Minimal Dynamical Systems]{Crossed Product C*-algebras of Minimal Dynamical Systems on the Product of the Cantor Set and the Torus}

\author{%
	Wei Sun \\
	School of Mathematical Sciences \\
	University of Nottingham
}



\begin{abstract}

	This paper studies the relationship between minimal dynamical systems on the product of the Cantor set ($X$) and torus ($\T^2$) and their corresponding crossed product $C^*$-algebras.
	
	For the case when the cocycles are rotations, we studied the structure of the crossed product $C^*$-algebra $A$ by looking at a large subalgebra $A_x$. It is proved that, as long as the cocycles are rotations, the tracial rank of the crossed product $C^*$-algebra is always no more than one, which then indicates that it falls into the category of classifiable $C^*$-algebras.
	
	If a certain rigidity condition is satisfied, it is shown that the crossed product $C^*$-algebra has tracial rank zero. Under this assumption, it is proved that for two such dynamical systems, if $A$ and $B$ are the corresponding crossed product $C^*$-algebras, and we have an isomorphism between $K_i(A)$ and $K_i(B)$ which maps $K_i(C(X \times \T^2))$ to $K_i(C(X \times \T^2))$, then these two dynamical systems are approximately $K$-conjugate. The proof also indicates that $C^*$-strongly flip conjugacy implies approximate $K$-conjugacy in this case.
	
\end{abstract}


\maketitle


	






\setcounter{section}{-1}




\section{Introduction and Notation} \label{Sec Introduction and Notation}

	In this section, an introduction of the background is given, and the notations used in this paper are also introduced.

\vspace{2mm}
	
	Let $X$ be a compact infinite metric space, and let $\af \in \text{Homeo}(X)$ be a minimal homeomorphism of $X$. We can construct the crossed product \ca \ from the minimal dynamical system $(X, \af)$, denoted by $C^*(\Z, X, \af)$.

	One interesting question is how properties of the dynamical system $(X, \af)$ determine properties of the crossed product \ca, and how properties of the crossed product \ca s shed some light on properties of the dynamical system $(X, \af)$.

	For minimal Cantor dynamical systems, Giodano, Putnam and Skau found (in \cite{GPS}) that for two minimal Cantor dynamical systems, the corresponding crossed product \ca s are isomorphic if and only if the minimal Cantor dynamical systems are strongly orbit equivalent.

	Lin and Matui studied this problem when the base space is the product of the Cantor set and the circle (see \cite{LM1}, \cite{LM2}), and they discovered that in the rigid cases (see Definition 3.1 of \cite{LM1}), for two crossed product \ca s to be isomorphic, the dynamical systems must be approximately K-conjugate (a ``strengthened" version of weak approximate conjugacy, in the sense that it is compatible with the K-data).

	We studied minimal dynamical systems on the product of the Cantor set and the torus. For the case that the cocycles take values in the rotation group, similar results are found for the relationship between \ca \ isomorphisms and approximate K-conjugacy between two dynamical systems. It is also shown that the tracial rank of the crossed product \ca \ is no more than one.

	For the case that the cocycles are Furstenberg transformations, a necessary condition for weak approximate conjugacy between two minimal dynamical systems (via conjugacy maps whose cocycles are Furstenberg transformations) is given.
	
	In section \ref{The subalgebra Ax}, structure of the subaglebra $A_x$ is studied. In section \ref{The crossed product C-algebra A}, we studied the crossed product \ca \ and concluded that its tracial rank is always no more than one. In section \ref{Examples with cocycles being rotations}, a concrete example of minimal dynamical system of the type $(X \times \T \times \T, \af \times \mbox{R}_{\xi} \times \mbox{R}_{\eta})$ whose crossed product \ca \ has tracial rank one is given. In section \ref{Approximate conjugacies}, we give an if and only if condition for when two such rigid (as defined in Definition \ref{definition of rigidity}) minimal dynamical systems are approximately $K$-conjugate.

\vspace{1cm}

	Some notations used in this paper are listed below.

	Let $(X, \af)$ be a minimal dynamical system, by $\af$-invariant probability measure $\mu$, we mean such a probability measure $\mu$ on $X$ satisfying $\mu(D) = \mu(\af(D))$ for every $\mu$-measurable subset $D$. Following the Markov-Kakutani fixed point Theorem, it is shown that the set of $\af$-invariant probability measures is not empty (see Lemma 1.9.18 and Theorem 1.9.19 of \cite{Lin2} for details).

	Let $\mu$ be a measure on $X$. For $f \in C(X)$, we use $\mu(f)$ to denote $\int_X f(x) \ \mathrm{d} \mu$.

	For a minimal dynamical system $(X, \af)$ we use $C^*(\Z, X, \af)$ to denote $C(X) \times_{\af} \Z$, the crossed product \ca \ of the dynamical system $(X, \af)$.

	In a topological space $X$, we say a subset $D$ is clopen, if $D$ is both closed and open.
	
	In section \ref{The subalgebra Ax} to \ref{Approximate conjugacies}, unless otherwise specified, $X$ denotes the Cantor set, $\T$ denotes the circle, and $\T^2$ denotes the two-dimensional torus.

	For a compact Hausdorff space $Y$, $\mathrm{Homeo}(Y)$ is used to denote the set of all the homeomorphisms of $Y $.

	As the Cantor set $X$ is totally disconnected, we can write a homeomorphism of $X \times \T^2$ as $\af \times \varphi$ (the skew product form), with $\af \in \text{Homeo}(X)$ and $\varphi \colon X \rightarrow \text{Homeo}(\T^2)$ being continuous, and
	$$ \af \times \varphi \colon X \times \T^2 \rightarrow X \times \T^2 \ \text{defined by} \  ( x, t_1, t_2 ) \mapsto ( \af(x),  \varphi(x)( t_1, t_2 ) ) . $$

	For the case that the cocycles take values in rotation groups, we can further express $\af \times \varphi$ as $(X \times \T \times \T, \af \times \mathrm{R}_{\xi} \times \mathrm{R}_{\eta})$, with $\xi, \eta \colon X \rightarrow \T$ continuous, and
	$$ \af \times \mathrm{R}_{\xi} \times \mathrm{R}_{\eta} \colon X \times \T^2 \rightarrow X \times \T^2 \ \text{defined by} \ ( x, t_1, t_2 ) \mapsto ( \varphi(x), t_1 + \xi(x), t_2 + \eta(x) ) . $$

	We use $A$ to denote the corresponding crossed product \ca. For $x \in X$, the subalgebra $A_x$ is defined as below.

\begin{dfn} \label{dfn of Ax}

	For a minimal dynamical system $( X \times \T \times \T, \af \times \mathrm{R}_{\xi} \times \mathrm{R}_{\eta} )$, $A_x$ is defined to be the subalgebra of the crossed product \ca \ generated by $C(X \times \T \times \T)$ and $u \cdot C_0( (X \backslash \{ x \}) \times \T \times \T)$, with $u$ being the implementing unitary in $A$ satisfying $u^* f u = f \circ (\af \times \mathrm{R}_{\xi} \times \mathrm{R}_{\eta})^{-1}$.

\end{dfn}	

\noindent \textbf{Remark:} The idea to define such a sub-algebra in the crossed product can be traced to Putnam's work (see \cite{Putnam}). From the definition, if $D$ is a clopen subset of the Cantor set $X$, and $1_{D \times \T^2}$ is the characteristic function of $D \times \T^2$, then $u 1_{D \times \T^2} u^* = 1_{D \times \T^2} \circ (\af \times \mathrm{R}_{\xi} \times \mathrm{R}_{\eta}) = 1_{\af^{-1}(D) \times \T^2}$.

\vspace{1mm}

	Let $\{ \CalP_n \colon n \in \N \}$ be as in the Bratteli-Vershik model of the minimal Cantor dynamical system $(X, \af)$ (see \cite[Theorem 4.2]{HPS}), and let $Y_n$ be the roof of $\CalP_n$ (denoted as $R(\CalP_n)$). Then $\{ Y_n \}$ will be a decreasing sequence of clopen sets such that $\bigcap_{n = 1}^{\infty} Y_n = \{ x \}$. Use $A_n$ to denote the subalgebra generated by $C(X \times \T \times \T)$ and $u \cdot C_0( (X \backslash Y_n) \times \T \times \T)$.

	In a \ca \ $A$, for $a, b \in A$, $a \approx_{\ep} b$ just means $\norm{a - b} \leq \ep$. By $a \approx_{\ep_1} b \approx_{\ep_2} c$, we mean $\norm{a - b} \leq \ep_1$ and $\norm{b - c} \leq \ep_2$. It is clear that $a \approx_{\ep_1} b \approx_{\ep_2} c$ implies $a \approx_{\ep_1 + \ep_2} c$.

	In a \ca \ $A$, $[a, b]$ (the commutator) is defined to be $a b - b a$.

	For a \ca \ $A$ we use $T(A)$ to denote the convex set of all the tracial states on $A$, and $\mathrm{Aff}(T(A))$ to denote all the affine linear functions from $T(A)$ to $\R$.

	In a \ca \ $A$, for $a \in A_{+}$, we use $\mathrm{Her}(a)$ to denote the smallest hereditary subaglebra that contains $a$.

	For a \ca \ $A$, we use $\mathrm{TR}(A)$ to denote the tracial rank of $A$ (see \cite[Definition 3.6.2]{Lin6}). We use $\mathrm{RR}(A)$ to denote the real rank of $A$ (\cite[Definition 3.1.6]{Lin6}) and $\mathrm{tsr}(A)$ to denote the stable rank of $A$ (\cite[Definition 3.1.1]{Lin6}).

\begin{dfn} \label{projection M-vN equivalent to a projection in hereditary subalgebra of a positive element}

	Let $A$ be a \ca. Let $p$ be a projection of $A$ and let $a \in A_+$. We say that $p \preceq a$ if $p$ is Murray-von Neumann equivalent to a projection $q \in \mathrm{Her}(a)$.

\end{dfn}

	Let $A$ be a \ca. We use $U(A)$ to denote the group of all the unitary elements in $A$. We use $CU(A)$ to denote the norm closure of the group generated by the commutators of $U(A)$. In other words, $CU(A)$ is the norm closure of the group generated by elements in $\{ u v u^* v^* \colon u, v \in U(A) \}$. One can check that $CU(A)$ is a normal subgroup of $U(A)$ and $U(A) / CU(A)$ is an abelian group.

\vspace{2mm}

\begin{dfn} \label{defn of sharp morphism}
	Let $\ph: A \longrightarrow B$ be a \ca \ homomorphism. We define
	$$ \ph^{\sharp} : U(A) / CU(A) \longrightarrow U(B) / CU(B) $$
	to be the map induced by $\ph$ which maps $[u] \in U(A) / CU(A)$ to $[\ph(u)] \in U(B) / CU(B)$.
\end{dfn}

\vspace{2cm}

\section{The subalgebra \texorpdfstring{$A_x$}{Ax}} \label{The subalgebra Ax}

	In this section, we study properties of a ``large" subalgebra of $A$, namely $A_x$. The idea of the construction of $A_x$ was first given by Putnam, but the construction here is a bit different from that in the sense that we are removing one fiber $\{ x \} \times \T \times \T$ instead of one point. In other words, we define $A_x$ to be the subalgebra generated by $C(X \times \T \times \T)$ and $u \cdot C_0( (X \backslash \{ x \}) \times \T \times \T)$, with $u$ being the implementing unitary in $A$ (as defined in Section \ref{Sec Introduction and Notation}).

\vspace{2mm}

	The following lemma gives the basic structure of $A_x$, which is used to study the structure of $A$.

\begin{lem} \label{AT2 structure of A_x}

	If $(X \times \T \times \T, \af \times \mathrm{R}_{\xi} \times \mathrm{R}_{\eta})$ is minimal, then for any $x \in X$ there are $k_1, k_2, \ldots \in \N$ and $d_{s, n} \in \N$ for $n \in \N$ such that $A_x \cong \displaystyle \varinjlim_{n} \bigoplus_{s = 1}^{k_n} M_{d_{s, n}}(C(\T^2))$.

\end{lem}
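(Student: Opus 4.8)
The plan is to realise $A_x$ as the closed increasing union of the subalgebras $A_n$ defined in Section~\ref{Sec Introduction and Notation}, to identify each $A_n$ as a finite direct sum of matrix algebras over $C(T \times \T^2)$ for suitable clopen $T \subseteq X$, and then to exploit the fact that $C(T)$ is a commutative AF algebra to rewrite the whole object as the asserted inductive limit.

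First I would establish that $A_x = \overline{\bigcup_n A_n}$. Since $Y_1 \supseteq Y_2 \supseteq \cdots$ with $\bigcap_n Y_n = \{x\}$, the ideals $C_0((X \setminus Y_n) \times \T^2)$ increase to $C_0((X \setminus \{x\}) \times \T^2)$, so $\bigcup_n u \cdot C_0((X \setminus Y_n) \times \T^2)$ is dense in $u \cdot C_0((X \setminus \{x\}) \times \T^2)$. Together with $C(X \times \T^2)$ these generate a dense $*$-subalgebra of $A_x$, and each $A_n \subseteq A_x$; hence $A_x$ is the inductive limit of the $A_n$ with the inclusions as connecting maps.

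The core step is the structural identification of $A_n$. Here I use the Kakutani--Rokhlin tower structure underlying the Bratteli--Vershik model of $\CalP_n$: the partition decomposes $X$ into $k_n$ towers, where tower $s$ has a clopen base $T_s$, height $d_{s,n}$, and levels $T_s, \af(T_s), \ldots, \af^{d_{s,n}-1}(T_s)$, and $X$ is the disjoint union of all levels. Writing $e^{(s)}_j = 1_{\af^j(T_s) \times \T^2}$ for the level projections, the elements $u\, e^{(s)}_j$ are partial isometries shifting one level along the tower; removing the fiber over $Y_n$ severs in each tower exactly the single partial isometry that would close it into a loop (the first-return connection), leaving precisely $d_{s,n}-1$ partial isometries that chain the $d_{s,n}$ levels together. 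I would check that these, together with the level projections, assemble tower by tower into a full $d_{s,n} \times d_{s,n}$ matrix-unit system whose $(0,0)$ corner is $e^{(s)}_0 A_n e^{(s)}_0 \cong C(T_s \times \T^2)$ (a short computation shows $u\cdot C_0(\cdots)$ contributes nothing to this corner), and that distinct towers give orthogonal summands since no partial isometry in $A_n$ connects different towers. The standard fact that a $C^*$-algebra generated by a full system of matrix units together with its corner algebra $D$ is isomorphic to $M_d(D)$ then yields $A_n \cong \bigoplus_{s=1}^{k_n} M_{d_{s,n}}(C(T_s \times \T^2))$. The delicate point is the cocycle: transporting a corner function up a tower via these matrix units rotates the $\T^2$ coordinate by the relevant sum of values of $\xi,\eta$; because the broken tower has no return edge there is no monodromy, so this twist does not affect the isomorphism type (in contrast to the full crossed product, where the closed loop would produce a genuine $C(\T^2) \rtimes \Z$ factor).

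Finally I would decompose $C(T_s \times \T^2) \cong C(T_s) \otimes C(\T^2)$ and use that $T_s$ is a clopen subset of the Cantor set, so $C(T_s) = \varinjlim_m \C^{m}$ along refinements of its clopen partitions. Consequently $M_{d_{s,n}}(C(T_s \times \T^2)) \cong C(T_s) \otimes M_{d_{s,n}}(C(\T^2))$ is itself an inductive limit of finite direct sums of $M_{d_{s,n}}(C(\T^2))$, and interleaving these refinements with the tower refinements as $n$ grows exhibits $A_x$ as $\varinjlim_n \bigoplus_{s=1}^{k_n} M_{d_{s,n}}(C(\T^2))$, with the connecting maps being the multiplicity embeddings determined by how towers at stage $n$ refine at stage $n+1$. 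I expect the main obstacle to lie entirely in the identification of $A_n$ --- verifying the matrix-unit relations and confirming that the cocycle twist is harmless in the absence of wraparound --- whereas the density argument and the AF decomposition of the base are routine. This is the generalisation from fiber $\T$ in \cite{LM1,LM2} to fiber $\T^2$, carried out in the spirit of Putnam's original orbit-breaking construction \cite{Putnam}.
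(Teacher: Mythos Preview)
Your proposal is correct and follows essentially the same route as the paper: realise $A_x$ as $\varinjlim A_n$, identify each $A_n$ with $\bigoplus_{v} M_{h_n(v)}\bigl(C(X(n,v,1))\otimes C(\T^2)\bigr)$ via the matrix units $e^v_{i,j}=1_{X(n,v,i)}\,u^{i-j}$, and then strip out the Cantor factor using its AF structure. The only difference is cosmetic: where you invoke the AF decomposition of $C(T_s)$ and ``interleave,'' the paper makes this explicit by setting $B_n=\bigoplus_v M_{h_n(v)}(\C\otimes C(\T^2))\subset A_n$, checking that $\phi_{n,n+1}(B_n)\subset B_{n+1}$, and proving via a direct uniform-continuity approximation that $\bigcup_n B_n$ is dense in $A_x$ --- which is precisely your interleaving, carried out by hand using the refinement $\CalP_N$ of $\CalP_n$.
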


\begin{proof}
	As $\af \times \mathrm{R}_{\xi} \times \mathrm{R}_{\eta}$ is minimal, it follows that $(X, \af)$ is also minimal. For $x \in X$, let $\CalP = \{ X(n, v, k) \colon v \in V_n, k = 1, 2, \ldots, h_n(v) \}$ be as in the Bratteli-Vershik model (\cite[Theorem 4.2]{HPS}) for $(X, \af)$. Let $R(\CalP_n)$ be the roof set of $\CalP_n$, defined by $R(\CalP_n) = \bigcup_{v \in V_n} X(n, v, h_n(v))$. We can assume that the roof sets satisfy
	$$ \bigcap_{n \in \N} R(\CalP_n) = \{ x \}. $$
	
	Let $A_n$ be the subalgebra of the crossed product \ca\ $A$ such that $A_n$ is generated by $C(X \times \T \times \T)$ and $u \cdot C_0( ( X \backslash R(\CalP_n) ) \times \T \times \T)$, with $u$ being the implementing unitary element satisfying $u f u^* = f \circ (\af \times \mathrm{R}_{\xi} \times \mathrm{R}_{\eta})$ for all $f \in C(X \times \T \times \T)$. Then it is clear that $A_1 \subset A_2 \subset \cdots$. As we can approximate $f \in C_0( ( X \backslash \{ x \} ) \times \T \times \T)$ with
	$$ f_n \in C_0( ( X \backslash R(\CalP_n) ) \times \T \times \T) = C( ( X \backslash R(\CalP_n) ) \times \T \times \T) , $$
	we have $\varinjlim (A_n, \phi_n) = A_x$ with $\phi_n \colon A_n \rightarrow A_{n + 1}$ being the canonical embedding.

	For $C(X \backslash R(\CalP_n) \times \T \times \T)$, it is clear that we have
		$$ C( ( X \backslash R(\CalP_n) ) \times \T \times \T) \cong \bigoplus_{v \in V_n} \bigoplus_{1 \leq k \leq h_n(v) - 1} C \left( X(n, v, k) \times \T^2 \right) . $$

	We will show that $A_n \cong \bigoplus_{v \in V_n} M_{h_n(v)}(C(X(n, v, 1)) \otimes C(\T^2))$.

	Let $e_{i, j}^v = 1_{X(n, v, i)} \cdot u^{i - j}$. Then $e_{i, j}^v \cdot e_{i', j'}^{v'} = 0$ if $v \neq v'$.
	Note that
	\begin{align*}
		e_{i, j}^v \cdot e_{k, s}^v & = 1_{X(n, v, i)} \cdot u^{i - j} \cdot 1_{X(n, v, k)} \cdot u^{k - s} \\
			& = 1_{X(n, v, i)} \cdot 1_{X(n, v, k + i - j)} \cdot u^{i - j + k - s} \\
			& = \delta_{k, j} \cdot e_{i, s}^v .
	\end{align*}
	In other words, $\{ e_{i, j}^v \}_{i, j = 1}^{h(v)}$ is a system of matrix units.

	As $A_n$ is generated by
	$$ \displaystyle\{ e ^v _{i, j} \otimes C \left( X(n, v, 0) \otimes C(T^2) \right) \colon v \in V_n, 1 \leq i, j \leq h(v) \} , $$
	it follows that
	$$ A_n \cong \bigoplus_{v \in V_n} M_{h_n(v)} \left( C(X(n, v, 1)) \otimes C(\T^2) \right) . $$

	Let $B_n = \bigoplus_{v \in V_n} M_{h_n(v)}(\C \otimes C(\T^2))$. Then it is clear that $B_n$ can be regarded as a subalgebra of $A_n$.

	As for the canonical embedding $\phi_{n, n + 1} \colon A_n \rightarrow A_{n + 1}$, consider
	$$ a \in A_n \cong \bigoplus_{v \in V_n} M_{h_n(v)}(C(X(n, v, 1)) \otimes C(\T^2)) $$
	such that $a = (f \otimes g) \cdot u^{i - j} \in e_{i, j}^v \otimes C( X(n, v, 1) \otimes C(\T^2) )$, with $f \in C(X(n, v, i)) \cong C(X(n, v, 1))$ and $g \in C(\T^2)$.
	
	Note that the Kakutani-Rokhlin partition of $A_{n + 1}$ is finer than that of $A_n$. We can write
	$$ f = \sum_{X(n + 1, v_s, k ) \subset X(n, v, i)} f_{s, k} \ \ \text{with} \ f_{s, k} \in C( X( n + 1, v_s, k ) ) . $$
	It follows that
	$$ \phi_{n, n + 1} (f \otimes g) = \sum_{X(n + 1, v_s, k ) \subset X(n, v, i)} f_{s, k} \otimes g . $$
	Then we have
	\begin{align*}
		\phi_{n, n + 1}(a) & = \left( \sum_{X(n + 1, v_s, k ) \subset X(n, v, i)} f_{s, k} \otimes g \right) \cdot u^{i - j} \\
			& = \sum_{X(n + 1, v_s, k ) \subset X(n, v, i)} \left( f_{s, k} \otimes g \right) \cdot u^{i - j} ,
	\end{align*}
	with $\sum_{X(n + 1, v_s, k ) \subset X(n, v, i)} \left( f_{s, k} \otimes g \right) \cdot u^{i - j}$ being an element in $A_{n + 1}$. It is then clear that \\ $\phi_{n, n + 1}(B_n) \subset B_{n + 1}$ if we regard $B_n$ as a subalgebra of $A_n$ and $B_{n + 1}$ as a subalgebra of $A_{n + 1}$.

	Just abuse notation and use $\phi_{n, n + 1}$ to denote the canonical embedding from $B_n$ to $B_{n + 1}$. Then we have the following commutative diagram:
	$$
	\xymatrix{
 	\cdots \ar@{->}[r] & B_n \ar@{->}[r]^{\displaystyle \phi_{n, n + 1}} \ar@{->}[d]^{\displaystyle j_n}
    	  & B_{n + 1} \ar@{->}[d]^{\displaystyle j_{n + 1}} \ar@{->}[r]^{\displaystyle \phi_{n + 1, n + 2}} & B_{n + 2} \ar@{->}[d]^{\displaystyle j_{n + 2}} \ar@{->}[r] & \cdots \\
	 \cdots \ar@{->}[r] & A_n \ar@{->}[r]_{\displaystyle \phi_{n, n + 1}} & A_{n + 1} \ar@{->}[r]_{\displaystyle \phi_{n + 1, n + 2}} & A_{n + 2} \ar@{->}[r] & \cdots
	 }
	$$

	For every $a \in A_x = \displaystyle \varinjlim_n (A_n, \phi_{n, n + 1})$ and every $\ep > 0$, there exists $a_n \in A_n$ such that $\norm{a - a_n} < \ep / 2$ if we identity $a_n$ with $\phi_{n, \infty}(a_n) \in A_x$. Without loss of generality, we can assume that
	$$ a_n = \sum_{k = 1}^L \sum_{v \in V_n} \sum_{i , j = 1}^{h_n(v)} \left( f_{k, v, i, j} \otimes g_{k, v, i, j} \right) \cdot e^{v}_{i, j} , $$
	with $f_{k, v, i, j} \in C(X(n, v, 0))$ and $g_{k, v, i, j} \in C(\T^2)$.

	Let $M = \max_{k, v, i, j} \{ \norm{g_{k, v, i, j}} \}$. For all $k, v, i, j$ as above, we can find $\delta > 0$ such that for $x, y \in X$, if $\dist(x, y) < \delta$, then
	$$\norm{f_{k, v, i, j}(x) - f_{k, v, i, j}(y)} < \frac{\ep}{2 \cdot M \cdot L \cdot |V_n| \cdot h_n(v)^2} . $$

	According to the Bratteli-Vershik model, $\bigcap_{n \in \N} R(\CalP_n) = \{ x \}$. We may further require that for all $n \in \N$, every block $X(n, v, k)$ in $\CalP_n$ satisfies $\diam( X(n, v, k) ) < 1/n$. Then we can choose $N \in \N$ such that $\diam(R(\CalP_N)) < \delta$. Without loss of generality, we can assume that $N \geq n$.

	In $\CalP_N$, for every $X(N, v, k)$, choose $w_{N, v, k} \in X(N, v, k)$. For $k = 1, \ldots, L$, $v \in V_n$,  $i, j = 1, \ldots, h_n(v)$, define
	$$ \widetilde{f_{k, v, i, j}} = \sum_{X(N, v', k') \subset X(n, v, k)} f_{k, v, i, j}(w_{N, v', k'}) \cdot 1_{X(N, v', k')} . $$
	According to our choice of $N$, it is clear that $\norm{f_{k, v, i, j} - \widetilde{f_{k, v, i, j}}} < \frac{\ep}{2 \cdot M \cdot L \cdot |V_n| \cdot h_n(v)^2}$.

	For the $a_n$ given above, define $\widetilde{a_n} \in A_n$ by
	$$\widetilde{a_n} = \sum_{k = 1}^L \sum_{v \in V_n} \sum_{i , j = 1}^{h_n(v)} \left( \widetilde{f_{k, v, i, j}} \otimes g_{k, v, i, j} \right) \cdot e^{v}_{i, j} . $$
	As
	$$ \norm{f_{k, v, i, j} - \widetilde{f_{k, v, i, j}}} < \frac{\ep}{2 \cdot M \cdot L \cdot |V_n| \cdot h_n(v)^2} , $$
	it follows that $\norm{a_n - \widetilde{a_n}} <  \ep / 2$.
	
	As $\widetilde{f_{k, v, i, j}}$ is constant on $X(N, v', k')$, it follows that $\phi_{n, N} (\widetilde{a_n}) \in B_N$. It is clear that
	\begin{align*}
		\norm{\phi_{n, N} (\widetilde{a_n}) - a} & \leq \norm{\phi_{n, N} (\widetilde{a_n}) - a_n} + \norm{a - a_n} \\
			& = \norm{\widetilde{a_n} - a_n} + \norm{a - a_n} \\
			& \leq \ep / 2 + \ep / 2 \\
			& = \ep .
	\end{align*}
	Note that $a \in A_x$ and $\ep > 0$ are arbitrary. It follows that $\bigcup_{n \in \N} \phi_{n, \infty} (B_n)$ is dense in $A_x$. In other words, we have $\displaystyle \varinjlim_n (B_n, \phi_{n, n + 1}) \cong A_x$. As $B_n = \bigoplus_{v \in V_n} M_{h_n(v)}(\C \otimes C(\T^2))$, we conclude that $A_x \cong \displaystyle \varinjlim_n \bigoplus_{s = 1}^{k_n} M_{d_{s, n}}(C(\T^2))$.
\end{proof}


\begin{lem} \label{simplicity of A_x}
 	Let $A_x$ be defined as above. If $\af \times \mathrm{R}_{\xi} \times \mathrm{R}_{\eta}$ is minimal, then $A_x$ is simple.
\end{lem}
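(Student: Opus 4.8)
The plan is to prove simplicity by showing that $A_x$ has no nontrivial closed two-sided ideals, exploiting the inductive limit structure $A_x \cong \varinjlim_n \bigoplus_{s=1}^{k_n} M_{d_{s,n}}(C(\T^2))$ established in Lemma~\ref{AT2 structure of A_x}. Since an AH-algebra of this form is generally \emph{not} simple on its own (each $C(\T^2)$ has plenty of ideals), simplicity must come from how the connecting maps $\phi_{n,n+1}$ mix the blocks together, and this mixing is driven by the minimality of the dynamical system. So the real content is a Bratteli-diagram argument combined with the minimality of the rotation on each fiber.

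Let me think about this carefully. A closed two-sided ideal $I$ of $A_x$ corresponds to a compatible system of ideals $I_n \ideal B_n$ with $\phi_{n,n+1}(I_n) \subseteq I_{n+1}$. An ideal of $M_{d_{s,n}}(C(\T^2))$ is of the form $M_{d_{s,n}}(C_0(\T^2 \setminus F_{s,n}))$ for a closed set $F_{s,n} \subseteq \T^2$; so the ideal is determined by a family of closed subsets of the torus, one for each block. The plan is to show that if $I \neq 0$, then $I = A_x$, by proving that for any fiber $\{x'\} \times \T^2$ worth of support, the combined action of (i) the simplicity of $(X,\af)$ forcing every block to eventually connect to every other block in the Bratteli diagram, and (ii) the minimality of the toral rotations $\mathrm{R}_\xi \times \mathrm{R}_\eta$ along the Rokhlin tower, forces the closed sets $F_{s,n}$ to shrink to all of $\T^2$ under the connecting maps.

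The key mechanism I would carry out in order: First, unwind how $\phi_{n,n+1}$ acts on the $C(\T^2)$ fibers. From the proof of Lemma~\ref{AT2 structure of A_x}, the connecting map sends $(f\otimes g)\cdot u^{i-j}$ to a sum over subblocks, and crucially the unitary $u$ implements the rotation $\mathrm{R}_\xi \times \mathrm{R}_\eta$ on the torus coordinates. So passing up a level not only refines the partition of $X$ but also \emph{translates} the $\T^2$-coordinate by the cocycle values accumulated along the tower. Second, I would use minimality of $\af\times\mathrm{R}_\xi\times\mathrm{R}_\eta$ on $X\times\T^2$: the orbit of any point under the full transformation is dense, which means the translates of the torus coordinate along a column of the Rokhlin tower become dense in $\T^2$. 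Third, I would combine these: given a nonzero element of $I$, its image in some $B_N$ has a nonvanishing torus-fiber component over some subblock, and by iterating $\phi$ the translates of that support under the accumulated rotations spread the ``on-support'' across all of $\T^2$ in every block (using density from minimality), forcing $F_{s,N'} = \emptyset$ for large $N'$, i.e.\ $I$ contains the whole algebra.

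The main obstacle, and where I would spend the most care, is the second-and-third step interaction: making precise the claim that the connecting maps, through the implementing unitary $u$, genuinely translate the torus-fiber supports by the cocycle, and that minimality of the \emph{full} system $(X\times\T^2,\ \af\times\mathrm{R}_\xi\times\mathrm{R}_\eta)$---not merely minimality of $(X,\af)$---yields density of these translates in $\T^2$. One must verify that the Rokhlin-tower structure tracks the orbit of the full system correctly, so that ``returning to the base'' of the tower $N$ times corresponds to applying $(\af\times\mathrm{R}_\xi\times\mathrm{R}_\eta)^{-h}$ for the relevant return time $h$, and that these return orbits are dense in the fiber. An alternative, possibly cleaner route I would keep in reserve is to invoke a known simplicity criterion: since $A_x$ sits inside the crossed product $A = C^*(\Z, X\times\T^2, \af\times\mathrm{R}_\xi\times\mathrm{R}_\eta)$, which is simple by the standard result that the crossed product of a minimal system by $\Z$ is simple, one can try to show any ideal of $A_x$ generates a proper ideal of $A$ and hence is trivial; but completing that comparison rigorously requires controlling the relationship between ideals of $A_x$ and ideals of $A$, which is itself delicate, so I expect the direct Bratteli-diagram argument to be the more self-contained path.
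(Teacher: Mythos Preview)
Your approach is genuinely different from the paper's, and both are viable. The paper takes a much shorter route: it identifies $A_x$ as the groupoid C*-algebra of the sub-equivalence relation $\CalR_x$ obtained from the orbit equivalence relation $\CalR$ on $X\times\T^2$ by ``cutting'' each orbit at the fiber $\{x\}\times\T^2$ (separating the positive and negative half-orbits). Minimality of $\af\times\mathrm{R}_\xi\times\mathrm{R}_\eta$ gives density of each positive half-orbit, hence each $\CalR_x$-equivalence class is dense, and then the paper simply invokes Renault's criterion \cite[Proposition 4.6]{Renault} that density of equivalence classes is equivalent to simplicity of the associated groupoid C*-algebra. This is essentially the argument of \cite[Proposition 3.3(5)]{LM1} transplanted to the present setting.

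Your Bratteli-diagram argument trades external machinery for internal bookkeeping: it avoids groupoid C*-algebras entirely but requires you to track carefully how the isomorphisms $A_n\cong\bigoplus_v M_{h_n(v)}(C(\T^2))$ at different levels are related. One caution: in the formulas of Lemma~\ref{AT2 structure of A_x} the connecting map appears to send $f\otimes g$ to $\sum f_{s,k}\otimes g$ with the \emph{same} $g$, so the torus translation you need is hidden in the change of identification between levels (the base $X(n+1,v',1)$ sits at a different height in the level-$n$ tower, so the $(1,1)$ matrix unit at level $n{+}1$ corresponds to a $u$-conjugate of a level-$n$ matrix unit, and that is where the rotation enters). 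If you pursue this route, make that re-coordinatization explicit. The payoff of your approach is self-containment; the payoff of the paper's is brevity.
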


\begin{proof}
	This proof is essentially the same as that of Proposition 3.3 (5) in \cite{LM1}. It works like this:

	Note that $X \times \T \times \T$ is compact and $\af \times \mathrm{R}_{\xi} \times \mathrm{R}_{\eta}$ is minimal. It is clear that the positive orbit (under $\af \times \mathrm{R}_{\xi} \times \mathrm{R}_{\eta}$) of $(x, t_1, t_2)$ is dense in $X \times \T \times \T$.

	The \ca \ $A$ corresponds to the groupoid \ca \ associated with the equivalence relation
	$$ \CalR = \{ ( (x, t_1, t_2), (\af \times \mathrm{R}_{\xi} \times \mathrm{R}_{\eta})^k (x, t_1, t_2) ) \colon (x, t_1, t_2) \in X \times \T \times \T \} , $$
	and the C*-subalgebra $A_x$ corresponds to the groupoid \ca \ associated with the equivalence relation
	$$ \CalR_x = \CalR \setminus \{ (\af \times \mathrm{R}_{\xi} \times \mathrm{R}_{\eta})^k (x, t_1, t_2) ) , (\af \times \mathrm{R}_{\xi} \times \mathrm{R}_{\eta})^l (x, t_1, t_2) ) \colon  \\$$
	$$ \hspace{2cm} (t_1, t_2) \in \T \times \T,  k \geq 0, l \leq 0 \ \text{or} \ k \leq 0 , \ l \geq 0  \}  . $$

	As the positive orbit of any $(x, t_1, t_2)$ is dense in $X \times \T \times \T$, it follows that each equivalence class of $\CalR_x$ is dense in $X \times \T \times \T$. \removeme[check the reference myself]{According to \cite[Proposition 4.6]{Renault}}, this is equivalent to the simplicity of $A_x$.
\end{proof}


	Now we study the $K$-theory of $A_x$ using its direct limit structure.

\vspace{2mm}

\begin{lem} \label{K_0 of C(T^2)}
	The group $K_0(C(\T^2))$ is order isomorphic to $\Z^2$ with the unit element identified with $(1, 0)$ and the positive cone $D$ being $\{ (m, n) \colon m > 0 \} \cup \{ (0, 0) \}$, and the group $K_1( C( \T^2 ) )$ is isomorphic to $\Z^2$.
\end{lem}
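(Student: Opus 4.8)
The plan is to compute the two $K$-groups from the tensor decomposition $C(\T^2)\cong C(\T)\otimes C(\T)$, and then to read off the order structure on $K_0$ from the geometry of complex vector bundles over the torus; the group computation is routine, and the order is the only genuinely nontrivial point.

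First I would recall the circle: $K_0(C(\T))\cong\Z$, generated by the class of the unit, and $K_1(C(\T))\cong\Z$, generated by the class of the canonical unitary $z\mapsto z$; both groups are torsion free. Applying the K\"unneth formula for the $K$-theory of tensor products (the $\mathrm{Tor}$ term vanishes because both groups are torsion free), or equivalently the Pimsner--Voiculescu sequence for the trivial $\Z$-action realising $C(\T)\otimes C(\T)=C(\T)\rtimes_{\mathrm{id}}\Z$ (so that the connecting maps are $1-\mathrm{id}_*=0$ and the sequence splits), I obtain
\[
K_0(C(\T^2)) \cong \big(K_0(C(\T))\otimes K_0(C(\T))\big)\oplus\big(K_1(C(\T))\otimes K_1(C(\T))\big)\cong\Z^2,
\]
\[
K_1(C(\T^2)) \cong \big(K_0(C(\T))\otimes K_1(C(\T))\big)\oplus\big(K_1(C(\T))\otimes K_0(C(\T))\big)\cong\Z^2 .
\]
This settles the group statement for both $K_0$ and $K_1$; a convenient set of generators for $K_1(C(\T^2))$ is the pair of coordinate unitaries.

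Next I would fix the identification $K_0(C(\T^2))\cong\Z^2$ so that the first coordinate is the rank, i.e. the image under evaluation at a point $K_0(C(\T^2))\to K_0(\C)=\Z$, and the second coordinate is the first Chern number, the image of $K_0\cong K^0(\T^2)\to H^2(\T^2;\Z)\cong\Z$. Under this identification the class of the unit is the trivial rank-one bundle with vanishing first Chern class, hence $(1,0)$, as required. To check that $(\mathrm{rank},c_1)$ really is an isomorphism onto $\Z^2$ (consistent with the count above) I would use the Chern character $\mathrm{ch}\colon K^0(\T^2)\to H^{\mathrm{even}}(\T^2;\Q)$ together with $H^{\mathrm{even}}(\T^2;\Z)=H^0\oplus H^2\cong\Z^2$.

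The remaining point, which I expect to be the crux, is the positive cone. Since $\T^2$ is connected, every nonzero projection over $\T^2$ (in a matrix algebra) has constant rank $m\geq 1$, so its class lies in $\{(m,n):m>0\}$, while the zero projection gives $(0,0)$; this yields the inclusion $D\subseteq\{(m,n):m>0\}\cup\{(0,0)\}$. For the reverse inclusion I would invoke the classification of complex vector bundles over a closed surface (more generally a $2$-dimensional complex): for rank $\geq 1$ the rank together with the first Chern number is a complete invariant, and every pair $(m,n)$ with $m\geq1$ is realised, for instance by a line bundle of Chern number $n$ summed with a trivial bundle of rank $m-1$. Hence every $(m,n)$ with $m>0$ is the class of an honest projection, which gives the stated cone $D=\{(m,n):m>0\}\cup\{(0,0)\}$. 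This last step is where the argument genuinely uses that the fibre space is the $2$-torus: it rests on the low dimension of $\T^2$, so that no higher obstruction to realising a $K_0$-class of positive rank by an actual bundle survives and stable isomorphism of bundles of rank $\geq1$ already coincides with isomorphism.
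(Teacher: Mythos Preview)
Your proof is correct and follows essentially the same approach as the paper: both compute the groups via the K\"unneth theorem and determine the positive cone by appealing to the classification of complex vector bundles over $\T^2$ (trivial bundles plus the Bott element, equivalently rank plus first Chern number). Your version is considerably more detailed than the paper's, which is little more than a two-line sketch, and your explicit two-sided argument for the cone (constant rank for one inclusion, realising line bundles of arbitrary $c_1$ for the other) is exactly what one would want to see spelled out.
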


\begin{proof}

	It follows from the K\"unneth Theorem that $K_0( C(\T^2) ) \cong \Z^2$ and $K_1( C(\T^2) ) \cong \Z^2$.



	From algebraic topology, we know that the complex vector bundles on $\T^2$ is generated by the $1_m$ and the Bott element, with $1_m$ being the $m$-dimensional trivial bundle, and the rank is determined by the rank of the trivial bundle, this will give the positive cone of $K_0(C(\T^2))$ as $\{ (m, n) \colon (m, n) \in \Z^2, m > 0 \} \cup \{ (0, 0) \}$.
\end{proof}

\vspace{2mm}

\begin{lem} \label{K_0 of C(X T^2)}

	There is an isomorphism $\iota \colon K_0(C(X \times \T^2)) \longrightarrow C(X, \Z^2)$ which sends $[1]$ to the constant function with value $(1, 0)$. Furthermore, $\iota$ maps $K_0( C( X \times \T^2 ) )_+$ onto $C(X, D)$, with $D$ as defined in Lemma \ref{K_0 of C(T^2)}.
	
	Moreover, for a clopen set $U$ of $X$ and a projection $\eta \in M_k(C(\T^2))$ such that $[\eta] \in K_0(C(\T^2))$ corresponds to $(a, b)$ as in Lemma \ref{K_0 of C(T^2)}, $\iota([\diag(\underbrace{1_U, \ldots, 1_U}_{k}) \cdot \eta]) = (1_U \cdot a, 1_U \cdot b)$.
	
\begin{proof}
	For $D$ as in Lemma \ref{K_0 of C(T^2)}, define
	$$\varphi \colon C(X, D) \rightarrow (K_0(C(X \times \T^2)))_+$$
	by
	$$ \varphi(f) = \sum_{(m, n) \in D} \left[ (\underbrace{1_{f^{-1}((m, n))}, \ldots, 1_{f^{-1}((m, n))}}_{d_{m, n}}) \cdot \eta_{m, n} \right] , $$
	where $\eta_{m, n}$ is a projection in $M_{d_{m, n}}(C(\T^2))$ which is identified with $(m, n)$ as in Lemma \ref{K_0 of C(T^2)}.

	If we can show that $\varphi$ is one-to-one, preserves addition, and maps the constant function with value $(1, 0)$ to $[1_{C(X \times \T^2)}]$, then we can extend $\varphi$ to a group isomorphism from $C(X, \Z^2)$ to $K_0(C(X \times \T^2))$.
	
	It is easy to check that $\varphi((1, 0)) = [1_{C(X \times \T^2)}]$. From the definition, it follows that $\varphi$ preserves addition. We just need to show that $\varphi$ is one-to-one.
	
	\onemm
	
	Injectivity of $\varphi$:
	
	\onemm
	
	If $\varphi(f) = 0$ for some $f \in C(X, D)$, then
	$$ \sum_{(m, n) \in D} \left[ (\underbrace{1_{f^{-1}((m, n))}, \ldots, 1_{f^{-1}((m, n))}}_{d_{m, n}}) \cdot \eta_{m, n} \right] = 0 $$
	in $(K_0(C(X \times \T^2)))_+$. As
	$$ K_0(C(X \times \T^2)) \cong \bigoplus_{(m, n) \in D} K_0(C(f^{-1}((m, n)) \times \T^2)) , $$
	we get that 	
	$$ [(\underbrace{1_{f^{-1}((m, n))}, \ldots, 1_{f^{-1}((m, n))}}_{d_{m, n}}) \cdot \eta_{m, n}] = 0 \ \text{in} \ K_0(C(f^{-1}((m, n)) \times \T^2)) \ \text{for all} \ (m, n) \in D . $$
	That is, there exists $k \in \N$ such that
	$$(\underbrace{1_{f^{-1}((m, n))}, \ldots, 1_{f^{-1}((m, n))}}_{d_{m, n}}) \cdot \eta_{m, n} \bigoplus \diag(\underbrace{1_{C(f^{-1}((m, n)) \times \T^2)}, \ldots, 1_{C(f^{-1}((m, n)) \times \T^2)}}_k)$$
	is Murray-von Neumann equivalent to $\diag(\underbrace{1_{C(f^{-1}((m, n)) \times \T^2)}, \ldots, 1_{C(f^{-1}((m, n)) \times \T^2)}}_k)$.

	Let $s \in M_{d_{m, n} + k}(f^{-1}((m, n)) \times \T^2)$ be the partial isometry corresponding to the Murray-von Neumann equivalence above. Choose $x \in f^{-1}((m, n))$. Then $s(x)$ can be regarded as an element in $M_{d_{m, n} + k}(\T^2)$ that gives a Murray-von Neumann equivalence between 
	$$ \eta_{m, n} \oplus \diag(\underbrace{1_{C(\T^2))}, \ldots, 1_{C(\T^2)}}_k) \ \text{and} \ \diag(\underbrace{1_{C(\T^2))}, \ldots, 1_{C(\T^2)}}_k) . $$
	It then follows that $\eta_{m, n} = 0$, which proves injectivity.
	
	\onemm
	
	Surjectivity of $\varphi$:
	
	For every projection $p \in M_{\infty}(C(X \times \T^2))$, we can find a partition $X = \bigsqcup_{i = 1}^M X_i$ such that $\norm{p(x) - p(y)} < 1$ for all $x, y \in X_i$. Choose $x_i \in X_i$ for $i = 1, \ldots, M$, and identify $M_{\infty}(C(X \times \T^2))$ with $C(X, M_{\infty}(C(\T^2)))$. Define $p' \in C(X, M_{\infty}(C(\T^2)))$ by $p' \left|_{X_i} \right. = p(x_i)$. It is clear that we can regard $p' \left|_{X_i} \right.$ as an element in $M_{\infty}(C(\T^2))$.

	Use $(a_i, b_i)$ to denote the corresponding element in $K_0(C(\T^2))$ as identified in Lemma \ref{K_0 of C(T^2)} and let $f = \sum_{i = 1}^M 1_{X_i} \cdot (a_i, b_i)$. Then we can check that $\varphi(f) = [p']$ in $(K_0(C(X \times \T^2)))_+$. As $[p] = [p']$, we have proved surjectivity of $\varphi$.
	
	\twomm
	
	As $\varphi$ is unital, one-to-one and preserves addition, we can extend it to an ordered group isomorphism $\widetilde{\varphi} \colon C(X, \Z^2) \longrightarrow K_0(C(X \times \T^2))$. Let $\iota = \widetilde{\varphi}^{-1}$, and we have finished the proof.
\end{proof}

\end{lem}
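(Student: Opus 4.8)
The plan is to build $\iota$ directly from fiberwise evaluation and to exploit that $X$, being the Cantor set, is totally disconnected, so that $C(X \times \T^2) = C(X, C(\T^2))$ and every projection is locally constant in the $X$-variable up to Murray--von Neumann equivalence. For each $x \in X$ the evaluation $\ev_x \colon C(X \times \T^2) \to C(\T^2)$ is a unital $*$-homomorphism, inducing $(\ev_x)_* \colon K_0(C(X \times \T^2)) \to K_0(C(\T^2)) \cong \Z^2$, where the target is identified as in Lemma \ref{K_0 of C(T^2)}. I would define $\iota([p])(x) = (\ev_x)_*[p] = [p(x)]$ and verify that the assignment $x \mapsto [p(x)]$ is a continuous (indeed locally constant) $\Z^2$-valued function, so that $\iota$ lands in $C(X, \Z^2)$. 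The abstract group isomorphism $K_0(C(X \times \T^2)) \cong C(X,\Z) \otimes \Z^2 \cong C(X, \Z^2)$ also drops out of the K\"unneth theorem, since $K_1(C(X)) = 0$ and $K_*(C(\T^2))$ is free; but the point of using $\ev_x$ is that the order structure and the explicit formula in the statement become transparent.

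Then come the verifications. Well-definedness and the homomorphism property are immediate from functoriality of $K_0$: $\iota$ is independent of the representing projection, and sends direct sums to pointwise sums. For continuity, given a projection $p \in M_n(C(X \times \T^2)) = C(X, M_n(C(\T^2)))$, I would use total disconnectedness to partition $X$ into finitely many clopen sets $U_i$ on which $\norm{p(x) - p(y)} < 1$; standard perturbation then gives $p(x) \sim p(y)$ for $x, y \in U_i$, so $[p(x)]$ is constant on each $U_i$ and $\iota([p]) \in C(X, \Z^2)$. Evaluating at the unit gives $\iota([1])(x) = [1_{C(\T^2)}] = (1,0)$, the required constant function, and for the last clause $\iota([\diag(1_U,\dots,1_U)\cdot\eta])(x)$ equals $[\eta(x)] = (a,b)$ when $x \in U$ and $0$ otherwise, i.e.\ $(1_U a, 1_U b)$, so that formula is essentially built into the definition.

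For bijectivity and the order statement I would argue through finite clopen partitions. For surjectivity, any $g \in C(X, \Z^2)$ is locally constant, so $X = \bigsqcup_i U_i$ with $g|_{U_i} \equiv (a_i, b_i)$; writing each $(a_i, b_i)$ as a difference of classes of projections over $C(\T^2)$ via Lemma \ref{K_0 of C(T^2)} and assembling them against the $1_{U_i}$ produces a $K_0$-class mapping to $g$. When $g$ takes values in $D$ the classes $(a_i,b_i)$ lie in $K_0(C(\T^2))_+$ and are represented by honest projections, so the assembled element is a genuine projection; conversely $\iota([p])(x) = [p(x)] \in D$ for every projection $p$, giving $\iota(K_0(C(X \times \T^2))_+) = C(X, D)$. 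For injectivity I would pass to the inductive limit $C(X) = \varinjlim_n C(X_n)$ over finite clopen quotients $X_n$, under which $K_0(C(X_n \times \T^2)) \cong \bigoplus_{\mathrm{pts}} K_0(C(\T^2))$ and $\iota$ is visibly the identity on each summand; continuity of $K_0$ then forces a class with all fiber values zero to vanish.

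The main obstacle is the injectivity / global-triviality step: showing that a projection whose $K_0$-class dies at every point is actually trivial, equivalently that $\iota$ is injective rather than merely having the expected fibers. Conceptually this reduces to the base identification $K_0(C(U \times \T^2)) \cong C(U, \Z^2)$ for a single clopen $U$ together with continuity of $K$-theory under the limit $C(X) = \varinjlim C(X_n)$, but keeping the clopen refinements, the finite direct sums, and the connecting maps consistent is the fussy bookkeeping. Everything else --- the homomorphism property, the unit, the explicit clopen-times-projection formula, and the identification of the positive cone --- follows cleanly once $\iota$ is set up via evaluation and local constancy.
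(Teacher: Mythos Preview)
Your proposal is correct and rests on the same core ideas as the paper: local constancy of projections over a totally disconnected base, reduction to $K_0(C(\T^2))$ via point evaluation, and clopen partitions. The paper, however, runs the construction in the opposite direction: it builds $\varphi \colon C(X,D) \to K_0(C(X\times\T^2))_+$ by assembling projections over level sets, checks injectivity by decomposing $K_0$ over the clopen partition and evaluating at a single point in each piece, and checks surjectivity by the same $\|p(x)-p(y)\|<1$ perturbation you use; $\iota$ is then defined as the inverse. Your direct definition of $\iota$ via $\ev_x$ makes the explicit formula $\iota([\diag(1_U,\dots,1_U)\cdot\eta])=(1_U a,1_U b)$ and the positive-cone identification fall out more transparently, whereas the paper has to trace them through $\varphi^{-1}$. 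For injectivity you invoke continuity of $K$-theory under $C(X)=\varinjlim C(X_n)$, while the paper argues directly from the finite direct-sum decomposition; both are standard and essentially equivalent here, though your inductive-limit phrasing is slightly cleaner bookkeeping.
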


\begin{lem} \label{structure of A_n}
	There is an isomorphism
	$$ \gamma_n \colon A_n \longrightarrow \bigoplus_{v \in V_n} M_{h_n(v)} \left( C(X(n, v, 1)) \otimes C(\T^2) \right) , $$
	such that for every clopen set $U$ in $X$,
	$$ \gamma_n(1_{U \times \T^2}) = \bigoplus_{v \in V_n} \diag \left( 1_{X(n, v, 1) \cap U}, \ldots, 1_{X(n, v, h(v)) \cap U} \right) . $$

\begin{proof}
  The proof is essentially the same as that of \cite[Lemma 3.1]{Putnam}. It can also be obtained as a K-theory version of part of the proof of Lemma \ref{AT2 structure of A_x}.
\end{proof}

\end{lem}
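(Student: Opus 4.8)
The plan is to reuse the isomorphism $\gamma_n$ that is already produced in the proof of Lemma \ref{AT2 structure of A_x} and merely track where the projections $1_{U \times \T^2}$ are sent, so that essentially no new structural work is required; all the genuine content (identifying $A_n$ with a finite direct sum of matrix algebras over $C(\T^2)$) is inherited. Recall that $\gamma_n$ is built from the system of matrix units $e_{i,j}^v = 1_{X(n, v, i)} \cdot u^{i - j}$, which it sends to the standard matrix units $E_{i,j}$ in the $v$-th block $M_{h_n(v)}(C(X(n, v, 1)) \otimes C(\T^2))$, the coefficient algebra $C(X(n, v, 1)) \otimes C(\T^2)$ arising by conjugating functions down to the base level $X(n, v, 1)$ of each tower. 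First I would record the two facts that make the computation mechanical: taking $i = j = k$ gives $e_{k,k}^v = 1_{X(n, v, k) \times \T^2}$, and $1_{U \times \T^2}$ lies in the commutative subalgebra $C(X \times \T^2) \subset A_n$ and hence commutes with every diagonal matrix unit $e_{k,k}^v$.

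Next I would decompose the projection along the Kakutani--Rokhlin partition. Since $U$ is clopen and the sets $X(n, v, k)$ partition $X$, one has $1_{U \times \T^2} = \sum_{v \in V_n} \sum_{k = 1}^{h_n(v)} 1_{(U \cap X(n, v, k)) \times \T^2}$, and each summand equals $1_{U \times \T^2} \cdot e_{k,k}^v$, a self-adjoint element supported entirely in the $(k,k)$ diagonal slot of the $v$-th block. Consequently $\gamma_n(1_{U \times \T^2})$ is block-diagonal, and within block $v$ it is a diagonal matrix whose $k$-th entry is the base-level coefficient in $C(X(n, v, 1))$ corresponding to $1_{(U \cap X(n, v, k)) \times \T^2}$.

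The one point requiring care is identifying that coefficient. Using $u^m g = (g \circ \Phi^m) u^m$ for $\Phi = \af \times \mathrm{R}_{\xi} \times \mathrm{R}_{\eta}$, together with the partition relation $\af^{k-1}(X(n, v, k)) = X(n, v, 1)$ that is forced by the matrix-unit computation in Lemma \ref{AT2 structure of A_x}, I would compute the base-level coefficient $e_{1,k}^v \cdot 1_{(U \cap X(n, v, k)) \times \T^2} \cdot e_{k,1}^v$ and check that it equals $1_{\af^{k-1}(U \cap X(n, v, k)) \times \T^2}$, a genuine projection supported in $X(n, v, 1) \times \T^2$ (here the rotation coordinates are irrelevant because the full $\T^2$ fiber maps to the full $\T^2$). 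Under the tower identification $X(n, v, k) \cong X(n, v, 1)$ via $\af^{k-1}$ this is precisely what the statement abbreviates as $1_{X(n, v, k) \cap U}$; assembling these over all $k$ and all $v$ then yields the claimed formula $\gamma_n(1_{U \times \T^2}) = \bigoplus_{v \in V_n} \diag(1_{X(n, v, 1) \cap U}, \ldots, 1_{X(n, v, h(v)) \cap U})$.

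I expect the only real obstacle to be exactly this last bookkeeping of the conjugation by $u^{k-1}$ and the attendant abuse of notation: one must stay consistent about whether $\af$ raises or lowers the tower index (the relation $u^{i-j} 1_{X(n, v, k)} = 1_{X(n, v, k + i - j)} u^{i-j}$ fixes this) and verify that the pulled-back characteristic functions land in $C(X(n, v, 1))$ with the correct labels. Since no approximation or inductive-limit argument is needed at this stage, the proof reduces to this finite, purely algebraic verification.
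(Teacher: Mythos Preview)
Your proposal is correct and follows essentially the same approach as the paper: the paper's proof simply defers to \cite[Lemma 3.1]{Putnam} and to the matrix-unit construction already carried out in the proof of Lemma~\ref{AT2 structure of A_x}, and you are spelling out precisely those details---decomposing $1_{U \times \T^2}$ along the Kakutani--Rokhlin partition and conjugating each piece down to the base level via the $e_{i,j}^v$. Your caveat about the direction convention for $\af$ on the tower indices is apt (the paper is not entirely consistent on this point), but as you note the relation $u^{i-j} 1_{X(n,v,k)} = 1_{X(n,v,k+i-j)} u^{i-j}$ from the proof of Lemma~\ref{AT2 structure of A_x} pins it down unambiguously for the purposes of this computation.
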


\begin{lem} \label{technical lemma}

	There is a group isomorphism
	$$ \phi \colon \bigoplus_{v \in V_n} C \left( X(n, v, 1), \Z^2 \right) \longrightarrow  C \left( X, \Z^2 \right) / \{ f - f \circ \af^{-1}  \colon f \left| _{Y_n} \right. = 0  \} $$
	such that
	$$ \phi \left( (f_1, \ldots, f_{|V_n|}) \right) = \sum_{v \in V_n} [1_{X(n, v, 1)} \cdot f_v] $$
	for $(f_1, \ldots, f_{|V_n|}) \in \displaystyle\bigoplus_{v \in V_n} C\left( X(n, v, 1), \Z^2 \right)$.
	
	Furthermore, if we define $D$ to be
	$$ \{ (m, n) \in \Z^2 \colon  m >0 \} \cup \{ (0, 0) \} , $$
	and if we define the positive cone of $\displaystyle\bigoplus_{v \in V_n} C(X(n, v, 1), \Z^2)$ to be $\displaystyle\bigoplus_{v \in V_n} C(X(n, v, 1), D)$ and the positive cone of $C(X, \Z^2) / \{ f - f \circ \af^{-1} \colon f \left| _{Y_n} \right. = 0  \}$ to be $C(X, D) / \{ f - f \circ \af^{-1} \colon f \left| _{Y_n} \right. = 0  \}$, then both $\phi$ and $\phi^{-1}$ are order preserving.

\begin{proof}	
	For $(f_1, \ldots, f_{|V_n|}) \in \bigoplus_{v \in V_n} C \left( X(n, v, 1), \Z^2 \right)$, define
	$$ \phi \left( f_1, \ldots, f_{|V_n|} \right) = \sum_{v \in V_n} [1_{X(n, v, 1)} \cdot f_v] . $$
	
	Injectivity of $\phi$:
	
	Suppose
	$$ (f_1, \ldots, f_{|V_n|}) \in \bigoplus_{v \in V_n} C(X(n, v, 1), \Z^2) $$
	and that $\phi((f_1, \ldots, f_{|V_n|})) = 0$. That is, there exists $H \in C(X, \Z^2)$ with $H \left| _{Y_n} \right. = 0$ such that
	$$ \sum_{v = 1}^{|V_n|} f_v = H - H \circ \af^{-1} . $$
	It follows that
	$$ \left( \sum_{k = 1}^{h(v)} 1_{X(n, v, k)} \right) \cdot \left( \sum_{v = 1}^{|V_n|} f_v \right) = \left( \sum_{k = 1}^{h(v)} 1_{X(n, v, k)} \right) \cdot \left( H - H \circ \af^{-1} \right) . $$
	As $H \left|_{Y_n} \right. = 0$,
	$$ \left( \sum_{k = 1}^{h(v)} 1_{X(n, v, k)} \right) \cdot (H \circ \af^{-1}) = \left(\sum_{k = 1}^{h(v)} 1_{X(n, v, k)} \cdot H \right) \circ \af^{-1} . $$
	It then follows that
	$$ \left( \sum_{k = 1}^{h(v)} 1_{X(n, v, k)} \right) \cdot \left(H - H \circ \af^{-1} \right) =  \left( \sum_{k = 1}^{h(v)} 1_{X(n, v, k}) \cdot H \right) - \left( \sum_{k = 1}^{h(v)} 1_{X(n, v, k} \cdot H \right) \circ \af^{-1} . $$
	Use $H_v$ to denote $\displaystyle \left( \sum_{k = 1}^{h(v)} 1_{X(n, v, k)} \right) \cdot H$. It is clear that $H_v$ is supported on $\displaystyle\bigsqcup_{k = 1}^{h(v)} X(n, v, k)$.
	
	Now we have $f_v = H_v - H_v \circ \af^{-1}$. As $f_v$ is supported on $X(n, v, 1)$, we get
	$$ H_v - H_v \circ \af^{-1} = 0 $$
	on $X(n, v, k)$ for $2 \leq k \leq h(v)$, which implies that for all $x \in X(n, v, 1)$,
	$$ H(x) = H_v(\af(x)) = \cdots = H_v \left( \af^{h(v) - 1}(x) \right) . $$
	As $\af^{h(v) - 1}(x) \in Y_n$, it follows that $H_v(\af^{h(v) - 1}(x)) = 0$. Now we can conclude that $H_v = 0$. It is then clear that $f_v = 0$.
	
	Applying the process to all $v = 1, \ldots, h(v)$, we get $H = 0$. It follows that $f_i = 0$ for $i = 1, \ldots, |V_n|$, which proves the injectivity of $\phi$.

\vspace{4mm}
	
	Surjectivity of $\phi$:
	
	For every $g \in C(X, \Z^2)$, we need to find
	$$ (f_1, \ldots, f_{|V_n|}) \in \bigoplus_{v \in V_n} C \left( X(n, v, 1), \Z^2 \right) $$
	such that
	$$ \phi \left( (f_1, \ldots, f_{|V_n|}) \right) - g = h - h \circ \af^{-1} $$
	for some $h \in C(X, \Z^2)$ satisfying $h \left| _{Y_n} \right. = 0$.
	
	Write $g$ as
	$$ g = 1 \cdot g = \sum_{v \in V_n} \sum_{k = 1}^{h(v)} 1_{X(n, v, k)} \cdot g . $$
	For every $k$ with $2 \leq k \leq h(v)$, consider $(1_{X(n, v, k)} \cdot g) \circ \af$. It is easy to check that \\ $(1_{X(n, v, k)} \cdot g) \circ \af \left|_{Y_n} \right. = 0$ and
	$$ 1_{X(n, v, k)} \cdot g + \left( (1_{X(n, v, k)} \cdot g) \circ \af - (1_{X(n, v, k)} \cdot g) \circ \af  \circ \af^{-1} \right) $$
	is supported on $X(n, v, k - 1)$.
	
	By repeating this process, we get $s \in C(X, \Z^2)$ such that $1_{X(n, v, k)} \cdot g + (s - s \circ \af)$ is supported on $1_{X(n, v, 1)}$.
	
	Apply the process for all $1_{X(n, v, k)} \cdot g$ with $v \in V_n$ and $1 < k \leq h(v)$. We can find $H \in C(X, \Z^2)$ such that $g + (H - H \circ \af^{-1})$ is supported on $\af(R(\CalP_n)) = \bigoplus_{v \in V_n} X(n, v, 1)$. According to the definition, if we set $f_v = 1_{X(n, v, 1)} \cdot (g + (H - H \circ \af^{-1}))$, then $\phi$ will map $(f_1, \ldots, f_{|V_n|})$ to $g$.

	Positivity of $\phi$:
	
	As
	$$ \phi \left( (f_1, \ldots, f_{|V_n|}) \right) = \sum_{v \in V_n} 1_{X(n, v, 1)} \cdot f_v , $$
	for
	$$\left( f_1, \ldots, f_{|V_n|} \right) \in \bigoplus_{v \in V_n} C \left( X(n, v, 1), \Z^2 \right) , $$
	if the range of each $f_i$ is in the positive cone $D$, it is clear that $\sum_{v \in V_n} 1_{X(n, v, 1)} \cdot f_v \in C(X, D)$. Thus $\phi$ is order preserving.
	
	Positivity of $\phi^{-1}$:
	
	For $f \in C(X, D)$, we will show that if there is
	$$ (f_1, \ldots, f_{| V_n |}) \in \bigoplus_{v \in V_n} C \left( X(n, v, 1), D \right) $$
	such that
	$$ \phi( f_1, \ldots, f_{| V_n |} ) = [f] , $$
	then $f_v \in C( X(n, v, 1 )$ for all $1 \leq v \leq | V_n |$.
	
	In fact, such an element $(f_1, \ldots, f_{| V_n |})$ can be constructed from $f$ as in the proof of surjectivity of $\phi$. The fact that $f \in C(X, D)$ then implies that for all $v$ with $1 \leq v \leq | V_n |$, the image $f_k$ is in $D$, which finishes the proof.
\end{proof}

\end{lem}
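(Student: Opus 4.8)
The plan is to treat $\phi$, which is plainly an additive group homomorphism, by establishing surjectivity, injectivity, and the two order assertions separately; all four rest on a single mechanism, namely relocating the ``mass'' of a $\Z^2$-valued function along the Kakutani--Rokhlin towers by subtracting coboundaries $h - h\circ\af^{-1}$ whose generating function $h$ vanishes on the roof $Y_n = R(\CalP_n)$. The geometric facts I would record first are that within a tower $\af$ carries level $k$ to level $k+1$ (so $X(n, v, k) = \af^{k-1}(X(n, v, 1))$), that $\af$ maps the roof $Y_n$ onto the base $\bigcup_{v \in V_n} X(n, v, 1)$, and that $D$ is closed under addition and contains $(0,0)$; the last point is what will make the order statement for $\phi^{-1}$ work.

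For surjectivity, given an arbitrary $g \in C(X, \Z^2)$ I would push each tower level down to the base one step at a time. Fix a tower $v$ and a level $k$ with $2 \leq k \leq h(v)$, and set $h = (1_{X(n, v, k)} \cdot g) \circ \af$, which is supported on $X(n, v, k-1)$ and hence vanishes on $Y_n$ since $k - 1 < h(v)$. Then $h \circ \af^{-1} = 1_{X(n, v, k)} \cdot g$, so adding the coboundary $h - h \circ \af^{-1}$ to $g$ cancels the level-$k$ piece and installs a level-$(k-1)$ piece carrying the same $\Z^2$-values. Iterating this down each tower produces $H \in C(X, \Z^2)$, supported off $Y_n$, with $g + (H - H \circ \af^{-1})$ supported on the base; restricting this representative to each $X(n, v, 1)$ yields a preimage $(f_1, \ldots, f_{|V_n|})$.

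For injectivity I would start from $\sum_{v \in V_n} 1_{X(n, v, 1)} \cdot f_v = H - H \circ \af^{-1}$ with $H|_{Y_n} = 0$, and work one tower at a time by multiplying through by the indicator $\sum_{k=1}^{h(v)} 1_{X(n, v, k)}$ of tower $v$. Writing $H_v$ for the restriction of $H$ to that tower, the condition $H|_{Y_n} = 0$ lets me rewrite the truncation of $H \circ \af^{-1}$ as $H_v \circ \af^{-1}$, so that $f_v = H_v - H_v \circ \af^{-1}$. Since $f_v$ is supported only on the base level, this forces $H_v - H_v \circ \af^{-1} = 0$ on levels $2, \ldots, h(v)$, i.e. $H_v$ is constant as one climbs the tower; evaluating at the top level, which lies in $Y_n$ where $H_v = 0$, forces $H_v \equiv 0$ on the whole tower and hence $f_v = 0$.

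Finally, $\phi$ is order preserving by inspection, since a $D$-valued family $(f_v)$ maps to a function that is $D$-valued on the base and equal to $0 \in D$ elsewhere. For $\phi^{-1}$ I would observe that the preimage of a positive class $[f]$, with $f \in C(X, D)$, is produced by exactly the push-down construction from surjectivity; since that construction only transports the existing $\Z^2$-values of $f$ down the towers and sums those landing on a common base point, and $D$ is closed under addition, each resulting $f_v$ is again $D$-valued. The step I expect to require the most care is injectivity, specifically the bookkeeping that converts the global coboundary relation into the tower-by-tower identity $f_v = H_v - H_v \circ \af^{-1}$ while correctly invoking $H|_{Y_n} = 0$ both at the roof itself and at the wrap-around from roof to base.
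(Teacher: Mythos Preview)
Your proposal is correct and follows essentially the same approach as the paper's proof: the same push-down-by-coboundaries construction for surjectivity, the same tower-by-tower localization (via multiplying by the tower indicator and using $H|_{Y_n}=0$) for injectivity, and the same reuse of the surjectivity construction for positivity of $\phi^{-1}$. Your explicit remark that $D$ is closed under addition is the precise reason the push-down preserves $D$-values, a point the paper leaves implicit.
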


\begin{lem} \label{K_0 of A_n}
	There is an order isomorphism
	$$ \rho_n \colon K_0(A_n) \longrightarrow C(X, \Z^2) / \{ f - f \circ \af^{-1} \colon f \in C(X, \Z^2), f \left|_{Y_n} \right. = 0 \} $$
	with the unit element and positive cone of
	$$ C(X, \Z^2) / \{ f - f \circ \af^{-1} \colon f \in C(X, \Z^2), f \left|_{Y_n} \right. = 0 \} $$
	being $[(1_X, 0)]$ and
	\begin{align*}
	 \{ [g] \in C(X, \Z^2) /  \{ f - f \circ \af^{-1} \colon f \in C(X, \Z^2), f \left|_{Y_n} \right. = 0 \} \colon \\
	 \hspace{1cm} \forall  x \in X, g(x) = (0, 0) \ \text{or} \ g(x) = (a, b) \ \text{with} \ a > 0 \} .
	 \end{align*}
	For a clopen subset $U$ of $X$ and $\eta \in M_k(C(\T^2))$ such that $[\eta] \in K_0(C(\T^2))$ corresponds to $(a, b)$ as in Lemma \ref{K_0 of C(T^2)}, $\rho_n([\diag(\underbrace{1_U, \ldots, 1_U}_{k}) \cdot \eta])$ is exactly $[(1_U \cdot a, 1_U \cdot b)]$ with $1_U$ denoting the continuous function from $X$ to $\Z$ that is $1$ on $U$ and $0$ otherwise.

\begin{proof}
	Consider the isomorphism
	$$ \gamma_n \colon A_n \longrightarrow \bigoplus_{v \in V_n} M_{h_n(v)} \left( C(X(n, v, 1)) \otimes C(\T^2) \right) $$
	as in Lemma \ref{structure of A_n}. It is clear that
	$$ {( \gamma_n )}_{*0} \colon K_0(A_n) \longrightarrow K_0 \left( \bigoplus_{v \in V_n} M_{h_n(v)} \left( C(X(n, v, 1)) \otimes C \left( \T^2 \right) \right) \right) $$
	is an order isomorphism.
	
	We know that
	$$ K_0 \left( \bigoplus_{v \in V_n} M_{h_n(v)}(C(X(n, v, 1)) \otimes C(\T^2)) \right) \cong \bigoplus_{v \in V_n} K_0 \left( M_{h_n(v)}(C(X(n, v, 1)) \otimes C(\T^2)) \right) , $$
	and use
	$$ h_n \colon K_0 \left( \bigoplus_{v \in V_n} M_{h_n(v)}(C(X(n, v, 1)) \otimes C(\T^2)) \right) \longrightarrow \bigoplus_{v \in V_n} K_0 \left( M_{h_n(v)}(C(X(n, v, 1)) \otimes C(\T^2)) \right) $$
	to denote the order isomorphism.
	
	There are
	natural order isomorphisms
	$$ l_{n, v} \colon K_0(M_{h_n(v)}(C(X(n, v, 1)) \otimes C(\T^2))) \longrightarrow K_0( C(X(n, v, 1)) \otimes C(\T^2) ) . $$
	By Lemma \ref{K_0 of C(X T^2)}, we can find order isomorphisms
	$$ s_{n ,v} \colon K_0( C(X(n, v, 1)) \otimes C(\T^2) ) \longrightarrow C(X(n, v, 1), \Z^2) $$
	such that each $s_{n, v}$ maps $[1_{C(X(n, v, 1)) \otimes C(\T^2)}]$ to the constant function with value $(1, 0)$.
	
	Combining $l_{n, v}$ and $s_{n, v}$ for all $v$, we get an order isomorphism
		$$ \varphi \colon \bigoplus_{v \in V_n} K_0(M_{h_n(v)}(C(X(n, v, 1)) \otimes C(\T^2))) \longrightarrow \bigoplus_{v \in V_n} C(X(n, v, 1), \Z^2) $$
	with the positive cone of $\bigoplus_{v \in V_n} C(X(n, v, 1), \Z^2)$ being $\bigoplus_{v \in V_n} C(X(n, v, 1), D)$ ($D$ as defined in Lemma \ref{K_0 of C(T^2)}). Note that $\varphi$ is not unital.
	
	According to Lemma \ref{technical lemma}, there is an order isomorphism
	$$ \psi \colon \bigoplus_{v \in V_n} C(X(n, v, 1), \Z^2) \rightarrow  C(X, \Z^2) / \{ f - f \circ \af^{-1} \colon f \left| _{Y_n} \right. = 0  \} . $$

	Let
	$$\rho_n = \psi \circ \varphi \circ h_n \circ (\gamma_n)_{*0} . $$
	Then $\rho_n$ is a group isomorphism from $K_0(A_n)$ to
	$$ C(X, \Z^2) / \{ f - f \circ \af^{-1} \colon f \in C(X, \Z^2), f \left|_{Y_n} \right. = 0 \} $$
	because $\psi$, $\varphi$, $h_n$ and $(\gamma_n)_{*0}$ are all group isomorphisms.

\twomm

	According to Lemma \ref{structure of A_n},
	$$ \gamma_n(1_{A_n}) =  \bigoplus_{v \in V_n} \diag(1_{X(n, v, 1)}, \ldots, 1_{X(n, v, h(v)) }) . $$ Thus
	$$ (\gamma_n)_{*0}([1_{A_n}]) =  \sum_{v \in V_n} \sum_{1 \leq k \leq h(v)} \left[ 1_{X(n, v, k)} \right] . $$
	It is then clear that
	$$ h_n( (\gamma_n)_{*0}([1_{A_n}])) = \left( \sum_{1 \leq k \leq h(1)} \left[ 1_{X(n, v, k)} \right], \ldots, \sum_{1 \leq k \leq h(|V_n|)} \left[ 1_{X(n, v, h(k))} \right] \right) . $$
	
	Note that $[1_{X(n, v, k)}] = [1_{X(n, v, 1)}]$ in $K_0(M_n(X(n, v, 1)))$. It follows that
	\begin{align*}
		\displaystyle \varphi \left( h_n({( \gamma_n )}_{*0}([1_{A_n}])) \right) & = \varphi \left( \displaystyle\sum_{1 \leq k \leq h(1)} [1_{X(n, v, k)}], \ldots, \sum_{1 \leq k \leq h(|V_n|)} [1_{X(n, v, h(k))}]  \right) \vspace{1mm} \\
			& = \displaystyle\sum_{v \in V_n} h(v) \cdot \left[ 1_{X(n, v, 1)} \right] .
	\end{align*}
	
	According to the definition of $\phi$ as stated in Lemma \ref{technical lemma}, we get
	$$ \psi \left( \varphi(h_n({( \gamma_n )}_{*0}([1_{A_n}]))) \right) = \psi \left(\sum_{v \in V_n} h(v) \cdot [1_{X(n, v, 1)}] \right) = \sum_{v \in V_n} \left[ f_v \right]$$
	with $f_v \in C(X, \Z^2)$ satisfying $f_v \left|_{X(n, v, 1)} \right. = h(v)$ and $f_v \left|_{X \backslash X(n, v, 1)} \right. = 0$.
	
	Let
	$$ H = \sum_{v \in V_n} \sum_{1 \leq k \leq h(v) - 1} 1_{X(n, v, k)} \cdot (h(v) - k) . $$
	Then it is clear that $H \left|_{Y_n} \right. = 0$ and
	$$ H_v \circ \af^{-1} = \sum_{v \in V_n} \sum_{2 \leq k \leq h(v)} 1_{X(n, v, k)} \cdot (h(v) - k + 1) . $$
	It is easy to check that
	$$ H - H \circ \af^{-1} = \sum_{v \in V_n} \left[ \left( \sum_{2 \leq k \leq h(v)} 1_{X(n, v, k)} \cdot (-1) \right) + 1_{X(n, v, 1)} \cdot \left( h(v) - 1 \right) \right] . $$
	
	In $C(X, \Z^2)$, it is easy to check that $(\sum_{v \in V_n} f_v) - 1_{X} = H - H \circ \af^{-1}$. In other words, we have
	$$ \psi(\varphi(h_n({( \gamma_n )}_{*0}([1_{A_n}])))) = \sum_{v \in V_n} [f_v] = [1_X] \ , $$
	which implies that $\rho_n$ is unital.	
	
\twomm
	
	To show that $\rho_n$ is order preserving, we just need to show that $\psi, \varphi, h_n$ and $(\gamma_n)_{*0}$ are all order preserving.
	
	It is clear that $h_n$ and ${( \gamma_n )}_{*0}$ are order preserving. According to Lemma \ref{technical lemma}, $\psi$ is also order preserving. We just need to show that $\varphi$ is order preserving.
	
	Note that $\varphi = \bigoplus_{v \in V_n} (s_{n, v} \circ l_{n, v})$. We just need to show that each $s_{n, v} \circ l_{n, v}$ is order preserving. In fact, $l_{n, v}$ is order preserving and $s_{n, v}$ is an order isomorphism. It follows that $s_{n, v} \circ l_{n, v}$ is order preserving. Thus $\varphi$ is order preserving.

\twomm

	Now we will show that $\rho_n$ is order isomorphism. In fact, we just need to show that for every $(a, b) \in \{ (m, n) \colon m > 0, n \in \Z \} \cup \{ 0, 0 \}$ and every clopen subset $U$ of $X$, if we regard $(1_U \cdot a, 1_U \cdot b)$ as a function in $C(X, \Z^2)$ defined by
	$$ (1_U \cdot a, 1_U \cdot b) (x) = \left\{  \begin{array}{ll} (a, b) & \text{if} \ x \in U  \\ (0, 0) & \text{if} \ x \notin U \end{array} \right. $$
	and we define
	$$ \pi \colon C(X, \Z^2) \longrightarrow C(X, \Z^2) / \{ f - f \circ \af^{-1} \colon f \in C(X, \Z^2), f \left|_{Y_n} \right. = 0 \} $$
	to be the natural quotient map, then $\pi ( (1_U \cdot a, 1_U \cdot b) )$ is in the image of $\rho_n ( K_0(A_n)_+ )$.	
	
\twomm	
	
	For a clopen subset $U$ of $X$ and $\eta \in M_k(C(\T^2))$ such that $[\eta] \in K_0(\T^2)$ corresponds to the $(a, b)$ above (see Lemma \ref{K_0 of C(T^2)}), we have
	$$ \rho_n([\diag(\underbrace{1_U, \ldots, 1_U}_{k}) \cdot \eta]) = (\phi \circ \varphi \circ h_n \circ (\gamma_n)_{*0})([\diag(\underbrace{1_U, \ldots, 1_U}_{k}) \cdot \eta]) . $$
	According to Lemma \ref{structure of A_n},
	\begin{align*}
	& (h_n \circ {( \gamma_n )}_{*0})([\diag(\underbrace{1_U, \ldots, 1_U}_{k}) \cdot \eta]) \\
	& = ( h_n \circ {( \gamma_n )}_{*0}) \left( \displaystyle\sum_{v \in V_n, 1 \leq k \leq h(v)} [\diag(\underbrace{1_{X(n, v, k) \cap U}, \ldots, 1_{X(n, v, k) \cap U}}_{k}) \cdot \eta] \right) \\
	& = \displaystyle \left( \sum_{1 \leq k \leq h(v)} \left[ 1_{X(n, v, k) \cap U} \cdot \eta \right] \right)_{v \in V_n} .
	\end{align*}
	Then
	\begin{align*}
	 & (\varphi \circ h_n \circ {( \gamma_n )}_{*0})([\diag(\underbrace{1_U, \ldots, 1_U}_{k}) \cdot \eta]) \\
	 & = \left( \displaystyle \sum_{1 \leq k \leq h(v)} \left( 1_{\af^{-(k - 1)}(X(n, v, k) \cap U)} \cdot a, 1_{\af^{-(k - 1)}(X(n, v, k) \cap U)} \cdot b \right) \right) _{v \in V_n}
	\end{align*}
	which is an element of $\bigoplus_{v \in V_n} C(X(n, v, 1), \Z^2)$.
	
	According to the definition of $\phi$ as in Lemma \ref{technical lemma}, it follows that
	\begin{align*}
	(\psi \circ \varphi \circ h_n \circ (\gamma_n)_{*0})([\diag(\underbrace{1_U, \ldots, 1_U}_{k}) \cdot \eta]) & = (\psi) ((\varphi \circ h_n \circ (\gamma_n)_{*0})([\diag(\underbrace{1_U, \ldots, 1_U}_{k}) \cdot \eta]))\\
	& = \displaystyle\sum_{v \in V_n} 1_{X(n, v, 1)} \cdot f_v
	\end{align*}
	with
	$$ \displaystyle f_v = \left(\sum_{1 \leq k \leq h(v)} 1_{\af^{-(k - 1)}(X(n, v, k) \cap U)} \cdot a, \sum_{1 \leq k \leq h(v)} 1_{\af^{-(k - 1)}(X(n, v, k) \cap U)} \cdot b \right) . $$
	
	Note that for all $k$ with $1 \leq k \leq h(v) - 1$, we have $1_{X(n, v, k)} \left|_{Y_n} \right. = 0$. Also, we can check that
	$$ 1_{X(n, v, k) \cap U} - 1_{X(n, v, k) \cap U} \circ \af^{-1} = 1_{X(n, v, k) \cap U} - 1_{\af(X(n, v, k) \cap U)} . $$
	It follows that
	$$ [1_{X(n, v, k) \cap U}] = [1_{\af(X(n, v, k) \cap U)}] \ \text{in} \ C(X, \Z) / \{ f - f \circ \af^{-1}  \colon f \in C(X, \Z), f \left|_{Y_n} \right. = 0 \} $$
	for $k = 1, \ldots, h(v)$. We then get that in $C(X, \Z) / \{ f - f \circ \af^{-1} \colon f \in C(X, \Z), f \left|_{Y_n} \right. = 0 \}$,
	$$ \left[ \sum_{1 \leq k \leq h(v)} 1_{\af^{-(k - 1)}(X(n, v, k) \cap U)} \right] = \left[ \sum_{1 \leq k \leq h(v)} 1_{X(n, v, k) \cap U} \right] . $$
	It then follows that
	\begin{align*}
	\displaystyle \left[ \sum_{v \in V_n} f_v \right] & = \left[ \displaystyle\sum_{ \substack{v \in V_n \\ 1 \leq k \leq h(v)} } \left( 1_{X(n, v, k) \cap U} \cdot a, 1_{X(n, v, k) \cap U} \cdot b \right) \right] \\
	& = \left( [1_U] \cdot a, [1_U] \cdot b \right)
	\end{align*}
	in $C(X, \Z^2) / \{ f - f \circ \af^{-1} \colon f \in C(X, \Z^2), f \left|_{Y_n} \right. = 0 \}$.
	
	We have proved that $\rho_n ([\diag(\underbrace{1_U, \ldots, 1_U}_{k}) \cdot \eta]) = \pi ( (1_U \cdot a, 1_U \cdot b) )$. It then follows that $\rho_n$ is an order isomorphism, which finishes the proof.
\end{proof}

\end{lem}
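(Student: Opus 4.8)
The plan is to realize $\rho_n$ as a composite of order isomorphisms, each supplied by one of the preceding lemmas, and then to verify separately that the composite is unital and carries the stated generators to the stated classes. First I would use the concrete presentation
$$ \gamma_n \colon A_n \longrightarrow \bigoplus_{v \in V_n} M_{h_n(v)}\bigl( C(X(n, v, 1)) \otimes C(\T^2) \bigr) $$
from Lemma \ref{structure of A_n} and pass to $K_0$. The functor $K_0$ turns the finite direct sum into a direct sum of $K_0$-groups, and full matrix amplification $B \mapsto M_{h_n(v)}(B)$ induces a natural order isomorphism on $K_0$; composing these gives an order isomorphism from $K_0(A_n)$ onto $\bigoplus_{v \in V_n} K_0\bigl( C(X(n, v, 1)) \otimes C(\T^2) \bigr)$. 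Next, applying Lemma \ref{K_0 of C(X T^2)} to each clopen piece $X(n, v, 1)$ identifies the $v$-th summand, as an ordered group, with $C(X(n, v, 1), \Z^2)$ equipped with the positive cone $C(X(n, v, 1), D)$; call the resulting order isomorphism $\varphi$. Finally, Lemma \ref{technical lemma} provides an order isomorphism $\psi$ from $\bigoplus_{v \in V_n} C(X(n, v, 1), \Z^2)$ onto the target quotient. Setting $\rho_n$ equal to the composite of these four maps yields a group isomorphism, since each factor is one. That $\rho_n$ is order preserving in both directions is then immediate: the matrix-amplification and direct-sum-splitting isomorphisms are order isomorphisms by construction, $\varphi$ is an order isomorphism because the fiberwise maps of Lemma \ref{K_0 of C(X T^2)} are, and $\psi$ together with $\psi^{-1}$ preserve order by the final clause of Lemma \ref{technical lemma}.

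The step I expect to be the real obstacle is pinning down the unit, that is, showing $\rho_n([1_{A_n}]) = [(1_X, 0)]$. Tracking $1_{A_n}$ through $\gamma_n$ produces $\bigoplus_{v} \diag(1_{X(n, v, 1)}, \ldots, 1_{X(n, v, h(v))})$, so its image in $\bigoplus_v K_0$ is $\bigl(\sum_{1 \le k \le h(v)} [1_{X(n, v, k)}]\bigr)_{v}$. Using $[1_{X(n, v, k)}] = [1_{X(n, v, 1)}]$ inside each matrix block, $\varphi$ sends this to $\sum_{v} h(v)\,[1_{X(n, v, 1)}]$, which under $\psi$ becomes $\sum_v [f_v]$ with $f_v|_{X(n, v, 1)} = h(v)$ and $f_v$ vanishing elsewhere. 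The class $[(1_X, 0)]$ is not literally equal to $\sum_v f_v$ in $C(X, \Z^2)$, so the heart of the matter is to exhibit an explicit coboundary in the kernel realizing their equality. I would take
$$ H = \sum_{v \in V_n} \sum_{1 \le k \le h(v) - 1} 1_{X(n, v, k)} \cdot (h(v) - k), $$
observe that $H|_{Y_n} = 0$ since each summand is supported off the roof, and verify by a direct telescoping computation that $\bigl(\sum_v f_v\bigr) - 1_X = H - H \circ \af^{-1}$; this is exactly the identity $\rho_n([1_{A_n}]) = [(1_X, 0)]$.

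The remaining assertion, the explicit value on $[\diag(1_U, \ldots, 1_U) \cdot \eta]$, runs through the same pipeline with a general generator in place of the unit. Under the matrix-splitting map this class decomposes over the towers as $\bigl(\sum_{1 \le k \le h(v)} [1_{X(n, v, k) \cap U} \cdot \eta]\bigr)_v$, and $\varphi$ then replaces $[\eta]$ by its pair $(a, b)$ while pulling each block $X(n, v, k) \cap U$ back to the base level through $\af^{-(k - 1)}$. The key observation, again at the level of the quotient, is that $1_{X(n, v, k) \cap U} - 1_{X(n, v, k) \cap U} \circ \af^{-1} = 1_{X(n, v, k) \cap U} - 1_{\af(X(n, v, k) \cap U)}$, so that $[1_{X(n, v, k) \cap U}] = [1_{\af(X(n, v, k) \cap U)}]$ and the $\af^{-(k-1)}$ twists may be undone. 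Collapsing the sum over $k$ and then over $v$, and using $\sum_{v, k} 1_{X(n, v, k)} = 1_X$, gives $\rho_n([\diag(1_U, \ldots, 1_U) \cdot \eta]) = [(1_U \cdot a, 1_U \cdot b)]$. Since the classes of such generators generate $K_0(A_n)_+$, this computation simultaneously identifies the positive cone as stated, completing the verification.
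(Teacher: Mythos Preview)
Your proposal is correct and follows essentially the same route as the paper's proof: the same composite $\rho_n = \psi \circ \varphi \circ (\text{direct-sum/matrix splitting}) \circ (\gamma_n)_{*0}$, the same explicit coboundary $H = \sum_{v}\sum_{1 \le k \le h(v)-1} 1_{X(n,v,k)}\,(h(v)-k)$ to verify unitality, and the same computation on the generators $[\diag(1_U,\ldots,1_U)\cdot\eta]$ via the identity $[1_{X(n,v,k)\cap U}] = [1_{\af(X(n,v,k)\cap U)}]$ in the quotient. Your handling of the order structure is slightly more streamlined than the paper's, since you invoke both directions of the positivity clause in Lemma~\ref{technical lemma} at once rather than treating ``order preserving'' and ``order isomorphism'' as separate verifications, but the substance is identical.
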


\onecm

\begin{cor} \label{representation of K_0(A_n)}
	Let $p$ be a projection in $M_{\infty}(A_n)$. Then there exists $p' \in M_{\infty}(C(X \times \T^2)) \subset M_{\infty} (A_n)$ such that $[p] = [p']$ in $K_0(A_n)$.
	
\begin{proof}	
	According to Lemma \ref{K_0 of A_n}, we have an isomorphism
	$$ \rho_n \colon K_0(A_n) \rightarrow C(X, \Z^2) / \{ f - f \circ \af^{-1} \colon f \in C(X, \Z^2), f \left|_{Y_n} \right. = 0 \} . $$
	Let $\rho_n([p]) = [g]$ for some $g \in C(X, \Z^2)$. Without loss of generality, we can assume that there is a partition of $X$ as $X = \bigsqcup_{i = 1}^N X_{i}$ such that this partition is finer than $\CalP_n$ and $g \left|_{X_i} \right.$ is constant for $i = 1, \ldots, N$.
	
	As $[p]$ is in $(K_0(A_n))_+$ and $\rho_n$ is an order isomorphism, it follows that $[g]$ is in the positive cone (defined in the statement of Lemma \ref{technical lemma}). For as $g$ above with $\rho_n([p]) = [g]$, we can assume that on any given $X_i$, $g \left|_{X_i} \right.$ is either $(0, 0)$ or $(a_i, b_i) \in \Z^2$ with $a_i > 0$.

	According to Lemma \ref{K_0 of C(T^2)}, there exist projections $\eta_i \in M_{d(i)} (C(\T^2))$ such that $[\eta_i]$ in $K_0(C(\T^2))$ can be identified with $(a_i, b_i)$.
	
	Let
	$$ p' = \diag \left( \diag ( \underbrace{1_{X_1}, \ldots, 1_{X_1}}_{d(1)} ) \cdot \eta_1, \ldots, \diag(\underbrace{1_{X_N}, \ldots, 1_{X_N}}_{d(N)}) \cdot \eta_N \right) . $$
	Then it is clear that $p' \in M_{\infty}(C(X \times \T^2))$.
	
	According to Lemma \ref{K_0 of A_n}, $\rho_n([p']) = [g]$, so that $\rho_n([p']) = \rho_n([p])$. As $\rho_n$ is an isomorphism (by Lemma \ref{K_0 of A_n} again), it follows that $[p] = [p']$ in $K_0(A_n)$.
\end{proof}

\end{cor}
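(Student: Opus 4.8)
The plan is to transport the statement across the order isomorphism $\rho_n$ of Lemma \ref{K_0 of A_n}, whose explicit description already exhibits every positive class as coming from a projection inside $C(X \times \T^2)$. First I would set $\rho_n([p]) = [g]$ for some $g \in C(X, \Z^2)$. Since $p$ is a projection, $[p]$ lies in $(K_0(A_n))_+$; as $\rho_n$ is an order isomorphism, $[g]$ lies in the positive cone described in Lemma \ref{K_0 of A_n}, so we may pick the representative $g$ so that, pointwise, $g(x) = (0,0)$ or $g(x) = (a,b)$ with $a > 0$. Because $X$ is totally disconnected and $g$ is continuous, $g$ is locally constant, so I can fix a clopen partition $X = \bigsqcup_{i=1}^N X_i$ (which may be taken to refine $\CalP_n$) on which $g$ is constant, with value $(a_i, b_i)$ on $X_i$.

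Next I would manufacture a projection $p'$ living in $M_{\infty}(C(X \times \T^2))$. For each $i$, Lemma \ref{K_0 of C(T^2)} provides a projection $\eta_i \in M_{d(i)}(C(\T^2))$ whose $K_0$-class is identified with $(a_i, b_i)$ (taking $\eta_i = 0$ when the value is $(0,0)$). Setting
$$ p' = \diag\left( \diag(\underbrace{1_{X_1}, \ldots, 1_{X_1}}_{d(1)}) \cdot \eta_1, \ldots, \diag(\underbrace{1_{X_N}, \ldots, 1_{X_N}}_{d(N)}) \cdot \eta_N \right) $$
gives a projection in $M_{\infty}(C(X \times \T^2))$, which we regard as sitting inside $M_{\infty}(A_n)$ through the inclusion $C(X \times \T^2) \subset A_n$.

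The last step is the verification $[p'] = [p]$, carried out by evaluating $\rho_n([p'])$. Here I would appeal to the explicit formula in Lemma \ref{K_0 of A_n}, that $\rho_n([\diag(1_U, \ldots, 1_U) \cdot \eta]) = [(1_U \cdot a, 1_U \cdot b)]$ for clopen $U$ and $\eta$ of class $(a,b)$. Applying this to each block of $p'$ and summing over $i$ yields $\rho_n([p']) = [\sum_{i=1}^N (1_{X_i} \cdot a_i, 1_{X_i} \cdot b_i)] = [g] = \rho_n([p])$. Since $\rho_n$ is in particular injective, this forces $[p] = [p']$ in $K_0(A_n)$.

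The step I expect to be the crux is the normalization in the first paragraph: I must ensure the positive class is represented by a function that is pointwise of the form $(0,0)$ or $(a,b)$ with $a > 0$, rather than only being positive after passing to the quotient by $\{ f - f \circ \af^{-1} : f|_{Y_n} = 0 \}$. This is precisely what the positive-cone description in Lemma \ref{K_0 of A_n} delivers, so the whole argument rests on using that description to extract a concrete locally constant representative before building $p'$; everything afterward is a mechanical application of the blockwise formula from the same lemma.
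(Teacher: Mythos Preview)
Your proposal is correct and follows essentially the same route as the paper: transport $[p]$ through the order isomorphism $\rho_n$ of Lemma~\ref{K_0 of A_n}, normalize the representative $g$ to take values in $D$ using the positive-cone description there, build $p'$ blockwise from projections $\eta_i$ supplied by Lemma~\ref{K_0 of C(T^2)}, and conclude via the explicit formula $\rho_n([\diag(1_U,\ldots,1_U)\cdot\eta]) = [(1_U\cdot a, 1_U\cdot b)]$ together with injectivity of $\rho_n$. Your identification of the crux---that the positive cone in Lemma~\ref{K_0 of A_n} is described by genuine $C(X,D)$ representatives---is exactly what the paper relies on as well.
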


\begin{lem} \label{commutativity of identification}

	Let $j_n \colon C(X \times \T^2) \longrightarrow A_n$ be the canonical embedding, and let $\iota$ and $\rho_n$ be as in Lemma \ref{K_0 of C(X T^2)} and Lemma \ref{K_0 of A_n}. Let $(j_n)_{*0} \colon K_0(C(X \times \T^2)) \rightarrow K_0(A_n)$ be the induced map on $K_0$ and let
	$$ \pi_n \colon C(X, \Z^2) \rightarrow C(X, \Z^2) / \{ f - f \circ \af^{-1} \colon f \in C(X, \Z^2), f \left|_{Y_n} \right. = 0 \} $$
	be the canonical quotient map. Then the follow diagram commutes:
	$$
	\xymatrix{
		K_0(C(X \times \T^2))\ar@{->}[rrrr]^{\iota} \ar[d]_{(j_n)_{*0}} &&&& C(X, \Z^2) \ar[d]^{\pi_n} \\
		K_0(A_n) \ar[rrrr]^{\rho_n} &&&& C(X, \Z^2) / \{ f - f \circ \af^{-1} \colon f \in C(X, \Z^2), f \left|_{Y_n} \right. = 0 \} \\
	}_.
	$$

\end{lem}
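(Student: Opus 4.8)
The plan is to observe that both composites $\pi_n \circ \iota$ and $\rho_n \circ (j_n)_{*0}$ are group homomorphisms from $K_0(C(X \times \T^2))$ to the quotient group $C(X, \Z^2) / \{ f - f \circ \af^{-1} \colon f \in C(X, \Z^2), f|_{Y_n} = 0 \}$, so it suffices to check that they agree on a set of generators. The natural generators are the classes $[\diag(\underbrace{1_U, \ldots, 1_U}_{k}) \cdot \eta]$, where $U \subseteq X$ is clopen and $\eta \in M_k(C(\T^2))$ is a projection whose class in $K_0(C(\T^2))$ corresponds to $(a, b)$ as in Lemma \ref{K_0 of C(T^2)}. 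These are precisely the projections on which $\iota$ and $\rho_n$ have been explicitly evaluated, in Lemma \ref{K_0 of C(X T^2)} and Lemma \ref{K_0 of A_n} respectively.

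First I would justify that these classes generate $K_0(C(X \times \T^2))$. Since the positive cone generates the group, it is enough to write every positive class in this form, and this is exactly the content of the surjectivity argument in Lemma \ref{K_0 of C(X T^2)}: every projection $p \in M_\infty(C(X \times \T^2))$ satisfies $[p] = \varphi(f)$ for some $f \in C(X, D)$, and $\varphi(f)$ is by definition a finite sum of classes of the form $[\diag(1_{f^{-1}((m,n))}, \ldots, 1_{f^{-1}((m,n))}) \cdot \eta_{m,n}]$ with $f^{-1}((m,n))$ clopen. Hence every element of $K_0(C(X \times \T^2))$ is an integer combination of the stated generators.

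Next I would carry out the verification on a single generator. Because $j_n$ is simply the inclusion $C(X \times \T^2) \hookrightarrow A_n$, the induced map $(j_n)_{*0}$ sends the class of the projection $\diag(1_U, \ldots, 1_U) \cdot \eta$ in $K_0(C(X \times \T^2))$ to the class of the same projection, now viewed inside $M_\infty(A_n)$. Applying the explicit formula from Lemma \ref{K_0 of A_n} then gives
$$ \rho_n\bigl( (j_n)_{*0}([\diag(\underbrace{1_U, \ldots, 1_U}_{k}) \cdot \eta]) \bigr) = \rho_n([\diag(\underbrace{1_U, \ldots, 1_U}_{k}) \cdot \eta]) = [(1_U \cdot a, 1_U \cdot b)] . $$
On the other side, Lemma \ref{K_0 of C(X T^2)} gives $\iota([\diag(1_U, \ldots, 1_U) \cdot \eta]) = (1_U \cdot a, 1_U \cdot b)$ in $C(X, \Z^2)$, so that
$$ \pi_n\bigl( \iota([\diag(\underbrace{1_U, \ldots, 1_U}_{k}) \cdot \eta]) \bigr) = \pi_n\bigl( (1_U \cdot a, 1_U \cdot b) \bigr) = [(1_U \cdot a, 1_U \cdot b)] . $$
The two expressions coincide, so the two homomorphisms agree on every generator, and hence on all of $K_0(C(X \times \T^2))$, which proves that the diagram commutes.

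The computations here are essentially bookkeeping, since the two preceding lemmas have already evaluated $\iota$ and $\rho_n$ on exactly the generators that appear. I expect the only step requiring care to be the reduction itself, namely confirming that the classes $[\diag(1_U, \ldots, 1_U) \cdot \eta]$ genuinely span the whole group $K_0(C(X \times \T^2))$ and not merely its positive cone; this follows once one notes that $K_0$ is generated by its positive classes, so that the span of the positive classes already exhausts $K_0$. The remainder is a direct substitution into the formulas of Lemma \ref{K_0 of C(X T^2)} and Lemma \ref{K_0 of A_n}.
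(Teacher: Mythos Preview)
Your proposal is correct and follows essentially the same approach as the paper's own proof: reduce to checking the identity on the generators $[\diag(1_U,\ldots,1_U)\cdot\eta]$ (justified via the surjectivity argument of Lemma~\ref{K_0 of C(X T^2)} and the fact that the positive cone generates $K_0$), then read off both sides directly from the explicit formulas in Lemma~\ref{K_0 of C(X T^2)} and Lemma~\ref{K_0 of A_n}. The paper phrases the reduction as ``check on the positive cone'' rather than ``check on generators,'' but the content is identical.
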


\begin{proof}
	As $K_0(C(X \times \Z^2))$ is generated by its positive cone $(K_0(C(X \times \Z^2)))_+$, we just need to show that $\pi_n \circ \iota = \rho_n \circ (j_n)_{*0}$ on $(K_0(C(X \times \Z^2)))_+$.

	For every projection $p \in M_{\infty}(C(X \times \T^2))$, according to the proof of surjectivity of $\varphi$ in Lemma \ref{K_0 of C(X T^2)}, there exist a partition $X = \bigsqcup_{i = 1}^M X_i$ and projections $\eta_i \in M_{d_i}(C(\T^2))$ for $i = 1, \ldots, M$ such that
	$$ [p] = \sum_{i = 1}^M [(\underbrace{1_{X_i}, \ldots, 1_{X_i}}_{d_i}) \cdot \eta_i] . $$

	According to Lemma \ref{K_0 of C(T^2)}, $\eta_i$ can be identified with $(a_i, b_i) \in D$. By Lemma \ref{K_0 of C(X T^2)}, we get $\iota([p]) = \sum_{i = 1}^M (1_{X_i} \cdot a_i, 1_{X_i} \cdot b_i)$.

	By Lemma \ref{K_0 of A_n},
	\begin{align*}
		\rho_n((j_n)_{*0}([p])) & = \rho_n((j_n)_{*0}( \sum_{i = 1}^M [(\underbrace{1_{X_i}, \ldots, 1_{X_i}}_{d_i}) \cdot \eta_i] )) \\
			& = \sum_{i = 1}^M [(1_{X_i} \cdot a_i, 1_{X_i} \cdot a_i)] .
	\end{align*}
	It is then clear that $(\pi_n \circ \iota) ([p])= (\rho_n \circ (j_n)_{*0}) ([p])$. Since $p$ is arbitrary, we have finished the proof.
\end{proof}

\begin{cor} \label{handy technical result}

	Let $p, q$ be projections in $M_{\infty}(C(X \times \T^2)) \subset M_{\infty}(A_n)$ such that $\iota([p]) - \iota([q]) = h - h \circ \af^{-1}$ for some $h \in C(X, \Z^2)$ satisfying $h \left|_{Y_n} \right. = 0$, with $\iota$ as in Lemma \ref{K_0 of C(X T^2)}. Then $(j_n)_{*0}([p]) = (j_n)_{*0}([q])$ in $K_0(A_n)$ with $j_n$ as in Lemma \ref{commutativity of identification}.

\end{cor}

\begin{proof}
	This follows directly from Lemma \ref{commutativity of identification}.
\end{proof}

\begin{lem} \label{K_i of A_x}

	For $A_x$ as defined in the beginning of this section,
	$$ K_i(A_x) \cong C(X, \Z^2) / \{  f - f \circ \af^{-1} \colon f \in C(X, \Z^2) \} , $$
	and
	$$ K_0(A_x)_+ \cong C(X, D) / \{  f - f \circ \af^{-1} \colon f \in C(X, \Z^2) \} , $$
	with $D$ defined to be $\{ (a, b) \in \Z^2 \colon a > 0, b \in \Z \} \cup \{ (0, 0) \}$.

\begin{proof}
	From Lemma \ref{K_0 of A_n}, we know that
	$$ K_i(A_n) \cong C(X, \Z^2) / \{ f - f \circ \af^{-1} \colon f \in C(X, \Z^2) \ \text{and} \ f \left|_{Y_n} \right. = 0 \} . $$

	As $A_x = \displaystyle \varinjlim A_n$, we get $K_i(A_x) = \varinjlim K_i(A_n)$. Note that the map
	$$ (j_{n, n + 1})_{*i} \colon K_i(A_n) \rightarrow K_i(A_{n + 1}) $$
	satisfies $(j_{n, n + 1})_{*i} ([f]) = [f]$ for all $f \in C(X, \Z^2)$. We can conclude that
    	$$  K_i(A_x) \cong C(X, \Z^2) / \{  f - f \circ \af^{-1} \colon f \in C(X, \Z^2) \ \text{and} \ f \left|_{Y_n} \right. = 0  \ \text{for some} \ n \in \N \} . $$
	As $\bigcap_{n = 1}^{\infty} Y_n = \{ x \}$, it follows that
	$$ \{ f \in C(X, \Z^2) \colon  f \left|_{Y_n} \right. = 0  \ \text{for some} \ n \in \N  \} = \{ f \in C(X, \Z^2) \colon  f(x) = 0 \} . $$
	Then we have
	$$ K_i(A_x) \cong C(X, \Z^2) / \{  f - f \circ \af^{-1} \colon f \in C(X, \Z^2) \ \text{and} \ f(x) = 0 \} . $$

	For every $g \in C(X, \Z^2)$, define $g_0 = g - g(x)$. It is clear that
	$$ g_0 \in \{ f \in C(X, \Z^2) \ \text{and} \ f(x) = 0 \}. $$
	
	As $g_0 - g_0 \circ \af^{-1} = g - g \circ \af^{-1}$, we have
	$$ K_i(A_x) \cong C(X, \Z^2) / \{  f - f \circ \af^{-1} \colon f \in C(X, \Z^2) \} . $$

	Let $j_{n, \infty} \colon A_n \rightarrow A_x$ be the embedding of $A_n$ into $A_x$. Then
	$$ K_0(A_x)_+ = \bigcup {( j_{n, \infty} )}_{*0} (K_0(A_n)_+) . $$
	According to Lemma \ref{K_0 of A_n},
	$$ K_0(A_n)_+ \cong C(X, D) / \{ f - f \circ \af^{-1} \colon f \in C(X, \Z^2) \ \text{and} \ f \left|_{Y_n} \right. = 0 \} . $$
	Similarly, using the fact that
	$$ \{ f \in C(X, \Z^2) \colon  f \left|_{Y_n} \right. = 0  \ \text{for some} \ n \in \N  \} = \{ f \in C(X, \Z^2) \colon  f(x) = 0 \} , $$ we can conclude that $K_0(A_x)_+ \cong C(X, D) / \{  f - f \circ \af^{-1} \colon f \in C(X, \Z^2) \ \text{and} \  f(x) = 0 \}$.

	As 
	$$ \{  f - f \circ \af^{-1} \colon f \in C(X, \Z^2) \ \text{and} \  f(x) = 0 \} = \{  f - f \circ \af^{-1} \colon f \in C(X, \Z^2) \} , $$
	we get $K_0(A_x)_+ \cong C(X, D) / \{  f - f \circ \af^{-1} \colon f \in C(X, \Z^2) \}$. 
\end{proof}

\end{lem}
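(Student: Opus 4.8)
The plan is to exploit the inductive-limit description $A_x = \varinjlim_n A_n$ furnished by Lemma \ref{AT2 structure of A_x}, together with continuity of $K$-theory, to reduce the computation to a direct limit of quotient groups. Continuity gives $K_i(A_x) \cong \varinjlim_n K_i(A_n)$, and by Lemma \ref{K_0 of A_n} (and the entirely parallel computation for $K_1$, which is available because $K_1(C(\T^2)) \cong \Z^2$ by Lemma \ref{K_0 of C(T^2)}) each $K_i(A_n)$ is identified with $C(X,\Z^2)/\{f - f\circ\af^{-1} : f|_{Y_n} = 0\}$. So the whole problem becomes computing the direct limit of these groups and, for $i=0$, tracking the positive cone.

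First I would pin down the connecting maps. The decisive fact is that under the identifications $\rho_n$ and $\rho_{n+1}$ the induced map $(j_{n,n+1})_{*i}$ is simply $[f] \mapsto [f]$, i.e. it is induced by the identity on $C(X,\Z^2)$. This is where the compatibility encoded in Lemma \ref{commutativity of identification} enters: the embedding $j_n \colon C(X \times \T^2) \to A_n$ factors through $j_{n+1}$, and on the level of the target quotients the only change between level $n$ and level $n+1$ is that one quotients by a larger relation subgroup, since $Y_{n+1} \subseteq Y_n$ forces $\{f : f|_{Y_n} = 0\} \subseteq \{f : f|_{Y_{n+1}} = 0\}$. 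I expect this verification — that the $\rho_n$ intertwine the inclusions into the identity on representatives — to be the main obstacle, as it requires chasing the explicit formula for $\rho_n$ across two consecutive Kakutani--Rokhlin partitions rather than a single one.

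Granting this, the direct limit collapses to $C(X,\Z^2)$ modulo the increasing union $\bigcup_n \{f - f\circ\af^{-1} : f|_{Y_n} = 0\}$. To simplify the union I would use that every $f \in C(X,\Z^2)$ is locally constant, so $f(x) = 0$ forces $f$ to vanish on a clopen neighbourhood of $x$; since $\{Y_n\}$ is a decreasing sequence of clopen sets with $\bigcap_n Y_n = \{x\}$, that neighbourhood contains some $Y_n$. Hence
\[
\{f \in C(X,\Z^2) : f|_{Y_n} = 0 \text{ for some } n\} = \{f \in C(X,\Z^2) : f(x) = 0\},
\]
which yields $K_i(A_x) \cong C(X,\Z^2)/\{f - f\circ\af^{-1} : f(x) = 0\}$.

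Finally I would remove the constraint $f(x) = 0$. For arbitrary $g \in C(X,\Z^2)$ set $g_0 = g - g(x)$, subtracting the constant function with value $g(x)$; then $g_0(x) = 0$ and, since constants are $\af^{-1}$-invariant, $g_0 - g_0\circ\af^{-1} = g - g\circ\af^{-1}$. This shows the two relation subgroups coincide, giving the stated isomorphism $K_i(A_x) \cong C(X,\Z^2)/\{f - f\circ\af^{-1} : f \in C(X,\Z^2)\}$. For the positive cone I would run the identical union and simplification argument on $K_0(A_x)_+ = \bigcup_n (j_{n,\infty})_{*0}(K_0(A_n)_+)$, using the order part of Lemma \ref{K_0 of A_n} to identify $K_0(A_n)_+$ with $C(X,D)/\{f - f\circ\af^{-1} : f|_{Y_n} = 0\}$, thereby obtaining $K_0(A_x)_+ \cong C(X,D)/\{f - f\circ\af^{-1} : f \in C(X,\Z^2)\}$.
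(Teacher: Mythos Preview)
Your proposal is correct and follows essentially the same route as the paper: pass to the direct limit of the $K_i(A_n)$, observe that the connecting maps act as the identity on representatives, collapse the union of relation subgroups using $\bigcap_n Y_n = \{x\}$ together with local constancy, and then drop the constraint $f(x)=0$ via the substitution $g_0 = g - g(x)$; the positive-cone argument is handled identically. The only difference is emphasis: you flag the compatibility of the $\rho_n$ with the connecting maps as the main obstacle, whereas the paper simply asserts $(j_{n,n+1})_{*i}([f]) = [f]$ without further comment.
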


\onecm

\begin{cor} \label{tracial rank of A_x is torsion free}

	For $A_x$ as in Definition \ref{dfn of Ax}, $K_i(A_x)$ is torsion free for $i = 0, 1$.

\end{cor}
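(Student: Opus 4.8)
The plan is to reduce the corollary to a torsion-freeness statement about a concrete abelian group and then exploit the inductive-limit structure of $A_x$. By Lemma \ref{K_i of A_x}, for $i = 0, 1$ we have
$$ K_i(A_x) \cong C(X, \Z^2) / \{ f - f \circ \af^{-1} \colon f \in C(X, \Z^2) \}, $$
so it suffices to show that this quotient group has no nonzero torsion element. Rather than analyze the quotient directly, I would go back to the inductive limit $A_x = \varinjlim_n A_n$ used to prove Lemma \ref{K_i of A_x}, which yields $K_i(A_x) = \varinjlim_n K_i(A_n)$ together with the connecting maps $(j_{n, N})_{*i}$ on $K$-theory induced by the canonical embeddings.

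First I would show that each $K_i(A_n)$ is torsion free. The proof of Lemma \ref{K_i of A_x} (via Lemma \ref{K_0 of A_n}) identifies
$$ K_i(A_n) \cong C(X, \Z^2) / \{ f - f \circ \af^{-1} \colon f \left|_{Y_n}\right. = 0 \}, $$
and Lemma \ref{technical lemma} provides an isomorphism of this quotient with $\bigoplus_{v \in V_n} C(X(n, v, 1), \Z^2)$. Now each $X(n, v, 1)$ is a compact totally disconnected set, so $C(X(n, v, 1), \Z^2)$ is a subgroup of the product group $(\Z^2)^{X(n, v, 1)}$; since a product of copies of the torsion-free group $\Z^2$ is torsion free and any subgroup of a torsion-free group is torsion free, it follows that $\bigoplus_{v \in V_n} C(X(n, v, 1), \Z^2)$, and hence $K_i(A_n)$, is torsion free.

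Finally I would invoke the standard fact that an inductive limit of torsion-free abelian groups is torsion free. Concretely, if $x \in \varinjlim_n K_i(A_n)$ satisfies $m x = 0$ for some positive integer $m$, lift $x$ to some $x_n \in K_i(A_n)$. Then $m x_n$ maps to $0$ in the limit, so $(j_{n, N})_{*i}(m x_n) = m \, (j_{n, N})_{*i}(x_n) = 0$ in $K_i(A_N)$ for some $N \geq n$; torsion-freeness of $K_i(A_N)$ forces $(j_{n, N})_{*i}(x_n) = 0$, and therefore $x = 0$. This gives that $K_i(A_x)$ is torsion free for $i = 0, 1$.

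The argument is essentially bookkeeping once the identifications of the preceding lemmas are assembled, so I do not expect a genuine obstacle. The only point deserving a moment of care is the observation that $C(X(n, v, 1), \Z^2)$, although of infinite rank, is still torsion free precisely because it embeds into a product of copies of $\Z^2$ (equivalently, it is free abelian); everything else is the routine inductive-limit step.
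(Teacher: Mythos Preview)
Your proof is correct but takes a different route from the paper. The paper works directly with the identification $K_i(A_x) \cong C(X,\Z^2)/\{f - f\circ\af^{-1}\}$ from Lemma~\ref{K_i of A_x} and gives a hands-on dynamical argument: if $ng = F - F\circ\af^{-1}$ for some $F \in C(X,\Z^2)$, one normalizes $\widetilde{F} = F - F(x_0)$ and tracks the values of $\widetilde{F}$ along the $\af$-orbit of $x_0$ to see they all lie in $n\Z^2$; density of the orbit and continuity then force $\widetilde{F}/n \in C(X,\Z^2)$, exhibiting $g$ as a coboundary. Your approach instead exploits the inductive-limit structure, reducing to torsion-freeness of each $K_i(A_n) \cong \bigoplus_{v} C(X(n,v,1),\Z^2)$ via Lemma~\ref{technical lemma}, which is immediate. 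Your argument is cleaner bookkeeping that reuses the finite-stage identifications already established; the paper's argument is more self-contained and makes the role of minimality of $\af$ explicit, which is a slightly more portable fact about coinvariants of minimal Cantor systems.
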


\begin{proof}
	According to Lemma \ref{K_i of A_x}, we just need to show that $C(X, \Z^2) / \{ f - f \circ \af^{-1} \colon f \in C(X, \Z^2) \}$ is torsion free. A purely algebraic proof is given here.

	Suppose we have $g \in C(X, \Z^2)$ and $n \in \Z \setminus \{ 0 \}$ such that
	$$ [n g] = 0 \ \text{in} \ C(X, \Z^2) / \{ f - f \circ \af^{-1} \colon f \in C(X, \Z^2) \} . $$
	If we can show that $[g] = 0$, then we are done. In other words, we need to find $f \in C(X, \Z^2)$ such that $g = f - f \circ \af^{-1}$.

	As $[n g] = 0$, there exists $F \in C(X, \Z^2)$ such that $n g = F - F \circ \af^{-1}$. If $F(x) \in n \Z^2$ for all $x$, just divide both sides by $n$. Then we get $g = \left( \frac{F}{n} \right) - \left( \frac{F}{n} \right) \circ \af^{-1}$ with $\frac{F}{n} \in C(X, \Z^2)$.

	Fix $x_0 \in X$, and define $\widetilde{F} = F - F(x_0)$. It is clear that $\widetilde{F}(x_0) = 0$. As $F - F \circ \af^{-1} = n g$, we can easily check that $\widetilde{F} - \widetilde{F} \circ \af^{-1} = ng$. It then follows that
	$$ \widetilde{F}(\af (x_0) ) = \widetilde{F} (x_0) + n g(\af(x_0)) = 0 + n g(\af(x_0)) \in n \Z^2 , $$
	$$ \widetilde{F}(\af^2 (x_0) ) = \widetilde{F} (\af(x_0)) + n g(\af^2(x_0)) \in n \Z^2 , $$
	$$ \cdots $$
	So for every $x \in \text{Orbit}_{\Z} (x_0)$, we get $\widetilde{F}(x) \in n \Z^2$. Note that $\widetilde{F}$ is continuous on $X$ and $\text{Orbit}_{\Z} (x_0)$ is dense in $X$. It follows directly that $\widetilde{F}(x) \in n \Z^2$ for all $x \in X$, thus finishing the proof.
\end{proof}

\begin{cor} \label{tracial rank of A_x}

	For $A_x$ as in Definition \ref{dfn of Ax}, $\mathrm{TR}(A_x) \leq 1$.

\begin{proof}
	From Lemma \ref{AT2 structure of A_x}, we know that $A_x$ is a AH algebra with no dimension growth. By Lemma \ref{simplicity of A_x}, $A_x$ is simple. According to Lemma \ref{K_i of A_x}, $K_i(A_x)$ is torsion free.

\removeme[Find the reference.]{As $A_x$ is a simple AH algebra with no dimension growth, it follows that $\mathrm{TR}(A_x) \leq 1$.}
\end{proof}

\end{cor}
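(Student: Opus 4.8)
The plan is to combine the structural facts about $A_x$ established in the preceding lemmas with a known classification-type theorem for simple AH algebras. Concretely, by Lemma \ref{AT2 structure of A_x} we have $A_x \cong \varinjlim_n \bigoplus_{s = 1}^{k_n} M_{d_{s, n}}(C(\T^2))$, so $A_x$ is a unital AH algebra whose building blocks are all matrix algebras over the single fixed space $\T^2$. Since $\dim \T^2 = 2$ is a constant that does not grow along the inductive system, this presentation exhibits $A_x$ as an AH algebra of uniformly bounded topological dimension, hence in particular with no dimension growth. By Lemma \ref{simplicity of A_x}, $A_x$ is moreover simple.

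Having recorded that $A_x$ is a unital simple AH algebra with no dimension growth, I would then invoke the theorem asserting that every such algebra has tracial rank at most one; this is the deep external input, cited from the classification literature (e.g.\ the results collected in \cite{Lin6}). The torsion-freeness of $K_i(A_x)$ obtained in Corollary \ref{tracial rank of A_x is torsion free} is not actually needed for the bound $\mathrm{TR}(A_x) \leq 1$ itself: that bound holds for all simple AH algebras with no dimension growth irrespective of torsion. I would mention the torsion-free statement only to situate this corollary within the larger program, where torsion-freeness becomes essential both for the sharper conclusion $\mathrm{TR}(A_x) = 0$ under the rigidity hypothesis and for the later application of Elliott-type classification.

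The main obstacle here is not a computation but a matching of hypotheses: one must confirm that the inductive limit furnished by Lemma \ref{AT2 structure of A_x} genuinely satisfies the dimension condition required by the cited tracial-rank theorem. The crucial point to check is precisely the uniform bound on the dimension of the spectra of the building blocks, and this is immediate since every spectrum equals $\T^2$, so the dimension is constantly $2$; it is exactly this bound that rules out the pathologies (infinite-dimensional fibres, or dimension growing with $n$) which could otherwise push the tracial rank above one. Once the hypothesis is verified, the conclusion $\mathrm{TR}(A_x) \leq 1$ is immediate from the cited result, and no further estimates are required.
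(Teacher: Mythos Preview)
Your proposal is correct and follows essentially the same route as the paper: establish via Lemma \ref{AT2 structure of A_x} that $A_x$ is an AH algebra with building blocks $M_{d_{s,n}}(C(\T^2))$ (hence no dimension growth), invoke simplicity from Lemma \ref{simplicity of A_x}, and then appeal to the known result that simple AH algebras with no dimension growth have tracial rank at most one. You are also right that torsion-freeness of $K_i(A_x)$ is not needed for the bound $\mathrm{TR}(A_x)\leq 1$; the paper records it but the concluding sentence uses only simplicity and no dimension growth.
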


\vspace{2cm}

\section{The crossed product \ca \ A} \label{The crossed product C-algebra A}

	This section contains the main theorem (Theorem \ref{tracial rank of A without rigidity}), which states that the tracial rank of the crossed product $C^*(\Z, X \times \T \times \T, \af \times \mbox{R}_{\xi} \times \mbox{R}_{\eta})$ has tracial rank no more than one.
	
\vspace{2mm}

\begin{sloppypar}
	We start by showing that for the natural embedding $j \colon A_x \rightarrow A$, the induced homomorphisms $(j_*)_i \colon K_i(A_x) \rightarrow K_i(A)$ are injective for $i = 0, 1$.
\end{sloppypar}

\begin{lem} \label{injectivity of K_0}

 	Let $A$ be $C^*(\Z, X \times \T \times \T, \af \times \mathrm{R}_{\xi} \times \mathrm{R}_{\eta} )$ and let $A_x$ be as in Definition \ref{dfn of Ax}. Let $j \colon A_x \rightarrow A$ be the canonical embedding. Then $j_{*0}$ is an injective order homomorphism from $K_0(A_x)$ to $K_0(A)$.

\begin{proof}
	It is clear that $j_{*0}$ will induce an order homomorphism from $K_0(A_x)$ to $K_0(A)$ and $j_{*0}$ maps $[1_{A_x}]$ to $[1_A]$.
	

	To show that $j_{*0}$ is injective, we need to show that whenever $p, q \in M_{\infty}(A_x)$ are projections such that $j_{*0}([p]) = j_{*0}([q])$ in $K_0(A)$, we have $[p] = [q]$ in $K_0(A_x)$.
	For projections $p, q \in M_{\infty}(A_x)$, we can find $n \in \N$ and projections $e, f \in M_{\infty}(A_n)$ such that $[e] = [p]$ and $[f] = [q]$ in $K_0(A_x)$.
	According to Corollary \ref{representation of K_0(A_n)}, we can find $e', f' \in M_{\infty}(C(X \times \T^2))$ such that $[e'] = [e]$ and $[f'] = [f]$ in $K_0(A_n)$.
	We need to show that if $j_{*0}([p]) = j_{*0}([q])$ in $K_0(A)$, then $[p] = [q]$ in $K_0(A_x)$.
	In fact, if $j_{*0}([p] - [q]) = 0$, we have $j_{*0}([p]) = j_{*0}([q])$, which implies that $j_{*0}([e']) = j_{*0}([f'])$ in $K_0(A)$.
	
	The Pimsner-Voiculescu six-term exact sequence in our situation reads as follows:

	$$
	\xymatrix{
		K_0(C(X \times \T^2)) \ar@{->}[rr]^{\id_{*0} - \af_{*0}} && K_0(C(X \times \T^2)) \ar@{->}[rr]^{j_{*0}} && K_0(A) \ar@{->}[d] \\
		K_1(A) \ar@{->}[u] && K_1(C(X \times \T^2)) \ar@{->}[ll]_{j_{*1}} && K_1(C(X \times \T^2)) \ar@{->}[ll]_{\id_{*1} - \af_{*1}}
	}
	$$
	
	As $j_{*0}([p'_n]) = j_{*0}([q'_n])$, by the exact sequence above, $[p'_n] - [q'_n]$ is in the image of $(\id_{*0} - \af_{*0})$. That is, there exists $x$ in $K_0(C(X \times \T^2))$ such that $[p'_n] - [q'_n] = x - \af_{*0}(x)$. Apply $\iota$ as defined in Lemma \ref{K_0 of C(X T^2)} on both sides. We get
	$$ \iota([p'_n]) - \iota([q'_n]) = \iota(x) - \iota(\af_{*0}(x)) \ \text{in} \ C(X, \Z^2) . $$
	Note that $\iota(\af_{*0}(x)) = \iota(x) \circ \af$. We get $\iota([p'_n]) - \iota([q'_n]) = (-\iota(x) \circ \af) - (-\iota(x) \circ \af) \circ \af^{-1}$. We can choose $N \in \N$ such that for all $k > N$, $(-\iota(x) \circ \af)$ restricted to $Y_k$ will be a constant function, say $c \in \Z^2$. It is clear that
	$$ \iota([p'_n]) - \iota([q'_n]) = (-\iota(x) \circ \af - c) - (-\iota(x) \circ \af - c) \circ \af^{-1} . $$
	
	Choose $m \in \N$ such that $m > \max(n, N)$. Then $(-\iota(x) \circ \af - c) \left|_{Y_m} \right. = 0$. According to Corollary \ref{handy technical result}, we have $(j_m)_{*0}([p'_n]) = (j_m)_{*0}([q'_n])$ with $j_m$ as in Lemma \ref{commutativity of identification}.
	
	We have shown that $[p'_n] = [q'_n]$ in $K_0(A_m)$. Note that $[p'_n] = [p_n]$ and $[q'_n] = [q_n]$ in $K_0(A_n)$ and $m > n$. It follows that $[p'_n] = [p_n]$ and $[q'_n] = [q_n]$ in $K_0(A_m)$. We then have that $[p_n] = [q_n]$ in $K_0(A_m)$, so that $[p_n] = [q_n]$ in $K_0(A_x)$.
	
	Note that $[p_n] = [p]$ and $[q_n] = [q]$ in $K_0(A_x)$. It then follows that $[p] = [q]$ in $K_0(A_x)$, which finishes the proof.
\end{proof}	

\end{lem}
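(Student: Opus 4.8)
The plan is to reduce an arbitrary pair of $K_0$-classes to projections living in $C(X\times\T^2)$, apply the Pimsner--Voiculescu sequence for the crossed product, and then transport the resulting relation through the concrete identifications of Lemmas \ref{K_0 of C(X T^2)} and \ref{K_0 of A_n}. First I would observe that the order-homomorphism and unitality assertions are immediate: since $j$ is a unital $*$-homomorphism, $j_{*0}$ carries $K_0(A_x)_+$ into $K_0(A)_+$ and $[1_{A_x}]$ to $[1_A]$, so only injectivity requires work.

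To prove injectivity, I would take projections $p,q\in M_\infty(A_x)$ with $j_{*0}[p]=j_{*0}[q]$ and aim to show $[p]=[q]$. Because $A_x=\varinjlim A_n$, each class is already represented at some finite stage $A_n$, and by Corollary \ref{representation of K_0(A_n)} I may assume the representatives $e',f'$ lie in $M_\infty(C(X\times\T^2))$. Feeding $j_{*0}[e']=j_{*0}[f']$ into the six-term exact sequence, whose relevant horizontal arrow is $\id_{*0}-\af_{*0}$ followed by $j_{*0}$, shows that $[e']-[f']$ lies in the image of $\id_{*0}-\af_{*0}$. Applying the isomorphism $\iota$ of Lemma \ref{K_0 of C(X T^2)}, which intertwines $\af_{*0}$ with precomposition by $\af$, converts this into a coboundary relation $\iota[e']-\iota[f']=h-h\circ\af^{-1}$ in $C(X,\Z^2)$ for a suitable $h$ (concretely $h=-\iota(x)\circ\af$ if $[e']-[f']=x-\af_{*0}(x)$).

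The one genuinely delicate point, and the step I expect to be the main obstacle, is that the witness $h$ produced by the exact sequence carries no control near $x$, whereas Corollary \ref{handy technical result} requires a witness that vanishes on some roof set $Y_m$. To bridge this gap I would exploit that $h$ is $\Z^2$-valued and continuous, hence locally constant, so that $h$ is constant equal to $h(x)$ on $Y_k$ for all large $k$ because $\bigcap_k Y_k=\{x\}$. Subtracting that constant value $c$, a globally constant and therefore $\af$-invariant function, leaves $h-h\circ\af^{-1}$ unchanged while producing $h'=h-c$ with $h'|_{Y_m}=0$ for some $m>n$. Corollary \ref{handy technical result} then gives $(j_m)_{*0}[e']=(j_m)_{*0}[f']$ in $K_0(A_m)$; pushing this forward into $A_x$ and recalling $[e']=[p]$ and $[f']=[q]$ yields $[p]=[q]$, completing the argument. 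The crux is thus the reconciliation of three facts: local constancy of integer-valued continuous functions, the nesting $\bigcap_k Y_k=\{x\}$, and the $\af$-invariance of constants, which together let one replace the PV witness by a cohomologous one supported away from $Y_m$.
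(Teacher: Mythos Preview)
Your proposal is correct and follows essentially the same route as the paper: reduce to representatives in $M_\infty(C(X\times\T^2))$ via Corollary \ref{representation of K_0(A_n)}, use the Pimsner--Voiculescu sequence to write the difference as a coboundary, then subtract a constant so the witness vanishes on some $Y_m$ and invoke Corollary \ref{handy technical result}. Your identification of the ``delicate point'' (adjusting the PV witness by its constant value near $x$ using local constancy and $\bigcap_k Y_k=\{x\}$) matches exactly the paper's key maneuver.
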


\begin{lem} \label{injectivity of K_1}

 	Let $A$ be $C^*(\Z, X \times \T \times \T, \af \times \mathrm{R}_{\xi} \times \mathrm{R}_{\eta} )$ and let $A_x$ be as in Definition \ref{dfn of Ax}. Let $j \colon A_x \rightarrow A$ be the canonical embedding. Then $j_{*1}$ is an injective homomorphism from $K_1(A_x)$ to $K_1(A)$.

\begin{proof}
	The proof is similar to the proof of Lemma \ref{injectivity of K_0}.

	For any two unitaries $x, y \in A_x$ such that $j_{*1}([x]) = j_{*1}([y])$ in $K_1(A)$, we need to show that $[x] = [y]$. For $x$, $y$ as above, we can find $n \in \N$ and $x', y' \in M_{\infty}(A_n)$ such that $[x] = [x']$ and $[y] = [y']$ in $K_1(A_x)$.

	From Lemma \ref{structure of A_n}, we get the structure of $A_n$, which then implies the fact that
	$$ K_1(A_n) \cong C(X, \Z^2) / \{ f - f \circ \af^{-1} \colon f \in C(X, \Z^2) \ \text{and} \ f \left|_{Y_n} \right. = 0 \} . $$
	Similar to the analysis of the Pimsner-Voiculescu six-term exact sequence as in the proof of Lemma \ref{injectivity of K_0}, we get $[x'] = [y']$ in $K_1(A_m)$ for $m$ large enough. It then follows that $[x'] = [y']$ in $K_1(A_x)$, which implies that $[x] = [y]$ in $K_1(A_x)$.
\end{proof}

\end{lem}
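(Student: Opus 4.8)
The plan is to mirror the argument for $K_0$ in Lemma \ref{injectivity of K_0}, replacing projections by unitaries and the order-preserving identifications by their $K_1$-counterparts. The starting point is that, by the K\"unneth formula, $K_1(C(\T^2)) \cong \Z^2$ exactly as $K_0(C(\T^2))$ does (Lemma \ref{K_0 of C(T^2)}); consequently every structural computation carried out for $K_0$ of $A_n$ goes through on $K_1$, the only difference being that there is no order to keep track of. In particular I would first record the $K_1$-analogues of Lemma \ref{K_0 of C(X T^2)}, Lemma \ref{K_0 of A_n}, Lemma \ref{commutativity of identification}, Corollary \ref{representation of K_0(A_n)} and Corollary \ref{handy technical result}: an isomorphism $\iota_1 \colon K_1(C(X \times \T^2)) \to C(X, \Z^2)$, an isomorphism $\rho_n \colon K_1(A_n) \to C(X, \Z^2)/\{ f - f\circ\af^{-1} \colon f|_{Y_n} = 0\}$ fitting into the same commuting square with the quotient map $\pi_n$, and the resulting statement that every class in $K_1(A_n)$ is represented by a unitary already lying in $M_\infty(C(X \times \T^2))$.

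With this in hand the argument runs as follows. Given unitaries $v, w$ over $A_x$ with $j_{*1}([v]) = j_{*1}([w])$ in $K_1(A)$, use $A_x = \varinjlim A_n$ to choose $n$ and representatives $v', w'$ coming from $K_1(A_n)$; by the $K_1$-version of Corollary \ref{representation of K_0(A_n)} I may assume $v', w' \in M_\infty(C(X \times \T^2))$. Feeding $[v'] - [w'] \in K_1(C(X \times \T^2))$ into the Pimsner--Voiculescu six-term exact sequence for $A$, whose $K_1$ corner reads $K_1(C(X \times \T^2)) \xrightarrow{\id_{*1} - \af_{*1}} K_1(C(X \times \T^2)) \xrightarrow{j_{*1}} K_1(A)$, the hypothesis $j_{*1}([v'] - [w']) = 0$ forces $[v'] - [w'] = z - \af_{*1}(z)$ for some $z \in K_1(C(X \times \T^2))$.

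I would then transport this identity to $C(X, \Z^2)$ via $\iota_1$, using $\iota_1(\af_{*1}(z)) = \iota_1(z) \circ \af$, to obtain $\iota_1([v']) - \iota_1([w']) = g - g \circ \af^{-1}$ with $g = -\iota_1(z)\circ\af$. Because $g$ is continuous and $\bigcap_k Y_k = \{x\}$, there is an $N$ so that $g$ is constant, say equal to $c$, on $Y_k$ for all $k > N$; subtracting the constant $c$ does not change $g - g\circ\af^{-1}$ and arranges $(g - c)|_{Y_m} = 0$ for $m > N$. Choosing $m > \max(n, N)$ and invoking the $K_1$-version of Corollary \ref{handy technical result} then yields $(j_m)_{*1}([v']) = (j_m)_{*1}([w'])$ in $K_1(A_m)$, hence $[v'] = [w']$ in $K_1(A_x)$ and therefore $[v] = [w]$.

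The main obstacle is the bookkeeping underlying the first paragraph: although the $K_1$-generators of $C(\T^2)$ (the two ``winding'' classes coming from the circle factors) play the role of the trivial-bundle and Bott generators used for $K_0$, one must re-verify that the identification $\rho_n$ built from them still makes the square of Lemma \ref{commutativity of identification} commute and that the surjectivity underlying the representation corollary survives the loss of the order structure. Once these $K_1$-analogues are established, the Pimsner--Voiculescu step and the eventually-constant localization are routine, exactly as for $K_0$.
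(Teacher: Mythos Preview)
Your proposal is correct and follows essentially the same approach as the paper, which simply says ``the proof is similar to the proof of Lemma \ref{injectivity of K_0}'' and sketches the reduction to $A_n$, the identification $K_1(A_n) \cong C(X,\Z^2)/\{f - f\circ\af^{-1} : f|_{Y_n} = 0\}$, and the Pimsner--Voiculescu step. In fact you have spelled out more carefully than the paper does precisely which $K_1$-analogues of the structural lemmas are needed and where the order structure becomes irrelevant.
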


	The following result is a known fact, and it is used later to show approximate unitary equivalence.

\begin{prp} \label{a handy known fact}

	Let $A$ be an infinite dimensional simple unital AF algebra and let $CU(A)$ be as in Section \ref{Sec Introduction and Notation}. Then $U(A) = CU(A)$.

\begin{proof}
	For every unitary $u \in A$ and every $\ep > 0$, we will show that $\dist(u, CU(A)) < \ep$.

	As $A$ is unital and infinite dimensional, we can assume that $A \cong \displaystyle\varinjlim A_n$ with each $A_n$ being a finite dimensional \ca \ and  each map $j_{n, n + 1} \colon A_n \hookrightarrow A_{n + 1}$ being unital.  Write
	$$ A_n \cong \bigoplus_{k = 1}^{s_n} M_{d_{n; k}}(\C) $$
	with $d_{n; 1} \leq d_{n; 2} \leq \cdots \leq d_{n; s_n}$.

	Let $d_n' = \min \{d_{n; s_1}, \ldots, d_{n; s_n}\}$. As $A$ is simple, we have $\lim_{n \rightarrow \infty} d_n' = \infty$.

	For $u$ and $\ep$ as given above, we can choose $n$ large enough such that $d'_{n} > \frac{2 \pi}{\ep}$ and there exists $v \in U(A_n)$ satisfying $\norm{u - v} < \ep / 2$. Let $\pi_{n; k}$ be the canonical projection from $A_n$ to $M_{d_{n; k}}(\C)$. {It is known that for any $w \in U(A)$, we have $w \in CU(A_n)$ if and only if $\det(\pi_{n; k}(w)) = 1$ for $k = 1, \ldots, s_n$}. Without loss of generality, we can assume that
	$$ \pi_{n; k}(u_n) = \diag(\lambda_{k, 1}, \ldots, \lambda_{k, d_{n; k}}) , \ \text{with} \ | \lambda_{k, d_{n; i}} | = 1 . $$
	Choose $L_k$ such that $- \pi \leq L_k < \pi$ and $\det(\pi_{n; k}(u_n)) = e^{i L_k}$. For $k = 1, \ldots, s_n$, define
	$$ v_{k}' = \diag (\lambda_{k, 1} \cdot e^{- i L / d_{n; k}}, \ldots, \lambda_{k, d_{n; k}} \cdot e^{- i L / d_{n; k}}) . $$
	
	Let $v'= \diag(v'_{1}, \ldots, v'_{s_n})$. It is then clear that $\norm{u_n - u_n'} \leq \pi / d'_{n} $. It is easy to check that $\det(\pi_{n; s_k}(v')) = 1$ for all $k = 1, \ldots, s_n$, which then implies that $v' \in CU(A_n) \subset CU(A)$.

	Note that $d'_{n} > \frac{2 \pi}{\ep}$. We have
	\begin{align*}
	\dist(u, CU(A)) & \leq \norm{u - v'} \\
		& \leq \norm{u - v} + \norm{v - v'} \\
		& \leq \ep /2 + \ep /2 \\
		& = \ep .
	\end{align*}
	As $\ep$ can be chosen to be arbitrarily small, it follows that $u \in CU(A)$.
\end{proof}

\end{prp}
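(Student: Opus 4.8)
The plan is to exploit the AF structure of $A$ together with the standard description of the closed commutator subgroup of the unitary group of a finite dimensional \ca. Write $A \cong \varinjlim (A_n, j_{n, n+1})$ with each $A_n$ finite dimensional and each connecting map unital, and decompose $A_n \cong \bigoplus_{k = 1}^{s_n} M_{d_{n; k}}(\C)$. The first key input is that for such a finite dimensional algebra a unitary $w \in U(A_n)$ lies in $CU(A_n)$ if and only if $\det(\pi_{n; k}(w)) = 1$ for every block $k$, where $\pi_{n; k}$ denotes the projection onto the $k$-th summand. This rests on the fact that $U(M_d(\C)) / CU(M_d(\C)) \cong \T$ via the determinant, together with the observation that $CU(A_n) \subseteq CU(A)$ under the unital inclusion.

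The second key input is simplicity. Because $A$ is simple and infinite dimensional, the connecting maps are sufficiently spread out that the minimal block size $d_n' := \min_k d_{n; k}$ tends to infinity as $n \to \infty$. Given a unitary $u \in U(A)$ and $\ep > 0$, I would first choose $n$ so large that both $d_n' > 2\pi / \ep$ and there exists $v \in U(A_n)$ with $\norm{u - v} < \ep / 2$; the latter is possible since $\bigcup_n U(A_n)$ is dense in $U(A)$ for a unital AF algebra.

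Next I would correct the determinant of $v$ block by block. On each summand, diagonalize $\pi_{n; k}(v)$ and write $\det(\pi_{n; k}(v)) = e^{i L_k}$ with $-\pi \le L_k < \pi$; multiplying that block by the scalar $e^{-i L_k / d_{n; k}}$ produces a unitary $v'$ with $\det(\pi_{n; k}(v')) = 1$ for all $k$, so that $v' \in CU(A_n) \subseteq CU(A)$. Since $|1 - e^{-i L_k / d_{n; k}}| \le |L_k| / d_{n; k} \le \pi / d_n'$, the correction costs at most $\norm{v - v'} \le \pi / d_n' < \ep / 2$. Combining the two estimates gives $\dist(u, CU(A)) \le \norm{u - v} + \norm{v - v'} < \ep$. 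As $\ep > 0$ is arbitrary and $CU(A)$ is closed, it follows that $u \in CU(A)$, and hence $U(A) = CU(A)$.

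The main obstacle is the quantitative control of the phase correction, which is exactly where simplicity enters. The whole argument hinges on the growth $d_n' \to \infty$: only because a modulus-one scalar can be distributed over arbitrarily many diagonal entries can its per-entry effect be made negligible while still fixing the determinant of the block. Establishing this growth (from simplicity of the limit) and the determinant characterization of $CU(A_n)$ are the substantive points; once they are in hand the remaining estimates are routine.
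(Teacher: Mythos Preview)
Your proposal is correct and follows essentially the same approach as the paper's proof: approximate $u$ by a unitary $v$ in a finite-dimensional stage $A_n$, use simplicity to ensure the minimal block size $d_n'$ is large, and then correct the determinant of each block by a scalar phase of order at most $\pi/d_n'$ to land in $CU(A_n) \subset CU(A)$. Your write-up is in fact slightly cleaner than the paper's (which has a few notational slips), but the argument is the same.
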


	We will need the fact that a cut-down of the crossed product \ca \ by a projection in $C(X)$ is similar to the original crossed product \ca, and can be regarded as a crossed product \ca \ of the induced action. Some definitions and facts will be given here.

	Let $(X \times \T \times \T, \af \times \mathrm{R}_{\xi} \times \mathrm{R}_{\eta})$ be a minimal topological dynamical system as defined in Section \ref{Sec Introduction and Notation}. Let $D$ be a clopen subset of $X$, and let $x \in D$. For simplicity, we use $\varphi$ to denote $\af \times \mathrm{R}_{\xi} \times \mathrm{R}_{\eta}$.

	Define $\widetilde{\varphi} \colon D \times \T \times \T \rightarrow D \times \T \times \T$ by $\widetilde{\varphi}((y, t_1, t_2)) = \varphi^{f(x)} ((y, t_1, t_2))$, where $f(x)$ is the first return time function defined by
	$$ f(x) = \min \{ n \in \N \colon n > 0, \varphi^n(x) \in U \} . $$
	
	As $\varphi$ is minimal on $X \times \T \times \T$, for every $x \in X$, the orbit of $x$ under $\varphi$ is dense in $X$. It then follows that the intersection of this orbit with $D$ is dense in $D$, which implies that $\widetilde{\varphi}$ is also minimal on $D \times \T \times \T$. As the composition of rotations on the circle is still a rotation on the circle, we can find maps $\widetilde{\xi} , \widetilde{\eta} \colon D \rightarrow \T$ such that $\widetilde{\varphi} = \widetilde{\af} \times \mathrm{R}_{\widetilde{\xi}} \times \mathrm{R}_{\widetilde{\eta}}$ with $\widetilde{\af}(x) = \af^{f(x)}(x)$ for $f$ as defined above.
	
	It follows that $\widetilde{\xi}$ and $\widetilde{\eta}$ are both continuous functions. In fact, as $D$ is clopen, we have that $f$ is continuous, which then implies that $\widetilde{\xi}$ and $\widetilde{\eta}$ are continuous.

	As $(D \times \T \times \T, \widetilde{\varphi})$ is a minimal dynamical system, the corresponding crossed product \ca \ $C^*(\Z, D \times \T \times \T, \widetilde{\varphi})$ is simple. Use $\widetilde{u}$ to denote the implementing unitary in $C^*(\Z, D \times \T \times \T, \widetilde{\varphi})$.

	Define $\widetilde{A_x}$ to be the subalgebra of $C^*(\Z, D \times \T \times \T, \widetilde{\varphi})$ generated by $C(D \times \T \times \T)$ and $\widetilde{u} \cdot C_0( ( D \backslash \{ x \} ) \times \T \times \T)$.

	The lemma below shows that the cut down of the original crossed product \ca \ is isomorphic to the crossed product \ca \ of the induced homeomorphism.

\begin{lem} \label{identification of cut-down}

	Let $\varphi$ and $\widetilde{\varphi}$ be defined as above. There is a \ca \ isomorphism from $C^*(\Z, D \times \T \times \T, \widetilde{\varphi})$ to $1_{D \times \T \times \T} \cdot A \cdot 1_{D \times \T \times \T}$.

\begin{proof}
	Let $f \colon D \rightarrow \N$ be the first return time function. As $D$ is clopen, $f$ is continuous. As $X$ is compact and $D$ is closed in $X$, $D$ is also compact. Continuity of $f$ then implies that $f(D)$ is a compact set, that is, a finite subset of $\N$. Write $f(D) = \{ k_1, \ldots, k_N \}$ with $N, k_1, \ldots, k_N \in \N$ and set $D_i = f^{-1}(k_i)$.

	In $1_{D \times \T \times \T} \cdot A \cdot 1_{D \times \T \times \T}$,
	let $w = \sum_{i = 1}^N 1_{D_i \times \T \times \T} \cdot u^{k_i}$. Then we have
	\begin{align*}
	w w^*   & = \left( \displaystyle\sum_{i = 1}^N 1_{D_i \times \T \times \T} \cdot u^{k_i} \right) \cdot \left( \sum_{i = 1}^N 1_{D_i \times \T \times \T} \cdot u^{k_i} \right)^* \vspace{2mm} \\
                & = \displaystyle \left( \sum_{i = 1}^N 1_{D_i \times \T \times \T} \cdot u^{k_i} \right) \cdot \left( \sum_{j = 1}^N  u^{- k_j} 1_{D_j \times \T \times \T} \right) \vspace{2mm} \\
                & = \displaystyle \sum_{i, j = 1}^N 1_{D_i \times \T \times \T} \cdot u^{k_i} \cdot u^{- k_j}1_{D_j \times \T \times \T} \vspace{2mm} \\
                & = \displaystyle \sum_{i, j = 1}^N 1_{D_i \times \T \times \T} \cdot u^{k_i - k_j} \cdot 1_{D_j \times \T \times \T} \vspace{2mm} \\
                & = \displaystyle \sum_{i, j = 1}^N 1_{D_i \times \T \times \T} \cdot (1_{D_j \times \T \times \T} \circ (\af \times \mathrm{R}_{\xi} \times \mathrm{R}_{\eta})^{k_i - k_j}) \cdot u^{k_i - k_j}.
	\end{align*}
	We need the following claim to get that $w w^* = 1_D$.

\claim For $D_i, k_i$ as above,
	$$ 1_{D_i \times \T \times \T} \cdot ( 1_{D_j \times \T \times \T} \circ (\af \times \mathrm{R}_{\xi} \times \mathrm{R}_{\eta})^{k_i - k_j} ) = \left\{ \begin{array}{lr} 1_{D_i \times \T \times \T} & i = j \\ 0 & i \neq j \end{array} \right. . $$

Proof of claim:

	If $k_j > k_i$, then $\af^{k_j - k_i}(D_j) \subset X \setminus D$. Thus $D_i \cap \af^{k_j - k_i}(D_j) = \varnothing$.

	If $k_j < k_i$, we claim that $D_i \cap \af^{k_j - k_i}(D_j) = \varnothing$. If not, choose $s \in D_i \cap \af^{k_j - k_i}(D_j)$. We can assume $s = \af^{k_j - k_i}(y)$ for some $y \in D_j$. It is then clear that $\af^{k_i - k_j}(s) = y \in D_j \subset D$, contradicting the fact that the first return time of $s$ (in $D_i$) is $k_i$.

	If $k_j = k_i$, it is clear that $1_{D_i} \cdot (1_{D_j} \circ \af^{k_i - k_j}) = 1_{D_i}$.

	This proves the claim.

	Using the claim, we get
	\begin{align*}
	    w w^*   & = \displaystyle\sum_{i, j = 1}^N 1_{D_i \times \T \times \T} \cdot \left( 1_{D_j \times \T \times \T} \circ (\af \times \mathrm{R}_{\xi} \times \mathrm{R}_{\eta})^{k_i - k_j} \right) \cdot u^{k_i - k_j} \vspace{2mm} \\
	                & = \displaystyle\sum_{i = 1}^N 1_{D_i \times \T \times \T} \\
	                & = 1_{D \times \T \times \T} .
	\end{align*}
	Now we calculate $w^* w$. It is clear that
	\begin{align*}
	    w^* w   & = \displaystyle \left( \sum_{i = 1}^N 1_{D_i \times \T \times \T} \cdot u^{k_i} \right)^* \cdot \left( \sum_{i = 1}^N 1_{D_i \times \T \times \T} \cdot u^{k_i} \right) \vspace{2mm} \\
	                & = \displaystyle \left( \sum_{j = 1}^N  u^{- k_j} \cdot 1_{D_j \times \T \times \T} \right) \cdot \left( \sum_{i = 1}^N 1_{D_i \times \T \times \T} \cdot u^{k_i} \right) \vspace{2mm} \\
	                & = \displaystyle \sum_{i, j = 1}^N u^{- k_j} \cdot 1_{D_j \times \T \times \T} \cdot 1_{D_i \times \T \times \T} \cdot u^{k_i} \vspace{2mm} \\
	                & = \displaystyle \sum_{i = 1}^N u^{-k_i} \cdot 1_{D_i \times \T \times \T} \cdot u^{k_i} \vspace{2mm} \\
	                & = \displaystyle \sum_{i = 1}^N 1_{D_i \times \T \times \T} \circ (\af \times \mathrm{R}_{\xi} \times \mathrm{R}_{\eta})^{-k_i} \vspace{2mm} \\
	                & = \displaystyle \sum_{i = 1}^N 1_{(\af \times \mathrm{R}_{\xi} \times \mathrm{R}_{\eta})^{k_i} (D_i \times \T \times \T)} \vspace{2mm} \\
	                & = \displaystyle \sum_{i = 1}^N 1_{\widetilde{\ph}(D_i \times \T \times \T)} \vspace{2mm} \\
	                & = 1_{D \times \T \times \T} .
	\end{align*}
	So far, we have shown that $w$ is a unitary in $1_{D \times \T \times \T} \cdot A \cdot 1_{D \times \T \times \T}$.

	Define a map
	$$ \gamma \colon C^*(\Z, D \times \T \times \T, \widetilde{\varphi}) \longrightarrow 1_{D \times \T \times \T} \cdot A \cdot 1_{D \times \T \times \T} $$
	by
	$$ \gamma(f) = f \ \text{for all} \ f \in C(D \times \T \times \T) \ \text{and} \ \gamma(\widetilde{u}) = w . $$
	
	\removeme[more details shall be filled in]{We will check that $\gamma$ is well-defined and gives the desired isomorphism between $C^*(\Z, D \times \T \times \T, \widetilde{\varphi})$ and $1_{D \times \T \times \T} \cdot A \cdot 1_{D \times \T \times \T}$.} In fact, for all $f \in C( D \times \T \times \T )$, we have
	\begin{align*}
	\gamma( \widetilde{u}^* f \widetilde{u} ) & = \gamma ( f \circ \widetilde{\ph}^{-1} ) \\
		& = f \circ \widetilde{\ph}^{-1} .
	\end{align*}
	We also have
	\begin{align*}
	\gamma( \widetilde{u}^* f \widetilde{u} ) & = \gamma(\widetilde{u}^*) \cdot \gamma(f) \cdot \gamma(\widetilde{u}) \\
		& = w^* \cdot f \cdot w \\
		& = \displaystyle \left( \sum_{j = 1}^N 1_{D_j} \cdot u^{k_j} \right)^* \cdot \left( f \cdot \sum_{i = 1}^N 1_{D_i} \right) \cdot \left( \sum_{l = 1}^N 1_{D_l} \cdot u^{k_l} \right) \vspace{1mm} \\
		& = \displaystyle \sum_{i, j, k = 1}^N u^{- k_j} \cdot 1_{D_j} \cdot f \cdot 1_{D_i} \cdot 1_{D_l} \cdot u^{k_l} \vspace{1mm} \\
		& = \displaystyle \sum_{i = 1}^N u^{- k_i} \cdot (f \cdot 1_{D_i}) \cdot u^{k_i} \\
		& = f \circ \widetilde{\ph}^{-1} ,
	\end{align*}
	which then implies that $\gamma$ is really a homomorphism.

	To show that $\gamma$ is surjective, we will show that for every $g \in C(X \times \T \times \T)$ and $n \in \N$, $1_{D \times \T \times \T} \cdot (g u^n) \cdot 1_{D \times \T \times \T}$ is in the image of $\gamma$. Note that
	\begin{align*}
	1_{D \times \T \times \T} \cdot (g u^n) \cdot 1_{D \times \T \times \T} & = ( 1_{D \times \T \times \T} \cdot g) \cdot (u^n \cdot 1_{D \times \T \times \T} ) \\
		& = (1_{D \times \T \times \T} \cdot g \cdot 1_{\af^{-n}(D) \times \T \times \T}) \cdot u^n .
	\end{align*}
	Without loss of generality, we assume that $$ D \cap \af^{-n} ( D ) \neq \varnothing . $$
	Note that there is $s$ with $1 \leq s \leq N$ such that $D \cap \af^{-n} ( D ) = D_s$, $n = k_s$ and $D_s$ is exactly $f^{-1} ( n )$. It follows that
	\begin{align*}
	1_{D \times \T \times \T} \cdot (g u^n) \cdot 1_{D \times \T \times \T} & = (g \cdot 1_{D_n \times \T \times \T} ) \cdot u^n \\
		& = (g \cdot 1_{D_n \times \T \times \T} ) \cdot ( 1_{D_n \times \T \times \T} \cdot u^n) .
	\end{align*}
	It is clear that we can identify $g \cdot 1_{D_n \times \T \times \T}$ with a function in $C(D \times \T \times \T)$. Note that $w = \sum_{i = 1}^N 1_{D_i \times \T \times \T} \cdot u^{k_i}$. We have
	\begin{align*}
	\gamma \left( (g \cdot 1_{D_s \times \T \times \T}) \cdot (\widetilde{u}) \right) & = \gamma \left( (g \cdot 1_{D_n \times \T \times \T}) \right) \cdot \gamma( \widetilde{u} ) \\
		& = (g \cdot 1_{D_s \times \T \times \T}) \cdot \left( \sum_{i = 1}^N 1_{D_i \times \T \times \T} \cdot u^{k_i} \right) \\
		& = g \cdot 1_{D_s \times \T \times \T} \cdot u^{k_s} \\
		& = g \cdot 1_{D_s \times \T \times \T} \cdot u^{n} \\
		& = 1_{D \times \T \times \T} \cdot (g u^n) \cdot 1_{D \times \T \times \T} .
	\end{align*}
	Then we have proved that $\gamma$ is surjective. As $C^*(\Z, D \times \T \times \T, \widetilde{\varphi})$ is a simple \ca, it follows that $\gamma$ is a \ca \ isomorphism. \qedhere
	
		
	
	
\end{proof}

\end{lem}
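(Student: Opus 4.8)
The plan is to realize the corner $1_{D \times \T \times \T} \cdot A \cdot 1_{D \times \T \times \T}$ as the crossed product of the first-return system by producing a single unitary $w$ in that corner to serve as the image of the implementing unitary $\widetilde{u}$, extending the assignment $\widetilde{u} \mapsto w$ and $f \mapsto f$ (for $f \in C(D \times \T \times \T)$) to a $*$-homomorphism via the universal property of the crossed product, and then upgrading it to an isomorphism using surjectivity together with the simplicity of $C^*(\Z, D \times \T \times \T, \widetilde{\ph})$.

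First I would analyze the first return time function $f \colon D \to \N$. Because $D$ is clopen and $\ph$ is a homeomorphism, $f$ is continuous, so its range is a finite set $\{k_1, \ldots, k_N\}$ and the level sets $D_i = f^{-1}(k_i)$ give a clopen partition $D = \bigsqcup_{i=1}^N D_i$. Using this I would set $w = \sum_{i=1}^N 1_{D_i \times \T \times \T} \cdot u^{k_i}$, an element of the corner, and prove it is a unitary there. Expanding $w w^*$ and $w^* w$ and pushing the $u$'s through via the covariance relation $u f u^* = f \circ \ph$ reduces everything to the products $1_{D_i \times \T \times \T} \cdot \bigl(1_{D_j \times \T \times \T} \circ \ph^{k_i - k_j}\bigr)$, i.e.\ to whether $D_i$ meets $\ph^{k_j - k_i}(D_j)$.

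I expect this orthogonality statement to be the crux of the argument, and it is exactly where the minimality of the return time is used: if $k_j > k_i$ then $\ph^{k_j - k_i}(D_j)$ lies outside $D$ entirely, while if $k_j < k_i$ a point of $D_i \cap \ph^{k_j - k_i}(D_j)$ would return to $D$ at a time strictly smaller than $k_i$, contradicting that its first return time is $k_i$; only the diagonal $i = j$ survives, giving $w w^* = w^* w = 1_{D \times \T \times \T}$. With $w$ in hand, I would define $\gamma$ on generators and verify the single covariance relation $\gamma(\widetilde{u}^* f \widetilde{u}) = w^* f w = f \circ \widetilde{\ph}^{-1}$ for $f \in C(D \times \T \times \T)$ by the same partition computation, so that the universal property promotes $\gamma$ to a $*$-homomorphism on all of $C^*(\Z, D \times \T \times \T, \widetilde{\ph})$.

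It then remains to see that $\gamma$ is onto and injective. For surjectivity I would show that the dense spanning elements $1_{D \times \T \times \T} \cdot (g u^n) \cdot 1_{D \times \T \times \T} = \bigl(g \cdot 1_{(D \cap \ph^{-n}(D)) \times \T \times \T}\bigr) u^n$ lie in the image: partitioning $D \cap \ph^{-n}(D)$ into the clopen pieces on which the number $m$ of intermediate returns to $D$ is constant, each piece times $u^n$ is the image under $\gamma$ of an element of the form $g' \widetilde{u}^{\,m}$ (whose image is $g' w^m$, the $m$-fold products of $w$ precisely collecting the higher return times). Finally, since $(D \times \T \times \T, \widetilde{\ph})$ is minimal, its crossed product $C^*(\Z, D \times \T \times \T, \widetilde{\ph})$ is simple, so the homomorphism $\gamma$, being nonzero, is automatically injective; combined with surjectivity this yields the desired \ca\ isomorphism.
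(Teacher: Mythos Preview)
Your proposal is correct and follows essentially the same route as the paper: the same unitary $w = \sum_i 1_{D_i \times \T \times \T}\,u^{k_i}$, the same orthogonality claim from first-return-time considerations, the same covariance check to define $\gamma$, and the same simplicity argument for injectivity. Your surjectivity argument is in fact slightly more complete than the paper's, since you explicitly handle arbitrary $n$ by partitioning $D \cap \varphi^{-n}(D)$ according to the number $m$ of intermediate returns and matching each piece with a power $\widetilde{u}^{\,m}$, whereas the paper only treats the case where $n$ itself is a first return time.
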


\vspace{2mm}

	The idea of topological full group of the Cantor set is needed in the next lemma, and a definition is given below.

\vspace{2mm}

\begin{dfn}

	Let $X$ be the Cantor set and let $\af$ be a minimal homeomorphism of $X$. We say that $\bt \in \text{Homeo}(X)$ is in the full group of $\af$ if $\bt$ preserves the orbit of $\af$. That is, for any $x \in X$, $\bt( \{ \af^n(x) \} _ {n \in \Z} ) = \{ \af^n(x) \} _ {n \in \Z}$. In this case, there exists a unique function $n \colon X \rightarrow \Z$ such that $\bt(x) = \af^{n(x)}(x)$ for all $x \in X$.
	
	We say that $\bt \in \text{Homeo}(X)$ is in the topological full group of $\af$ if the function $n$ above is continuous.

	We use $[ \af ]$ to denote the full group of $\af$, and use $[[ \af ]]$ to denote the topological full group of $\af$.

\end{dfn}

\begin{lem} \label{extension of element in topological full group}

	Let $X$ be the Cantor set and let $\af$ be a minimal homeomorphism of $X$. Let $Y$ and $U$ be two clopen subsets of $X$ such that $U \subset Y$. If there exists $\beta \in [[\af]]$ such that $\bt(U) \subset Y$ and $U \cap \bt(U) = \varnothing$, then there exists $\gamma \in [[\af]]$ such that $\gamma(Y) = Y$, $\gamma \left|_{U} \right. = \bt \left|_{U} \right.$ and $\gamma \left|_{X \backslash Y} \right. = \id \left|_{X \backslash Y} \right.$.

\begin{proof}
	As $\bt \in [[\af]]$, there exists a continuous function $n_1 \colon X \rightarrow \Z$ such that $\bt(x) = \af^{n_1(x)}(x)$ for all $x \in X$. Let $U_j = U \cap n_1^{-1}(j)$ for $j \in \Z$. As the sets $n_1^{-1}(j)$ are  mutually disjoint for $j \in \Z$, so are the sets $U_j$ . Now we have $\bt(U) = \bigsqcup_{j = - \infty}^{\infty} \af^{j} (D_j)$.

	Define $\gamma \in \text{Homeo}(X)$ by $\gamma(x) = \af^{n_2(x)}(x)$, with
	$$
		n_2(x) = \left \{ \begin{array}{ll} n_1(x) &  x \in U \\ - j & x \in \af^{j}(U_j) \\ 0 & x \notin U \ \text{and} \ x \notin \bt(U) \end{array} \right. _.
	$$
	As $U \cap \bt(U) = \varnothing$, we get $U \cap \af^j(U_j) = \varnothing$ for all $j \in \Z$. Thus $n_2$ is a well-defined function. Then we can check that $\gamma \left|_{U} \right. = \bt \left|_{U} \right.$ as $n_1 \left|_{U} \right. = n_2 \left|_{U} \right.$. It is also obvious that $\gamma(\bt(U)) = U$ and $\gamma \left|_{Y \backslash (U \sqcup \bt(U))} \right. = \id_{Y \backslash (U \sqcup \bt(U))}$. It follows that $\gamma(Y) = Y$.  As $n_2(x) = 0$ when $x \notin Y$, we get $\gamma \left|_{X \backslash Y} \right. = \id \left|_{X \backslash Y} \right.$.
\end{proof}

\end{lem}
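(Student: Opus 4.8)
The plan is to build $\gm$ as the homeomorphism that interchanges $U$ and $\bt(U)$ --- acting as $\bt$ in one direction and $\bt^{-1}$ in the other --- while fixing everything else. Since $U$ and $\bt(U)$ are disjoint clopen subsets of $Y$, such a permutation stays inside $Y$ and is the identity on $X \setminus Y$, so the last two required properties come essentially for free, and $\gm|_U = \bt|_U$ holds by construction.

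First I would record the cocycle of $\bt$. Because $\bt \in [[\af]]$, there is a continuous $n_1 \colon X \to \Z$ with $\bt(x) = \af^{n_1(x)}(x)$. As $X$ is compact and $n_1$ is continuous, $n_1(X)$ is finite, so $U$ splits into finitely many clopen pieces $U_j = U \cap n_1^{-1}(j)$, and $\bt(U) = \bigsqcup_j \af^j(U_j)$. The sets $\af^j(U_j) = \bt(U_j)$ are pairwise disjoint and clopen, since $\bt$ is a homeomorphism and the $U_j$ are disjoint and clopen.

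Next I would define $\gm(x) = \af^{n_2(x)}(x)$ through the piecewise cocycle
$$
n_2(x) = \begin{cases} n_1(x) & x \in U, \\ -j & x \in \af^j(U_j), \\ 0 & x \notin U \cup \bt(U). \end{cases}
$$
The hypothesis $U \cap \bt(U) = \varnothing$ guarantees that these three cases are mutually exclusive and exhaustive, so $n_2$ is well defined; it is locally constant, because $X$ is partitioned into the finitely many clopen sets $U_j$, $\af^j(U_j)$, and the clopen complement $X \setminus (U \cup \bt(U))$, on each of which $n_2$ is constant. Hence $n_2$ is continuous and $\gm \in [[\af]]$. On $U$ we have $\gm = \bt$, while on $\af^j(U_j)$ the map sends $\af^j(y) \mapsto y$ for $y \in U_j$, that is $\gm = \bt^{-1}$ there; thus $\gm$ bijectively swaps $U$ with $\bt(U)$ and is the identity on their complement, so $\gm$ is genuinely a homeomorphism of $X$.

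Finally I would read off the three conclusions. By definition $\gm|_U = \bt|_U$. Since $n_2$ vanishes off $U \cup \bt(U)$ and $U \cup \bt(U) \subset Y$, we get $\gm|_{X \setminus Y} = \id|_{X \setminus Y}$. And because $\gm$ permutes the clopen pair $U$, $\bt(U)$ and fixes $Y \setminus (U \sqcup \bt(U))$ pointwise, it carries $Y$ onto itself, giving $\gm(Y) = Y$. I do not anticipate a serious obstacle here; the one point that genuinely requires care is the continuity of the piecewise $n_2$ --- equivalently, membership of $\gm$ in $[[\af]]$ --- which is exactly where compactness of $X$ is used to force $n_1$, and hence the partition, to be finite.
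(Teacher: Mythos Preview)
Your proof is correct and follows essentially the same construction as the paper: define $\gamma$ to swap $U$ and $\beta(U)$ via the piecewise cocycle $n_2$ and fix the complement. You are actually more explicit than the paper on the one nontrivial point --- continuity of $n_2$ --- by invoking compactness to force $n_1(U)$ finite and hence the partition into clopen pieces finite; the paper simply asserts $n_2$ is well defined without addressing continuity.
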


\vspace{1cm}

\begin{lem} \label{partial isometry in the desired form}

	Let $X$ be the Cantor set. Let $\af$ be a minimal homeomorphism of $X$, and let $x \in X$. Let $A$ be the crossed product \ca \ of the dynamical system $(X, \af)$. Use $A_x$ to denote the subalgebra generated by $C(X)$ and $u \cdot C_0(X \backslash \{ x \})$. Let $D$ be a clopen subset of $X$ and let $n \in \N$ be such that $x \notin \bigcup_{k = 0}^{n - 1} \af^k(D)$. In $A_x$, the element $s = u \cdot 1_{\af^{n - 1}}(D) \cdots u \cdot 1_{\af(D)} \cdot u \cdot 1_D $ is a partial isometry such that $s^* s = 1_D$ and $s s^* = 1_{\af^n(D)}$.

\begin{proof}
	We just need to check $s s^* = 1_{\af^n(D)}$, $s^* s = 1_D$, and $s \in A_x$.

	In fact,
	\begin{align*}
	s s^*	& = (u \cdot 1_{\af^{n - 1}}(D) \cdots u \cdot 1_{\af(D)} \cdot u \cdot 1_D) \cdot (u \cdot 1_{\af^{n - 1}}(D) \cdots u \cdot 1_{\af(D)} \cdot u \cdot 1_D)^* \\
		& = u \cdot 1_{\af^{n - 1}}(D) \cdots u \cdot 1_{\af(D)} \cdot u \cdot 1_D \cdot 1_D \cdot u^* \cdot 1_{\af(D)} \cdot u^* \cdots 1_{\af^{n - 1}(D)} \cdot u^* \\
		& = 1_{\af^n(D)} ,
	\end{align*}
	and
	\begin{align*}
	s^* s	& = (u \cdot 1_{\af^{n - 1}}(D) \cdots u \cdot 1_{\af(D)} \cdot u \cdot 1_D)^* \cdot (u \cdot 1_{\af^{n - 1}}(D) \cdots u \cdot 1_{\af(D)} \cdot u \cdot 1_D) \\
		& = 1_D \cdot u^* \cdot 1_{\af(D)} \cdot u^* \cdots 1_{\af^{n - 1}(D)} \cdot u^* \cdot u \cdot 1_{\af^{n - 1}}(D) \cdots u \cdot 1_{\af(D)} \cdot u \cdot 1_D \\
		& = 1_D .
	\end{align*}

	As $x \notin \bigcup_{k = 0}^{n - 1} \af^k(D)$, it follows that $u \cdot 1_{\af^{k}(D)} \in A_x$ for $k = 0, \ldots, n -1$. Thus $s, s^* \in A_x$.
\end{proof}

\end{lem}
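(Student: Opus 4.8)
The plan is to verify the three assertions separately: the two partial-isometry identities $s^*s = 1_D$ and $s s^* = 1_{\af^n(D)}$ (which together certify that $s$ is a partial isometry with the stated source and range projections), and then the membership $s \in A_x$. The first two follow from a telescoping computation governed by the covariance relation, and the third is read off from the hypothesis on $x$.

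First I would record the one identity that does all the work: the implementing unitary satisfies $u f = (f \circ \af^{-1}) u$ for $f \in C(X)$, which in the orientation relevant here gives $u\,1_E\,u^* = 1_{\af(E)}$ and hence $u\,1_{\af^k(D)} = 1_{\af^{k+1}(D)}\,u$ for every clopen $E \subseteq X$ and every $k$. Pushing each factor $u$ past the adjacent indicator function and absorbing repeated projections via $1_E 1_E = 1_E$ collapses the product to the clean form $s = u^n\,1_D = 1_{\af^n(D)}\,u^n$. From this, both identities are immediate: $s^* s = 1_D\,u^{-n}u^{n}\,1_D = 1_D$, and $s s^* = u^n\,1_D\,u^{-n} = 1_{\af^n(D)}$, the latter being $n$ applications of $u\,1_E\,u^* = 1_{\af(E)}$. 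Alternatively, and staying closer to the way $s$ is written, one telescopes $s s^*$ and $s^* s$ from the middle outward: in $s^* s$ the central $u^* u$ cancels to leave $1_D 1_D = 1_D$, and each newly exposed $u^*\,1_{\af^k(D)}\,u = 1_{\af^{k-1}(D)}$ then merges with its neighbouring copies of the same indicator; in $s s^*$ the central $1_D 1_D = 1_D$ followed by $u\,1_D\,u^* = 1_{\af(D)}$ starts a cascade that climbs to $1_{\af^n(D)}$.

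The membership $s \in A_x$ is where the hypothesis $x \notin \bigcup_{k=0}^{n-1}\af^k(D)$ is used. For each $k$ with $0 \le k \le n-1$, the set $\af^k(D)$ is clopen and omits $x$, so $1_{\af^k(D)} \in C_0(X \setminus \{x\})$ and therefore $u\,1_{\af^k(D)} \in u \cdot C_0(X \setminus \{x\}) \subseteq A_x$; since $A_x$ is a subalgebra, the product $s$ of these $n$ factors, together with its adjoint $s^*$, lies in $A_x$. I would emphasize that this hypothesis is exactly what is needed: $A_x$ contains $u g$ only for $g$ vanishing at $x$, so if some $\af^k(D)$ with $k < n$ were to meet $x$, the corresponding factor would escape $A_x$. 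There is no genuine obstacle in this lemma—its entire content is bookkeeping—and the only point demanding care is fixing the covariance convention consistently so that the telescoping runs in the direction producing $1_{\af^n(D)}$ rather than an intersection of distinct translates of $D$; once that orientation is pinned down, every intermediate product simplifies because consecutive indicator functions refer to the same clopen set.
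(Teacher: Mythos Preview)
Your proof is correct and follows essentially the same approach as the paper: telescope $s^*s$ and $ss^*$ via the covariance relation $u\,1_E\,u^* = 1_{\af(E)}$, and invoke the hypothesis $x \notin \bigcup_{k=0}^{n-1}\af^k(D)$ to place each factor $u\,1_{\af^k(D)}$ in $A_x$. Your additional observation that $s = u^n 1_D = 1_{\af^n(D)} u^n$ is exactly what the paper records in the remark immediately following the lemma.
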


\twomm

\noindent \textbf{Remark:} It is easy to check that $s = u^{n} \cdot 1_D$ and $s^* = (u^n \cdot 1_D)^* = 1_D \cdot u^{-n}$.

\onecm

\begin{lem} \label{topological full group element to desired form}

	Let $X$ be the Cantor set and let $\af$ be a minimal homeomorphism of $X$. Let $u$ be the implementing unitary of the crossed product \ca \ $C^*(\Z, X, \af)$. For $\gamma \in [[ \af ]]$, there exist mutually disjoint clopen sets $X_1$, $\ldots$, $X_N$ and $n_1$, $\ldots$, $n_N \in \N$ such that $X = \bigsqcup_{i = 1}^N X_i$ and $\gamma(x) = \af^{n_i}(x)$ for $x \in X_i$. Furthermore, $w = \displaystyle\sum_{i \in \N} 1_{X_i} \cdot u^{n_i}$ is a unitary element in $C^*(\Z, X, \af)$ satisfying $w^* f w = f \circ \gamma^{-1}$ for all $f \in C(X)$.

\begin{proof}
	As $\gamma \in [[ \af ]]$, there exists a continuous function $n \colon X \rightarrow \Z$ such that $\gamma(x) = \af^{n(x)}(x)$ for all $x \in X$. As $X$ is compact and $n$ is continuous, the range $n(X)$ must be finite.

	Define
	$$ w = \sum_{k \in n(X)} 1_{Y_k} \cdot u^{k} $$
	where $Y_k = n^{-1}( k )$. As $n(X)$ is finite, we have finitely many sets $Y_k$. As $\gamma$ is a homeomorphism, it follows that $\af^{k}(Y_k) \cap \af^{j}(Y_j) = \varnothing$ if $k \neq j$.

	We will check that $w w^* = 1$ and $w^* w = 1$.
	
	Note that
	\begin{align*}
		w w^*	& = \displaystyle \left( \sum 1_{Y_k} \cdot u^{k}) (\sum 1_{Y_j} \cdot u^{j} \right)^* \\
			& = \displaystyle \sum_{k, j \in \Z} 1_{Y_k} \cdot u^{k} \cdot u^{-j} \cdot 1_{Y_j} \\
			& = \displaystyle \sum_{k, j \in \Z} 1_{Y_k} \cdot \left( 1_{Y_j} \circ \af^{k - j} \right) \cdot u^{k - j} \\
			& = \displaystyle \sum_{k, j \in \Z} 1_{Y_k} \cdot 1_{\af^{j - k}(Y_j)} \cdot u^{k - j} .
	\end{align*}

	As $\af^{k}(Y_k) \cap \af^{j}(Y_j) = \varnothing$ if $k \neq j$, it follows that $\af^{j - k}(Y_j) \cap Y_k = \varnothing$ if $k \neq j$. Then we get
	\begin{align*}
	w w^* & = \displaystyle \sum_{k, j \in \Z} 1_{Y_k} \cdot 1_{\af^{j - k}(Y_j)} \cdot u^{k - j} \\
		& = \displaystyle \sum_{k} 1_{Y_k} \\
		& = 1 \ .
	\end{align*}
	As $C^*(\Z, X, \af)$ has stable rank one, it is finite. It then follows that $w^* w = 1$. So far, we have shown that $w$ is a unitary element in $C^*(\Z, X, \af)$.

	To show that $w^* f w = f \circ \gamma^{-1}$, we just need to show that for each $i$ and for every clopen set $D \subset Y_i$, we have $w^* 1_{D} w = 1_{D} \circ \gamma^{-1}$. As $C(X)$ is generated by
	$$ \{ 1_{D} \colon D \ \text{is} \ \text{a clopen set of} \ Y_i \ \text{for some} \ i \in \Z \} , $$
	that will imply $w^* f w = f \circ \gamma^{-1}$ for all $f \in C(X)$.

	For a clopen set $D \subset Y_i$, it is clear that
	\begin{align*}
		w^* 1_{D} w & = \displaystyle \left( \sum_{j \in \Z} 1_{Y_j} \cdot u^{j} \right)^* \cdot 1_{D} \cdot \left( \sum_{k \in \Z} 1_{Y_k} \cdot u^{k} \right) \vspace{2mm} \\
				& = \displaystyle \sum_{j, k \in \Z} u^{-j} \cdot 1_{Y_j} \cdot 1_{D} \cdot 1_{Y_k} \cdot u^k \\
				& = u^{-i} \cdot 1_{D} \cdot u^i \\
				& = 1_{D} \circ \af^{-i} \\
				& = 1_{D} \circ \gamma^{-1} \ ,
	\end{align*}
	which finishes the proof.
\end{proof}

\end{lem}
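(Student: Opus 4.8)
The plan is to extract the partition directly from the definition of the topological full group and then verify the two assertions about $w$ by bookkeeping with the implementing unitary. Since $\gamma \in [[\af]]$, there is by definition a continuous function $n \colon X \to \Z$ with $\gamma(x) = \af^{n(x)}(x)$ for all $x$. Because $X$ is compact and $\Z$ carries the discrete topology, the image $n(X)$ is finite; writing $n(X) = \{ n_1, \ldots, n_N \}$ and $X_i = n^{-1}(n_i)$ yields mutually disjoint clopen sets with $X = \bigsqcup_{i=1}^N X_i$ and $\gamma(x) = \af^{n_i}(x)$ on $X_i$. This is precisely the first assertion. (I note that in general the $n_i$ lie in $\Z$ rather than $\N$; the restriction to $\N$ is inessential to the argument.)

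To see that $w = \sum_{i} 1_{X_i} u^{n_i}$ is unitary, I would first compute $w w^*$. Expanding the product and repeatedly using the covariance relation in the form $u^{k} 1_{X_j} = 1_{\af^{k}(X_j)} u^{k}$, one obtains a sum of terms of the shape $1_{X_i} \cdot 1_{\af^{n_i - n_j}(X_j)} \cdot u^{n_i - n_j}$. The decisive point is that, since $\gamma$ is a homeomorphism and hence injective, the translated sets $\af^{n_i}(X_i) = \gamma(X_i)$ are pairwise disjoint; equivalently $\af^{n_i - n_j}(X_j) \cap X_i = \varnothing$ whenever $i \neq j$. Thus every off-diagonal term vanishes and $w w^* = \sum_i 1_{X_i} = 1$. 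For $w^* w$ the cleanest route is again direct: a symmetric computation gives $w^* w = \sum_i 1_{\af^{n_i}(X_i)} = \sum_i 1_{\gamma(X_i)} = 1_{\gamma(X)} = 1$, using that $\gamma$ is a bijection. Alternatively, one may invoke that $C^*(\Z, X, \af)$ has stable rank one and is therefore finite, so $w w^* = 1$ already forces $w^* w = 1$.

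It remains to establish $w^* f w = f \circ \gamma^{-1}$, and I would reduce this to a generating set. As $X$ is totally disconnected, $C(X)$ is the closed linear span of indicators $1_D$ of clopen sets, and by splitting $D$ along the partition it suffices to treat $D \subset X_i$ for a single $i$. For such $D$ the orthogonality $1_{X_j} 1_D = \delta_{ij} 1_D$ collapses the double sum, leaving
$$ w^* 1_D w = u^{-n_i} 1_D u^{n_i} = 1_D \circ \af^{-n_i} = 1_{\af^{n_i}(D)} = 1_{\gamma(D)} = 1_D \circ \gamma^{-1} , $$
where the penultimate equality uses $\gamma = \af^{n_i}$ on $X_i \supset D$. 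Linearity and continuity then extend the identity to all of $C(X)$.

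The main obstacle is the disjointness bookkeeping in the computation of $w w^*$: one must track carefully how $u^{n_i}$ conjugates the indicator functions and then argue that injectivity of $\gamma$ annihilates the cross terms. Once the level-set partition of $n$ is in hand, the remaining verifications are essentially formal.
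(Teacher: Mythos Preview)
Your proof is correct and follows essentially the same approach as the paper: extract the finite partition from the level sets of the orbit cocycle $n$, compute $ww^*$ by using disjointness of the sets $\gamma(X_i) = \af^{n_i}(X_i)$, and verify the conjugation identity on indicators $1_D$ with $D \subset X_i$. The one minor difference is that for $w^*w = 1$ the paper invokes stable rank one (hence finiteness), whereas you also supply the direct calculation $w^*w = \sum_i 1_{\gamma(X_i)} = 1$; your route is slightly more self-contained.
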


\vspace{5mm}

	{Some facts about Cantor dynamical systems that will be needed are given below.}

\vspace{5mm}

\begin{lem} \label{unitary equivalence in Ax, weaker version}

	Let $(X, \af)$ be a minimal Cantor dynamical system and let $x \in X$. Let $U$ and $V$ be two clopen subsets of $X$. Let $A$ be the crossed product \ca \ of $(X, \af)$ and let $A_x$ be the subalgebra generated by $C(X)$ and $u \cdot C_0(X \backslash \{ x \})$, with $u$ being the implementing unitary element in $A$ satisfying $u f u^* = f \circ \af^{-1}$ for all $f \in C(X)$. If there exists an integer $n \geq 1$ such that $\af^n(U) = V$ and $x \notin \bigcup_{k = 0}^{n - 1} \af^k(U)$, then there exists $w \in A_x$ such that $w \cdot 1_U \cdot w^* = 1_V$.

\end{lem}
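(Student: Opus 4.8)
The plan is to obtain $w$ as the explicit partial isometry already produced by Lemma~\ref{partial isometry in the desired form}, so that the present statement becomes essentially a direct corollary. First I would apply that lemma with $D = U$: its hypothesis $x \notin \bigcup_{k=0}^{n-1}\af^k(U)$ is exactly the one imposed here, so it yields the element
$$ s = u \cdot 1_{\af^{n-1}(U)} \cdots u \cdot 1_{\af(U)} \cdot u \cdot 1_U \in A_x , $$
which is a partial isometry satisfying $s^*s = 1_U$ and $ss^* = 1_{\af^n(U)}$. Since by assumption $\af^n(U) = V$, this last identity reads $ss^* = 1_V$.

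Next I would set $w = s$ and verify the desired relation. Using the partial isometry identity $s s^* s = s$ together with $s^* s = 1_U$, one has
$$ w \cdot 1_U \cdot w^* = s (s^* s) s^* = (s s^* s) s^* = s s^* = 1_V , $$
which is the conclusion. Equivalently, by the Remark following Lemma~\ref{partial isometry in the desired form} one may take $w = u^n \cdot 1_U$ and compute directly $w \, 1_U \, w^* = u^n 1_U u^{-n} = 1_U \circ \af^{-n} = 1_{\af^n(U)} = 1_V$, using the convention $u f u^* = f \circ \af^{-1}$.

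The only point that genuinely needs attention --- and the reason the precise hypothesis on $x$ is imposed --- is the membership $w \in A_x$. Writing $w$ in the factored form above makes this transparent: each factor $u \cdot 1_{\af^k(U)}$ with $0 \le k \le n-1$ lies in $A_x$ precisely because $x \notin \af^k(U)$ for those $k$, so that $1_{\af^k(U)} \in C_0(X \bs \{x\})$ and $u \cdot 1_{\af^k(U)}$ is one of the defining generators of $A_x$. Thus there is no real obstacle beyond invoking the earlier lemma; the substantive construction was already carried out there, and the present statement merely repackages it as the (partial-isometric) equivalence of $1_U$ and $1_V$ inside $A_x$. I would close by emphasising that $w$ is not asserted to be a unitary of $A_x$, only a partial isometry implementing the equivalence, which is exactly why this is the ``weaker version.''
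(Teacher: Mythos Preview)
Your argument is correct for the statement as literally written: the partial isometry $s$ from Lemma~\ref{partial isometry in the desired form} lies in $A_x$ and satisfies $s\,1_U\,s^* = s(s^*s)s^* = ss^* = 1_V$, so nothing more is needed. This is indeed a much shorter route than the paper's.

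The paper proceeds differently and obtains a stronger conclusion. It chooses a Kakutani--Rokhlin partition $\CalP$ whose roof set contains $x$ and is disjoint from $\bigcup_{k=0}^{n-1}\af^k(U)$, uses the isomorphism $A_{\CalP}\cong\bigoplus_s M_{h(s)}(C(X(s,1)))$, and in each matrix block builds a \emph{unitary} $w_s$ conjugating $1_{U_s}$ to $1_{V_s}$; the sum $w=\sum_s w_s$ is then a genuine unitary in $A_{\CalP}\subset A_x$ with $w\,1_U\,w^* = 1_V$. So the paper establishes unitary equivalence inside $A_x$, not merely Murray--von~Neumann equivalence. One small misreading on your part: the ``weaker version'' in the lemma's label does not refer to $w$ being only a partial isometry; it refers to the extra hypothesis $x\notin\bigcup_{k=0}^{n-1}\af^k(U)$, which is dropped in the strengthened Lemma~\ref{unitary conjugation as normalizer}. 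Since the statement you were given does not explicitly demand that $w$ be unitary, your proof stands; but if the intended conclusion is unitary equivalence (as the title and the paper's own proof suggest), you would still need to extend your partial isometry to a unitary in $A_x$, and that is precisely the work the Kakutani--Rokhlin construction is doing.
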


\begin{proof}
	As $x \notin \bigcup_{k = 0}^{n - 1} \af^k(U)$, we can find a Kakutani-Rokhlin partition $\CalP$ of $X$ with respect to $\af$ such that the roof set $R(\CalP)$ is a clopen set containing $x$ and $R(\CalP) \cap ( \bigcup_{k = 0}^{n - 1} \af^k(U) )= \varnothing$.

	Write
	$$ \displaystyle \CalP = \bigsqcup_{\substack{1 \leq s \leq N \\ 1 \leq k \leq h(s)}} X(s, k) $$
	with $\af(X(s, k)) = X(s, k + 1)$ for all $k = 1, \ldots, h(s) - 1$ and $\af(R(\CalP)) \subset \displaystyle \bigsqcup_{\substack{1 \leq s \leq N}} X(s, 1)$.
	
	Use $A_{\CalP}$ to denote the subalgebra generated by $C(X)$ and $u \cdot C_0(X \backslash R(\CalP))$. Then
	$$ A_{\CalP} \cong \displaystyle \bigoplus _{s = 1}^N M_{h(s)} ( C(X(s, 1)) ) . $$
	In other words, there exists a \ca \ isomorphism
	$$ \varphi \colon A_{\CalP} \longrightarrow \bigoplus _{s = 1}^N M_{h(s)} ( C(X(s, 1)) ) $$
	satisfying
	$$ \varphi(1_{X(s, k)}) = \diag(0, \ldots, 0, 1, 0, \ldots) \in M_{h(s)} (C(X, 1)) $$
	with the $k$-th diagonal element being $1_{X(s, k)}$.

	It is clear that $1_U = \sum_{s, k} 1_{U \cap X(s, k)}$ and $1_V = \sum_{s, k} 1_{V \cap X(s, k)}$. Define $U_s$ to be \\ $\bigsqcup _{k} \left( U \cap X(s, k) \right)$ and $V_s$ to be $\bigsqcup _{k} \left( V \cap X(s, k) \right)$. It is clear that $1_U = \sum_{s} 1_{U_s}$ and $1_V = \sum_{s} 1_{V_s}$. Recall the isomorphism $\ph$ above. By abuse of notation, we can regard $1_{U_s}$ and $1_{V_s}$ as two diagonal matrices in $M_{h(s)} (C(X_{s, 1}))$.

	If we can find unitary elements $w_s \in M_{h(s)} (C(X_{s, 1}))$ such that $w_s \cdot 1_{U_s} \cdot w_s^* = 1_{V_s}$, by setting $w = w_1 + \cdots + w_s$, it is then clear that $w$ is unitary element in $\bigoplus _{s = 1}^N M_{h(s)} ( C(X(s, 1)) )$ such that $w \cdot 1_U \cdot w^* = 1_V$, which is equivalent to the existence of a unitary in $A_{\CalP}$ conjugating $1_U$ to $1_V$. As $x \in R(\CalP)$, we can regard $A_{\CalP}$ as a subalgebra of $A_x$. Then the unitary $w$ in $A_{\CalP}$ is also a unitary in $A_x$.

	Let $w_s$ be a unitary element in $M_{h(s)} (C(X_{s, 1}))$ satisfying
	$$ w_s \cdot E_{i, i} \cdot w_s^* = E_{i + 1, i + 1} $$
	for $i = 1, \ldots, h(s) - 1$ and
	$$ w_s E_{h(s), h(s)} w_s^* = E_{1, 1} , $$
	with $( E_{i, j} )$ being the standard system of matrix units. It follows that $w_s \cdot 1_{U_s} \cdot w_s^* = 1_{V_s}$, which finishes the proof.
\end{proof}

\onecm

\begin{lem}

	Let $(X, \af)$ be a minimal Cantor dynamical system and let $U, V$ be two clopen subsets of $X$ satisfying $\af^n(U) = V$ for some $n \in \N$. Then there exists a partition of $U$, say $U = \bigsqcup_{i = 1}^m U_i$ with each $U_i$ clopen such that for all $k = 1, \ldots, n$ and $i, j = 1, \ldots, m$ with $i \neq j$, we have $\af^{k} (U_i) \cap \af^{k} (U_j) = \varnothing$.

\end{lem}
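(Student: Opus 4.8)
The plan is to realise the pieces of $U$ as the floors of a single Kakutani--Rokhlin partition of $X$ whose towers are so tall that a point of $U$ cannot return to its own floor within $n$ steps; the desired disjointness is then immediate from the tower geometry.

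First I would use the Herman--Putnam--Skau theory (\cite[Theorem 4.2]{HPS}) together with the minimality (hence aperiodicity) of $\af$ to produce a Kakutani--Rokhlin partition
$$ \CalP = \{ X(s, k) \colon 1 \le s \le N, \ 0 \le k \le h_s - 1 \} $$
with $\af( X(s, k) ) = X(s, k+1)$ for $0 \le k \le h_s - 2$, base $B = \bigsqcup_{s} X(s, 0)$, every height $h_s > n$, and with the additional feature that $U$ is a union of floors, say $U = \bigsqcup_{(s, k) \in S} X(s, k)$ for a suitable index set $S$. Such a partition exists: one first chooses a clopen set whose first return time to itself exceeds $n$ (so all heights exceed $n$), and then refines the base by the finite clopen partition generated by the sets $\af^{-k}(U)$ for $0 \le k < \max_s h_s$; this forces every floor to lie either inside $U$ or inside $X \setminus U$ without altering any height.

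I would then declare the pieces to be the floors contained in $U$: set $U_i = X(s, k)$ as $(s, k)$ ranges over $S$ (relabelled $U_1, \dots, U_m$). These are clopen, pairwise disjoint, and partition $U$. To check the separation, fix $U_i = X(s, k)$ and $1 \le j \le n$ and suppose $X(s, k) \cap \af^j( X(s, k) ) \ne \varnothing$, witnessed by $y, \af^j(y) \in X(s, k)$. Putting $b = \af^{-k}(y) \in X(s, 0)$, we get $\af^j(b) = \af^{-k}( \af^j(y) ) \in X(s, 0) \subseteq B$; but the first return time to $B$ from the base $X(s, 0)$ equals $h_s > n \ge j$, so $\af^j(b) \notin B$ for $1 \le j \le n$, a contradiction. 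Hence $U_i \cap \af^j(U_i) = \varnothing$ for $1 \le j \le n$, so $U_i, \af(U_i), \dots, \af^n(U_i)$ are pairwise disjoint; in particular, since the $U_i$ are themselves disjoint and each $\af^k$ is a bijection, $\af^k(U_i) \cap \af^k(U_j) = \af^k( U_i \cap U_j ) = \varnothing$ whenever $i \ne j$, which is the assertion.

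The step I expect to be the real obstacle is controlling short return times. Partitioning $U$ by its own first return map is not enough, because on pieces whose points come back to $U$ within $n$ steps no refinement of $U$ alone can prevent a point from sharing a piece with a near iterate of itself. Passing instead to a globally tall Kakutani--Rokhlin partition of all of $X$ is precisely what forbids a return to a single floor in fewer than $h_s > n$ steps, and arranging such a partition to refine $U$ at the same time is the one genuinely nontrivial ingredient; everything else is bookkeeping with the tower structure.
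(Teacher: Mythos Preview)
Your argument is correct, and in fact you prove a bit more than the lemma literally asserts: the conclusion $\af^{k}(U_i)\cap\af^{k}(U_j)=\varnothing$ for $i\neq j$ is automatic for \emph{any} partition of $U$, since $\af^{k}$ is a bijection; the content lies in the stronger property (which both you and the paper actually establish) that for each fixed $i$ the sets $\af(U_i),\ldots,\af^{n}(U_i)$ are pairwise disjoint.

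Your route, however, differs genuinely from the paper's. The paper gives a purely local argument: for each $y\in U$, minimality makes $y,\af(y),\ldots,\af^{n}(y)$ distinct, so one can choose a small clopen neighbourhood $D_y\subset U$ with $\af(D_y),\ldots,\af^{n}(D_y)$ pairwise disjoint; compactness of $U$ gives a finite subcover, which one then disjointifies. That's all. Your approach instead invokes a global Kakutani--Rokhlin partition with heights exceeding $n$, refined so that $U$ is a union of floors, and reads off the disjointness from the tower geometry. This works, and has the pleasant side effect that the pieces $U_i$ sit in a single coherent combinatorial model of $(X,\af)$, which can be convenient downstream; but it is heavier machinery than the lemma needs. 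In particular, your closing remark that a refinement of $U$ ``alone'' cannot do the job is not borne out: the paper's neighbourhoods $D_y$ are exactly such a refinement, and they suffice because they are chosen small enough pointwise to separate the finitely many iterates.
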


\begin{proof}
	We just need to find a partition of $U$ into $U = \bigsqcup_{i = 1}^m U_i$ such that for every given $i$ with  $1 \leq i \leq m$, the clopen sets $\af^{1} (U_i), \ldots, \af^{n} (U_i)$ are mutually disjoint.

	For every $y \in U$, as $\af$ is a minimal homeomorphism, we can find a clopen set $D_y \subset U$ such that $\af^{1}(D_y), \ldots, \af^{n}(D_y)$ are mutually disjoint. As $U$ is compact, there exists a finite subset of $U$, say $\{ y_1, \ldots, y_N \}$, such that $\bigcup_{s = 1}^N D_{y_s} = U$.

	As the intersection of two clopen sets is still clopen, without loss of generality, we may assume that the sets $D_{y_1}, \ldots, D_{y_N}$ are mutually disjoint. That is, $U = \bigsqcup_{i = 1}^m D_{y_i}$. It is then clear that for any given $s$ with $1 \leq s \leq N$, $\af^{k} (D_{y_s})$ are mutually disjoint for $k = 1, \ldots, n$ , which finishes the proof.
\end{proof}

\onecm

The lemma below is the strengthened version of Lemma \ref{unitary equivalence in Ax, weaker version} in the sense that we no longer require $x \notin \bigcup_{k = 0}^{n - 1} \af^k(U)$.

\vspace{2mm}

\begin{lem} \label{unitary conjugation as normalizer}

	Let $X$ be the Cantor set and let $x \in X$. Let $\af$ be a minimal homeomorphism of $X$ and let $A_x$ be defined as in Lemma \ref{unitary equivalence in Ax, weaker version}. For every $n \in \N$ and clopen subset $U \subset X$, there exists a unitary element $w \in A_x$ such that
	$$ w = \sum_{j \in \Z} 1_{D_j} u^j \  \text{and} \ w \cdot 1_U \cdot w^* = 1_{\af^n(U)} , $$
	where $D_j$ for $j \in \Z$ are mutually disjoint clopen subsets of $X$ satisfying $X = \displaystyle \bigsqcup_{j \in \Z} D_j$, and all but finitely many $D_j$ are empty.

\begin{proof}
	Let $d$ be the metric on $X$. As $(X, \af)$ is a minimal dynamical system, $x, \af(x), \ldots, \af^n(x)$ are distinct from each other.
	
	Let
	$$ R = \Frac{1}{2} \min_{0 \leq i, j \leq n, i \neq j} d(\af^i(x), \af^j(x)) . $$
	It is clear that $R > 0$.

	For $k$ with $0 \leq k \leq n$, if $x \in \af^k(U)$, as $\af^k(U)$ is clopen, there exists $r_k > 0$ such that the open set $\{ y \in X \colon d(x, y) < r_k \}$ is a subset of $\af^k(U)$. If $x \notin \af^k(U)$, as $\af^k(U)$ is compact, $\inf_{y \in \af^k(U)} d(x, y) = d(x, y')$ for some $y' \in \af^k(U)$. In this case, let $r_k = \inf_{y \in \af^k(U)} d(x, y)$.

	Let
	$$ r = \min (R, r_0, r_1, \ldots, r_n) > 0 $$
	and define $E'$ to be
	$$ \{ y \in X \colon d(x, y) < r \} . $$
	Then $E'$ is an open subset of $X$. As the topology of the Cantor set $X$ is generated by clopen sets, we can find a clopen subset $E \subset E'$ such that $x \in E$.

	According to the definition of $r$, it follows that for $k = 0, 1, \ldots, n$, either $E' \subset \af^k(U)$ or $E' \cap \af^k(U) = \varnothing$. The fact that $E \subset E'$ implies that for $k = 0, 1, \ldots, n$, either $E \subset \af^k(U)$ or $E \cap \af^k(U) = \varnothing$.

	Let $\CalP$ be a Kakutani-Rokhlin tower such that the roof set is $E$. \removeme[More details?]{As $E$ is the roof set and $E, \af(E), \ldots, \af^n(E)$ are mutually disjoint, it follows that the height of each tower in $\CalP$ is greater than $n + 1$}.

	Use $X(N, v, s)$ to denote the clopen subset of the partition $\CalP$ at the $v$-th tower,  with height $s$. Then
	$$ X = \bigsqcup_{v \in V, 1 \leq k \leq h(v)} X(n, v, s) , $$
	where $h(v)$ is the height of the $v$-th tower.

	Let $U_{v, k} = U \cap X(N, v, k)$. Then
	$$ U = \bigsqcup_{v \in V, 1 \leq k \leq h(v)} U_{v, k} . $$
	For every $v, k$ such that $U_{v, k} \neq \varnothing$, if there exists $m \in \N$ such that $1 \leq m \leq n$ and $\af^m(U_{v, k}) \subset \af(E)$, then $E \cap \af^{m - 1}(U) \neq \varnothing$. According to our choice of $E$, for all $s$ with $1 \leq s \leq n$, either $E \subset \af^s(U)$ or $E \cap \af^s(U) = \varnothing$. By assumption, we have $\af^m(U_{v, k}) \subset \af(E)$ and $U_{v, k} \neq \varnothing$. Then
	$$ E \cap \af^{m - 1}(U) \hspace{1mm} \supset \hspace{1mm} E \cap \af^{m - 1}(U_{v, k}) \hspace{1mm} = \hspace{1mm} \af^{m - 1} (U_{v, k}) \hspace{1mm} \neq \hspace{1mm} \varnothing , $$
	which implies that $E \subset \af^{m - 1}(U)$.

	Let $A_E$ be the subalgebra of $A$ generated by $C(X)$ and $u \cdot C_0(X \backslash R(\CalP) )$, with $u$ being \\
	the implementing unitary of $A$. We will show that there exists a unitary element $w \in A_E$ such that
	$$ w = \sum_{j \in \Z} 1_{D_j} u^j $$
	with all the sets $D_j$ for $j \in \Z$ being mutually disjoint and $w \cdot 1_U \cdot w^* = 1_{\af^n(U)}$. As $A_E$ can be regarded as a subalgebra of $A_x$, that is enough to prove the lemma if we can find the unitary $w$ as described above.

	\removeme[tentative proof below.]

	If $k + n \leq h(v)$, this is the case that $x \notin \bigsqcup_{j = 0}^{n - 1} \af^{j}(U_{v, k})$. According to Lemma \ref{partial isometry in the desired form}, there exists a partial isometry $s_{v, k} \in A_x$ such that $s_{v, k}^* s_{v, k} = 1_{U_{v, k}}$ and $s_{v, k} s_{v, k}^* = 1_{\af^n(U_{v, k})} = 1_{U_{v, k + n}}$. According to the remark after Lemma \ref{partial isometry in the desired form}, we have $s_{v, k} = u^n \cdot 1_{U_{v, k}}$.

	If there is a nonempty $U_{v, k}$ such that $k + n > h(v)$, then
	$$ \af^{h(v) - k}(U) \cap E \supset \af^{h(v) - k}(U_{v, k}) \cap E \neq \varnothing . $$
	According to the construction of $E$, it follows that $E \subset \af^{h(v) - k}(U)$, which then implies that $\af^{-(h(v) - k)}(E) \subset U$. Intersecting both sets with
	$$ \af^{-(h(v) - k)}(E) = \bigsqcup_{v' \in V} X(n, v', h(v') - (h(v) - k)), $$
	we get
	\begin{align*}
	\displaystyle\bigsqcup_{v' \in V} X(n, v', h(v') - (h(v) - k)) & = \af^{-(h(v) - k)}(E) \displaystyle\displaystyle\cap \, \displaystyle\bigsqcup_{v' \in V} X(n, v', h(v') - (h(v) - k)) \\
	 & \subset U \ \displaystyle \cap \, \displaystyle\bigsqcup_{v' \in V} X(n, v', h(v') - (h(v) - k)) \\
	 & \subset \displaystyle\bigsqcup_{v' \in V} X(n, v', h(v') - (h(v) - k)) \ ,
	 \end{align*}
	which implies that
	$$ U \cap \, \bigsqcup_{v' \in V} X(n, v', h(v') - (h(v) - k)) = \bigsqcup_{v' \in V} X(n, v', h(v') - (h(v) - k)) . $$
	In other words,
	$$ U_{v', h(v') - (h(v) - k)} = X(n, v', h(v') - (h(v) - k)) \ \text{for all} \ v \in V' . $$
	Now we have
	$$ \af^{-(h(v) - k)} (E) = \bigsqcup_{v' \in V} U_{v', h(v') - (h(v) - k)} = \bigsqcup_{v' \in V} X_{v', h(v') - (h(v) - k)} . $$
	It follows that
	$$ \af^n \left( \bigsqcup_{v' \in V} U_{v', h(v') - (h(v) - k)} \right) = \af^n \left( \bigsqcup_{v' \in V} X_{v', h(v') - (h(v) - k)} \right) = \bigsqcup_{v' \in V} X_{v', n - (h(v) - k)} . $$

	By Lemma \ref{partial isometry in the desired form}, there exists a partial isometry $s_{v, k}'$ such that
	$$ s_{v, k}' s_{v, k}'^* = 1_{U(v', h(v') - (h(v) - k))} $$
	and
	\begin{align*}
		s_{v, k}'^* s_{v, k}' & = 1_{\af^n({U(v', h(v') - (h(v) - k))}} \\
			& = 1_{U(v', h(v') + n - (h(v) - k)) - h(v')} .
	\end{align*}
	Furthermore, according to the remark after Lemma \ref{partial isometry in the desired form}, $s_{v, k}' \in A_E$. 

	For every non-empty $U_{v, k}$, either $k + n \leq h(v)$ or $U \supset \af^{-(h(v) - k)} (R(\CalP))$. Thus the above two cases will give a partial isometry $s \in A_E$ such that $s s^* = 1_U$ and $s^* s = 1_{\af^n(U)}$. 

	\removeme[QQQQ QQQQ QQQQ : Orig: we will show that    .. Finish it.] There exists a partial isometry $\widetilde{s} \in A_E$ such that
	$$ \widetilde{s} \widetilde{s}^* = 1_{X \backslash U} \ \text{and} \ \widetilde{s}^* \widetilde{s} = 1_{X \backslash \af^n(U)} . $$
	Let $w = s + \widetilde{s}$. Then $w$ is a unitary element in $A_E$ satisfying $w \cdot 1_u \cdot w^* = 1_{\af^n(U)}$, which finishes the proof.
\end{proof}

\end{lem}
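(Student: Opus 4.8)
The plan is to realize $w$ as the unitary attached to a suitable element of the topological full group $[[\af]]$, arranged so that it lands in $A_x$ rather than merely in the full crossed product $A$. The naive candidate is $\af^n$ acting on $U$, implemented by the partial isometry $u^n 1_U$; but by Lemma \ref{partial isometry in the desired form} such a partial isometry belongs to $A_x$ only when the orbit segment $U, \af(U), \dots, \af^{n-1}(U)$ avoids the distinguished point $x$, which is precisely the hypothesis of the weaker Lemma \ref{unitary equivalence in Ax, weaker version}. The entire difficulty is thus to route the conjugation around $x$, the single place where $A_x$ and $A$ differ.

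First I would choose a clopen neighborhood of $x$ compatible with the first $n$ forward iterates of $U$. Minimality makes $x, \af(x), \dots, \af^n(x)$ distinct, so $R = \tfrac12 \min_{0 \le i < j \le n} d(\af^i(x), \af^j(x)) > 0$. For each $k \in \{0, \dots, n\}$, since $\af^k(U)$ is clopen there is $r_k > 0$ with the ball $B(x, r_k)$ either contained in $\af^k(U)$ or disjoint from it. Taking $r = \min(R, r_0, \dots, r_n)$ and a clopen $E$ with $x \in E \subset B(x,r)$ gives two properties: the sets $E, \af(E), \dots, \af^n(E)$ are pairwise disjoint, and for each $k \in \{0,\dots,n\}$ one has either $E \subset \af^k(U)$ or $E \cap \af^k(U) = \varnothing$. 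This dichotomy is symmetric in $U$ and $X \setminus U$, which I will exploit later. I would then take a Kakutani--Rokhlin partition $\CalP$ with roof set $E$; disjointness of $E, \dots, \af^n(E)$ forces every tower height $h(v)$ to exceed $n$, and since $x \in E = R(\CalP)$ the relevant tower subalgebra sits inside $A_x$.

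Writing $U = \bigsqcup_{v,k} U_{v,k}$ with $U_{v,k} = U \cap X(N,v,k)$, I would split into two cases. When $k + n \le h(v)$, the forward orbit of $U_{v,k}$ stays strictly below the roof, hence misses $x$, and Lemma \ref{partial isometry in the desired form} supplies a partial isometry $u^n 1_{U_{v,k}}$ in $A_x$ carrying $1_{U_{v,k}}$ to $1_{\af^n(U_{v,k})}$. When $k + n > h(v)$, the orbit of $U_{v,k}$ must cross the roof, so it risks passing through $x$; here the compatibility of $E$ is decisive: because $0 \le h(v)-k \le n-1$, the dichotomy forces $E \subset \af^{h(v)-k}(U)$, equivalently $\af^{-(h(v)-k)}(E) \subset U$, so $U$ actually contains entire tower levels near the roof. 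This fullness is what allows the crossing to be rerouted and realized by a partial isometry inside $A_x$. Summing the contributions over all $(v,k)$ assembles one partial isometry $s$ with $s^* s = 1_U$ and $s s^* = 1_{\af^n(U)}$.

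Finally I would handle the complement and collect terms. Since $E$ satisfies the same dichotomy against the iterates of $X \setminus U$, the identical construction applied to $X \setminus U$ yields $\tilde s$ with $\tilde s^* \tilde s = 1_{X \setminus U}$ and $\tilde s \tilde s^* = 1_{X \setminus \af^n(U)}$; orthogonality of domains and ranges then makes $w = s + \tilde s$ a unitary with $w\, 1_U\, w^* = 1_{\af^n(U)}$. Each constituent partial isometry has the form $u^m 1_D$ with clopen $D$ and $m$ from a finite set, so rewriting $u^m 1_D = 1_{\af^m(D)} u^m$ and grouping by exponent expresses $w$ as $\sum_j 1_{D_j} u^j$; unitarity of $w$ forces the $D_j$ to be pairwise disjoint clopen sets partitioning $X$, with only finitely many nonempty, exactly as required. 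I expect the wraparound case $k+n > h(v)$ to be the main obstacle: the heart of the matter is to verify that the compatibility of $E$ genuinely forces $U$ to contain the relevant full tower levels and to use this to implement the passage past $x$ while staying within $A_x$; everything else is bookkeeping with first-return times and matrix units.
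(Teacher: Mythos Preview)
Your proposal is correct and follows essentially the same approach as the paper: the choice of the clopen neighborhood $E$ with the dichotomy property, the Kakutani--Rokhlin tower with roof $E$, the case split on $k+n \le h(v)$ versus $k+n > h(v)$, the key observation that in the wraparound case $U$ must contain full tower levels, and the assembly $w = s + \tilde s$ all mirror the paper's argument. Your write-up is in fact slightly more explicit than the paper's in two places---noting the symmetry of the dichotomy in $U$ and $X\setminus U$ to justify the construction of $\tilde s$, and spelling out why the final $w$ has the form $\sum_j 1_{D_j}u^j$ with the $D_j$ partitioning $X$---both of which the paper leaves implicit or unfinished.
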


\onecm

\begin{lem} \label{technical lemma tracial rank of A}

	Let $X$ be the Cantor set and let $x \in X$.  Let $D$ be a clopen subset of $X$ satisfying $x \in D$, and use $X \times \T_1 \times \T_2$ to denote the product of the Cantor set and two dimensional torus. Let $A$ be the crossed product \ca \ $C^*(\Z, X \times \T_1 \times \T_2, \af \times \mathrm{R}_{\xi} \times \mathrm{R}_{\eta})$ and let $u$ be the implementing unitary of $A$. Let $z_1 \in C(\T_1, \C)$ be defined by $z_1(t) = t$ and let $z_2 \in C(\T_2, \C)$ be defined by $z_2(t) = t$. By abuse of notation, we identify $z_1$ with $\id_X \otimes z_1 \otimes \id_{\T_2}$ and $z_2$ with $\id_X \otimes \id_{\T_1} \otimes z_2$. Suppose that there exists $M \in \N$ such that
	$$ \norm{u^M z_i p u^{-M} - z_i q} < \ep \ \text{for} \ i = 1, 2 ,\, \text{where} \  p = 1_D \  \text{and} \  q = u^M p u^{-M} . $$
	Then there exists a partial isometry $w \in A_x$ (with $A_x$ as defined in Lemma \ref{unitary equivalence in Ax, weaker version}) such that
	$$ w^* w = p, \ w w^* = q \ \text{and} \ \ \norm{w z_i p w^* - z_iq} < \ep \ \text{for} \ i = 1, 2 . $$

\begin{proof}
	According to Lemma \ref{unitary conjugation as normalizer}, we can find a unitary element $w_1 \in A_x$ such that
	$$ w_1 = \sum_{k \in \Z} u^k 1_{n^{-1}(k)} $$
	for some $n \in C(X, \Z)$ and
	$$ w_1 p w_1^* = q . $$

	Let
	$$ j_0 \colon C(\T_1 \times \T_2) \longrightarrow C(D \times \T_1 \times \T_2) $$
	be defined by $j_0(f) = 1_D \otimes f$ for all $f \in C(\T_1 \times \T_2)$. Then it is clear that $j$ is an injective homomorphism.

	As $C(D \times \T_1 \times \T_2) \subset p A_x p$ (with $p = 1_D$), we hence get the canonical inclusion map
	$$\phi_0 \colon C(D \times \T_1 \times \T_2) \rightarrow p A_x p . $$

	Define
	$$ \phi_1 \colon C(D \times \T_1 \times \T_2) \longrightarrow p A_x p $$
	by
	$$ \phi_1(g) = w_1^* \cdot u^M \cdot g \cdot u^{-M} \cdot w_1 \ \text{for all} \  g \in C(D \times \T_1 \times \T_2) . $$
	
	As $q = u^M p u^{-M}$ and $p = 1_D$, it follows that $u^M \cdot g \cdot u^{-M} \in q C(X \times \T^2) q \subset q A_x q$.
	
	The fact that $w_1 p w_1^* = q$ implies that $w_1^* q A_x q w_1 = p A_x q$. So far, we have shown that $\phi_1$ is really a homomorphism from $C(D \times \T^2)$ to $p A_x p$. As $\norm{\phi_1(g)} = \norm{g}$, it is clear that $\phi_1$ is injective.

	Define $\varphi_0 = \phi_0 \circ j_0$ and $\varphi_1 = \phi_1 \circ j_0$. Then $\varphi_0, \varphi_1$ are two injective homomorphisms from $C(\T^2)$ to $p A_x p$.
	
	Let
	$$ j \colon p A_x p \longrightarrow p A p $$
	be the canonical embedding.
	
	By Lemmas \ref{injectivity of K_0} and \ref{injectivity of K_1}, \removeme[Do we need to prove injectivity for K1 in detail?]{
	$$ j_{*i} \colon K_i(p A_x p) \longrightarrow K_i(p A p) $$
	will induce an injective embedding of $K_i(p A_x p)$ into $K_i(p A p)$ for $i = 0, 1$}.

	Consider $(\varphi_0)_{*i}$ and $(\varphi_1)_{*i} \colon K_i(C(\T_1 \times \T_2)) \rightarrow K_i(p A_x p)$ for $i = 0, 1$. As $\varphi_1(f) = w_1^* u^M f u^{-M} w_1$, it is clear that $(\varphi_0)_{*i}(a) = (\varphi_1)_{*i}(a)$ in $K_i(p A p)$ for all $a \in K_i(\T_1 \times \T_2)$. Since we know that $j_{*i} \colon K_i(p A_x p) \rightarrow K_i(p A p)$ is injective, it follows that $(\varphi_0)_{*i}(a) = (\varphi_1)_{*i}(a)$ in $K_i(p A_x p)$ for all $a \in K_i(\T_1 \times \T_2)$.

	For a \ca \ $B$, recall from Section \ref{Sec Introduction and Notation} that $T(B)$ denotes the convex set of all tracial states on $B$.
	For all $\tau \in T(p A p)$ and $g \in C(D \times \T_1 \times \T_2)$, it is clear that
	$$ \tau(w_1^* u^M g u^{-M} w_1) = \tau(g) . $$
	\removeme[Reference? Or more details?]{As $T(p A p) = T(p A_x p)$},  it follows that for every  tracial state $\tau' \in T(p A_x p)$, we have
	$$ \tau' \left( w_1^* u^M g u^{-M} w_1 \right) = \tau'(g) . $$
	It is then clear that for all $\tau' \in T(p A_x p)$ and $f \in C(\T_1 \times \T_2)$,
	$$ \tau' \left( \varphi_0(f) \right) = \tau'( \varphi_1(f) ) . $$

	Recall from Definition \ref{defn of sharp morphism} the maps
	$$ {\varphi_0}^{\sharp}, {\varphi_1}^{\sharp} \colon U(C(\T_1 \times \T_2))/CU(C(\T_1 \times \T_2)) \rightarrow  U(p A_x p)/CU(p A_x p) . $$
	We will show that $\varphi_0(z_1 \otimes 1_{T_2}) \cdot \varphi_1(z_1 \otimes 1_{T_2})^{-1} \in CU(p A_x p)$. If that is done, then we can show that $\varphi_0(1_{T_1} \otimes z_2) \cdot \varphi_1(1_{T_1} \otimes z_2)^{-1} \in CU(p A_x p)$ in a similar way.

	In fact,
	\begin{align*}
	\varphi_1(z_1 \otimes 1_{\T_2}) & = w_1^* \cdot u^M \cdot \left( 1_D \otimes z_1 \otimes 1_{\T_2} \right) \cdot u^{-M} \cdot w_1 \\
		& = w_1^* \cdot \left( 1_{\af^M(D)} \otimes z_1 \cdot e^{2 \pi i s} \otimes 1_{\T_2} \right) \cdot w_1
	\end{align*}
	for some $s \in C(X, \R)$. As $w_1 = \displaystyle \sum_{k \in \Z} u^k 1_{n^{-1}(k)}$ and $w_1 1_D w_1^* = u^M p u^{-M}$, we get
	\begin{align*}
		\varphi_1(z_1 \otimes 1_{\T_2}) & = w_1^* \cdot \left( 1_{\af^M(D)} \otimes \left( z_1 \cdot e^{2 \pi i s} \right) \otimes 1_{\T_2} \right) \cdot w_1 \vspace{1mm} \\
			& = \left( \displaystyle\sum_{k \in \Z} u^k 1_{n^{-1}(k) \times \T_1 \times \T_2} \right)^* \cdot \left( 1_{\af^M(D)} \otimes (z_1 \cdot e^{2 \pi i s}) \otimes 1_{\T_2} \right) \cdot \left( \displaystyle\sum_{k \in \Z} u^k 1_{n^{-1}(k)  \times \T_1 \times \T_2} \right) \vspace{1mm} \\
			& = \displaystyle\sum_{k, j \in \Z}  \hspace{2mm} 1_{n^{-1}(k) \times \T_1 \times \T_2} \cdot u^{-k} \cdot \left( 1_{\af^M(D)} \otimes (z_1 \cdot e^{2 \pi i s}) \otimes 1_{\T_2} ) \right) \cdot u^j \cdot 1_{n^{-1}(j)  \times \T_1 \times \T_2} \vspace{1mm} \\
			& = \displaystyle\sum_{k \in \Z} \hspace{2mm} 1_{n^{-1}(k)  \times \T_1 \times \T_2} \cdot u^{-k} \cdot\left( 1_{\af^M(D)} \otimes (z_1 \cdot e^{2 \pi i s}) \otimes 1_{\T_2} ) \right) \cdot u^k \cdot 1_{n^{-1}(k)  \times \T_1 \times \T_2} \\
			& = 1_{D} \otimes \left( z_1 \cdot e^{2 \pi i h} \right) \otimes 1_{\T_2}
	\end{align*}
	for some $h \in C(X, \R)$. Then we have
	$$ \varphi_0(z_1 \otimes 1_{\T_2}) \cdot \varphi_1(z_1 \otimes 1_{\T_2})^{-1} = 1_D \otimes e^{- 2 \pi i h} \otimes 1_{\T_2} $$
	with $h \in C(X, \R)$, and we also have
	$$ 1_D \otimes e^{- 2 \pi i h} \otimes 1_{\T_2} \in p A_x p \cap p C^*(\Z, X, \af) p . $$
	Note that $p A_x p \cap p C^*(\Z, X, \af) p \cong p C^*(\Z, X, \af)_x p$, which is an infinite dimensional simple AF algebra by \cite{HPS}. By Lemma \ref{a handy known fact}, it follows that
	$$ U(p A_x p \cap p C^*(\Z, X, \af) p) = CU(p A_x p \cap p C^*(\Z, X, \af) p) . $$
	Then we get
	$$
	\begin{array}{l}
	\varphi_0(z_1 \otimes 1_{\T_2}) \cdot \varphi_1(z_1 \otimes 1_{\T_2})^{-1} \in \\
	\hspace{1cm} U \left( p A_x p \cap p C^*(\Z, X, \af) p \right) = CU \left( p A_x p \cap p C^*(\Z, X, \af) p \right) \subset CU(p A_x p) .
	\end{array}
	$$
	So far, we have shown that $\varphi_0^{\sharp}(z_1 \otimes 1_{\T_2}) = \varphi_1^{\sharp}(z_1 \otimes 1_{\T_2})$. In the same way, it follows that $\varphi_0^{\sharp}(1_{\T_1} \otimes z_2) = \varphi_1^{\sharp}(1_{\T_1} \otimes z_2)$.

	According to \cite[Theorem 10.10]{Lin1}, we conclude that $\varphi_0$ and $\varphi_1$ are approximately unitarily equivalent. Then there exists a unitary $w_2 \in p A_x p$ such that
	$$ \norm{w_1^* u^M z_i u^{-M} w_1 - w_2 z_i p w_2^*} < \ep - \norm{u^M z_i p u^{-M} - z_i q} . $$
	Let $w = w_1 w_2$. Then
	$$ \norm{u^M z_i p u^{-M} - z_i q} < \ep \ \text{for} \ i = 1, 2. $$

	We can easily check that
	$$ w^* w = w_2^* w_1^* w_1 w_2 = w_2^* w_2 = p $$
	and
	$$ w w^* = w_1 w_2 w_2^* w_1 = w_1 p w_1^* = q , $$
	which finishes the proof.
\end{proof}

\end{lem}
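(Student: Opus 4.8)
The plan is to recast the conclusion as a statement about approximate unitary equivalence inside the corner $p A_x p$. The hypothesis says that conjugation by $u^M$ --- which is a unitary of $A$ but \emph{not} of $A_x$ --- carries $z_i p$ to something within $\ep$ of $z_i q$; the whole point is to reproduce this conjugation, up to the same tolerance, by a partial isometry living in $A_x$. First I would record that $q = u^M 1_D u^{-M} = 1_{\af^M(D) \times \T^2}$ and, since $\af^M$ moves the clopen set $D$ to $\af^M(D)$, invoke Lemma \ref{unitary conjugation as normalizer} to produce a unitary $w_1 = \sum_{k} 1_{D_k} u^k \in A_x$ with $w_1 p w_1^* = q$. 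Then I would introduce the two injective homomorphisms $\varphi_0, \varphi_1 \colon C(\T_1 \times \T_2) \to p A_x p$, where $\varphi_0(f) = 1_D \otimes f$ is the canonical inclusion and $\varphi_1(f) = w_1^* u^M (1_D \otimes f) u^{-M} w_1$. Because $u^M(1_D \otimes f)u^{-M} \in q A q$ and $w_1^* q A q w_1 = p A p$, both maps land in $p A_x p$, and $\varphi_1 = \mathrm{Ad}(w_1^* u^M) \circ \varphi_0$ with $w_1^* u^M$ a unitary of $pAp$ but not of $p A_x p$.

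The bulk of the work is to check that $\varphi_0$ and $\varphi_1$ agree on the full classification invariant of maps out of $C(\T^2)$, so that \cite[Theorem 10.10]{Lin1} applies. On $K$-theory this is easy: since $\varphi_1$ and $\varphi_0$ differ by conjugation by a unitary of $A$, one has $(\varphi_0)_{*i} = (\varphi_1)_{*i}$ in $K_i(pAp)$, and because the corner versions of Lemmas \ref{injectivity of K_0} and \ref{injectivity of K_1} make $j_{*i} \colon K_i(p A_x p) \to K_i(pAp)$ injective, equality descends to $K_i(p A_x p)$. For traces, conjugation invariance gives $\tau(\varphi_1(f)) = \tau(\varphi_0(f))$ for every $\tau \in T(pAp)$, and the identification $T(pAp) = T(p A_x p)$ (restriction of traces across the large subalgebra) then yields agreement on all tracial states of $p A_x p$.

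The delicate point --- and the step I expect to be the main obstacle --- is matching the determinant/$U(\cdot)/CU(\cdot)$ invariant $\varphi_0^{\sharp}, \varphi_1^{\sharp}$ of Definition \ref{defn of sharp morphism} on the coordinate unitaries. Here I would compute directly: applying $u^M$ rotates $z_1$ to $z_1 \cdot e^{2\pi i s}$ for an accumulated $s \in C(X, \R)$ while moving $1_D$ to $1_{\af^M(D)}$, and conjugating further by $w_1 = \sum_k 1_{D_k} u^k$ (which involves only the Cantor coordinate and $u$) returns the support to $D$ and replaces the phase, so that $\varphi_1(z_1 \otimes 1_{\T_2}) = 1_D \otimes (z_1 \cdot e^{2\pi i h}) \otimes 1_{\T_2}$ for some $h \in C(X, \R)$. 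Consequently $\varphi_0(z_1 \otimes 1_{\T_2}) \varphi_1(z_1 \otimes 1_{\T_2})^{-1} = 1_D \otimes e^{-2\pi i h} \otimes 1_{\T_2}$ is a unitary depending only on $x$, hence lies in the corner of the AF subalgebra $C^*(\Z, X, \af)_x$; by \cite{HPS} this corner is an infinite dimensional simple unital AF algebra, so Proposition \ref{a handy known fact} forces the element into $CU(p A_x p)$. The same computation handles $1_{\T_1} \otimes z_2$, and since these two unitaries generate the relevant part of $U(C(\T^2))/CU(C(\T^2))$ (the winding, i.e. $K_1$, directions, the exponential part being controlled by the trace agreement above), we obtain $\varphi_0^{\sharp} = \varphi_1^{\sharp}$.

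With all invariants matched and $p A_x p$ simple, unital, of tracial rank at most one (Corollary \ref{tracial rank of A_x}) with torsion-free $K$-theory (Corollary \ref{tracial rank of A_x is torsion free}), \cite[Theorem 10.10]{Lin1} gives a unitary $w_2 \in p A_x p$ with $\|\varphi_1(z_i) - w_2 \varphi_0(z_i) w_2^*\| < \ep - \|u^M z_i p u^{-M} - z_i q\|$ for $i = 1, 2$. Setting $w = w_1 w_2$, a short calculation gives $w^* w = w_2^* (w_1^* w_1) w_2 = p$ and $w w^* = w_1 (w_2 w_2^*) w_1^* = w_1 p w_1^* = q$, and the triangle inequality combined with the hypothesis yields $\|w z_i p w^* - z_i q\| < \ep$, completing the argument.
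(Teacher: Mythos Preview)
Your proposal is correct and follows essentially the same route as the paper's proof: obtain $w_1 \in A_x$ from Lemma \ref{unitary conjugation as normalizer}, compare the two homomorphisms $\varphi_0, \varphi_1 \colon C(\T^2) \to p A_x p$ on $K$-theory (via the injectivity Lemmas \ref{injectivity of K_0} and \ref{injectivity of K_1}), on traces (via $T(pAp) = T(pA_xp)$), and on $U/CU$ (via the explicit computation landing in the AF corner and Proposition \ref{a handy known fact}), then invoke \cite[Theorem 10.10]{Lin1} and set $w = w_1 w_2$. Your identification of the $U/CU$ step as the delicate point and your justification for why checking on the coordinate unitaries suffices are both on target.
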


\onecm

\begin{lem} \label{closest technical lemma for tracial rank of A}

	We write $X \times \T \times \T$ as $X \times \T_1 \times \T_2$ to distinguish the factors. Let $A$ be the crossed product \ca \ $C^*(\Z, X \times \T_1 \times \T_2, \af \times \mathrm{R}_{\xi} \times \mathrm{R}_{\eta})$ and let $u$ be the implementing unitary of $A$. Let $x \in X$. For any $N \in \N$, any $\ep >0 $ and any finite subset $\CalG \subset C(X \times \T \times \T)$, we have a natural number $M > N$, a clopen neighborhood $U$ of $x$ and a partial isometry $w \in A_x$ (with $A_x$ defined as in Lemma \ref{unitary equivalence in Ax, weaker version}) satisfying the following:

	(1) $\af^{- N + 1}(U), \af^{- N + 2}(U), \ldots, U, \af(U), \ldots, \af^M(U)$ are mutually disjoint, and $\mu(U) < \ep / M$ for all $\af$-invariant probability measure $\mu$,

	(2) $w^* w = 1_U$ and $w w^* = 1_{\af^M(U)}$,

	(3) $u^{-i} w u^i \in A_x$ for $i = 0, 1, \ldots, M - 1$,

	(4) $\norm{w f - f w} < \ep$ for all $f \in \CalG$.

\begin{proof}
	By abuse of notation, we identify $f \in C(X)$ with $f \otimes \id_{\T_1} \otimes \id_{\T_2}$, $g \in C(\T_1)$ with $\id_X \otimes g \otimes \id_{\T_2}$ and $h \in C(\T_2)$ with $\id_X \otimes \id_{\T_2} \otimes h$.
	
	Without loss of generality, we can assume that
	$$ \CalG = \{ f_1, \ldots, f_k, z_1, z_2 \} , $$
	where
	$ f_i \in C(X) \subset C(X \times \T_1 \times \T_2) $ for $i = 1, \ldots, k$ and $z_i(t_i) = t_i$ for $t_i \in \T_i$, $i = 1, 2$.

	There exists a neighborhood $E$ of $x$ such that
	$$ |f_i(x) - f_i(y)| < \ep / 2 $$
	for all $y \in E$ and $i = 1, \ldots, k$. It then follows that for any $y_1, y_2 \in E$ and $i$ such that $1 \leq i \leq k$, we have
	$$ |f_i(y_1) - f_i(y_2)| < \ep. $$

	As $(X, \af)$ is minimal, there exists $M > N$ such that $\af^M(x) \in E$. Let
	$$ K = \max \left\{ M, \Frac{M}{\ep} + 1 \right\} . $$
	It is clear that the points $\af^{-N + 1}(x), \af^{-N + 2}(x), x, \af(x), \ldots, \af^K(x)$ are distinct. Then there exists a clopen set $U$ containing $x$ such that $U \subset E$, $\af^M(U) \subset E$
	and $\af^{-N + 1}(U)$, $\af^{-N + 2}(U)$, $U$, $\af(U)$, $\ldots$, $\af^K(U)$ are disjoint.

	As $\af^{-N + 1}(U), \af^{-N + 2}(U), U, \af(U), \ldots, \af^K(U)$ are disjoint, for every $\af$-invariant probability measure $\mu$, we have $\mu(U) < \ep / M$.

	By Lemma \ref{technical lemma tracial rank of A}, there exists a partial isometry $w \in A_x$ such that $w^* w = 1_U$ and $w w^* = 1_{\af^M(U)}$.

	As $U \subset E$ and $\af^M(U) \subset E$, it follows that $\norm{w f_i - f_i w} < \ep$ for $0 \leq i \leq k$. The fact that $\norm{u^M z_i p u^{-M} - z_i q} < \ep$ implies $\norm{w z_i - z_i w} < \ep$ for $i = 1, 2$. So far, (4) is checked.

	From our construction of $U$, we have (1). The assertion (2) follows from our construction of $w$. Note that $U, \af(U), \ldots, \af^M(U)$ are mutually disjoint. \removeme[DO check this!!!]{We can check that $u^{-i} w u^i  \in A_x$ for $i = 0, \ldots, m - 1$}, thus finishing the proof.
\end{proof}

\end{lem}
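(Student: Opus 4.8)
The plan is to obtain $w$ from Lemma~\ref{technical lemma tracial rank of A}, after choosing $M$ and $U$ so that all of (1)--(4) and, crucially, the hypothesis of that lemma can be arranged at once. First I would reduce $\CalG$ to generators. Since $C(X \times \T_1 \times \T_2)$ is generated by $C(X)$ together with $z_1, z_2$ and their inverses, and since an almost-commutation estimate on generators propagates (with a fixed multiplicative loss) to finite sums and products, I may assume \wolog\ that $\CalG = \{ f_1, \ldots, f_k, z_1, z_2 \}$ with $f_i \in C(X)$, after shrinking $\ep$ by a harmless constant. By continuity of the $f_i$, fix a clopen neighborhood $E$ of $x$ on which each $f_i$ varies by less than $\ep$, i.e.\ $|f_i(x) - f_i(y)| < \ep/2$ for all $y \in E$.

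The key step is the choice of $M$ and $U$, and here I would invoke minimality of the \emph{full} system $(X \times \T_1 \times \T_2, \ph)$, where $\ph = \af \times \mathrm{R}_{\xi} \times \mathrm{R}_{\eta}$, rather than only of $(X, \af)$. Because the $\ph$-orbit of $(x, 0, 0)$ is dense, I can pick $M > N$ making $\ph^M(x, 0, 0)$ as close to $(x, 0, 0)$ as I wish: thus $\af^M(x) \in E$, and the accumulated cocycles $\Xi_M(x) = \sum_{j=0}^{M-1} \xi(\af^j(x))$ and $H_M(x) = \sum_{j=0}^{M-1} \eta(\af^j(x))$ are as close to integers as desired. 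Since $(X,\af)$ is minimal (hence aperiodic), the points $\af^{-N+1}(x), \ldots, \af^{K}(x)$ are distinct for $K := \max\{M, M/\ep + 1\}$, so total disconnectedness of $X$ lets me take a clopen $U \ni x$ with $U \subset E$, $\af^M(U) \subset E$, and $\af^{-N+1}(U), \ldots, \af^{K}(U)$ pairwise disjoint; shrinking $U$ using continuity of $\Xi_M, H_M$ keeps $|e^{2\pi i \Xi_M} - 1|$ and $|e^{2\pi i H_M} - 1|$ below $\ep$ on $U$. The disjoint tower then has height $K + N > M/\ep$, so $\af$-invariance of any $\mu$ forces $\mu(U) \le 1/(K+N) < \ep/M$, which is (1).

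Set $p = 1_U$ and $q = u^M p u^{-M} = 1_{\af^M(U)}$. Conjugation by $u^M$ rotates the torus coordinates by $\Xi_M$ and $H_M$, so $\norm{u^M z_i p u^{-M} - z_i q}$ equals the supremum over the relevant fibre of $|e^{2\pi i \Xi_M} - 1|$ (resp.\ $|e^{2\pi i H_M} - 1|$), which is $< \ep$ by the near-integer condition. This verifies the hypothesis of Lemma~\ref{technical lemma tracial rank of A}, which yields a partial isometry $w \in A_x$ with $w^* w = 1_U$, $w w^* = 1_{\af^M(U)}$ (giving (2)) and $\norm{w z_i p w^* - z_i q} < \ep$. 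Writing $w z_i - z_i w = (w z_i p w^* - z_i q)\,w$ gives the $z_i$ part of (4), while $U, \af^M(U) \subset E$ gives $\norm{w f_i - f_i w} < \ep$ for the $f_i$. For (3) I would track supports: $w$ is built (via Lemma~\ref{unitary conjugation as normalizer} together with a correction in $1_U A_x 1_U$) from levels of the tower, $w = \sum_j 1_{D_j} u^j$, and conjugating by $u^{-i}$ replaces each $D_j$ by $\af^i(D_j)$; for $0 \le i \le M-1$ these translated supports remain of the admissible form relative to the orbit of $x$, so each $u^{-i} w u^i$ stays in $A_x$. This is routine bookkeeping once the signs of Lemma~\ref{partial isometry in the desired form} are kept consistent.

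I expect the main obstacle to be precisely the verification of the hypothesis of Lemma~\ref{technical lemma tracial rank of A}: a single return time $M$ must simultaneously bring $\af^M(x)$ back near $x$ (to control the $f_i$) \emph{and} make the rotation cocycles $\Xi_M(x), H_M(x)$ nearly integral (to control the $z_i$). This cannot be done from minimality of $(X, \af)$ alone, which controls only the base return; it is exactly here that minimality of the product system $(X \times \T_1 \times \T_2, \ph)$ is indispensable, since density of the orbit of $(x,0,0)$ delivers both returns at once. Everything else — the measure estimate and the support tracking for (3) — is comparatively mechanical.
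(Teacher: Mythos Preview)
Your proposal follows the same architecture as the paper's proof: reduce $\CalG$ to $\{f_1,\ldots,f_k,z_1,z_2\}$, pick a small clopen neighborhood $E$ of $x$ controlling the $f_i$, choose a return time $M>N$ and a clopen $U\subset E$ with a long disjoint tower, read off the measure bound, and then invoke Lemma~\ref{technical lemma tracial rank of A} to produce $w$. On (1), (2), (4) and the hand-waved (3) you and the paper proceed identically.

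The one substantive difference is in your favor. The paper selects $M$ using only minimality of $(X,\af)$ (so that $\af^M(x)\in E$) and then simply \emph{asserts} ``the fact that $\norm{u^M z_i p u^{-M} - z_i q}<\ep$'' when applying Lemma~\ref{technical lemma tracial rank of A}. As you correctly observe, that norm equals $\sup_{y\in U}|e^{2\pi i\,\Xi_M(y)}-1|$ (resp.\ with $H_M$), and base minimality alone says nothing about the accumulated rotation cocycles. Your use of minimality of the full system $(X\times\T_1\times\T_2,\af\times\mathrm{R}_\xi\times\mathrm{R}_\eta)$, via density of the orbit of $(x,0,0)$, is exactly what is needed to force $\Xi_M(x)$ and $H_M(x)$ close to integers simultaneously with $\af^M(x)\in E$; a further shrink of $U$ by continuity then gives the uniform bound. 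So your argument is the paper's argument with the missing justification of Lemma~\ref{technical lemma tracial rank of A}'s hypothesis supplied.
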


\onecm

\begin{dfn}

	Let $\CalC$ be a category of unital separable \ca s. A separable simple \ca \, $A$ is called $\CalC$-Popa if for every finite subset $\CalF \subset A$ and $\ep > 0$, there exists a nonzero projection $p \in A$ and a unital subalgebra $B$ of $p A p$ (with $1_B = p$) such that $B \in \CalC$ and

	1) $\norm{[x, p]} \leq \ep$ for all $x \in \CalF$,

	2) $p \cdot x \cdot p \in _{\ep} B$ for all $x \in \CalF$.

\end{dfn}

\onecm

\begin{lem} \label{Popa-Popa}

	Let $\CalC$ be a category of unital separable \ca s.  Let $A$ be a separable simple \ca. If for every finite set $\CalF \subset A$ and $\ep > 0$, there exists a nonzero projection $p \in A$ and a unital subalgebra $B$ of $p A p$ such that $B$ is $\CalC$-Popa and

	1) $\norm{[x, p]} \leq \ep$ for all $x \in \CalF$,

	2) $p x p \in _{\ep} B$ for all $x \in \CalF$,
	
	\noindent then $A$ is $\CalC$-Popa.

\begin{proof}
	For any $\ep > 0$ and any finite subset $\CalF \subset A$, we can find a subalgebra $B$ such that $B$ is $\CalC$-Popa and

	1) $\norm{[x, 1_B]} \leq \ep$ for all $x \in \CalF$,

	2) $1_B \cdot x \cdot 1_B \subset _{\ep} B$ for all $x \in \CalF$.

	Use $1_B \CalF 1_B$ to denote the set $\{ 1_B x 1_B \colon x \in \CalF \}$. As $1_B \cdot x \cdot 1_B \in _{\ep} B$, for every $x \in \CalF$, choose an element $y_x \in B$ satisfying $\norm{y_x - 1_B \cdot x \cdot 1_B} \leq \ep$. Use $\CalG$ to denote $\{ y_x \colon x \in \CalF \}$ with $y_x$ as described.

	As $B$ is $\CalC$-Popa, we can find $E \subset B$ such that $E \in \CalC$ and

	a) $\norm{[1_E, y_x]} \leq \ep$ for all $y_x \in \CalG$,

	b) $1_E \cdot y_x \cdot 1_E \in _{\ep} 1_E$ for all $y_x \in \CalG$.

	We then check that
	\vspace{-6mm}
	$$
	 \norm{1_E \cdot y_x - y_x \cdot 1_E} \approx_{2 \ep} \norm{1_E \cdot 1_B \cdot x \cdot 1_B - 1_B \cdot x \cdot 1_B \cdot 1_E} \approx_{2 \ep} \norm{1_E \cdot x - x \cdot 1_E}.
	$$
	It then follows that
	$$ \norm{1_E \cdot x - x \cdot 1_E} \approx_{4 \ep} \norm{1_E \cdot y_x - y_x \cdot 1_E} . $$
	As $\norm{[1_E, y_x]} \leq \ep$, we get $\norm{[x, 1_E]} \leq 5 \ep$.

	For any $x \in A$, we have
	\begin{align*}
	\dist(1_E \cdot x \cdot 1_E, E) & = \dist(1_E \cdot (1_B \cdot x \cdot 1_B) \cdot 1_E, E) \\
		& \approx_{\ep} \dist(1_E \cdot y_x \cdot 1_E, E)) \\
		& \approx_{\ep} 0.
	\end{align*}
	Then it is clear that $1_E \cdot x \cdot 1_E \in_{2 \ep} E$.

	Thus for every finite subset $\CalF \subset A$ and $\ep > 0$, we can find the subalgebra $E$ of $A$ as described above such that $E \in \CalC$ and

	1) $\norm{[x, 1_E]} \leq 5 \ep$ for all $x \in \CalF$,

	2) $1_E \cdot x \cdot 1_E \in _{2 \ep} E$ for all $x \in \CalF$,
	
	\noindent which shows that $A$ is $\CalC$-Popa.
\end{proof}

\end{lem}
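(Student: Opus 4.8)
The plan is to read this as a two-step (transitivity) approximation: approximate $\CalF$ first by the $\CalC$-Popa subalgebra $B\subset pAp$, then approximate the resulting elements by a genuine member $E$ of $\CalC$ sitting inside $B$, and finally compose the two approximations with $\ep$ chosen small enough to absorb the accumulated constants. The output $E$, together with its unit $1_E$, will be exactly the data witnessing that $A$ is $\CalC$-Popa.

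First I would fix a finite subset $\CalF\subset A$ and $\ep>0$ and apply the hypothesis to produce a nonzero projection $p$ and a unital subalgebra $B\subseteq pAp$ (so $1_B=p$) which is itself $\CalC$-Popa and satisfies $\norm{[x,p]}\le\ep$ and $pxp\in_{\ep}B$ for every $x\in\CalF$. For each $x\in\CalF$ I would select $y_x\in B$ with $\norm{y_x-pxp}\le\ep$ and set $\CalG=\{y_x\colon x\in\CalF\}$, a finite subset of $B$. Applying the $\CalC$-Popa property of $B$ to $\CalG$ and $\ep$ then gives a nonzero projection $1_E\in B$ and a unital subalgebra $E\subseteq 1_E B 1_E$ with $E\in\CalC$, $\norm{[1_E,y_x]}\le\ep$, and $1_E y_x 1_E\in_{\ep}E$ for all $y_x\in\CalG$. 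Since $1_E\le 1_B=p$, the algebra $E$ is automatically a unital subalgebra of $1_E A 1_E$, so it is the right sort of object.

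The main work, and the one genuine subtlety, is to transfer the two estimates from $\CalG$ back to $\CalF$: because $1_E$ is only a subprojection of $p$ and not of the unit of $A$, one has $1_E p=1_E=p\,1_E$, and every stray factor $p$ adjacent to $x$ must be moved past $x$ at a cost of $\norm{[x,p]}\le\ep$. For the commutator bound I would chain the substitution $y_x\approx_{\ep}pxp$ on each side of $1_E$, then use $1_E 1_B=1_E=1_B 1_E$ to rewrite $1_E\,pxp$ as $1_E x p$ and $pxp\,1_E$ as $p x 1_E$, and finally replace $1_E x p$ by $1_E x$ and $p x 1_E$ by $x 1_E$ using $\norm{[x,p]}\le\ep$; tallying the four $\ep$-errors gives $\norm{1_E x-x 1_E}\approx_{4\ep}\norm{1_E y_x-y_x 1_E}$, whence $\norm{[x,1_E]}\le 5\ep$. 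For the approximation estimate I would use $1_E=1_E p$ to write $1_E x 1_E=1_E(pxp)1_E$, replace $pxp$ by $y_x$ at a cost of $\ep$, and then apply condition (b) to land $1_E y_x 1_E$ within $\ep$ of $E$, giving $1_E x 1_E\in_{2\ep}E$.

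Finally, since $\CalF$ and $\ep$ were arbitrary, running the construction above with $\ep$ replaced by $\ep/5$ produces, for every finite $\CalF$ and every tolerance, a nonzero projection $1_E$ and a unital subalgebra $E\in\CalC$ of $1_E A 1_E$ with $\norm{[x,1_E]}\le\ep$ and $1_E x 1_E\in_{\ep}E$ (the latter because $2\ep/5\le\ep$); this is precisely the definition of $A$ being $\CalC$-Popa. I expect the only delicate point to be the bookkeeping of constants in the commutator estimate, where each passage among $pxp$, $1_E x p$, and $1_E x$ must be charged correctly either to $\norm{y_x-pxp}\le\ep$ or to $\norm{[x,p]}\le\ep$.
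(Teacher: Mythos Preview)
Your proposal is correct and follows essentially the same approach as the paper: both apply the hypothesis to get $B$, pass to approximants $y_x\in B$, invoke the $\CalC$-Popa property of $B$ to obtain $E\in\CalC$, and then chain the same $4\ep$ worth of errors (two from $y_x\approx_\ep pxp$ and two from $\norm{[x,p]}\le\ep$ via $1_E p=1_E$) to reach $\norm{[x,1_E]}\le 5\ep$ and $1_E x 1_E\in_{2\ep}E$. The only cosmetic difference is that you explicitly rescale by $\ep/5$ at the end, whereas the paper is content to conclude directly from the $5\ep,2\ep$ bounds.
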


\onecm

	This following is a technical result that will be needed later.

\vspace{2mm}

\begin{prp} \label{tech prop 1}

	Let $A$ be a \ca. For every $a \in A_{sa}$ such that $\norm{a - a^2} \leq \delta < \frac{1}{4}$, there exists a projection $p \in C^*(a)$ such that $\norm{p - a} \leq \sqrt{\delta}$.

\begin{proof}
	Just refer \cite[Lemma 2.5.5]{Lin6}.
\end{proof}

\end{prp}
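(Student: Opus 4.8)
The plan is to prove this by continuous functional calculus, exploiting that the hypothesis $\norm{a - a^2} \le \delta$ forces the spectrum of $a$ to avoid a neighborhood of $1/2$. Since $a$ is self-adjoint, $\spec(a)$ is a compact subset of $\R$ and $\norm{a - a^2} = \sup_{t \in \spec(a)} |t - t^2|$; thus the hypothesis says precisely that $\spec(a) \subseteq S_\delta := \{ t \in \R \colon |t - t^2| \le \delta \}$.

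First I would analyze the real set $S_\delta$. Writing $t - t^2 = t(1-t)$, whose maximum on $\R$ equals $1/4$ and is attained at $t = 1/2$, the condition $\delta < 1/4$ guarantees that $S_\delta$ splits as a disjoint union of two closed intervals, $S_\delta = S_0 \sqcup S_1$, with $0 \in S_0$ and $1 \in S_1$, separated by the open gap $(t_-, t_+)$, where $t_\pm = \tfrac{1}{2}(1 \pm \sqrt{1 - 4\delta})$ are the roots of $t - t^2 = \delta$. Concretely $S_0 = [\tfrac{1}{2}(1 - \sqrt{1+4\delta}),\, t_-]$ and $S_1 = [t_+,\, \tfrac{1}{2}(1 + \sqrt{1+4\delta})]$; note that the symmetry $t \mapsto 1 - t$ preserves $S_\delta$ and interchanges $S_0$ and $S_1$.

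Next I would define $\chi \colon S_\delta \to \{0, 1\}$ to be $0$ on $S_0$ and $1$ on $S_1$. Since $S_0$ and $S_1$ are disjoint compact sets, $\chi$ is continuous on $S_\delta$, hence on $\spec(a)$; and since $0 \in S_0$ gives $\chi(0) = 0$, the element $p := \chi(a)$ lies in $C^*(a)$. As $\chi$ is real-valued and $\chi^2 = \chi$ on $S_\delta$, the element $p$ is a projection. It then remains only to estimate $\norm{p - a} = \sup_{t \in \spec(a)} |\chi(t) - t| \le \sup_{t \in S_\delta} |\chi(t) - t|$.

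Finally, the main computational step is the bound $\sup_{t \in S_\delta}|\chi(t) - t| \le \sqrt{\delta}$. On $S_0$ this quantity is $\sup_{t \in S_0} |t| = \max\{\, t_-,\ \tfrac{1}{2}(\sqrt{1+4\delta} - 1)\,\}$, the maximum of $|t|$ being attained at an endpoint of the interval. Squaring the two candidate inequalities $\tfrac{1}{2}(1 - \sqrt{1-4\delta}) \le \sqrt{\delta}$ and $\tfrac{1}{2}(\sqrt{1+4\delta} - 1) \le \sqrt{\delta}$ reduces the first to the elementary inequality $2\delta \le \sqrt{\delta}$ (equivalently $\delta \le 1/4$), which holds by hypothesis, and reduces the second to $0 \le 4\sqrt{\delta}$, which is automatic. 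By the symmetry $t \mapsto 1 - t$ the identical bound holds on $S_1$ for $|1 - t|$. This gives $\norm{p - a} \le \sqrt{\delta}$, completing the proof. The only delicate part is bookkeeping of these endpoint estimates, but each is elementary once $S_\delta$ has been described explicitly.
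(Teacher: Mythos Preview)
Your proof is correct and is exactly the standard functional-calculus argument one expects here; the paper itself gives no proof but simply cites Lemma~2.5.5 of Lin's book, whose content is essentially what you have written out. The only cosmetic point is that your careful endpoint analysis of $S_\delta$ is more explicit than necessary---once you know $\spec(a)$ misses $(t_-,t_+)$, the bound $|\chi(t)-t|^2 \le |t||1-t| = |t-t^2| \le \delta$ for $t \in S_\delta$ gives $\norm{p-a}\le\sqrt{\delta}$ in one line---but your version is perfectly fine.
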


\begin{thm} \label{tracial rank of A without rigidity}

	Let $X$ be the Cantor set and let $\af \times \mathrm{R}_{\xi} \times \mathrm{R}_{\eta}$ be a minimal action on $X \times \T \times \T$. Use $A$ to denote the crossed product \ca \ of the minimal system $(X \times \T \times \T, \af \times \mathrm{R}_{\xi} \times \mathrm{R}_{\eta})$. Then $\mathrm{TR}(A) \leq 1$.

\begin{proof}
	According to \cite[Lemma 4.3]{HLX}, for simple \ca \ $A$, if for every $\ep > 0$, $c \in A_+ \setminus \{ 0 \}$ and finite subset $\CalF \subset A$, there exists a nonzero projection $p$ and a unital subalgebra $B$ of $p A p$ such that $\mathrm{TR}(B) \leq 1$ and

	1) $\norm{[x, p]} \leq \ep$ for all $x \in \CalF$,

	2) $\text{dist}(p \cdot x \cdot p, B) \leq \ep$ for all $x \in \CalF$,

	3) $1 - p \preceq c$ as in Definition \ref{projection M-vN equivalent to a projection in hereditary subalgebra of a positive element}. That is, $1 - 1_B$ is Murray-von Neumann equivalent to
	
	a projection in $\mathrm{Her}(c)$,

	\noindent then it follows that $\mathrm{TR}(A) \leq 1$.

	Let $A_x$ be as defined in Lemma \ref{dfn of Ax}. According to Lemma \ref{tracial rank of A_x}, $\mathrm{TR}(A_x) = 1$. If we can find a projection $e \in A_x$ such that $B = e A_x e$ satisfies the previous three conditions, then we are done.

	As $A$ is generated by $C(X \times \T \times \T)$ and the implementing unitary $u$, we can assume that the finite set is $\CalF \cup \{ u \}$ with $\CalF \subset C(X \times \T \times \T)$.

	Choose $N \in \N$ such that $2 \pi / N < \ep$ and let
	$$ \CalG = \bigcup_{i = 0}^{N - 1} u^i \CalF u^{-i} . $$
	According to Lemma \ref{closest technical lemma for tracial rank of A}, with respect to $\CalG$ and $\ep$ above, we can find $M > N$, a clopen neighborhood of $x$ and a partial isometry $w \in A_x$ satisfying $w^* w = 1_U$, $w w^* = 1_{\af^M(U)}$ and $\norm{[w, f]} < \ep$ for all $f \in \CalF$.

	Let $p = 1_U$ and $q = 1_{\af^M(U)}$. For $t \in [0, \pi/2]$, define
	$$ P(t) = p \cos^2 t + \sin t \cos t (w + w^*) + q \sin^2 t . $$
	As $p q = 0$ and $p$, $q$ are Murray-von Neumann equivalent via $w$, it follows that $t \mapsto P(t)$ is a path of projections with $P(0) = p$ and $P(\pi/2) = q$.

	Define
	$$e = 1 - \left( \sum_{i = 0}^{M - N} u^i p u^{-i} + \sum_{i = 1}^{N - 1} u^{-i} P(i \pi / 2 N) u^i \right) . $$
	According to Lemma \ref{closest technical lemma for tracial rank of A}, $u^{-i} w u^i \in A_x$ for $i = 0, \ldots, m - 1$. It is clear that $e \in A_x$. \removeme[QQQQ. Need more details.]{It follows that $e$ is a projection.}

	We first show that for $e \in A_x$ above, the following hold.

	1) $\norm{[x, e]} \leq \ep$ for all $x \in \CalF \cup \{ u \}$;   \hfill  (C1)

	2) $\text{dist}(e x e, e A_x e) \leq \ep$ for all $x \in \CalF \cup \{ u \}$. \hfill (C2)

	For the part of (C1) involving $u$, note that
	\begin{align*}
	    u e u^* - e  & = 1 - u \left( \displaystyle\sum_{i = 0}^{M - N} u^i p u^{-i} + \sum_{i = 1}^{N - 1} u^{-i} P(i \pi / 2 N) u^i \right) u^* \vspace{2mm}  \\
                    & \displaystyle \hspace{2cm} - \left( 1 -  \left(\sum_{i = 0}^{M - N} u^i p u^{-i} + \sum_{i = 1}^{N - 1} u^{-i} P(i \pi / 2 N) u^i \right) \right) \vspace{3mm} \\
                    & = \displaystyle - \sum_{i = 1}^{M - N + 1} u^i p u^{-i} + \sum_{i = 0}^{M - N} u^i p u^{-i} +  \sum_{i = 1}^{N - 1} u^{-i} P(i \pi / 2 N) u^i \vspace{2mm} \\
                     & \displaystyle \hspace{3cm} - \sum_{i = 0}^{N - 2} u^{-i} P((i + 1) \pi / 2 N) u^i \vspace{3mm} \\
                     & = p - u^{M - N + 1} p (u^*)^{M - N + 1} +  (u^*)^{N - 1} P((N - 1) \pi / 2 N) u^{N - 1} - P(\pi / 2 N) \vspace{2mm} \\
                    &  \displaystyle \hspace{3cm} + \sum_{i = 1}^{N - 2} u^{-i} ( P( i \pi / 2 N) - P((i + 1) \pi / 2 N) ) u^i \vspace{2mm} \\
                    & = \displaystyle p - P(\pi / 2 N) +  u^{-(N - 1)} P((N - 1) \pi / 2 N) u^{N - 1} - u^{M - N + 1} p u^{-(M - N + 1)} \vspace{2mm}  \\
                    & \displaystyle \hspace{3cm} + \sum_{i = 1}^{N - 2} u^{-i} ( P( i \pi / 2 N) - P((i + 1) \pi / 2 N) ) u^i  .
	\end{align*}

	\noindent \removeme[QQQQ Show all the details.]{As $2 \pi / N < \ep$, we get $\norm{u e u^* - e} < \ep$. It then follows that $\norm{u e - e u} < \ep $.} By Lemma \ref{closest technical lemma for tracial rank of A}, $\norm{f e - e f} < \ep$ for all $f \in \CalF$. So far, we have checked (C1).

	For $f \in \CalF \subset C(X \times \T \times \T)$, as $f \in A_x$, we get $e f e \in e A_x e$. As $e u \in A_x$, it is clear that $e u e = e (e u) e \in e A_x e$. Thus we have checked (C2).

\vspace{2mm}

	Let $\CalC$ be the set of all the unital separable \ca s $C$ such that there exist $N \in \N$ and one dimensional finite CW complexes $X_i$ and $d_i \in \N$ with $1 \leq i \leq N$ and
	$$ C \cong \bigoplus_{n = 1}^N M_{d_n} \left( C( X_n ) \right) . $$
	Note that $\ep$ can be chosen to be arbitrarily small, and also note that $e A_x e$ has tracial rank no more than one, which implies that $e A_x e$ is $\CalC$-Popa.

	By Lemma \ref{Popa-Popa}, $A$ is also $\CalC$-Popa. According to \cite[Lemma 3.6.6]{Lin6}, $A$ has property (SP). For the given element $c \in A_+$, there exists a non-zero projection $q \in \mathrm{Her}(c)$. Let $\delta_0 = \inf \{ \tau(q) \colon \tau \in T(A) \}$. As $A$ is simple and $q \neq 0$, we get $\tau(q) > 0$ for all $\tau \in T(A)$. As $T(A)$ is a weak* closed subset of the unit ball of $A\sp*$, noting that the unit ball of $A\sp*$ is weak* compact by Alaoglu's Theorem, it follows that $T(A)$ is also compact. Thus $\delta_0 > 0$.

	Without loss of generality, we can assume that $\ep < \min \{1, \frac{1}{8} \delta_0, \frac{1}{(40 \delta_0)^2} \}$ and $q \in \CalF$.


	It remains to show that $1 - e$ is Murray-von Neumann equivalent to a projection in $\mathrm{Her}(c)$.

	As $q \in \CalF$, we have
	$$ \norm{[q, e]} \leq \ep \ \text{and} \ \text{dist}(e q e, e A_x e) \leq \ep . $$
	We can find $b \in (e A_x e)_{sa}$ such that $\norm{e q e - b} \leq \ep$. Note that $\norm{[q, e]} \leq \ep$ implies that $\norm{(e q e)^2 - e q e} \leq \ep$.  According to Proposition \ref{tech prop 1}, there exists a projection $q' \in A$ such that $\norm{q' - e q e} \leq \sqrt{\ep}$ and $q' \preceq e q e$ as in Definition \ref{projection M-vN equivalent to a projection in hereditary subalgebra of a positive element}.

	Note that we have
	\begin{align*}
	\norm{b^2 - b} & \leq \norm{b^2 - (e q e)^2} + \norm{(e q e)^2 - e q e} + \norm{e q e - b} \\
		& \leq 3 \ep + \ep + \ep \\
		& = 5 \ep.
	\end{align*}
	By Proposition \ref{tech prop 1} again, there exists a projection $p \in e A_x e$ such that
	$$ \norm{p - b} \leq \sqrt{5 \ep} \ \text{and} \ [p] \leq [b] . $$
	As
	$$ \norm{p - q'} \leq \norm{p - b} + \norm{b - e q e} + \norm{e q e - q'} \leq \sqrt{5 \ep} + \ep + \sqrt{\ep} , $$
	it follows that $[p] = [q']$.
	As
	$$ q' \preceq e q e \ \text{and} \ e q e \preceq q , $$
	we conclude that $p \lesssim q$ in $A$.

\vspace{4mm}


	Note that
	$$ q = e q e + (1 - e) q e + e q (1 - e) + (1 - e) q (1 - e) . $$
	For every $\tau \in T(A)$, we have
	$$ \tau(q) = \tau(e q e) + \tau((1 - e) q (1 - e)) + \tau( (1 - e) q e + e q (1 - e) ) . $$
	According to (C1) and our choice of $\ep$, we have
	$$ \tau(e q e) + \tau((1 - e) q (1 - e)) > \tau(q) - \ep > \frac{1}{2} \tau(q) . $$
	As $\tau$ is a tracial state and $e$ is a projection,
	$$ \tau((1 - e) q (1 - e)) \leq \tau((1 - e) 1 (1 - e)) = \tau(1 - e) . $$
	\removeme[More details??]{Note that $\tau(1 - e) < \frac{1}{4} \tau(q)$} for all $\tau \in T(A)$ (because $\tau(1 - e) < \frac{1}{4} \delta_0$). We can conclude that
	$$ \tau(e q e) > \frac{1}{2} \tau(q) - \tau((1 - e) q (1 - e)) \geq \frac{1}{2} \tau(q) - \tau(1 - e)  > \frac{1}{4} \tau(q) \geq \frac{1}{4} \delta_0 > 0 . $$
	
	
	

	In our construction, note that
	$$ \norm{p - e q e} \leq \norm{p - b} + \norm{b - e q e} \leq \sqrt{5 \ep} + \ep . $$
	It follows that
	$$ \tau(p) \geq \frac{1}{4} \delta_0 - (\sqrt{5 \ep} + \ep) \geq \frac{1}{8} \delta_0 \ \ \text{for all} \ \tau \in T(A). $$
	According to our construction, we have
	$$ \tau(1 - e) < M \cdot \frac{\ep}{M} = \ep \leq \frac{1}{8} \delta_0 \leq \tau(p) $$
	for all $\tau \in T(A)$, \removeme[why we can apply Blackardar's comparability?]{which then implies that $1 - e \lesssim p$.} As $[p] \leq [c]$ (as in Definition \ref{projection M-vN equivalent to a projection in hereditary subalgebra of a positive element}), we get $[1 - e] \leq [c]$ (as in Definition \ref{projection M-vN equivalent to a projection in hereditary subalgebra of a positive element}), which finishes the proof.
\end{proof}

\end{thm}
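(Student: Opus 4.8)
The plan is to establish $\mathrm{TR}(A) \le 1$ through the local approximation criterion of \cite[Lemma 4.3]{HLX}: since $A$ is simple, it suffices to produce, for every $\ep > 0$, every $c \in A_+ \setminus \{0\}$ and every finite $\CalF \subset A$, a nonzero projection $e$ and a unital subalgebra $B \subseteq eAe$ with $\mathrm{TR}(B) \le 1$ so that $e$ almost commutes with $\CalF$, each $exe$ lies within $\ep$ of $B$, and $1 - e \preceq c$. The candidate for $B$ is a corner $eA_xe$ of the large subalgebra $A_x$: by Corollary \ref{tracial rank of A_x} we have $\mathrm{TR}(A_x) \le 1$, and since $A_x$ is simple (Lemma \ref{simplicity of A_x}) a unital corner of it again has tracial rank at most one.

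First I would reduce the finite set. Since $A$ is generated by $C(X \times \T \times \T)$ and the implementing unitary $u$, I may take $\CalF \cup \{u\}$ with $\CalF \subset C(X \times \T \times \T)$. To tame the commutator with $u$, enlarge to $\CalG = \bigcup_{i=0}^{N-1} u^i \CalF u^{-i}$ for $N$ with $2\pi/N < \ep$, and invoke Lemma \ref{closest technical lemma for tracial rank of A} to obtain an $M > N$, a small clopen neighborhood $U$ of $x$, and a partial isometry $w \in A_x$ with $w^*w = 1_U$, $ww^* = 1_{\af^M(U)}$, $u^{-i} w u^i \in A_x$ for $0 \le i \le M-1$, $\mu(U) < \ep/M$ for every invariant measure $\mu$, and $\norm{[w,f]} < \ep$ on $\CalG$.

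Next I build $e$. Using the mutually orthogonal tower projections $u^i 1_U u^{-i}$ together with the path of projections $P(t) = p\cos^2 t + \sin t \cos t\,(w + w^*) + q\sin^2 t$ joining $p = 1_U$ to $q = 1_{\af^M(U)}$ (a genuine path of projections since $pq = 0$ and $w$ implements $p \sim q$), I set
$$ e = 1 - \left( \sum_{i=0}^{M-N} u^i p u^{-i} + \sum_{i=1}^{N-1} u^{-i} P(i\pi/2N) u^i \right) . $$
A telescoping computation reduces $u e u^* - e$ to two boundary terms plus the consecutive differences $P(i\pi/2N) - P((i+1)\pi/2N)$, whose total norm is controlled by the length of the path and hence by $2\pi/N < \ep$; this yields $\norm{[u,e]} < \ep$. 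Since $U$ and $\af^M(U)$ sit inside a neighborhood on which the $f \in \CalF$ barely vary, $\norm{[f,e]} < \ep$ as well, giving (C1). As $\CalF \subset C(X \times \T^2) \subset A_x$ and $eue = e(eu)e \in eA_xe$ because $eu \in A_x$, condition (C2) is immediate with $B = eA_xe$.

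The main obstacle is the comparison $1 - e \preceq c$, which requires strict comparison of projections by traces. Here I would first observe that the corner construction together with Lemma \ref{Popa-Popa} makes $A$ a $\CalC$-Popa algebra, hence by \cite[Lemma 3.6.6]{Lin6} it has property (SP); thus $\mathrm{Her}(c)$ contains a nonzero projection $q$, and compactness of $T(A)$ (via Alaoglu) gives a uniform trace lower bound $\delta_0 > 0$. After shrinking $\ep$ below $\min\{1, \tfrac{1}{8}\delta_0, (40\delta_0)^{-2}\}$ and placing $q$ in $\CalF$, I extract from $eqe$ via the spectral perturbation Proposition \ref{tech prop 1} a projection $p \in eA_xe$ with $[p] \le [c]$ (in the sense of Definition \ref{projection M-vN equivalent to a projection in hereditary subalgebra of a positive element}) and $\tau(p) \ge \tfrac{1}{8}\delta_0$ for all $\tau$. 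The tower estimate $\tau(1 - e) < M\cdot(\ep/M) = \ep \le \tfrac{1}{8}\delta_0 \le \tau(p)$ then forces $1 - e \preceq p \preceq c$ by comparability. The delicate points to pin down are that $e$ is genuinely a projection — orthogonality of the tower summands and of the interpolating values must be checked — and that all the quantitative choices of $\ep$, $M$, and $\delta_0$ align to license the final trace comparison.
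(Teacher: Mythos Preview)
Your proposal is correct and follows the paper's argument essentially step for step: the same HLX criterion, the same corner $eA_xe$ of the large subalgebra, the same path-of-projections construction of $e$ via Lemma \ref{closest technical lemma for tracial rank of A}, the same telescoping estimate for $\norm{[u,e]}$, the same $\CalC$-Popa/property (SP) route to obtaining $q$ and $\delta_0$, and the same final trace comparison $\tau(1-e) < \ep \le \tfrac{1}{8}\delta_0 \le \tau(p)$. The two ``delicate points'' you flag --- that $e$ is a genuine projection and that the comparability step is justified --- are exactly the places the paper itself leaves as assertions to be checked.
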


\onecm

\removeme[Finish this cor later.]{}








	The following result on the $K$-theory of the crossed product \ca \ above follows from Pimsner-Voiculescu six-term exact sequence.

\begin{prp} \label{K-data of crossed product C-algebra}

	Let $A$ be the crossed product \ca\, of the minimal dynamical system $(X \times \T \times \T, \af \times \mathrm{R}_{\xi} \times \mathrm{R}_{\eta})$. Then
	$$ K_0(A) \cong C(X, \Z^2) / \{f - f \circ \af^{-1} \colon f \in C(X, \Z^2) \} \oplus \Z^2 $$
	and
	$$ K_1(A) \cong C(X, \Z^2) / \{f - f \circ \af^{-1} \colon f \in C(X, \Z^2) \} \oplus \Z^2 . $$

\begin{proof}
	Use $j: C(X \times \T^2) \rightarrow A$ to denote the canonical embedding of $C(X \times \T^2)$ into $A$. We have the Pimsner-Voiculescu six-term exact sequence:
	$$
	\xymatrix{
		K_0(C(X \times \T^2)) \ar@{->}[rr]^{\id_{*0} - \af_{*0}} && K_0(C(X \times \T^2)) \ar@{->}[rr]^{j_{*0}} && K_0(A) \ar@{->}[d] \\
		K_1(A) \ar@{->}[u] && K_1(C(X \times \T^2)) \ar@{->}[ll]_{j_{*1}} && K_1(C(X \times \T^2)) \ar@{->}[ll]_{\id_{*1} - \af_{*1}} .
	}
	$$

	We know that
	$$ K_0(C(\T^2)) \cong \Z^2, \ K_1(C(\T^2)) \cong \Z^2 $$
	and
	$$ K_0(C(X)) \cong C(X, \Z), K_1(C(X))) = 0 . $$
	According to the K\"unneth theorem, $K_0(C(X \times \T^2)) \cong C(X, \Z^2)$ and $K_1(C(X \times \T^2)) \cong C(X, \Z^2)$.

	For $i = 0, 1$, consider the image of $\id_{*i} - \af_{*i}$. They are both isomorphic to
	$$ \{ f - f \circ \af^{-1} \colon f \in C(X, \Z^2) \} . $$

	The kernel of $\id_{*i} - \af_{*i}$ for $i = 0, 1$ is
	$$ \{ f \in C(X, \Z^2) \colon f = f \circ \af \} . $$
	Assume that $f$ is in the kernel of $\id_{*i} - \af_{*i}$ for $i = 0, 1$. Fix $x_0 \in X$. We have $f(\af^{n}(x_0)) = f(x_0)$ for all $n \in \Z$. As $\af$ is a minimal homeomorphism of the Cantor set $X$ and $f$ is continuous, $f$ must be a constant function from $X$ to $\Z^2$. Now we conclude that
	$$ \ker(\id_{*i} - \af_{*i}) \cong \Z^2 . $$

	As the six-term sequence above is exact, we have the short exact sequence:
    $$ 0 \longrightarrow \coker(\id_{*0} - \af_{*0}) \longrightarrow K_0(A) \longrightarrow \ker(\id_{*1} - \af_{*1}) \longrightarrow 0 . $$

	As $\ker(\id_{*i} - \af_{*i}) \cong \Z^2$ and $\Z^2$ is projective, it follows that
	$$ K_0(A) \cong \coker(\id_{*0} - \af_{*0}) \oplus \Z^2 . $$
	As $\coker(\id_{*0} - \af_{*0}) \cong C(X, \Z^2)/\{ f - f \circ \af \colon f \in C(X, \Z^2) \}$, we get
	$$ K_0(A) \cong C(X, \Z^2)/\{ f - f \circ \af \colon f \in C(X, \Z^2) \} \oplus \Z^2 . $$
	Similarly, we get that $K_1(A) \cong C(X, \Z^2)/\{ f - f \circ \af \colon f \in C(X, \Z^2) \} \oplus \Z^2$.
\end{proof}

\end{prp}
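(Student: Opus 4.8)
The plan is to feed the crossed product into the Pimsner--Voiculescu six-term exact sequence and read off $K_*(A)$ from the $K$-theory of the coefficient algebra $C(X \times \T^2)$. First I would record the inputs: since $K_0(C(\T^2)) \cong K_1(C(\T^2)) \cong \Z^2$ (Lemma \ref{K_0 of C(T^2)}) and $K_0(C(X)) \cong C(X, \Z)$, $K_1(C(X)) = 0$, the K\"unneth formula gives $K_0(C(X \times \T^2)) \cong K_1(C(X \times \T^2)) \cong C(X, \Z^2)$. Writing $B = C(X \times \T^2)$, the PV sequence for $A = C^*(\Z, X \times \T^2, \af \times \mathrm{R}_\xi \times \mathrm{R}_\eta)$ is a hexagon whose two horizontal edges are the maps $\id_{*i} - (\af \times \mathrm{R}_\xi \times \mathrm{R}_\eta)_{*i}$ on $C(X, \Z^2)$, the remaining edges being $j_{*i}$ and the index/exponential maps.

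The step I expect to require the most care is identifying the induced automorphism on $K$-theory, since a priori the homeomorphism carries the nontrivial cocycle $\mathrm{R}_\xi \times \mathrm{R}_\eta$. Here I would use the homotopy $\varphi_s(x, t_1, t_2) = (\af(x), t_1 + s \xi(x), t_2 + s \eta(x))$, $s \in [0, 1]$, which is a path of homeomorphisms connecting $\af \times \mathrm{R}_\xi \times \mathrm{R}_\eta$ (at $s = 1$) to $\af \times \id_{\T^2}$ (at $s = 0$). As homotopic automorphisms induce equal maps on $K$-theory, $(\af \times \mathrm{R}_\xi \times \mathrm{R}_\eta)_{*i}$ equals $\af_{*i} \otimes \id$, which under the K\"unneth identification is exactly $f \mapsto f \circ \af^{-1}$ on $C(X, \Z^2)$. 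Consequently the image of $\id_{*i} - \af_{*i}$ is $\{ f - f \circ \af^{-1} : f \in C(X, \Z^2) \}$, so its cokernel is $C(X, \Z^2) / \{ f - f \circ \af^{-1} \}$ for both $i = 0, 1$.

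Next I would compute the kernel of $\id_{*i} - \af_{*i}$, i.e. the functions satisfying $f = f \circ \af$. Fixing any $x_0 \in X$, such an $f$ is constant on the $\af$-orbit of $x_0$; minimality makes this orbit dense, and continuity of $f$ then forces $f$ to be globally constant, so $\ker(\id_{*i} - \af_{*i}) \cong \Z^2$. Finally I would extract from the hexagon the short exact sequence
$$ 0 \longrightarrow \coker(\id_{*0} - \af_{*0}) \longrightarrow K_0(A) \longrightarrow \ker(\id_{*1} - \af_{*1}) \longrightarrow 0 , $$
and since $\ker(\id_{*1} - \af_{*1}) \cong \Z^2$ is free abelian, hence projective, this sequence splits, giving $K_0(A) \cong C(X, \Z^2) / \{ f - f \circ \af^{-1} \} \oplus \Z^2$. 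Running the same argument through the complementary half of the hexagon (with the roles of the two rows interchanged) produces the identical short exact sequence computing $K_1(A)$, since both $K_0(B)$ and $K_1(B)$ equal $C(X, \Z^2)$ and carry the same map $f \mapsto f \circ \af^{-1}$; this yields the stated formula for $K_1(A)$ and completes the proof.
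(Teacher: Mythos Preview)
Your proposal is correct and follows essentially the same route as the paper: Pimsner--Voiculescu, K\"unneth to identify $K_i(C(X\times\T^2))\cong C(X,\Z^2)$, minimality to compute the kernel as $\Z^2$, and projectivity of $\Z^2$ to split the resulting short exact sequence. The one difference is that you explicitly justify, via the homotopy $\varphi_s$, why the induced map on $K$-theory ignores the rotation cocycle and reduces to $f\mapsto f\circ\af^{-1}$; the paper simply writes $\id_{*i}-\af_{*i}$ in the PV diagram without comment, so your extra paragraph fills in a step the paper leaves implicit.
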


\vspace{1cm}

	If we require a certain ``rigidity" condition on the dynamical system $(X \times \T \times \T, \af \times \mathrm{R}_{\xi} \times \mathrm{R}_{\eta})$, then the tracial rank of the crossed product will be zero.

\vspace{5mm}

\begin{dfn} \label{definition of rigidity}

	Let $(X \times \T \times \T, \af \times \mathrm{R}_{\xi} \times \mathrm{R}_{\eta})$ be a minimal dynamical system. Let $\mu$ be an $\af \times \mathrm{R}_{\xi} \times \mathrm{R}_{\eta}$-invariant probability measure on $X \times \T \times \T$. It will induce an $\af$-invariant probability measure on $X$ defined by $\pi(u)(D) = \mu(D \times \T \times \T)$ for every Borel set $D \subset X$. We say that $(X \times \T \times \T, \af \times \mathrm{R}_{\xi} \times \mathrm{R}_{\eta})$ is rigid if $\pi$ gives a one-to-one map between the $\af \times \mathrm{R}_{\xi} \times \mathrm{R}_{\eta}$-invariant probability measures and the $\af$-invariant probability measures.

\end{dfn}

\noindent \textbf{Remark:} For minimal actions on $X \times \T \times \T$ of the type $\af \times \mathrm{R}_{\xi} \times \mathrm{R}_{\eta}$, it is easy to see that $\pi$ always maps the set of $\af \times \mathrm{R}_{\xi} \times \mathrm{R}_{\eta}$-invariant probability measures over $X \times \T \times \T$ onto the set of $\af$-invariant measures over $X$.

\vspace{1mm}

	According to Theorem 4.6 in \cite{Lin-Phillips}, the ``rigidity" condition defined above implies that the crossed product \ca \ has tracial rank zero.
	
\vspace{2mm}

\begin{prp} \label{rigidity implies tracial rank zero for rotation case}

	Let $(X \times \T \times \T, \af \times \mathrm{R}_{\xi} \times \mathrm{R}_{\eta})$ be a minimal dynamical system. If it is rigid, then the corresponding crossed  product \ca \ $C^*(\Z, X \times \T \times \T, \af \times \mathrm{R}_{\xi} \times \mathrm{R}_{\eta})$ has tracial rank zero.

\begin{proof}
	Use $A$ to denote $C^*(\Z, X \times \T \times \T, \af \times \mathrm{R}_{\xi} \times \mathrm{R}_{\eta})$. We will show that
	$$ \rho \colon K_0(A) \longrightarrow \text{Aff}(T(A)) $$
	has a dense range, which will then imply that $\mathrm{TR}(A) = 0$ according to \cite[Theorem 4.6]{Lin-Phillips}.

	For the crossed product \ca \ $B = C^*(\Z, X, \af)$, we know that $B$ has tracial rank zero and $\rho_B \colon K_0(B) \rightarrow T(B)$ has the dense range. According to \cite[Theorem 1.1]{Putnam}, $K_0(A) \cong C(X, \Z) / \{ f - f \circ \af^{-1} \}$. For every $x \in K_0(A)$, we can find $f \in C(X, \Z)$ such that $\hat{x}(\tau) := \tau(x)$ equals $\tau(f) = \int_X f \, \mathrm{d} \mu_{\tau}$.

	As $\af \times \mathrm{R}_{\xi} \times \mathrm{R}_{\eta}$ is rigid, there is a one-to-one correspondence between $(\af \times \mathrm{R}_{\xi} \times \mathrm{R}_{\eta})$-invariant measures and $\af$-invariant measures. In other words, $T(A)$ is homeomorphic to $T(B)$ (as two convex compact sets). Let $h \in C(X)$ be a projection. Then $h \otimes 1_{C(\T \times \T)}$ is a projection in $A$.

	\removeme[Is it necessary?]

	As $\rho_B$ has a dense range in $\text{Aff}{(T(B))}$, we have that $\rho$ has dense range in $\text{Aff}(T(A))$. As $X \times \T \times \T$ is an infinite finite dimensional metric space and $\af \times \mathrm{R}_{\xi} \times \mathrm{R}_{\eta}$ is minimal, according to \cite[Theorem 4.6]{Lin-Phillips}, $C^*(\Z, X \times \T \times \T, \af \times \mathrm{R}_{\xi} \times \mathrm{R}_{\eta})$ has tracial rank zero.
\end{proof}

\end{prp}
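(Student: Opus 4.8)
The plan is to upgrade the bound $\mathrm{TR}(A)\le 1$ obtained in Theorem \ref{tracial rank of A without rigidity} to the equality $\mathrm{TR}(A)=0$ by verifying the hypothesis of \cite[Theorem 4.6]{Lin-Phillips}. Since $X\times\T\times\T$ is an infinite, finite-dimensional compact metric space and $\af\times\mathrm{R}_{\xi}\times\mathrm{R}_{\eta}$ is minimal, that theorem reduces the problem to a single statement: it suffices to show that the canonical pairing $\rho\colon K_0(A)\longrightarrow\mathrm{Aff}(T(A))$ has dense range. So the entire proof is a density argument, and rigidity enters only through its effect on $T(A)$.

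First I would identify $T(A)$ with the simplex of invariant measures. For a minimal system the conditional expectation onto $C(X\times\T\times\T)$ sets up an affine homeomorphism between $T(A)$ and the set of $\af\times\mathrm{R}_{\xi}\times\mathrm{R}_{\eta}$-invariant Borel probability measures on $X\times\T\times\T$. By the remark following Definition \ref{definition of rigidity} the pushforward map $\pi$ onto the $\af$-invariant measures on $X$ is always a continuous affine surjection, and rigidity says it is injective; a continuous affine bijection between compact convex sets is an affine homeomorphism, so $T(A)$ is affinely homeomorphic to $T(B)$, where $B=C^*(\Z,X,\af)$. The key compatibility is at the level of the embedded projections: for a clopen $D\subset X$ the element $1_{D\times\T\times\T}$ is a projection in $A$ coming from $C(X)\subset C(X\times\T\times\T)\subset A$, and for a trace $\tau$ with associated measure $\mu$ one has $\tau(1_{D\times\T\times\T})=\mu(D\times\T\times\T)=\pi(\mu)(D)$. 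Hence, under the identification $T(A)\cong T(B)$, the affine function $\widehat{[1_{D\times\T\times\T}]}\in\mathrm{Aff}(T(A))$ is carried exactly to $\widehat{[1_D]}\in\mathrm{Aff}(T(B))$.

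Next I would transport the density from the Cantor case. The algebra $B$ is the crossed product of a minimal Cantor system, so it has tracial rank zero, and by \cite{Putnam} one has $K_0(B)\cong C(X,\Z)/\{f-f\circ\af^{-1}\}$, a group represented by integer combinations of the classes $[1_D]$ for clopen $D$; consequently $\rho_B(K_0(B))$ is dense in $\mathrm{Aff}(T(B))$ and is spanned by the functions $\widehat{[1_D]}$. Transporting back through the affine homeomorphism $T(A)\cong T(B)$, the functions $\widehat{[1_{D\times\T\times\T}]}$, all of which lie in $\rho(K_0(A))$, are dense in $\mathrm{Aff}(T(A))$. Thus $\rho(K_0(A))$ is dense, and \cite[Theorem 4.6]{Lin-Phillips} yields $\mathrm{TR}(A)=0$.

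The main obstacle is the bookkeeping of the first step rather than any deep estimate: one must check carefully that $T(A)$ really is the invariant-measure simplex, that rigidity promotes the set-theoretic bijection $\pi$ to an affine homeomorphism, and that this homeomorphism intertwines the trace of $1_{D\times\T\times\T}$ in $A$ with the trace of $1_D$ in $B$, so that the two families of affine functions genuinely coincide. Once that identification $\widehat{[1_{D\times\T\times\T}]}=\widehat{[1_D]}$ is secured, density is immediate from the Cantor case. A secondary point to confirm is that the criterion of \cite[Theorem 4.6]{Lin-Phillips} is applicable with only the a priori bound $\mathrm{TR}(A)\le 1$; but that theorem is formulated precisely for minimal crossed products over finite-dimensional spaces and requires nothing beyond the density of $\rho(K_0(A))$ in $\mathrm{Aff}(T(A))$.
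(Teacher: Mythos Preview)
Your proposal is correct and follows essentially the same route as the paper: both reduce to the density criterion of \cite[Theorem 4.6]{Lin-Phillips}, use rigidity to identify $T(A)$ affinely with $T(B)=T(C^*(\Z,X,\af))$, and then push the known density of $\rho_B(K_0(B))$ in $\mathrm{Aff}(T(B))$ through this identification via the projections $1_{D\times\T\times\T}\leftrightarrow 1_D$. If anything, your version is more explicit about why the affine homeomorphism intertwines $\widehat{[1_{D\times\T\times\T}]}$ with $\widehat{[1_D]}$, a point the paper passes over quickly.
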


\vspace{2cm}

\section{Examples} \label{Examples with cocycles being rotations}

	This section contains examples of minimal dynamical systems of type $(X \times \T \times \T, \af \times \mbox{R}_{\xi} \times \mbox{R}_{\eta})$ that is rigid. It also contains a concrete example of a minimal dynamical system of the same type but is not rigid.
	
\vspace{2mm}
	
	We start with a criterion for determining whether a dynamical system of $(X \times \T \times \T, \af \times \mathrm{R}_{\xi} \times \mathrm{R}_{\eta})$ is minimal or not. This result is a special case of the remark of page 582 in \cite{Furstenberg}. The proof here essentially follows that of Lemma 4.2 of \cite{LM1}.

\vspace{2mm}

\begin{lem} \label{induction lemma on minimality}

	Let $Y$ be a compact metric space, and let $\bt \times \mathrm{R}_{\eta}$ be a skew product homeomorphism of $Y \times \T$ with $\bt \in \text{Homeo}(Y)$, $\eta \colon Y \rightarrow \T$ and
	$$  (\bt \times \mathrm{R}_{\eta}) (y, t) = (\bt(y), t + \eta(y)) \ \text{with} \ \T \ \text{identified with} \ \R / \Z . $$
	Then $\bt \times \mathrm{R}_{\eta}$ is minimal if and only if $(Y, \bt)$ is minimal and there exist no $f \in C(Y, \T)$ and non-zero integer $n$ such that
	$$ n \eta = f \circ \bt - f . $$

\end{lem}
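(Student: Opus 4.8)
The plan is to prove both implications, with the forward direction (minimality $\Rightarrow$ the two conditions) being the easy one and the converse requiring a symmetry argument in the spirit of \cite{Furstenberg} and \cite{LM1}.

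First I would dispose of the easy direction by contraposition. If $(Y, \bt)$ is not minimal, then since the projection $\pi \colon Y \times \T \rightarrow Y$ is a factor map onto $(Y, \bt)$ and a minimal system cannot factor onto a non-minimal one, $\bt \times \mathrm{R}_{\eta}$ is not minimal. If instead $n \eta = f \circ \bt - f$ for some nonzero $n \in \Z$ and $f \in C(Y, \T)$, I would exhibit a nonconstant continuous invariant function: define $F \colon Y \times \T \rightarrow \T$ by $F(y, t) = n t - f(y)$. A direct computation using the cohomological equation gives $F \circ (\bt \times \mathrm{R}_{\eta}) = F$, and since $n \neq 0$ the map $t \mapsto n t$ is onto $\T$, so $F$ is surjective; hence $F^{-1}(\{ c \})$ is a proper, nonempty, closed invariant set, and $\bt \times \mathrm{R}_{\eta}$ is not minimal.

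For the converse, assume $(Y, \bt)$ is minimal and $\bt \times \mathrm{R}_{\eta}$ is not minimal; I would produce the forbidden cohomological equation. The key structure is the circle action $\mathrm{R}_s \colon (y, t) \mapsto (y, t + s)$ on the fibre, which commutes with $T := \bt \times \mathrm{R}_{\eta}$. Pick (by Zorn's Lemma) a minimal closed invariant set $M \subseteq Y \times \T$; since $\pi$ is equivariant and $(Y, \bt)$ is minimal, $\pi(M) = Y$, so $M$ meets every fibre. Because $\mathrm{R}_s$ commutes with $T$, each $\mathrm{R}_s(M)$ is again a minimal set, and two minimal sets are either equal or disjoint; hence $H := \{ s \in \T \colon \mathrm{R}_s(M) = M \}$ is a closed subgroup of $\T$ and $\bigcup_{s} \mathrm{R}_s(M) = Y \times \T$. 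If $H = \T$ then $M$ is $\T$-invariant and, meeting every fibre, equals $Y \times \T$, contradicting non-minimality; so $H$ is a proper closed subgroup, that is, $H \cong \Z / n$ for some $n \geq 1$.

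Finally I would quotient by $H$ to reach a graph. Identifying $\T / H$ with $\T$ via $t \mapsto n t$, the factor map $\id \times q$ (with $q \colon \T \rightarrow \T / H$) intertwines $\bt \times \mathrm{R}_{\eta}$ with $\bt \times \mathrm{R}_{n \eta}$, and $M' := (\id \times q)(M)$ is a minimal set for $\bt \times \mathrm{R}_{n \eta}$ whose fibrewise $\T$-stabiliser is trivial (using that $M$ is a union of full $H$-coset fibres, so $(\id \times q)^{-1}((\id \times q)(M)) = M$). Trivial stabiliser forces the translates $\{ \mathrm{R}_s(M') \}_{s \in \T}$ to be pairwise disjoint and to cover $Y \times \T$, which I would use to show each fibre $M'_y$ is a single point $\sigma(y)$; closedness of $M'$ together with compactness of $Y$ then gives $\sigma \in C(Y, \T)$. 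Invariance of the graph of $\sigma$ under $\bt \times \mathrm{R}_{n \eta}$ reads $\sigma(\bt(y)) = \sigma(y) + n \eta(y)$, i.e. $n \eta = \sigma \circ \bt - \sigma$ with $n \neq 0$, contradicting the hypothesis. The main obstacle is this last step: verifying that the trivial-stabiliser minimal set is exactly the graph of a continuous section, namely establishing single-point fibres from the partition property and deducing continuity via the closed graph, which is where the compactness of $Y$ and the circle structure are essential.
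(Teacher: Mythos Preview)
Your proposal is correct and follows the same overall Furstenberg-style strategy as the paper: for the hard direction, both take a proper minimal set $M$, observe that the fibre rotations commute with the skew product, and identify the stabiliser $H=\{s:\mathrm{R}_s(M)=M\}$ as a proper closed subgroup $\Z/n$ of $\T$, then extract a continuous $\sigma$ with $n\eta=\sigma\circ\bt-\sigma$. The difference is only in how the last extraction is carried out. The paper works directly in $Y\times\T$: it uses an orbit-recurrence argument to show that each fibre $M_y$ is closed under the operation $t,t'\mapsto t+m(t'-t)$, deduces that $M_y$ consists of exactly $n$ evenly spaced points, and then observes that the ``multiply-by-$n$'' image $nM$ is the graph of a continuous function. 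Your route instead passes to the quotient $\T\to\T/H\cong\T$, reducing to a minimal set $M'$ for $\bt\times\mathrm{R}_{n\eta}$ with \emph{trivial} stabiliser, where the disjoint-translate partition immediately forces singleton fibres and the closed-graph argument gives continuity. Your reduction is a bit cleaner and avoids the paper's somewhat delicate verification that every fibre has exactly $|H|$ points; conversely, the paper's direct analysis makes the geometric picture of evenly spaced fibres explicit. Either way the argument goes through, and your identification of the single-point-fibre step as the crux is exactly right.
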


\begin{proof}
	Proof of the ``if" part:

	If $(Y, \bt)$ is minimal and there exist no $f \in C(Y, \T)$ and non-zero integer $n$ such that $n \eta = f \circ \bt - f$, we will prove that $\bt \times \mathrm{R}_{\eta}$ is minimal.

	If $\bt \times \mathrm{R}_{\eta}$ is not minimal, then there exists a proper minimal subset $E$ of $Y \times \T$. Let $\pi_Y \colon Y \times \T \rightarrow Y$ be the canonical projection onto $Y$. Note that $\pi_Y \circ (\bt \times \mathrm{R}_{\eta}) = \bt \circ \pi_Y$. It follows that $\pi_Y(E)$ is an invariant subset of $Y$. As $Y$ is compact, so is $\pi_Y(E)$. Since $(Y, \bt)$ is minimal, the closed invariant set $\pi_Y(E)$ must be $Y$.

	Let's consider
	$$ D := \{ t \in \T: (\id_Y \times \mathrm{R}_t) ( E ) = E \} . $$
	As $(\id_Y \times \id_{\T}) ( E ) = E$, the set $D$ is not empty.
%
	Note that $D$ is a subgroup of $\T$. It follows that $D$ is a non-empty subgroup of $\T$ (with $\T$ identified with the quotient group $\R / \Z$).

	If we have $\{ t_n \}_{n \in \N} \subset D$ such that $t_n \rightarrow t$, then for any $\omega \in E$, we have $(\id \times \mathrm{R}_{t_n}) \omega \in E$. Then $t_n \rightarrow t$ implies that $(\id \times \mathrm{R}_{t_n}) w \rightarrow (\id \times \mathrm{R}_{t}) w$. As $E$ is closed, $(\id \times \mathrm{R}_{t}) w \in E$.

	So far, we have shown that if $t_n \in D$ for $n \in \N$ and $t_n \rightarrow t$, then $t \in D$. Note that ``$\{ t_n \}_{n \in \N} \subset D$ and $t_n \rightarrow t$" is equivalent to ``$\{ - t_n \}_{n \in \N} \subset D$ and $-t_n \rightarrow -t$". It follows that $- t \in D$. In other words, we have
	$$ (\id \times \mathrm{R}_{t}) ( E ) \subset E \ \text{and} \ (\id \times \mathrm{R}_{-t}) ( E ) \subset E . $$
	Then we get
	$$ E = (\id \times \mathrm{R}_{t}) ((\id \times \mathrm{R}_{-t}) ( E ) ) \subset (\id \times \mathrm{R}_{t}) ( E ) \subset E , $$
	which implies that $(\id \times \mathrm{R}_{t}) E = E$. In other words, $D$ is closed.

	As $E$ is a proper subset and $\pi_Y(E) = Y$, $D$ must be a proper subgroup of $\T$. Otherwise, for any $(y, t) \in Y \times \T$, as $\pi_Y(E) = Y$, there exists $t' \in \T$ such that $(y, t') \in E$. Since $t - t' \in D = \T$,  $(y, t) = (\id \times \mathrm{R}_{t - t'}) (y, t') \in E $, which indicates that $E = Y \times T$, contradicting the fact that $E$ is a proper subset.

	As a proper closed subgroup of $\T$, $D$ must be
	$$ \left\{ \frac{k}{n} \right\}_{0 \leq k \leq n - 1} \ \text{with} \ n = |D| . $$
	Let $\pi_{\T}$ be the canonical projection from $Y \times \T$ onto $\T$. For $y \in Y$, use $E_y$ to denote $\pi_{\T} (E \, \cap \pi_Y^{-1}(\{y\}))$.

	Using the fact that $E$ is a minimal subset of $(\bt, \mathrm{R}_{\eta})$, we will show that $E_y$ must be $n$ points distributed evenly on the circle for all $y \in Y$.

	We claim that if $t, t' \in E_y$, then for any $m \in \Z$, $t + m (t' - t)$ must be in $E_y$. To prove this claim, if $t, t' \in E_y$, then there exists $\{ k_n \}_{n \in \N}$ such that $k_n \rightarrow \infty$ and $\text{dist}((\bt \times \mathrm{R}_{\eta})^{k_n}(y, t), (y, t')) \rightarrow 0$. Note that
	$$ \text{dist}((\bt \times \mathrm{R}_{\eta})^{k_n}(y, t), (y, t')) = \text{dist}((\bt \times \mathrm{R}_{\eta})^{k_n}(y, t'), (y, t + 2 (t' - t))) . $$
	It follows that $(y, t + 2 (t' - t)) \in \overline{\text{Orbit}_{\bt \times \mathrm{R}_{\eta}}( (y, t) ) }$. By induction, we conclude that if $t, t' \in E_y$, then for any $m \in \Z$, $t + m (t' - t)$ is also in $E_y$, proving the claim.

	For any $y \in Y$, consider $E_y$, which is a non-empty closed subset of $\T$. Let
	$$ l_y = \inf_{ t_1, t_2 \in E_y } \text{dist}(t_1, t_2) . $$
	Note that if $t, t' \in E_y$, then $t + m (t' - t) \in E_y$. The fact that $E_y \subsetneq \T$ implies that $l_y > 0$. It is then clear that $E_y$ is made up of $1 / l_y$ points distributed evenly on $\T$.

\vspace{2mm}

	\textbf{Claim:} For every $y \in Y$, $1 / l_y = |D|$.
	
	For given $y \in Y$, as $(\id \times \mathrm{R}_t) ( E ) = E$ for all $t \in D$, we get that $E_y$ is invariant under $\mathrm{R}_t$ for all $t \in D$. It then follows that $1 / l_y = k n$ with $k \in \N$ and $n = |D|$.
	
	If $k > 1$, write
	$$ E_y = \{ (y, t_1), \ldots, (y, t_{k n}) \} . $$
	Use $\text{Orbit}_{\bt \times \mathrm{R}_{\eta}} ( E_y )$ to denote $\bigcup_{m = 1}^{\infty} (\bt \times \mathrm{R}_{\eta})^{m} ( E_y )$.
	
	As $\bt$ is minimal, for every $y' \in Y$, there is a sequence $(m_k)_{k \in \N}$ such that
	$$ \bt^{m_k} ( y ) \rightarrow y' . $$
	The fact that $\text{Orbit}_{\bt \times \mathrm{R}_{\eta}} ( E_y )$ is dense implies that there exists $t' \in \T$ such that $(y', t')$ is in the closure of $\text{Orbit}_{\bt \times \mathrm{R}_{\eta}} ( E_y )$. Note that for every $m \in \N$, $(\bt \times \mathrm{R}_{\eta})^m ( E_y )$ consists of $k n$ points distributed evenly on the circle. It follows that $E_{y'}$ contains at least $n k$ points distributed evenly on the circle.
	
	Now we have shown that for every $a \in Y$, $E_a$ is made up of at least $n k$ evenly distributed points on the circle, which then implies that $D$ contain at least $n k$ elements. The assumption that $k > 1$ gives a contradiction.
	
	We then conclude that $k = 1$, which proves the claim.
	
\vspace{2mm}	
	
	By the claim above, for all $y \in Y$, the set $E_y$ is made up of $n$ points distributed evenly on $\T$. If we define
	$$ n E = \{ (x, n t) \colon (x, t) \in E \} , $$
	then $n E$ is the graph of some continuous map $g \colon Y \rightarrow \T$. As $E$ is closed, so is $n E$, which implies that $g$ is continuous. As $E$ is $(\bt \times \mathrm{R}_{\eta})$-invariant, for every $(x, t) \in E$, it follows that
	$$ (\bt \times \mathrm{R}_{\eta})(x, t) = (\bt(x), t + \eta(x)) \in E . $$
	In other words, we have $n (t + \eta(x)) = g(\beta(x))$. As $n t = g(x)$, it follows that $n \eta = g \circ \bt - g$, which finishes the proof of ``if" part.

\vspace{4mm}

	Proof of the ``only if" part:

	Suppose $\bt \times \mathrm{R}_{\eta}$ is minimal. Then it is clear that $(Y, \bt)$ is a minimal system.

	Suppose that there exists nonzero $n \in \Z$ such that $n \eta = g \circ \bt - g$ for some $g \in C(X, T)$. Let
	$$ E = \{ (y, t) \in Y \times \T \colon  n t = g(y) \} . $$
	For $(y, t) \in E $, we have $(\bt \times \mathrm{R}_{\eta})(y, t) = (\bt(y), t + \eta(y))$. As
	$$ n (t + \eta(y)) = n t + n \eta(y) = g(y) + n \eta(y) = g(\bt(y)) , $$
	 it follows that $E$ is $(\bt \times \mathrm{R}_{\eta})$-invariant.

	As $g$ is continuous, $E$ is closed. And it is clear that $E$ is a proper subset of $Y \times \T$. Now we have a proper closed $(\bt \times \mathrm{R}_{\eta})$-invariant set in $Y \times \T$, contradicting the minimality of $\bt \times \mathrm{R}_{\eta}$.
\end{proof}

\vspace{2mm}

	Lemma \ref{induction lemma on minimality} provides an inductive approach to determine the minimality of some dynamical systems. Following this lemma, we get the proposition below.

\vspace{4mm}

\begin{prp} \label{criterion for minimality on X times T2 with cocycles being rotations}

	Let $\alpha \times \mathrm{R}_{\xi} \times \mathrm{R}_{\eta}$ be a homeomorphism of $X \times \T \times \T$. Then $\alpha \times \mathrm{R}_{\xi} \times \mathrm{R}_{\eta}$ is minimal if and only if

	i) $(X, \alpha)$ is minimal,

	ii) $\xi$ is not a torsion element in $C(X, \T) / \{ f \circ \af -  f \}$,

	iii) For $\widetilde{\eta} \in C(X \times \T, \T)$ defined by $\tilde{\eta}(x, t) = \eta(x)$, the map $\widetilde{\eta}$ is not a torsion element in
	$$ C(X \times \T, \T)  / \{ f \circ (\af \times \mathrm{R}_{\xi} ) - f  \colon f \in C( X \times \T, \T ) \} . $$

\end{prp}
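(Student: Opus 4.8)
The plan is to obtain the proposition as a two-step iteration of the induction lemma (Lemma~\ref{induction lemma on minimality}), peeling off one circle factor at a time.

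First I would apply Lemma~\ref{induction lemma on minimality} with $Y = X$, $\bt = \af$ and cocycle $\xi$. This shows that the skew product $\af \times \mathrm{R}_{\xi}$ on $X \times \T$ is minimal if and only if $(X, \af)$ is minimal and there is no $f \in C(X, \T)$ together with a nonzero integer $n$ satisfying $n\xi = f \circ \af - f$. The key translation here is that the nonexistence of such a pair $(f, n)$ is exactly the assertion that the class $[\xi]$ is not a torsion element of the quotient group $C(X, \T)/\{ f \circ \af - f \colon f \in C(X, \T) \}$: indeed $n[\xi] = 0$ in this quotient means precisely $n\xi = f \circ \af - f$ for some $f \in C(X,\T)$. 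Hence $\af \times \mathrm{R}_{\xi}$ is minimal on $X \times \T$ if and only if conditions (i) and (ii) both hold.

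Next I would regard $X \times \T \times \T$ as $(X \times \T) \times \T$ and apply Lemma~\ref{induction lemma on minimality} again, now with $Y = X \times \T$, $\bt = \af \times \mathrm{R}_{\xi}$, and cocycle $\widetilde{\eta}$. A short computation shows that under the identification $((x, t_1), t_2) \leftrightarrow (x, t_1, t_2)$ the skew product $(\af \times \mathrm{R}_{\xi}) \times \mathrm{R}_{\widetilde{\eta}}$ is exactly $\af \times \mathrm{R}_{\xi} \times \mathrm{R}_{\eta}$, since $\widetilde{\eta}$ is $\eta$ read as a function on $X \times \T$ that is constant in the $t_1$-coordinate, so that $(\af \times \mathrm{R}_{\xi})\times \mathrm{R}_{\widetilde{\eta}}((x,t_1),t_2) = (\af(x), t_1 + \xi(x), t_2 + \eta(x))$. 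The lemma then gives that $\af \times \mathrm{R}_{\xi} \times \mathrm{R}_{\eta}$ is minimal if and only if $(X \times \T, \af \times \mathrm{R}_{\xi})$ is minimal and $\widetilde{\eta}$ is not a torsion element of $C(X \times \T, \T)/\{ f \circ (\af \times \mathrm{R}_{\xi}) - f \colon f \in C(X \times \T, \T) \}$, which is precisely condition (iii).

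Combining the two steps finishes the proof: by the first step $(X \times \T, \af \times \mathrm{R}_{\xi})$ is minimal exactly when (i) and (ii) hold, and by the second step minimality of the full system is equivalent to minimality of $(X \times \T, \af \times \mathrm{R}_{\xi})$ together with (iii). I do not expect a genuine obstacle here; the only points requiring care are the bookkeeping identity that rewrites the cohomological equation $n\eta = f \circ \bt - f$ as the torsion condition in the displayed quotient groups, and the verification that the iterated skew product is literally $\af \times \mathrm{R}_{\xi} \times \mathrm{R}_{\eta}$, so that Lemma~\ref{induction lemma on minimality} applies verbatim at the second stage.
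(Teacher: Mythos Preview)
Your proposal is correct and follows essentially the same approach as the paper: both peel off one circle factor at a time and translate the cohomological equation into a torsion condition. The only cosmetic difference is that the paper cites Lemma~4.2 of \cite{LM1} for the first step (minimality of $\af \times \mathrm{R}_\xi$ on $X \times \T$), whereas you apply Lemma~\ref{induction lemma on minimality} directly with $Y = X$; since that lemma is stated for an arbitrary compact metric space $Y$, your version is slightly more self-contained.
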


\begin{proof}
	The ``if" part:


	Note that $(X \times \T \times \T, \alpha \times \mathrm{R}_{\xi} \times \mathrm{R}_{\eta})$ is a skew product of $\alpha \times \mathrm{R}_{\xi}$ and $\widetilde{\mathrm{R}_{\eta}}$, where $\widetilde{\mathrm{R}_{\eta}}$ is defined by
	$$ \widetilde{\mathrm{R}_{\eta}} \colon X \times \T \rightarrow \text{Homeo}(\T) , \ \text{with} \ (\widetilde{\mathrm{R}_{\eta}}(x, t)) (t') = t' + \eta(x) . $$

 	From i) and ii), using Lemma 4.2 of \cite{LM1}, $(X \times \T, \af \times \mathrm{R}_{\xi})$ is minimal. According to Lemma \ref{induction lemma on minimality}, and by iii), we conclude that $\alpha \times \mathrm{R}_{\xi} \times \mathrm{R}_{\eta}$ is minimal.

	The ``only if" part:

	As $(X \times \T \times \T, \alpha \times \mathrm{R}_{\xi} \times \mathrm{R}_{\eta})$ is the skew product of $(X \times \T, \alpha \times \mathrm{R}_{\xi})$ and $\widetilde{\mathrm{R}_{\eta}} \colon X \times \T \rightarrow \text{Homeo}(\T)$, with $\widetilde{\mathrm{R}_{\eta}}$ defined as above, the minimality of $(X \times \T \times \T, \alpha \times \mathrm{R}_{\xi} \times \mathrm{R}_{\eta})$ implies the minimality of $(X \times \T, \alpha \times \mathrm{R}_{\xi})$. By Lemma 4.2 of \cite{LM1}, that implies (i) and (ii).

	For (iii), suppose that $\widetilde{\eta}$ is a torsion element, that is, there is non-zero $n \in \Z$ and $f \in C(X \times \T, \T)$ such that $n \tilde {\eta} = f \circ (\af \times \mathrm{R}_{\xi}) - f$. By Lemma \ref{induction lemma on minimality}, it follows that $(X \times \T \times \T, \af \times \mathrm{R}_{\xi} \times \mathrm{R}_{\eta})$ is not minimal, a contradiction.
\end{proof}

Proposition \ref{criterion for minimality on X times T2 with cocycles being rotations} enables us to construct minimal dynamical systems on $X \times \T \times \T$ inductively. In fact, we have the following lemma.

\begin{lem} \label{Existence of minimal actions by induction}
	
	Given any minimal dynamical system $(X \times \T, \af \times \mathrm{R}_{\xi})$, there exist uncountably many $\theta \in [0, 1]$ such that if we use $\theta$ to denote the constant function in $C(X, \T)$ defined by $\theta(x) = \theta$ for all $x \in X$ (identifying $\T$ with $\R / \Z$), then the dynamical system $(X \times \T \times \T, \af \times \mathrm{R}_{\xi} \times \mathrm{R}_{\theta})$ is still minimal.
	
\end{lem}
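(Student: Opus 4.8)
The plan is to reduce the statement to the minimality criterion of Proposition~\ref{criterion for minimality on X times T2 with cocycles being rotations} and then to count the exceptional values of $\theta$. Write $\bt = \af \times \mathrm{R}_\xi$. Since $(X \times \T, \bt)$ is assumed minimal, conditions (i) and (ii) of Proposition~\ref{criterion for minimality on X times T2 with cocycles being rotations} hold for every $\theta$ (they are precisely equivalent to the minimality of $(X\times\T,\bt)$). Thus $(X \times \T \times \T, \af \times \mathrm{R}_\xi \times \mathrm{R}_\theta)$ is minimal as soon as condition (iii) holds, i.e.\ as soon as the constant function $\tilde\theta \in C(X\times\T,\T)$ (with value $\theta$) is \emph{not} a torsion element of $C(X\times\T,\T) / \{ f \circ \bt - f \colon f \in C(X\times\T,\T)\}$. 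So it suffices to show that the set of $\theta \in [0,1)$ for which $\tilde\theta$ \emph{is} torsion is countable.

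First I would isolate the obstruction as a subgroup of $\T$. Let
$$ G = \{ \theta \in \T \colon \text{the constant function } \theta \text{ equals } f \circ \bt - f \text{ for some } f \in C(X\times\T,\T) \} . $$
Because the coboundaries and the constants are each subgroups of $C(X\times\T,\T)$, the set $G$ is a subgroup of $\T$. By definition, $\tilde\theta$ is a torsion element exactly when $n\theta \in G$ for some nonzero integer $n$, so the set of ``bad'' parameters is $B = \bigcup_{n \neq 0} \{ \theta \in \T \colon n\theta \in G \}$. If $G$ is countable, then for each fixed $n \neq 0$ the set $\{\theta \colon n\theta \in G\}$ is countable, being the preimage of the countable set $G$ under the $|n|$-to-one map $\theta \mapsto n\theta$ on $\T$; hence $B$ is a countable union of countable sets and is countable, and $[0,1] \setminus B$ is uncountable, which is exactly the desired conclusion.

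The heart of the argument, and the main obstacle, is therefore to prove that $G$ is countable; I would do this by realizing the elements of $G$ as continuous eigenvalues of $\bt$. If $\theta \in G$, write $\theta = f \circ \bt - f$ with $f \in C(X\times\T,\T)$ and set $g = e^{2\pi i f} \colon X\times\T \to S^1$. Then $g \circ \bt = e^{2\pi i \theta}\, g$, so $e^{2\pi i\theta}$ is a continuous eigenvalue of $\bt$ with a unimodular continuous eigenfunction. Since $\bt$ is a homeomorphism of the compact metric space $X\times\T$, it admits a $\bt$-invariant Borel probability measure $\nu$ (Markov--Kakutani, as recalled in Section~\ref{Sec Introduction and Notation}). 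For distinct $\theta_1, \theta_2 \in G$ with corresponding eigenfunctions $g_1, g_2$, invariance of $\nu$ gives
$$ \int g_1 \overline{g_2}\, \mathrm{d}\nu = \int (g_1 \overline{g_2}) \circ \bt \, \mathrm{d}\nu = e^{2\pi i(\theta_1 - \theta_2)} \int g_1 \overline{g_2}\, \mathrm{d}\nu , $$
and since $e^{2\pi i(\theta_1-\theta_2)} \neq 1$ this forces $\langle g_1, g_2 \rangle_{L^2(\nu)} = 0$.

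Thus distinct elements of $G$ produce mutually orthogonal unit vectors in $L^2(\nu)$. Because $X\times\T$ is a compact metric space, $C(X\times\T)$ is separable and hence $L^2(\nu)$ is a separable Hilbert space, which can contain only countably many mutually orthogonal unit vectors. Therefore $G$ is countable, and combining this with the reduction above yields uncountably many $\theta \in [0,1]$ for which $(X \times \T \times \T, \af \times \mathrm{R}_\xi \times \mathrm{R}_\theta)$ is minimal. The only subtlety to watch is the passage from a constant coboundary to an eigenfunction (ensuring $g$ is continuous and unimodular), but this direction is automatic since $g = e^{2\pi i f}$; the reverse implication is not needed, as the upper bound on $G$ alone suffices.
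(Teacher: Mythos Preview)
Your proof is correct and takes a genuinely different route from the paper's. Both begin with the same reduction via Proposition~\ref{criterion for minimality on X times T2 with cocycles being rotations} to showing that only countably many $\theta$ can have $n\theta$ equal to a coboundary for some $n\neq 0$, but the counting arguments diverge. The paper lifts the cohomological equation $n\theta = f - f\circ\bt$ from $\T$ to $\R$, extracts an integer-valued defect $g \in C(X\times\T,\Z)\cong C(X,\Z)$, and then integrates against an invariant measure to pin $\theta$ to the countable set $\{\mu(g)/n : g\in C(X,\Z),\, n\in\Z\setminus\{0\}\}$; this relies on the connectedness of $\T$ (to identify $C(X\times\T,\Z)$ with $C(X,\Z)$) and on the countability of $C(X,\Z)$ for the Cantor set $X$. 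Your argument instead recognises each element of $G$ as a continuous eigenvalue of $\bt$ and invokes the classical fact that eigenfunctions for distinct eigenvalues are orthogonal in the separable space $L^2(\nu)$. Your route is shorter, avoids the lifting step entirely, and is more portable: it uses only that $X\times\T$ is a compact metric space, so the same proof would go through for skew extensions over an arbitrary compact metric base, not just the Cantor-times-circle setting. The paper's route, on the other hand, stays closer to the trace/affine-function formalism used elsewhere in the article.
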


\begin{proof}	
	Note that the dynamical system $(X \times \T, \af \times \mathrm{R}_{\xi})$ is minimal. According to Lemma \ref{induction lemma on minimality}, $(X, \af)$ must be a minimal dynamical system, and $\xi$ is not a torsion element in
	$$ C(X, \T) / \{ f - f \circ \af \colon f \in C(X, \T) \} . $$
	This implies that conditions i) and ii) in Proposition \ref{criterion for minimality on X times T2 with cocycles being rotations} are already satisfied.

	According to Proposition \ref{criterion for minimality on X times T2 with cocycles being rotations}, for $(X \times \T \times \T, \af \times \mathrm{R}_{\xi} \times \mathrm{R}_{\theta})$ to be minimal, we just need to find $\theta \in \R$ such that for every $n \in \Z \setminus \{ 0 \}$ and $f \in C(X \times \T, \T)$, we have
	$$ n \theta \neq f - f \circ (\af \times \mathrm{R}_{\xi}) . $$
	
	If this is not true, then we have
	$$ n \theta = f - f \circ (\af \times \mathrm{R}_{\xi}) . $$
	
	Let $F \colon X \times \T \rightarrow \R$ be a lifting of $f$. That is, $F \in C(X \times \T, \R)$ and the following diagram commutes:
	$$
	\xymatrix{
		&&  \R \ar@{->}[d]^{\pi} \\
	X \times \T  \ar@{->}[rr]_{f} \ar@{->}[rru]^{F} && \T  ,
	}
	$$
	with $\pi(t) = t$ for all $t \in \R$ (identifying $\T$ with $\R / \Z$).
	
	Using $[F]$ to denote $\pi \circ F$, it follows that
	\begin{align*}
		n \theta & = [F] - [F \circ (\af \times \mathrm{R}_{\xi})] \\
			& = [F - F \circ (\af \times \mathrm{R}_{\xi})] .
	\end{align*}
	In other words, there exists $g \in C(X \times \T, \Z)$ such that
	$$ n \theta -  ( F - F \circ (\af \times \mathrm{R}_{\xi}) ) = g . $$
	For every $(\af \times \mathrm{R}_{\xi})$-invariant probability measure $\mu$, we have $\mu ( n \theta) = \mu(g)$, with $\mu(n \theta) = \displaystyle \int_{X \times \T} n \theta \, \mathrm{d} \mu$ and $\mu(g) = \displaystyle \int_{X \times \T} g \, \mathrm{d} \mu$
	
	Since $\mu (n \theta) = n \mu(\theta)$, it follows that
	$$ \mu(\theta) = \Frac{\mu(g)}{n} = \mu \left( \Frac{g}{n} \right) . $$
	Let $A$ be the crossed product \ca \ of $(X \times \T, \af \times \mathrm{R}_{\xi})$. Define
	$$ \rho: A_{sa} \longrightarrow \mathrm{Aff}(T(A)) $$
	by $\rho(a)(\tau) = \tau(a)$ for all $a \in A_{sa}$ and $\tau \in T(A)$. Then we have $\rho( \theta ) = \rho \left( \Frac{g}{n} \right)$ in $\mathrm{Aff}(T(A))$.
	
	Now we have shown that if $\theta$ (as a constant function) is a torsion element in
	$$ C(X \times \T, \T) / \{ f - f \circ \af \colon f \in C(X \times \T, \T) \} $$
	with order $n$, then there exists $g \in C(X \times \T, \Z)$ such that $\rho ( \theta ) = \rho \left( \Frac{g}{n} \right)$.
	
	As $\T$ is connected, we have $C(X \times \T, \Z) \cong C(X, \Z)$. Note that the set
	$$ \left \{ \Frac{g}{n} \colon g \in C(X \times \T, \Z) \cong C(X, \Z), n \in \Z \setminus \{ 0 \} \right \} $$
	contains countably many elements. It follows that its image under $\rho$ contains at most countably many elements. The fact that $[0, 1]$ contains uncountably many elements and $\rho(\theta) = 0$ if and only if $\theta = 0$ implies that there exists (uncountably many, in fact) $\theta \in \R$ such that $\theta$ (as a constant function) is not a torsion element in
	$$ C(X \times \T, \T) / \{ f - f \circ \af \colon f \in C(X \times \T, \T) \} , $$	
	which then implies that $(X \times \T \times \T, \af \times \mathrm{R}_{\xi} \times \mathrm{R}_{\theta})$ is still minimal.
\end{proof}

	We now give examples of rigid and non-rigid minimal actions of on $X \times \T \times \T$.

\vspace{1mm}


	Let $\varphi_0 \colon \T \rightarrow \T$ be a Denjoy homeomorphism (see \cite[Definition 3.3]{PSS} or \cite[Prop 12.2.1]{KatokHasselblatt}) with rotation number $r(\gamma) = \theta$ for some $\theta \in \R \setminus \Q$. It is known that $\varphi_0$ has a unique proper invariant closed subset of $\T$, which is a Cantor set, and that $\varphi_0$ restricted on this Cantor set is minimal. Let $X$ be the Cantor set and use $\varphi \colon X \rightarrow X$ to denote the restriction of $\varphi_0$ to $X$.

	According to the Poincare Classification Theorem (see \cite[Theorem 11.2.7]{KatokHasselblatt}), there is a non-invertible continuous monotonic map $h \colon \T \rightarrow \T$ such that the following diagram commutes:
	$$
	\xymatrix{
		\T \ar@{->}[rr]^{\varphi_0} \ar[d]_{h} && \T \ar[d]^{h} \\
		\T \ar@{->}[rr]_{\mathrm{R}_{\theta}} && \T & .
	}
	$$

	Using the restriction of $\varphi$ to the invariant subset (which is the Cantor set $X$), we  get a commutative diagram:
	$$
	\xymatrix{
		X \ar@{->}[rr]^{\varphi} \ar[d]_{h \left|_X \right.} && X \ar[d]^{h \left|_X \right.} \\
		\T \ar@{->}[rr]_{\mathrm{R}_{\theta}} && \T & .
	}
	$$
	It is known that
	for a Denjoy homeomorphism, $h \left|_X \right.$ maps $X$ onto $\T$.

	Recall that for $\xi, \eta \colon \T \rightarrow \T$, the action
	$$ \gamma \colon (s, t_1, t_2) \mapsto (s + \theta, t_1 + \xi(s), t_2 + \eta(s)) $$
	is called a Furstenberg transformation. Consider the action
	$$ \af \times \mathrm{R}_{\xi \circ h} \times \mathrm{R}_{\eta \circ h} \colon X \times \T \times \T \rightarrow X \times \T \times \T . $$
	It is clear that we have the commutative diagram below :
	\begin{equation} \label{factor thru diagram from Denjoy homeomorphism}
	\xymatrix{
		X \times \T \times \T \ar@{->}[rr]^{\af \times \mathrm{R}_{\xi \circ h} \times \mathrm{R}_{\eta \circ h}} \ar[d]_{h \left|_X \right. \times \id_{\T} \times \id_{\T}} && X \times \T \times \T \ar[d]^{h \left|_X \right. \times \id_{\T} \times \id_{\T}} \\
		\T \times \T \times \T \ar[rr]^{\gamma} && \T \times \T \times \T \ \ .
	}
	\end{equation}
	In this case, if $\gamma$ is minimal, then $\af \times \mathrm{R}_{\xi \circ h} \times \mathrm{R}_{\eta \circ h}$ is also minimal, as will be shown in the next proposition.

\begin{prp} \label{minimality thru the commutative diagram}

	For the minimal dynamical systems as in diagram (\ref{factor thru diagram from Denjoy homeomorphism}), if $(\T \times \T \times \T, \gamma)$ is a minimal dynamical system, then $(X \times \T \times \T, \af \times \mathrm{R}_{\xi \circ h} \times \mathrm{R}_{\eta \circ h})$ is also a minimal dynamical system.

\end{prp}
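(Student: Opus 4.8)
The plan is to exploit the fact that the vertical map
$$ H := h\left|_X\right. \times \id_{\T} \times \id_{\T} \colon X \times \T \times \T \longrightarrow \T \times \T \times \T $$
in diagram (\ref{factor thru diagram from Denjoy homeomorphism}) is a continuous surjection which is \emph{almost one-to-one}, and to show that any such factor map onto a minimal system forces the total space to be minimal. Write $T = \af \times \mathrm{R}_{\xi \circ h} \times \mathrm{R}_{\eta \circ h}$. The commutativity of (\ref{factor thru diagram from Denjoy homeomorphism}) says $H \circ T = \gamma \circ H$, so $H$ is a semiconjugacy from $(X \times \T \times \T, T)$ onto $(\T \times \T \times \T, \gamma)$, and surjectivity of $H$ follows from the stated fact that $h\left|_X\right.$ maps $X$ onto $\T$.

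First I would record the general principle. Let $E \subset X \times \T \times \T$ be a nonempty closed set with $T(E) = E$; by the standard criterion for minimality it suffices to prove $E = X \times \T \times \T$. Since $E$ is compact and $H$ is continuous, $H(E)$ is a nonempty closed subset of $\T \times \T \times \T$, and the intertwining $H \circ T = \gamma \circ H$ gives $\gamma(H(E)) = H(T(E)) = H(E)$. By the assumed minimality of $\gamma$ we conclude $H(E) = \T \times \T \times \T$.

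Next I would pull this back through the singleton fibres of $H$. Let
$$ Y_0 = \{ \, y \in X \times \T \times \T \colon H^{-1}(H(y)) = \{ y \} \, \} . $$
For $y \in Y_0$ we have $H(y) \in H(E) = \T \times \T \times \T$, so there is $e \in E$ with $H(e) = H(y)$; since the fibre over $H(y)$ is the single point $y$, this forces $e = y$, whence $y \in E$. Thus $Y_0 \subset E$, and as $E$ is closed we obtain $\overline{Y_0} \subset E$. The proof therefore reduces to showing that $Y_0$ is dense in $X \times \T \times \T$.

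The density of $Y_0$ is the step where the Denjoy structure enters, and it is the main point. Because $H = h\left|_X\right. \times \id_{\T} \times \id_{\T}$, a point $(x, t_1, t_2)$ lies in $Y_0$ exactly when $(h\left|_X\right.)^{-1}(h(x)) = \{ x \}$; that is, $Y_0 = X'' \times \T \times \T$ where $X'' = \{ x \in X \colon (h\left|_X\right.)^{-1}(h(x)) = \{x\} \}$. For a Denjoy homeomorphism the semiconjugacy $h$ collapses each complementary gap of the Cantor set $X$ to a point, so $h\left|_X\right.$ fails to be injective only at the two endpoints of each gap; as the gaps are indexed by the countably many inserted wandering intervals, the exceptional set $X \setminus X''$ is countable. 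Since $X$ is a Cantor set it is perfect, so every nonempty relatively open subset of $X$ is uncountable and hence meets the cocountable set $X''$; thus $X''$ is dense in $X$ and $Y_0 = X'' \times \T \times \T$ is dense in $X \times \T \times \T$. Combining this with $\overline{Y_0} \subset E$ yields $E = X \times \T \times \T$, proving minimality. The one place requiring care is the claim that $h\left|_X\right.$ is injective off a countable set; I would justify it from the construction of the Denjoy homeomorphism, citing the references already used for Denjoy homeomorphisms in the excerpt.
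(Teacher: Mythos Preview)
Your proof is correct, and it takes a genuinely different route from the paper. The paper argues by contradiction at the level of orbits: it supposes there is a point $(x,t_1,t_2)$ whose forward orbit misses a basic open set $D\times U\times V$, uses the monotonicity of $h\left|_X\right.$ to show that $h\left|_X\right.(D)$ contains an open arc $(c',d')\subset\T$, and then checks directly from the semiconjugacy that the $\gamma$-orbit of $(h(x),t_1,t_2)$ misses $(c',d')\times U\times V$, contradicting minimality of $\gamma$. Your argument instead isolates the abstract principle behind this: $H$ is an almost one-to-one factor map onto a minimal system, and any such extension is itself minimal. You then verify almost one-to-oneness from the Denjoy structure (countably many gap endpoints, perfectness of $X$). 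This is cleaner and more conceptual; it replaces the ad hoc open-set chase with a two-line general lemma and localises all the Denjoy-specific input in the cocountability of the non-injective set of $h\left|_X\right.$. The paper's approach has the mild advantage of not needing to name the almost one-to-one property, but yours makes the mechanism transparent and would apply verbatim to any almost one-to-one extension of a minimal system.
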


\begin{proof}	
	Assume that $(\T \times \T \times \T, \gamma)$ is minimal and $(X \times \T \times \T, \af \times \mathrm{R}_{\xi \circ h} \times \mathrm{R}_{\eta \circ h})$ is not minimal. It then follows that there exist $(x, t_1, t_2) \in X \times \T \times \T$, nonempty open subset $D \subset X$ and open subsets $U, V \subset \T$ such that
	\begin{equation} \label{the non-density fact}
		\{ (\af \times \mathrm{R}_{\xi \circ h} \times \mathrm{R}_{\eta \circ h})^n ( x, t_1, t_2 ) \}_{n \in \N} \cap ( D \times U \times V ) = \varnothing .
	\end{equation}
	
	Define
	$$ \pi_1, \pi_2 \colon X \times \T \times \T \longrightarrow \T \times \T $$
	by
	$$ \pi_1(x, t_1, t_2) = t_1 \ \text{and} \ \pi_2(x, t_1, t_2) = t_2 . $$
	As $\af$ is a minimal action on the Cantor set $X$, the statement \ref{the non-density fact} implies that for every $k \in \N$ such that $\af^k(x) \in D$, we have
	\begin{equation} \label{quick fact following the assumption}
	\pi_1 \left( (\af \times \mathrm{R}_{\xi \circ h} \times \mathrm{R}_{\eta \circ h})^k ( x ) \right) \notin U \ \text{and} \ \pi_2 \left( (\af \times \mathrm{R}_{\xi \circ h} \times \mathrm{R}_{\eta \circ h})^k ( x ) \right) \notin V .
	\end{equation}
	
	Note that if we regard the Cantor set $X$ as a subset of $\T$, then $h \left|_X \right. \colon X \rightarrow \T$ is a noninvertible continuous monotone function. For the open set $D \subset X$, without loss of generality, we can assume that (by identifying $X$ as a subset of $\T$ and identifying $\T$ with $\R / \Z$)
	$$ D = (a, b) \cap X \ \text{with} \ a, b \in (0, 1) \ \text{and} \ a < b . $$
	It then follows that there exists $c, d \in (0, 1)$ with $c < d$ (without loss of generality, we can assume that $0 \notin h \left|_X \right. (D)$ such that $h \left|_X \right. (D)$ is one of the following:
	$$ (c, d), \ (c, d], \ [c, d) \ \text{or} \ [c, d] . $$
	In either case, there exists $c', d' \in (0, 1)$ with $c' < d'$ such that
	$$ (c', d') \subset h \left|_X \right. (D) . $$
	
	Let $t_x = h \left|_X \right. (x)$. It is then clear that
	$$ h \left|_X \right. \left( (\af \times \mathrm{R}_{\xi \circ h} \times \mathrm{R}_{\eta \circ h})^n ( x, t_1, t_2 ) \right) = \gamma^n (t_x, t_1, t_2) $$
	for all $n \in \N$. As $h \left|_X \right. (D)$ is monotone, for every $k \in \N$, if $\mathrm{R}_{\theta}^k ( t_x ) \in (c', d')$, then we have $\af^k ( x ) \in D$, which implies (see (\ref{quick fact following the assumption})) that
	$$
	\pi_1 \left( (\af \times \mathrm{R}_{\xi \circ h} \times \mathrm{R}_{\eta \circ h})^k ( x, t_1, t_2 ) \right) \notin U \ \text{and} \ \pi_2 \left( (\af \times \mathrm{R}_{\xi \circ h} \times \mathrm{R}_{\eta \circ h})^k ( x, t_1, t_2 ) \right) \notin V .
	$$
		
	Define $$ \rho_1, \rho_2 \colon \T \times \T \times \T \longrightarrow \T \times \T $$
	by $\rho_1(t_0, t_1, t_2) = t_1$ and $\rho_2(t_0, t_1, t_2) = t_2$. It is easy to check that for all $n \in \N$, we have
	$$ \pi_i \left( (\af \times \mathrm{R}_{\xi \circ h} \times \mathrm{R}_{\eta \circ h})^k ( x, t_1, t_2 ) \right) = \rho_i \left(  \gamma^k (t_x, t_1, t_2) \right) . $$
	Then we have that for every $k \in \N$ such that $\mathrm{R}_{\theta}^k ( t_x ) \in (c', d')$,
	$$ \rho_1 \left(  \gamma^k (td_x, t_1, t_2) \right) \notin U \ \text{and} \ \rho_2 \left(  \gamma^k (t_x, t_1, t_2) \right) \notin V . $$
	According to the definition of the Furstenberg transformation $\gamma$, it follows that
	$$ \{ \gamma^n (t_x, t_1, t_2) \}_{n \in \N} \cap \left( (c', d') \times U \times V \right) = \varnothing , $$
	contradicting the minimality of $\gamma$, which finishes the proof.
\end{proof}
	
	
%

\vspace{2mm}

	The proposition below shows that if the two dynamical systems in Prop \ref{minimality thru the commutative diagram} are minimal, then there is a one-to-one correspondence between the invariant measures on them.

\vspace{2mm}

\begin{prp} \label{one-to-one correspondence between invariant measures}

	If the dynamical systems $(\T \times \T \times \T, \gamma)$ and $(X \times \T \times \T, \af \times \mathrm{R}_{\xi \circ h} \times \mathrm{R}_{\eta \circ h})$ (as in diagram (\ref{factor thru diagram from Denjoy homeomorphism})) are minimal, then there is a one-to-one correspondence between the $\af \times \mathrm{R}_{\xi \circ h} \times \mathrm{R}_{\eta \circ h}$-invariant probability measures and the $\gamma$-invariant probability measures.


\end{prp}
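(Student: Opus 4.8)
The plan is to exhibit the correspondence explicitly through the factor map $H = h|_X \times \id_\T \times \id_\T \colon X \times \T \times \T \to \T \times \T \times \T$. Writing $\Phi = \af \times \mathrm{R}_{\xi \circ h} \times \mathrm{R}_{\eta \circ h}$, the commutativity of diagram (\ref{factor thru diagram from Denjoy homeomorphism}) says precisely that $H \circ \Phi = \gamma \circ H$, so for any $\Phi$-invariant probability measure $\mu$ the pushforward $H_* \mu$ satisfies $\gamma_*(H_* \mu) = H_*(\Phi_* \mu) = H_* \mu$ and is therefore $\gamma$-invariant. Thus $\mu \mapsto H_* \mu$ is a well-defined map from $\Phi$-invariant measures to $\gamma$-invariant measures, and the whole proposition reduces to showing that this map is a bijection.

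First I would analyze the fibers of $h|_X$. Since $X$ is the minimal Cantor set of a Denjoy homeomorphism with rotation number $\theta$, it arises from the irrational rotation by replacing each point of a single orbit $B = \{ \mathrm{R}_\theta^n(b_0) \colon n \in \Z \}$ with a gap; the monotone surjection $h|_X$ collapses each gap, is injective over $\T \setminus B$, and over each point of the countable set $B$ has fiber equal to the two endpoints of the corresponding gap. Let $G \subset X$ be the (countable) set of all gap endpoints. Both $B$ and $G$ are invariant, $B$ under $\mathrm{R}_\theta$ and $G$ under $\af$, and $H$ restricts to a Borel bijection from $(X \setminus G) \times \T \times \T$ onto $(\T \setminus B) \times \T \times \T$.

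The key step is to show that these exceptional fibers are null for every invariant measure on either side. For a $\gamma$-invariant $\nu$, its marginal on the first coordinate is $\mathrm{R}_\theta$-invariant, hence equal to Lebesgue measure by unique ergodicity of the irrational rotation; as $B$ is countable this gives $\nu(B \times \T \times \T) = 0$. For a $\Phi$-invariant $\mu$, its first marginal is an $\af$-invariant measure on the infinite minimal system $(X, \af)$, hence non-atomic (the orbit of any point is infinite yet carries finite total mass), so $\mu(G \times \T \times \T) = 0$. Note that only unique ergodicity of the base rotation, together with mere non-atomicity on $X$, is needed.

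Finally I would assemble the bijection. For injectivity, if $\mu_1, \mu_2$ are $\Phi$-invariant with $H_* \mu_1 = H_* \mu_2$, then both are concentrated on $(X \setminus G) \times \T \times \T$, on which $H$ is a Borel isomorphism, so $\mu_i = (H|_{(X\setminus G)\times \T^2})^{-1}_*(H_* \mu_i)$ and $\mu_1 = \mu_2$. For surjectivity, given $\gamma$-invariant $\nu$, let $\tilde g$ be the Borel inverse of $H$ over $(\T \setminus B) \times \T \times \T$ and set $\mu = \tilde g_* \nu$; then $H_* \mu = \nu$, and because $B$ is $\mathrm{R}_\theta$-invariant the map $\tilde g$ intertwines $\gamma$ and $\Phi$ on the full-measure set $(\T \setminus B)\times \T \times \T$, whence $\Phi_* \mu = \tilde g_* \gamma_* \nu = \mu$, so $\mu$ is $\Phi$-invariant. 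The main obstacle I anticipate is the measurable-inverse bookkeeping in this last step: one must verify that $\gamma$ transports no positive mass across the bad fibers and that $\tilde g$ genuinely conjugates the two systems off a null set, which is exactly what the $\mathrm{R}_\theta$-invariance of $B$ (and the resulting $\gamma$-invariance of $(\T \setminus B)\times\T^2$) secures.
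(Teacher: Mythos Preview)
Your proposal is correct and follows essentially the same route as the paper: both arguments identify the factor map $H$ as a Borel isomorphism off the countable exceptional-fiber set, show this set is null for every invariant measure via the skew-product structure of the actions, and conclude that the pushforward $\mu \mapsto H_*\mu$ is a bijection. The paper phrases the inverse slightly differently---defining $\psi(\nu)(E) = \nu(H(E))$ directly and checking it is a measure up to null sets rather than invoking a Borel section---but the content is identical; your description of the gap set as arising from a \emph{single} $\mathrm{R}_\theta$-orbit is a harmless simplification, since only countability and invariance of $B$ and $G$ are actually used.
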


\begin{proof}

%
%
%
%
%
	

	First of all, we will define the correspondence between the $\af \times \mathrm{R}_{\xi \circ h} \times \mathrm{R}_{\eta \circ h}$-invariant probability measures and the $\gamma$-invariant probability measures.

\begin{sloppypar}	
	For simplicity, we use $H$ to denote the function $h \left|_X \right.$ in diagram (\ref{factor thru diagram from Denjoy homeomorphism}). We use \\
	$M_{\af \times \mathrm{R}_{\xi \circ h} \times \mathrm{R}_{\eta \circ h}}$ to denote the set of $\af \times \mathrm{R}_{\xi \circ h} \times \mathrm{R}_{\eta \circ h}$-invariant probability measures on $X \times \T \times \T$ and $M_{\gamma}$ to denote the set of $\gamma$-invariant probability measures on $\T \times \T \times \T$.
\end{sloppypar}
	
	Define
	$$ \varphi : M_{\af \times \mathrm{R}_{\xi \circ h} \times \mathrm{R}_{\eta \circ h}} \longrightarrow M_{\gamma} \ \text{and} \ \psi: M_{\gamma} \longrightarrow M_{\af \times \mathrm{R}_{\xi \circ h} \times \mathrm{R}_{\eta \circ h}} $$
	by
	$$ \varphi(\mu)(D) = \mu \left( ( H \times \id_{\T} \times \id_{\T} )^{-1} (D) \right) \ \text{and} \ \psi(\nu) (E) = \nu \left( ( H \times \id_{\T} \times \id_{\T} ) (E) \right) $$
	for all Borel subsets $D$ of $\T \times \T \times \T$, Borel subsets $E$ of $X \times \T \times \T$, $\mu \in M_{\af \times \mathrm{R}_{\xi \circ h} \times \mathrm{R}_{\eta \circ h}}$ and $\nu \in M_{\gamma}$.
	
	We need to show that the $\varphi$ and $\psi$ above are well-defined.
	
	As every $\mu \in M_{\af \times \mathrm{R}_{\xi \circ h} \times \mathrm{R}_{\eta \circ h}}$ is a probability measure, it follows that $\varphi(\mu) (\T \times \T \times \T) = 1$.
	
	For every Borel subset $D \subset \T \times \T \times \T$, as both $\af \times \mathrm{R}_{\xi \circ h} \times \mathrm{R}_{\eta \circ h}$ and $\gamma$ are homeomorphisms, it follows that
	$$ ( H \times \id_{\T} \times \id_{\T} )^{-1} ( \gamma ( D ) ) =  ( \af \times \mathrm{R}_{\xi \circ h} \times \mathrm{R}_{\eta \circ h} ) \left( ( H \times \id_{\T} \times \id_{\T} )^{-1} ( D ) \right) , $$
	which implies that $\varphi(\mu)$ is $\gamma$-invariant.
		
	For a sequence of Borel subsets $D_1, D_2, \ldots$ of $\T \times \T \times \T$ such that $D_i \cap D_j = \varnothing$ if $i \neq j$, it is clear that $( H \times \id_{\T} \times \id_{\T} )^{-1} (D_1), ( H \times \id_{\T} \times \id_{\T} )^{-1} (D_2), \ldots$ are Borel subsets of $X \times \T \times \T$ (as $H \times \id_{\T} \times \id_{\T}$ is continuous) satisfying $( H \times \id_{\T} \times \id_{\T} )^{-1} (D_i) \cap ( H \times \id_{\T} \times \id_{\T} )^{-1} (D_j) = \varnothing$ if $i \neq j$. Then we have that
	$$ \varphi(\mu) \left( \bigsqcup_{n = 1}^{\infty} D_n \right) = \sum_{n = 1}^{\infty} \varphi(\mu) ( D_n ) . $$
	
	So far, we have shown that $\varphi$ is a well-defined map from $M_{\af \times \mathrm{R}_{\xi \circ h} \times \mathrm{R}_{\eta \circ h}}$ to $M_{\gamma}$.
	
	Now we will check the map $\psi$.
	
	As every $\nu \in M_{\gamma}$ is a probability measure, it follows that
	$$ \psi(\nu) (X \times \T \times \T) = \nu ( \T \times \T \times \T ) = 1 . $$
	
	For every Borel subset $E \subset X \times \T \times \T$, we will show that $\psi(\nu) (E)$ is well-defined. According to the definition of $\psi(\nu)$, we just need to show that $(H \times \id_{\T} \times \id_{\T}) (E) $ is $\nu$-measurable.
	
	For any two open subsets $S_1$ and $S_2$ of $X \times \T \times \T$, we have
	$$ (H \times \id_{\T} \times \id_{\T}) ( S_1 \cup S_2 ) = (H \times \id_{\T} \times \id_{\T}) (S_1) \cup (H \times \id_{\T} \times \id_{\T}) (S_2) , $$
	$$ (H \times \id_{\T} \times \id_{\T}) (S_i^{c}) = ( (H \times \id_{\T} \times \id_{\T}) (S_i) )^{c} \ \ \text{for} \ i = 1, 2. $$
	
	As $H$ is not one-to-one, we cannot get
	$$ (H \times \id_{\T} \times \id_{\T}) ( S_1 \cap S_2 ) = (H \times \id_{\T} \times \id_{\T}) (S_1) \cap (H \times \id_{\T} \times \id_{\T}) (S_2) , $$
	but we still have
	$$  (H \times \id_{\T} \times \id_{\T}) ( S_1 \cap S_2 ) \subset (H \times \id_{\T} \times \id_{\T}) (S_1) \cap (H \times \id_{\T} \times \id_{\T}) (S_2) . $$
	We will consider $ \left( (H \times \id_{\T} \times \id_{\T}) (S_1) \cap (H \times \id_{\T} \times \id_{\T}) (S_2) \right) \setminus (H \times \id_{\T} \times \id_{\T}) ( S_1 \cap S_2 ) . $
	
	Note that $H$ is just the restriction of $h$ to $X$, where $h$ is a noninvertible continuous monotone map from $\T$ to $\T$ (see \cite[Theorem 11.2.7]{KatokHasselblatt}). It follows that $H : X \rightarrow \T$ is one-to-one except at countablely many points of $X$. Use $X_0$ to denote this subset consists of countably many points. Then we have that
	$$ \left( (H \times \id_{\T} \times \id_{\T}) (S_1) \cap (H \times \id_{\T} \times \id_{\T}) (S_2) \right) \setminus (H \times \id_{\T} \times \id_{\T}) ( S_1 \cap S_2 ) \, \subset \, H(X_0) \times \T \times \T . $$
	
	As $\nu ( \T \times \T \times \T ) = 1$ and the minimal action $\gamma$ has the skew product structure, it follows that for every $t \in \T$, $\nu( \{ t \} \times \T \times \T) = 0$, which then implies that $\nu( H(X_0) \times \T \times \T ) = 0$. Then we get that
	$$ \left( (H \times \id_{\T} \times \id_{\T}) (S_1) \cap (H \times \id_{\T} \times \id_{\T}) (S_2) \right) \setminus (H \times \id_{\T} \times \id_{\T}) ( S_1 \cap S_2 ) $$
	is of measure zero for all $\gamma$-invariant measure $\nu$.
	
	For two sets $A$ and $B$, we use $A \bigtriangleup B$ to denote $( A \cap B^c ) \cup ( A^c \cap B)$.
	
	
	For every Borel subset $F$ of $X \times \T \times \T$, as $F$ is generated by open sets via taking complements, countably many unions and intersections, it follows that there exists a Borel set $F'$, such that
	 $$ (H \times \id_{\T} \times \id_{\T}) (F) \bigtriangleup F' $$
	is of measure zero for all $\gamma$-invariant measure $\nu$. 	
	Note that $F'$ is a Borel set. For every $\gamma$-invariant measure $\nu$, $F'$ is both $\nu$-measurable. It then follows that $(H \times \id_{\T} \times_{\T}) (F)$ is measurable. Recall that
	$$ \psi(\nu) (F) = \nu \left( ( H \times \id_{\T} \times \id_{\T} ) (F) \right) . $$
	It follows that for $\psi(v)$ is well-defined on all the Borel subsets of $X \times \T \times \T$.
	
	For a sequence of Borel subsets $E_1, E_2, \ldots$ of $X \times \T \times \T$ such that $D_i \cap D_j = \varnothing$ if $i \neq j$, and for every $\gamma$-invariant probability measure $\nu$, we will show that
	$$ \psi(\nu) \left( \bigsqcup_{n = 1}^{\infty} E_n \right) = \sum_{n = 1}^{\infty} \psi(\nu) ( E_n ) . $$
	According to the definition, we have
	$$ \psi(\nu) \left( \bigsqcup_{n = 1}^{\infty} E_n \right) = \nu \left( ( H \times \id_{\T} \times \id_{\T} ) \left( \bigsqcup_{n = 1}^{\infty} E_n \right) \right) $$
	Note that
	$$ \left( ( H \times \id_{\T} \times \id_{\T} ) \left( \bigsqcup_{n = 1}^{\infty} E_n \right) \right) = \left( \bigcup_{i = 1}^{\infty} ( H \times \id_{\T} \times \id_{\T} ) ( E_n ) \right) $$
	and
	$$ ( H \times \id_{\T} \times \id_{\T} ) ( E_i ) \cap ( H \times \id_{\T} \times \id_{\T} ) ( E_j ) \subset H(X_0) \times \T \times \T \ \text{for} \ i \neq j . $$
	Recall that $H(X_0) \times \T \times \T$ is a set of measure zero for every $\gamma$-invariant probability measure. It follows that
	$$ \psi(\nu) \left( \bigsqcup_{n = 1}^{\infty} E_n \right) = \sum_{n = 1}^{\infty} \psi(\nu) ( E_n ) . $$
	
	For every Borel subset $E \subset X \times \T \times \T$, according to the commutative diagram (\ref{factor thru diagram from Denjoy homeomorphism}), we have
	$$ ( \gamma \circ ( H \times \id_{\T} \times \id_{\T} ) ) E = \left( ( H \times \id_{\T} \times \id_{\T} ) \circ (\af \times \mathrm{R}_{\xi \circ h} \times \mathrm{R}_{\eta \circ h}) \right) (E) . $$
	It then follows that
	\begin{align*}
		\psi ( \nu ) (E) & = \nu ( ( H \times \id_{\T} \times \id_{\T} ) E ) \\
			& = \nu ( \gamma \left( ( H \times \id_{\T} \times \id_{\T} ) E \right) ) \\
			& = \nu \left( ( H \times \id_{\T} \times \id_{\T} ) ( (\af \times \mathrm{R}_{\xi \circ h} \times \mathrm{R}_{\eta \circ h}) E ) \right) \\
			& = \psi ( \nu ) \left( (\af \times \mathrm{R}_{\xi \circ h} \times \mathrm{R}_{\eta \circ h}) E \right) ,
	\end{align*}
	which implies that $\psi(\nu)$ is $\af \times \mathrm{R}_{\xi \circ h} \times \mathrm{R}_{\eta \circ h}$-invariant.
	
	So far, we have shown that $\psi$ is a well-defined map from $M_{\gamma}$ to $M_{\af \times \mathrm{R}_{\xi \circ h} \times \mathrm{R}_{\eta \circ h}}$.

	Now we will show that for every $\af \times \mathrm{R}_{\xi \circ h} \times \mathrm{R}_{\eta \circ h}$-invariant measure $\mu$ and $\gamma$-invariant measure $\nu$, we have
	$$ (\varphi \circ \psi) (\nu) = \nu \ \text{and} \ (\psi \circ \varphi) (\mu) = \mu . $$
	
	In fact, we just need to show that for every Borel subset $D$ of $\T \times \T \times \T$ and every Borel subset $E$ of $X \times \T \times \T$,
	\begin{equation} \label{checkpoint one}
		\nu \left( ( H \times \id_{\T} \times \id_{\T} ) ( (H \times \id_{\T} \times \id_{\T})^{-1} (D) ) \bigtriangleup D \right) = 0
	 \end{equation}
	and
	\begin{equation} \label{checkpoint two}
		\mu \left( ( H \times \id_{\T} \times \id_{\T} )^{-1} ( (H \times \id_{\T} \times \id_{\T}) (E) ) \bigtriangleup E \right) = 0 .
	\end{equation}
	
	As
	$$ ( H \times \id_{\T} \times \id_{\T} ) ( (H \times \id_{\T} \times \id_{\T})^{-1} (D) ) = D , $$
	the equation (\ref{checkpoint one}) holds.
	
	Note that
	$$ \left( ( H \times \id_{\T} \times \id_{\T} )^{-1} ( (H \times \id_{\T} \times \id_{\T}) (E) ) \bigtriangleup E \right) \subset X_0 \times \T \times \T . $$
	The fact that $X_0$ consists of countably many points and the minimal action $\af \times \mathrm{R}_{\xi \circ h} \times \mathrm{R}_{\eta \circ h}$ has skew product structure implies that
	$$ \mu( X_0 \times \T \times \T ) = 0 . $$
	It then follows that the equation (\ref{checkpoint two}) holds, which finishes the proof.
\end{proof}

\vspace{4mm}

	By Proposition \ref{one-to-one correspondence between invariant measures} above, there is a one-to-one correspondence between the $\af \times \mathrm{R}_{\xi \circ h} \times \mathrm{R}_{\eta \circ h}$-invariant probability measures and the $\gamma$-invariant probability measures
	(because if two measures coincide on all the Borel sets, they must be the same measure).

	It follows that a minimal Furstenberg transformation on $\T^3$ that is uniquely ergodic
	will yield an example of a rigid minimal action on $X \times \T \times \T$, and a minimal transformation on $\T^3$ that is not uniquely ergodic will yield an example of a non-rigid minimal action on $X \times \T \times \T$.

\vspace{4mm}

\begin{ex} \label{rigid example}

	This is an example of rigid minimal dynamical system $(X \times \T \times \T, \af \times \mathrm{R}_{\xi} \times \mathrm{R}_{\eta})$.

\end{ex}
	
	Let $(X, \af)$ be a Denjoy homeomorphism with rotation number $\theta_1 \in \R \setminus \Q$.
	
	Choose $\theta_2, \theta_3$ such that $1, \theta_1, \theta_2, \theta_3 \in \R$ are linearly independent over $\Q$. That is, if $\lambda_0, \lambda_1, \lambda_2, \lambda_3 \in \Q$ and satisfy
	$$ \lambda_0 + \lambda_1 \theta_1 + \lambda_2 \theta_2 + \lambda_3 \theta_3 = 0, $$
	then $\lambda_i = 0$ for $i = 0, \ldots, 3$.
	
	The dynamical system $(\T \times \T \times \T, \mathrm{R}_{\theta_1} \times \mathrm{R}_{\theta_2} \times \mathrm{R}_{\theta_3})$ is minimal and uniquely ergodic.
	
	Define $\varphi \colon X \rightarrow \text{Homeo}(\T^2)$ by
	$$ \varphi(x) (z_1, z_2) = (z_1 e^{2 \pi i \theta_2}, z_2 e^{2 \pi i \theta_3}). $$
	
	As $(\T \times \T \times \T, \mathrm{R}_{\theta_1} \times \mathrm{R}_{\theta_2} \times \mathrm{R}_{\theta_3})$ is uniquely ergodic, so is $(X \times \T^2, \af \times \varphi)$. This gives an example of a rigid minimal dynamical system $(X \times \T \times \T, \af \times \mathrm{R}_{\xi} \times \mathrm{R}_{\eta})$.
	
\vspace{4mm}
	
\begin{ex} \label{nonrigid example}
	
	We will give an example of minimal dynamical system $(X \times \T \times \T, \af \times \mathrm{R}_{\xi} \times \mathrm{R}_{\eta})$ such that it is not rigid.

\end{ex}

\begin{sloppypar}	
	According to \cite{Furstenberg} (see page 585), there exists a minimal a Furstenberg transformation
	$$ \gamma_0 \colon \T^2 \longrightarrow \T^2 $$
	such that
	$$ \gamma_0 ( z_1, z_2 ) =  (z_1 e^{2 \pi i \theta}, f(z_1) z_2) \ \text{for some} \ \theta \in \R \setminus \Q \ \text{and} \ \text{contractible} \ f \in C(\T, \T) ,  $$
	and $\gamma_0$ is not uniquely ergodic.
\end{sloppypar}
	
	Let $(\T, \varphi)$ be a Denjoy homeomorphism with rotation number $\theta$. Let $(X, \af)$ be the minimal Cantor dynamical system derived from $(\T, \varphi)$ which factors through $(\T, \mathrm{R}_{\theta})$. In other words, $\af = \varphi \left|_X \right.$ and we have the commutative diagram
	\begin{equation} \label{Denjoy homeomorphism factor thru}
	\xymatrix{
	X \ar@{->}[rr]^{\af} \ar@{->}[d]_{\pi} && X \ar@{->}[d]^{\pi} \\
	\T \ar@{->}[rr]_{\mathrm{R}_{\theta}} && \T & ,
	}
	\end{equation}
	with $\pi \colon X \rightarrow \T$ being a surjective map.
	
	Define $\xi \colon X \rightarrow \text{Homeo}(\T)$ by $\xi(x) (z) =  f( \pi(x) ) z $. We can then check that the following diagram commutes:
	$$
	\xymatrix{
	X \times \T \ar@{->}[rr]^{\af \times \mathrm{R}_{\xi}} \ar@{->}[d]_{\pi \times \id_{\T}} && X \times \T \ar@{->}[d]^{\pi \times \id_{\T}} \\
	\T^2 \ar@{->}[rr]_{\gamma_0} && \T^2 & .
	}
	$$
	
	As $\pi$ is surjective, so is $\pi \times \id_{\T}$. Minimality of $\gamma_0$ then implies minimality of $\af \times \mathrm{R}_{\xi}$. As $\gamma_0$ is not uniquely ergodic, similarly to the proof of Proposition \ref{one-to-one correspondence between invariant measures}, it follows that $(X \times \T, \af \times \mathrm{R}_{\xi})$ is not uniquely ergodic.
	
	In the commutative diagram (\ref{Denjoy homeomorphism factor thru}), note that $\pi$ is onto, and $(\T, \mathrm{R}_{\theta})$ is uniquely ergodic. It follows that $(X, \af)$ is also uniquely ergodic.
	
	As $(X \times \T, \af \times \mathrm{R}_{\xi})$ is not uniquely ergodic, there exist more than one $(\af \times \mathrm{R}_{\xi})$-invariant probability measure. Let $\mu$ and $\nu$ to be two such measures on $X \times \T$ that are different from each other.
	
	According to Lemma \ref{Existence of minimal actions by induction}, there exists $\theta \in \R$ such that if we use $\mathrm{R}_{\theta}$ to denote the function in $C(X, \text{Homeo}(\T))$ defined by
	$$ \mathrm{R}_{\theta}(x) (z) = z e^{2 \pi i \theta} \ \text{for all} \ x \in X \ \text{and} \ z \in \T , $$
	then the dynamical system $(X \times \T \times \T, \af \times \mathrm{R}_{\xi} \times \mathrm{R}_{\theta})$ is still minimal.
	
	Use $m$ to denote the Lebesgue measure on $\T$. For the $(\af \times \mathrm{R}_{\xi})$-invariant probability measures $\mu$ and $\nu$, as $\mathrm{R}_{\theta}$ is a rotation of the circle, we can check that both $\mu \times m$ and $\nu \times m$ are $(\af \times \mathrm{R}_{\xi} \times \mathrm{R}_{\theta})$-invariant probability measures on $X \times \T \times \T$.
	
	As $\mu$ and $\nu$ are different measures, it is clear that $\mu \times m$ is different from $\nu \times m$.
	
	Now we have at least two $(\af \times \mathrm{R}_{\xi} \times \mathrm{R}_{\theta})$-invariant measures. Note that $(X, \af)$ is uniquely ergodic. We have that the dynamical system $(X \times \T \times \T, \af \times \mathrm{R}_{\xi} \times \mathrm{R}_{\theta})$ is not uniquely ergodic.

\vspace{2mm}

\noindent \textbf{Remark:} For this example, the corresponding crossed product \ca \ has tracial rank one and the dynamical system $(X \times \T \times \T, \af \times \mathrm{R}_{\xi} \times \mathrm{R}_{\theta})$ is not rigid. The reason is as follows.
	
	Consider the dynamical system $(X \times \T_1, \af \times \mathrm{R}_{\xi})$. It is not uniquely ergodic. As $(X, \af)$ is uniquely ergodic, it follows that $(X \times \T_1, \af \times \mathrm{R}_{\xi})$ is not rigid.
	
	Use $A$ to denote the crossed product \ca \ $C^*(\Z, X \times \T_1, \af \times \mathrm{R}_{\xi})$. According to Theorem 4.3 of \cite{LM2}, the algebra $A$ has tracial rank one. By Proposition 1.10 (1) of \cite{Ph2}, $\rho_A( K_0(A) )$ is not dense in $\mathrm{Aff}(T(A))$.
	
	Note that $A$ is an A$\T$-algebra. According to Theorem 2.1 of \cite{EGL}, $A$ is approximately divisible. By Theorem 1.4 (e) of \cite{BKR}, and noting that real rank of $A$ is not zero (as tracial rank of $A$ is one and $A$ is A$\T$-algebra), we have that the projections in $A$ does not separate traces of $A$. In other words, there exist two $(\af \times \mathrm{R}_{\xi})$-invariant measures $\mu$ and $\nu$ such that
	$$ \mu \neq \nu , \ \text{and} \ \mu(x) = \nu(x) \ \text{for all} \ x \in K_0(A) . $$
	
	Define measures $\mu_X, \nu_X$ by
	$$ \mu_X( D ) = \mu( D \times \T ) \ \ \text{and} \ \ \nu_X(D) = \nu( D \times \T ) $$
	for all Borel sets $D \subset X$. It is clear that both $\mu_X$ and $\nu_X$ are $\af$-invariant probability measures on $X$.
	
	
	Note that $C(X, \Z)$ is generated by the projections in $C(X)$. Also note that the $\C$-linear span of $C(X, \Z)$ is dense in $C(X, \R)$. The fact that the projections in $A$ do not separate $\mu$ and $\nu$ implies that $C(X, \Z)$ do not separate $\mu_X$ and $\nu_X$, which then implies that $\mu_X = \nu_X$.
	
	Use $B$ to denote 	$C^*(\Z, X \times \T_1 \times \T_2, \af \times \mathrm{R}_{\xi} \times \mathrm{R}_{\theta})$. Let $m$ be the Lebesgue measure on $\T$. It is clear that $\mu \times m$ and $\nu \times m$ are two $(\af \times \mathrm{R}_{\xi} \times \mathrm{R}_{\theta})$-invariant probability measures.
	
	We will show that the projections in $B$ do not separate $\mu \times m$ and $\nu \times m$.	 
	
	From Proposition \ref{K-data of crossed product C-algebra},
	\begin{equation} \label{K0 of B}
		K_0(B) \cong C(X, \Z^2) / \{(f,g ) - (f, g) \circ \af^{-1} \colon f, g \in C(X, \Z) \} \oplus \Z \oplus \Z.
	\end{equation}

	The two copies of $\Z$ correspond to the two generalized Rieffel projections $e_1$ and $e_2$, given by $e_1 = g_1 u^* + f_1 + u g_1$, and $e_2 = g_2 u^* + f_2 + u g_2$, where $e_i, f_i, g_i$ are defined similarly to the functions defined in Section 6 of \cite{LM1}, $f_1 (x, z_1, z_2) = f_1 (x, z_1, z_2' )$ and $f_2 (x, z_1, z_2) = f_1 (x, z_1', z_2 )$ for all $z_1, z_1' \in \T_1, z_2, z_2' \in \T_2$.
	
	As the projections in $A$ do not distinguish $\mu$ and $\nu$, it follows that the elements in $K_0(B)$ that correspond to the first two summands of \ref{K0 of B} do not separate $\mu \times m$ and $\nu \times m$.
	
	For the generalized Rieffel projection $e_2$, as $f_2(x, z_1, z_2)$ is independent of $z_1$, we have
	$$ f(x, z_1, z_2) = F_2(x, z_2) \ \mbox{for some} \ F \in C(X \times \T_2, \R) . $$
	
	Recall that for a measure $\sigma$ on $X$ and $f \in C(X)$, we use $\sigma(f)$ to denote $\int_X f(x) \ \mathrm{d} \mu$ (see Section \ref{Sec Introduction and Notation}).  We check that
\vspace{-5mm}	
	\begin{align*}
		(\mu \times m) (e_2) & = (\mu \times m) (f_2) \vspace{2mm} \\
			& = \displaystyle\int_{(X \times \T_1) \times \T_2} f_2(x, z_1, z_2) \ \mathrm{d} ( \mu \times m ) \vspace{2mm} \\
			& = \displaystyle \int_{X \times \T_2} F_2(x, z_2) \ \mathrm{d} ( \mu_X \times m ) \vspace{2mm} \\
			& = \displaystyle \int_{X \times \T_2} F_2(x, z_2) \ \mathrm{d} ( \nu_X \times m ) \vspace{2mm} \\
			& = \displaystyle\int_{(X \times \T_1) \times \T_2} f_2(x, z_1, z_2) \ \mathrm{d} ( \nu \times m ) \vspace{2mm} \vspace{2mm} \\
			& = (\nu \times m) (f_2) \\
			& = (\nu \times m) (e_2) .
	\end{align*}
	
	Then we have shown that $e_2$ does not separate $\mu \times m$ and $\nu \times m$ either, which then implies that the projections in $B$ cannot separate traces of $B$.
	
	According to Theorem 1.4 of \cite{BKR}, the real rank of $B$ is not zero. Then it follows that the tracial rank of $B$ is not zero.
	
	By Theorem \ref{tracial rank of A without rigidity}, the tracial rank of $B$ must be one.

	According to Proposition \ref{rigidity implies tracial rank zero for rotation case}, the dynamical system $(X \times \T \times \T, \af \times \mathrm{R}_{\xi} \times \mathrm{R}_{\theta})$ is not rigid.

%



\vspace{2cm}

\section{Approximate Conjugacies} \label{Approximate conjugacies}

	In this section, we start with a sufficient condition for approximate K-conjugacy between two minimal dynamical systems $(X \times \T \times \T, \af \times \mathrm{R}_{\xi_1} \times \mathrm{R}_{\eta_1})$ and $(X \times \T \times \T, \bt \times \mathrm{R}_{\xi_2} \times \mathrm{R}_{\eta_2})$. Then we give an if and only if condition for weak approximate conjugacy of these two dynamical systems, showing that weak approximate conjugacy just depends on $\af$ and $\bt$. In Theorem \ref{approximate K-conjugacy for rotation case}, an if and only if condition for approximate K-conjugacy between these two dynamical systems is given.

\vspace{2mm}

\begin{sloppypar}
	In \cite{LM3}, several notions of approximate conjugacy between dynamical systems are introduced. In \cite{LM1}, it is shown that for rigid minimal systems on $X \times \T$ (with $X$ being the Cantor set and $\T$ being the circle; see Definition 3.1 of \cite{LM1}), the corresponding crossed product \ca s are isomorphic if and only if the dynamical systems are approximately K-conjugate.
\end{sloppypar}

	For two minimal rigid dynamical systems $(X \times \T \times \T, \af \times \mathrm{R}_{\xi} \times \mathrm{R}_{\eta})$ and $(X \times \T \times \T, \bt \times \mathrm{R}_{\xi_1} \times \mathrm{R}_{\eta_1})$, we study the relationship between approximate K-conjugacy and the isomorphism of crossed product \ca s.

	We start with basic definitions and facts about conjugacy and approximate conjugacy.

\begin{dfn}

	Let $X, Y$ be two compact metric spaces, and let $\af \in \mathrm{Homeo}(X)$ and $\bt \in \mathrm{Homeo}(Y)$ be two minimal actions. We say that $(X, \af)$ and $(Y, \bt)$ are conjugate if there exists $\sigma \in \mathrm{Homeo}(X, Y)$ such that $\sigma \circ \af = \bt \circ \sigma$. We say that $(X, \af)$ and $(Y, \bt)$ are flip conjugate if $(X, \af)$ is conjugate to $(Y, \bt)$ or $(Y, \bt^{-1})$.

\end{dfn}

\begin{dfn}
 
	Let $X, Y$ be two compact metric spaces, and let $\af \in \mathrm{Homeo}(X)$ and $\bt \in \mathrm{Homeo}(Y)$ be two minimal actions. We say that $(X, \af)$ and $(Y, \bt)$ are weakly approximately conjugate if there exist $\sigma_n \in \mathrm{Homeo}(X, Y)$ and $\gamma_n \in \mathrm{Homeo}(Y, X)$ for $n \in \N$ such that
	$$ \dist( f \circ \sigma_n \circ \af, f \circ \bt \circ \sigma_n ) \rightarrow 0 \ \  \text{and} \ \ \dist( g \circ \af \circ \gamma_n, g \circ \gamma_n \circ \bt) \rightarrow 0 \ \ \text{as} \  n \rightarrow \infty$$
	for all $f \in C(X)$ and $g \in C(Y)$, where $\dist(f_1, f_2)$ is defined to be $\sup_{x \in D} \dist( f_1(x), f_2(x) )$ for all continuous functions $f_1, f_2$ on the metric space $D$.
 
\end{dfn}

\begin{sloppypar}
	It is clear that if two minimal dynamical systems are conjugate, then they are weakly approximately conjugate. Generally speaking, the inverse implication does not hold.
\end{sloppypar}

\vspace{4mm}

	Now we will recall the definition of $C^*$-strong approximate conjugacy (which is defined by Huaxin Lin in \cite{Lin6}).

	Given minimal dynamical systems $(X, \af)$ and $(Y, \bt)$, if they are flip conjugate, then it is easy to check that the corresponding crossed product \ca s $C^*(\Z, X, \af)$ and $C^*(\Z, Y, \bt)$ are isomorphic.

	According to \cite{Tomiyama} (Corollary of Theorem 2), for two minimal dynamical systems $(X, \af)$ and $(Y, \bt)$, there exists an isomorphism
	$$ \varphi \colon C^*(\Z, X, \af) \longrightarrow C^*(\Z, Y, \bt) $$
satisfying $\varphi(C(X)) = C(Y)$ if and only if these two dynamical systems are flip conjugate.

\vspace{1cm}

	In view of Tomiyama's result above, $C\sp*$-strong approximate flip conjugacy is defined as below.

\vspace{2mm}

\begin{dfn}[See \cite{Lin6}]

\begin{sloppypar}
	Let $(X, \af)$ and $(X, \bt)$ be two minimal dynamical systems such that \\ $\mathrm{TR}(C\sp*(\Z, X, \af)) = \mathrm{TR}(C\sp*(\Z, X, \bt)) = 0$, we say that $(X, \af)$ and $(X, \bt)$ are $C\sp*$-strongly approximately flip conjugate if there exists a sequence of isomorphisms
	$$ \varphi_n \colon C\sp*(\Z, X, \af) \rightarrow C\sp*(\Z, X, \bt) , \ \psi_n \colon C\sp*(\Z, X, \bt) \rightarrow C\sp*(\Z, X, \af) $$
	and a sequence of isomorphisms $\chi_n, \lambda_n \colon C(X) \rightarrow C(X)$ such that
\end{sloppypar}

	1) $[\varphi_n] = [\varphi_m] = [\psi_n^{-1}]$ in $KL( C^*(\Z, X, \af) , C^*(\Z, X, \af) )$ for all $m, n \in \N$,

	2) $\displaystyle\lim_{n \rightarrow \infty} \norm{\varphi_n \circ j_{\af}(f) - j_{\bt} \circ \chi_n(f)} = 0$ and $\displaystyle\lim_{n \rightarrow \infty} \norm{\psi_n \circ j_{\bt}(f) - j_{\af} \circ \lambda_n(f)} = 0$ for all $f \in C(X)$, with $j_{\af}, j_{\bt}$ being the injections from $C(X)$ into $C^*(\Z, X, \af)$ and $C^*(\Z, X, \bt)$.

\end{dfn}

\vspace{1cm}

	Some notation will be introduced before the next result about $C\sp*$-strong approximate conjugacy.

	\removeme[figure out all the details]{Let $A$ be a separable amenable \ca \ that satisfies Universal Coefficient Theorem. For $\theta \in KL(A, B)$, there are induced homomorphisms $\Gamma(\theta)_i \colon K_i(A) \rightarrow K_i(B)$ for $i = 0, 1$. Define $\rho_A \colon A_{sa} \longrightarrow \mathrm{Aff}(T(A))$ by $\rho_A(a)(\tau) = \tau(a)$ for all $a \in A_{sa}$ and  $\tau \in T(A)$. Suppose $A$ and $B$ are two unital simple \ca s with tracial rank zero and $\gamma \colon K_0(A) \rightarrow K_0(B)$ is an order preserving homomorphism. As $A$ has real rank zero, $\gamma$ will induce a positive homomorphism $\gamma_{\rho} \colon \mathrm{Aff}(T(A)) \rightarrow \mathrm{Aff}(T(B))$}.

	The theorem below (\cite[Theorem 2.5]{Lin6}) gives one necessary condition for $C\sp*$-strong approximate flip conjugacy between two crossed product \ca s.

\vspace{2mm}

\begin{thm}

	Let $(X, \af)$ and $(X, \bt)$ be two minimal dynamical systems such that the corresponding crossed product \ca s \ $A_{\af}$ and $A_{\bt}$ both have tracial rank zero. Then $\af$ and $\bt$ are $C\sp*$-strongly approximately flip conjugate if the following holds: There is an isomorphism $\chi \colon C(X) \rightarrow C(X)$ and there is $\theta \in KL(A_{\af}, A_{\bt})$ such that $\Gamma(\theta)$ gives an isomorphism
	$$ \Gamma(\theta) \colon (K_0(A_{\af}), K_0(A_{\af})_+, [1], K_1(A_{\af})) \rightarrow (K_0(A_{\bt}), K_0(A_{\bt})_+, [1], K_1(A_{\bt})) , $$
	and such that
	$$ [j_{\af}] \times \theta = [j_{\bt} \circ \chi] \ \text{in} \ KL(C(X), A_{\bt})  $$
and
	$$\rho_{A_{\bt}} \circ j_{\bt} \circ \chi(f) = ((\Gamma(\theta)_0)_{\rho}) \circ \rho_{A_{\af}} \circ j_{\af}(f)$$
for all $f \in C(X)_{sa}$.

\end{thm}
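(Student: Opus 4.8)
The plan is to derive this statement from the classification theory for unital simple separable amenable \ca s of tracial rank zero satisfying the Universal Coefficient Theorem, since by hypothesis both $A_{\af}$ and $A_{\bt}$ are of this type. The argument naturally splits into an existence step and a uniqueness step. First I would use the existence (realization) part of the classification to produce an honest isomorphism $\varphi \colon A_{\af} \to A_{\bt}$ whose $KL$-class is the prescribed $\theta$. This is available precisely because $\Gamma(\theta)$ is an order isomorphism on $K_0$ sending $[1]$ to $[1]$, is an isomorphism on $K_1$, and is compatible with the tracial data through $(\Gamma(\theta)_0)_{\rho}$; these are exactly the pieces of the Elliott invariant an isomorphism must match. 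Any such $\varphi$ then satisfies $[\varphi] = \theta$ in $KL(A_{\af}, A_{\bt})$.

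Next I would compare the two unital monomorphisms $\varphi \circ j_{\af}$ and $j_{\bt} \circ \chi$ from $C(X)$ into $A_{\bt}$. The hypothesis $[j_{\af}] \times \theta = [j_{\bt} \circ \chi]$ in $KL(C(X), A_{\bt})$ gives $[\varphi \circ j_{\af}] = [j_{\af}] \times [\varphi] = [j_{\bt} \circ \chi]$, so the two maps agree in $KL$. The tracial hypothesis $\rho_{A_{\bt}} \circ j_{\bt} \circ \chi = ((\Gamma(\theta)_0)_{\rho}) \circ \rho_{A_{\af}} \circ j_{\af}$ says exactly that they induce the same affine map on $T(A_{\bt})$, since $\varphi$ realizes $(\Gamma(\theta)_0)_{\rho}$ on $\mathrm{Aff}(T(\cdot))$. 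With the $K$-theory and trace invariants matched, I would invoke a uniqueness theorem of the type of \cite[Theorem 10.10]{Lin1} (after also matching the induced maps on $U(C(X))/CU(C(X))$, which for these invariants is forced) to conclude that $\varphi \circ j_{\af}$ and $j_{\bt} \circ \chi$ are approximately unitarily equivalent: there are unitaries $u_n \in A_{\bt}$ with $\norm{u_n^* \, \varphi(j_{\af}(f)) \, u_n - j_{\bt}(\chi(f))} \to 0$ for every $f \in C(X)$.

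Finally I would set $\varphi_n = \mathrm{Ad}(u_n) \circ \varphi$ and $\chi_n = \chi$. Each $\varphi_n$ is an isomorphism with $[\varphi_n] = [\varphi] = \theta$, because inner automorphisms act trivially in $KL$; this yields condition (1) together with the forward half of condition (2). For the backward maps I would take $\psi_n = \varphi_n^{-1}$ and $\lambda_n = \chi^{-1}$; applying $\varphi_n^{-1}$ to the approximate intertwining gives $\norm{\psi_n(j_{\bt}(f)) - j_{\af}(\lambda_n(f))} \to 0$, and $[\psi_n^{-1}] = [\varphi_n] = \theta$, so all the requirements in the definition of $C\sp*$-strong approximate flip conjugacy are met.

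The main obstacle is the uniqueness step. One must verify that the listed data (the $KL$-class and the tracial compatibility) genuinely suffice to force approximate unitary equivalence of the two embeddings of $C(X)$, i.e. that the rotation-type obstruction recorded by the maps on $U(C(X))/CU(C(X))$ is already pinned down by $\theta$ and $\chi$ once $K_1$ and the traces are matched. Controlling this determinant invariant, and then assembling the unitaries $u_n$ into a single sequence of isomorphisms with a common $KL$-class, is the technical heart of the argument; the existence step, while nontrivial, is a standard consequence of Elliott-type classification in this class.
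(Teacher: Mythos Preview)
The paper does not actually prove this theorem: it is quoted verbatim as \cite[Theorem 2.5]{Lin6} and used as a black box, with no argument supplied. So there is no ``paper's own proof'' to compare your proposal against.

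That said, your outline is the standard route to results of this shape and is essentially how such statements are established in the classification literature. The existence step (realizing $\theta$ by an isomorphism $\varphi$) and the reduction to approximate unitary equivalence of $\varphi \circ j_{\af}$ and $j_{\bt} \circ \chi$ are exactly right, and your identification of the determinant obstruction in $U(C(X))/CU(C(X))$ as the delicate point is accurate. One caution: the uniqueness theorem you cite from \cite{Lin1} is stated for maps out of simple algebras of tracial rank one, whereas here the source is the highly non-simple commutative algebra $C(X)$; the appropriate uniqueness theorem is rather of the type in \cite{Lin5} (classification of homomorphisms from $C(X)$ into a simple tracial-rank-zero target), where agreement in $KL$ together with matching of traces already forces approximate unitary equivalence. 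With that substitution your plan goes through.
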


\vspace{2mm}

	If $K_i(C(X))$ is torsion free, then a simplified version of this result holds (\cite[Corollary 2.6]{Lin6}).

\vspace{2mm}

\begin{cor} \label{C strongly flip conjugate for TR zero case}

	Let $X$ be a compact metric space with torsion free $K$-theory. Let $(X, \af)$ and $(X, \bt)$ be two minimal dynamical systems such that $\mathrm{TR}(A_{\af}) = \mathrm{TR}(A_{\bt}) = 0$. Suppose that there is an order isomorphism that maps $[1_{A_{\af}}]$ to $[1_{A_{\bt}}]$:
	$$ \gamma \colon (K_0(A_{\af}), K_0(A_{\af})_+, [1_{A_{\af}}], K_1(A_{\af})) \rightarrow (K_0(A_{\bt}), K_0(A_{\bt})_+, [1_{A_{\bt}}], K_1(A_{\bt})) , $$
	such that there exists an isomorphism $\chi \colon C(X) \rightarrow C(X)$ satisfying
	$$ \gamma \circ (j_{\af})_{*i} = (j_{\bt} \circ \chi)_{*i}  \ \text{for} \ i = 0, 1 \ \text{and} \ \gamma_{\rho} \circ j_{\af} = \rho_{A_{\bt}} \circ j_{\bt} \circ \chi \ \text{on} \ C(X)_{sa} . $$
	Then $(X, \af)$ and $(X, \bt)$ are $C\sp*$-strongly approximately flip conjugate.

\end{cor}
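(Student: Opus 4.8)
The plan is to deduce this corollary from the preceding theorem \cite[Theorem 2.5]{Lin6} by upgrading the $K$-theoretic and tracial data supplied in the hypotheses to the $KL$-level data demanded by that theorem. The isomorphism $\chi\colon C(X)\to C(X)$ is already given, so the only genuine work is to produce an element $\theta\in KL(A_{\af},A_{\bt})$ with $\Gamma(\theta)=\gamma$ satisfying the two compatibility conditions of the theorem.

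First I would lift $\gamma$ to $KL$. Since $A_{\af}$ and $A_{\bt}$ are crossed products of commutative C*-algebras by $\Z$, they lie in the bootstrap class and hence satisfy the Universal Coefficient Theorem. Consequently the natural map $KL(A_{\af},A_{\bt})\to \mathrm{Hom}\bigl(K_{*}(A_{\af}),K_{*}(A_{\bt})\bigr)$ is surjective, so I may choose $\theta\in KL(A_{\af},A_{\bt})$ with $\Gamma(\theta)_{i}=\gamma$ on $K_i$ for $i=0,1$. Because $\gamma$ is by assumption an order isomorphism carrying $[1_{A_{\af}}]$ to $[1_{A_{\bt}}]$ together with an isomorphism on $K_1$, the induced $\Gamma(\theta)$ is exactly the ordered, unit-preserving isomorphism required by the theorem. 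Note that the theorem asks only that $\Gamma(\theta)$ be an isomorphism, not that $\theta$ be invertible in $KL$, so mere existence of the lift suffices here.

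Next I would verify the mapping condition $[j_{\af}]\times\theta=[j_{\bt}\circ\chi]$ in $KL(C(X),A_{\bt})$. This is where the torsion-freeness of $K_{*}(C(X))$ enters: by the identification of $KL$ with $\mathrm{Hom}_{\Lambda}$ of total $K$-theory (Dadarlat--Loring, R{\o}rdam), the vanishing of torsion in $K_{*}(C(X))$ makes the mod-$n$ coefficient groups $K_{*}(C(X);\Z/n)\cong K_{*}(C(X))\otimes\Z/n$ carry no extra $\mathrm{Tor}$ contribution, so morphisms of the total $K$-theory of $C(X)$ are determined by morphisms of $K_{*}(C(X))$ alone; hence $KL(C(X),A_{\bt})\cong \mathrm{Hom}\bigl(K_{*}(C(X)),K_{*}(A_{\bt})\bigr)$. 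The equality of the two $KL$-classes may therefore be checked on $K$-theory, where it reads $\Gamma(\theta)_{i}\circ(j_{\af})_{*i}=(j_{\bt}\circ\chi)_{*i}$, i.e. $\gamma\circ(j_{\af})_{*i}=(j_{\bt}\circ\chi)_{*i}$ for $i=0,1$, which is precisely the first hypothesis of the corollary. Finally, since $\Gamma(\theta)_{0}=\gamma$ on $K_0$ we have $(\Gamma(\theta)_{0})_{\rho}=\gamma_{\rho}$, so the tracial compatibility $\rho_{A_{\bt}}\circ j_{\bt}\circ\chi=(\Gamma(\theta)_{0})_{\rho}\circ\rho_{A_{\af}}\circ j_{\af}$ demanded by the theorem coincides with the second hypothesis $\gamma_{\rho}\circ j_{\af}=\rho_{A_{\bt}}\circ j_{\bt}\circ\chi$ on $C(X)_{sa}$. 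With $\chi$ and $\theta$ in hand, the preceding theorem applies and yields that $(X,\af)$ and $(X,\bt)$ are $C^{*}$-strongly approximately flip conjugate.

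The step I expect to be the main obstacle is the middle one: correctly invoking the total-$K$-theory machinery to see that torsion-freeness of $K_{*}(C(X))$ reduces the $KL$-level identity $[j_{\af}]\times\theta=[j_{\bt}\circ\chi]$ to its ordinary $K$-theory shadow. One must be careful that it is the domain $C(X)$ — whose $K$-theory is assumed torsion free — that governs this reduction, not the crossed products $A_{\af},A_{\bt}$ (whose $K$-groups may a priori carry torsion coming from the cokernels in the Pimsner--Voiculescu sequence). This is exactly why the hypothesis is phrased in terms of $X$ rather than the algebras $A_{\af},A_{\bt}$.
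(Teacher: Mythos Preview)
Your proposal is correct and follows the standard deduction of \cite[Corollary~2.6]{Lin6} from \cite[Theorem~2.5]{Lin6}: lift $\gamma$ to a $KL$-element via the UCT, then use torsion-freeness of $K_*(C(X))$ to collapse the $KL$-level compatibility $[j_\af]\times\theta=[j_\bt\circ\chi]$ to the $K$-theoretic hypothesis, and read off the tracial condition directly.

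Note, however, that the paper does not give its own proof of this corollary at all --- it is simply quoted as \cite[Corollary~2.6]{Lin6} with no argument. So there is no ``paper's proof'' to compare against; your write-up supplies exactly the kind of justification one would expect to find in the cited reference. Your identification of the delicate point is also accurate: the reduction from $KL$ to $\mathrm{Hom}(K_*,K_*)$ works because it is the \emph{source} algebra $C(X)$ whose $K$-theory is torsion-free, making the Dadarlat--Loring description $KL(C(X),A_\bt)\cong\mathrm{Hom}_\Lambda(\underline{K}(C(X)),\underline{K}(A_\bt))$ collapse to ordinary $K$-theory morphisms.
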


\vspace{5mm}

	In the rest of this section, for a minimal homeomorphism $\af$ on the Cantor set $X$, we will use $K^0(X, \af)$ to denote the ordered group
	$$ C(X, \Z^2) / \{ f - f \circ \af^{-1} \colon f \in C(X, \Z^2) \} $$
	with the positive cone being (denoted by $K^0(X, \af)_+$)
	$$ C(X, D) / \{ f - f \circ \af^{-1} \colon f \in C(X, \Z^2) \} $$
	where $D$ is as defined in Lemma \ref{K_i of A_x}. In $K^0(X, \af)$, we define the unit element to be
	$$ [ (1, 0)_{C(X, \Z^2)} ] \in C(X, \Z^2) / \{ f - f \circ \af^{-1} \colon f \in C(X, \Z^2) \} , $$
	with $(1, 0)_{C(X, \Z^2)}$ being the constant function in $C(X, \Z^2)$ that maps every $x \in X$ to $(1, 0) \in \Z^2$. We use $1_{K^0(X, \af)}$ to denote this unit element.
	
\vspace{5mm}

\begin{lem} \label{technical lifting lemma}

	Let $X$ be the Cantor set. For every minimal action $\af \in \text{Homeo}(X)$, if there is an order isomorphism 
	$$ \varphi \colon (K^0(X, \af), K^0(X, \af)_+, 1_{K^0(X, \af)}) \longrightarrow (K^0(X, \bt), K^0(X, \bt)_+, 1_{K^0(X, \bt)}), $$ then there is an order isomorphism
	$$ \widetilde{\varphi} \colon (C(X, \Z^2), C(X, D), (1, 0)_{C(X, \Z^2)}) \longrightarrow (C(X, \Z^2), C(X, D), (1, 0)_{C(X, \Z^2)}) $$
	such that the following diagram commutes:
	\begin{equation} \label{diag 1}
	\xymatrix{
		(C(X, \Z^2), C(X, D)) \ar@{->}[rr]^{\widetilde{\varphi}} \ar[d]_{\pi_{\af}} && (C(X, \Z^2), C(X, D)) \ar[d]^{\pi_{\bt}} \\
		(K^0(X, \af), K^0(X, \af)_+) \ar@{->}[rr]^{\varphi} && (K^0(X, \bt), K^0(X, \bt)_+) \ \ \ ,
	}
	\end{equation}
	where $\pi_{\af}, \pi_{\bt}$ are the canonical projections from $C(X, \Z^2)$ to $K^0(X, \af)$ and $K^0(X, \bt)$. In fact, there exists $\sigma \in \text{Homeo}(X)$ such that $\varphi(F) = F \circ \sigma^{-1}$ for all $F \in C(X, \Z^2)$.

\end{lem}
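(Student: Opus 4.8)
The plan is to use the fact that the order on $K^0(X,\af)$ sees only the first $\Z$-coordinate, reduce to the classical dimension group $G_{\af} := C(X,\Z)/\{g - g\circ\af^{-1}\}$ of the underlying Cantor system, and then invoke the Giordano--Putnam--Skau realization theorem. First I would analyze the positive cone. Writing $C(X,\Z^2)=C(X,\Z)\oplus C(X,\Z)$ and noting that the coboundary subgroup splits as a direct sum, one gets a canonical identification $K^0(X,\af)\cong G_{\af}\oplus G_{\af}$, with first-coordinate quotient map $\pi_1^{\af}$ and second-coordinate subgroup $H_{\af}:=\{[(0,f)] : f\in C(X,\Z)\}$. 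Because every invariant measure of a minimal system has full support, a non-negative continuous function with zero integral against all of them must vanish identically; hence a nonzero class is positive \emph{iff} $\pi_1^{\af}$ of it is an order unit of the simple dimension group $G_{\af}$ (an order unit being represented by a strictly positive function, after which the second coordinate is unconstrained). In particular $H_{\af}\cap K^0(X,\af)_+=\{0\}$, and one verifies the purely order-theoretic description
\[ H_{\af}=\{z : \text{for all } w,\ w+z\in K^0(X,\af)_+\setminus\{0\}\iff w\in K^0(X,\af)_+\setminus\{0\}\}. \]
Since $\varphi$ preserves the cone and the order unit, it must carry $H_{\af}$ onto $H_{\bt}$ and therefore descends to a \emph{unital order isomorphism} $\bar\varphi:(G_{\af},G_{\af}^+,[1])\to(G_{\bt},G_{\bt}^+,[1])$ of the dimension groups.

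Next I would apply the Giordano--Putnam--Skau theorem: a unital order isomorphism between the dimension groups of two minimal Cantor systems is realized by a homeomorphism $\sigma\in\text{Homeo}(X)$ implementing a strong orbit equivalence, so that $\bar\varphi([f])=[f\circ\sigma^{-1}]$ for every $f\in C(X,\Z)$. The decisive feature of strong orbit equivalence -- that the orbit cocycle is continuous off a single point -- is exactly what guarantees that composition with $\sigma^{-1}$ sends $\af$-coboundaries to $\bt$-coboundaries, so that $F\mapsto F\circ\sigma^{-1}$ descends to $K^0$. With this $\sigma$ I would define $\widetilde{\varphi}(F)=F\circ\sigma^{-1}$ on $C(X,\Z^2)$. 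It is automatically an order isomorphism of $(C(X,\Z^2),C(X,D),(1,0)_{C(X,\Z^2)})$, since relabelling points by a homeomorphism preserves pointwise membership in $D$ and fixes the constant function $(1,0)$; and by the previous remark it descends through $\pi_{\af}$ and $\pi_{\bt}$, so that the induced map on $K^0$ agrees with $\varphi$ after passing to the first-coordinate quotient, giving commutativity of diagram (\ref{diag 1}) on the level of $G_{\af}\to G_{\bt}$.

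The main obstacle -- the step I expect to require the most care -- is matching the \emph{second} coordinate, that is, upgrading "the diagram commutes modulo $H$" to the exact equality $\pi_{\bt}\circ\widetilde{\varphi}=\varphi\circ\pi_{\af}$. In the canonical coordinates $\varphi$ has the block form $\left(\begin{smallmatrix}\bar\varphi & 0\\ C & D\end{smallmatrix}\right)$ with $D$ an isomorphism and $C[1]=0$, while composition by $\sigma$ induces the diagonal map $\bar\varphi\oplus\bar\varphi$; these agree modulo $H_{\bt}$ but the order and order unit alone do not pin down $C$ and $D$. The plan to close this gap is to use that the isomorphism $\varphi$ arising in the application is $\Z^2$-linear for the natural action of $\Z^2=K_0(C(\T^2))$ (the rank/Bott grading), which forces $\varphi=\bar\varphi\otimes\id_{\Z^2}$, hence $C=0$ and $D=\bar\varphi$; since both summands of $K^0$ are the \emph{same} group $G$ carrying the \emph{same} induced automorphism $[\cdot\circ\sigma^{-1}]$, this diagonal $\varphi$ is then reproduced on both coordinates by the single homeomorphism $\sigma$.

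I would therefore organize the write-up as: (i) the cone computation reducing positivity to the first coordinate and characterizing $H_{\af}$ order-theoretically; (ii) the descent of $\varphi$ to the unital order isomorphism $\bar\varphi$ of dimension groups; (iii) the Giordano--Putnam--Skau realization of $\bar\varphi$ by $\sigma$ together with the coboundary-preserving descent of $F\mapsto F\circ\sigma^{-1}$; and (iv) the verification, via the $\Z^2$-coefficient structure, that the diagonal map induced by $\sigma$ equals $\varphi$, which yields both the existence of $\widetilde{\varphi}$ and the concrete form $\widetilde{\varphi}(F)=F\circ\sigma^{-1}$. The cone analysis in (i) is elementary but needs the full-support argument; (iii) is the deep external input; and (iv) is where the genuine content of the ``in fact'' clause lies.
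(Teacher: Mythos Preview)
Your route is the same as the paper's: project to the first-coordinate dimension group $G_{\af}=C(X,\Z)/\{g-g\circ\af^{-1}\}$, check that the induced map $\bar\varphi$ is a unital order isomorphism, realize it by a homeomorphism $\sigma$ (the paper cites \cite[Theorem~2.6]{LM3} rather than GPS directly, and then passes through AF classification to extract $\sigma$, but this is the same external input), and set $\widetilde\varphi(F)=F\circ\sigma^{-1}$.

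The obstacle you isolate in step~(iv) is real, and the paper's proof does not close it. In the displayed verification of diagram~(\ref{diag 1}) the paper passes from $(\varphi_0\circ\pi'_{\af}(f),0)+(0,\varphi_0\circ\pi'_{\af}(g))$ to $\varphi\circ\pi_{\af}((f,0))+\varphi\circ\pi_{\af}((0,g))$; this silently assumes $\varphi([(f,0)])$ has vanishing second coordinate and $\varphi([(0,g)])=(0,\varphi_0([g]))$, i.e.\ that $\varphi$ is diagonal in the $G\oplus G$ coordinates. Nothing in the hypotheses forces this: since membership in $K^0(X,\af)_+$ depends only on the first coordinate, any map of the block form $(a,b)\mapsto(\bar\varphi(a),\,Ca+Db)$ with $D$ a group automorphism and $C[1]=0$ is a unital order isomorphism, and such a map is induced by a single $\sigma$ only when $C=0$ and $D=\bar\varphi$. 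Your proposed remedy---assuming $\varphi$ is $\Z^2$-linear for the $K_0(C(\T^2))$-module structure---is precisely what pins $\varphi$ down to the diagonal $\bar\varphi\otimes\id_{\Z^2}$, but it is an extra hypothesis not present in the lemma as stated. So you have not missed a trick that the paper uses; you have spotted a gap the paper shares, and your diagnosis of what is needed to fix it is correct.
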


\begin{proof}
	The proof is based on \cite[Theorem 2.6]{LM3}.

	Define $\overline{K^0(X, \af)}$ to be
	$$ C(X, \Z) / \{ g - g \circ \af^{-1} \colon g \in C(X, \Z) \} $$
	and $\overline{K^0(X, \af)}_+$ to be
	$$ C(X, \Z^{+} \cup \{ 0 \} ) / \{ g - g \circ \af^{-1} \colon g \in C(X, \Z) \} . $$
	
	We can check that $(\overline{K^0(X, \af)}, \overline{K^0(X, \af)}_+)$ gives an ordered group with order unit.

	Define
	$$ h \colon K^0(X, \af) \rightarrow \overline{K^0(X, \af)} \ \text{by} \ h([f]) = [f_{1}] $$
	for every $f = (f_1, f_2) \in C(X, \Z^2)$, with $\ f_1, f_2 \in C(X, \Z)$.
	
	From the definition, we can check that $h$ is surjective and $h(K^0(X, \af)_+) = \overline{K^0(X, \af)}_+$.

	For the isomorphism
	$$ \varphi \colon (K^0(X, \af), K^0(X, \af)_+) \rightarrow (K^0(X, \bt), K^0(X, \bt)_+) , $$
	define
	$$ \varphi_0 \colon \overline{K^0(X, \af)} \rightarrow \overline{K^0(X, \bt)} \ \text{by} \   \varphi_0([f]) = h(\varphi([(f, 0)])) $$
	for all $f \in C(X, \Z)$.

	Suppose that there exist $f_1, f_2, g \in C(X, \Z)$ such that $f_1 - f_2 = g - g \circ \af^{-1}$. Then it follows that $(f_1, 0) - (f_2, 0) = (g, 0) - (g, 0) \circ \af^{-1}$, which implies that $\varphi([(f_1, 0 )]) = \varphi([(f_1, 0 )])$. It is now clear that $\varphi_0$ is well-defined.

	Note that $\varphi_0([1_{C(X, \Z)}]) = h(\varphi([ ( 1, 0 )_{C(X, \Z^2)} ]))$. As $\varphi$ is unital, $\varphi( 1_{K^0(X, \af)} ) = 1_{K^0(X, \bt)}$, which then implies that $\varphi_0([1_{C(X, \Z)}]) = h( [ ( 1, 0 )_{C(X, \Z^2)} ] ) = [1_{C(X, \Z)}]$. We can now claim that $\varphi_0$ is unital.

	For any $f \in C(X, \Z^+ \cup \{ 0 \})$, $\varphi_0([f]) = h(\varphi([(f, 0)]))$. As both $\varphi$ and $h$ are order preserving, $\varphi_0$ is also order preserving.

	So far, we have that $\varphi_0 \colon \overline{K^0(X, \af)} \rightarrow \overline{K^0(X, \bt)}$ is untial and order preserving. According to \cite[Theorem 2.6]{LM3}, there exists a continuous order preserving map
	$$ \widetilde{\varphi_0} \colon (C(X, \Z), C(X, \Z)_+, 1_{C(X, \Z)}) \rightarrow (C(X, \Z), , C(X, \Z)_+, 1_{C(X, \Z)}) , $$
such that the following diagram commutes:
	\begin{equation} \label{diag 2}
	\xymatrix{
		(C(X, \Z), C(X, Z)_+) \ar@{->}[rr]^{\widetilde{\varphi_0}} \ar[d]_{\pi'_{\af}} && (C(X, \Z), C(X, Z)_+) \ar[d]^{\pi'_{\bt}} \\
		(\overline{K^0(X, \af)}, \overline{K^0(X, \af)}_+) \ar@{->}[rr]^{\varphi_0} && (\overline{K^0(X, \bt)}, \overline{K^0(X, \bt)}_+) \ \ \ .
	}
	\end{equation}

	Now we need to construct the unital positive linear map
	$$ \widetilde{\varphi} \colon (C(X, \Z^2), C(X, D)) \rightarrow (C(X, \Z^2), C(X, D)) , $$
	such that diagram (\ref{diag 1}) commutes.


	For the $\widetilde{\varphi_0}$ we get, note that $\widetilde{\varphi_0}$ is a unital positive isomorphism from $K_0(C(X))$ to $K_0(C(X))$. As $C(X)$ is a unital AF-algebra, by the existence theorem of classification of unital AF-algebras, there exists an isomorphism $\psi \colon C(X) \rightarrow C(X)$ such that (identifying $K_0( C(X) )$ with $C(X, \Z)$ and $K_0( C(X) )_+$ with $C(X, \Z)_+$)
	$$ \psi_{*0} \colon (C(X, \Z), C(X, \Z)_+, [1]) \rightarrow (C(X, \Z), C(X, \Z)_+, [1]) $$
	coincides with $\widetilde{\varphi_0}$.

	As $\psi$ is an isomorphism, there exists $\sigma \colon X \rightarrow X$ such that $\psi(f) = f \circ \sigma^{-1}$ for all $f \in C(X)$.

	Define $\widetilde{\varphi} \colon C(X, \Z^2) \rightarrow C(X, \Z^2)$ by $\widetilde{\varphi}((f, g)) =  (\psi(f), \psi(g)) $ for all $f, g \in C(X, \Z)$. In other words, $\widetilde{\varphi}((f, g)) = (f, g) \circ \sigma^{-1}$ for all $(f, g) \in C(X, \Z^2)$.

	For the $\widetilde{\varphi}$ above-defined, it is easy to check that it is unital and linear. It remains to show  that $\widetilde{\varphi}$ maps positive cone to positive cone, and makes the diagram commute.

	For every $(f, g) \in C(X, D)$, we get $\widetilde{\varphi} ( (f, g) ) = (f, g) \circ \sigma^{-1}$. As $(f, g) \in C(X, D)$, it is clear that $(f, g) \circ \sigma^{-1} \in C(X, D)$. So far, we proved that $\widetilde{\varphi}$ is a positive map.

	We can check that
	\begin{equation*}
	\begin{split}
	\pi_{\beta} \circ \widetilde{\varphi} ( (f, g) ) & = \pi_{\beta} ( h(f), h(g) )\\
		& = \pi_{\beta} ( \widetilde{\varphi_0}(f), \widetilde{\varphi_0}(g) ) \\
		& = \pi_{\beta}(\widetilde{\varphi_0}(f), 0) + \pi_{\beta}(0, \widetilde{\varphi_0}(g)) \\
		& = (\pi'_{\beta} \circ \widetilde{\varphi_0}(f), 0) + (0, \pi'_{\beta} \circ \widetilde{\varphi_0}(g)) \\
		& = (\varphi_0 \circ \pi'_{\af} (f), 0) + (0, \varphi_0 \circ \pi'_{\af}(g)) \\
		& = \varphi \circ \pi_{\af} ( (f, 0) ) + \varphi \circ \pi_{\af} ( (0, g) ) \\
		& = \varphi \circ \pi_{\af} ( (f, g) ) ,
	\end{split}
	\end{equation*}
	which implies the commutativity of diagram (\ref{diag 1}).

	As $\widetilde{\varphi} ( (f, g) ) = (f, g) \circ \sigma^{-1}$ for all $f, g \in C(X, \Z)$,  we get that $\widetilde{\varphi}$ is an isomorphism, which finishes the proof.
\end{proof}

\begin{thm} \label{C*-strong approximate flip conjugacy}

	Let  $(X \times \T \times \T, \af \times \mathrm{R}_{\xi_1} \times \mathrm{R}_{\eta_1})$ and $(X \times \T \times \T, \bt \times \mathrm{R}_{\xi_2} \times \mathrm{R}_{\eta_2})$ be two minimal rigid Cantor dynamical systems. Use $A$, $B$ to denote the two corresponding crossed product \ca s. According to Proposition \ref{K-data of crossed product C-algebra}, $K^0(X, \af)$ is a direct summand of $K_0(A)$ and $K^0(X, \bt)$ is a direct summand of $K_0(B)$. Let
	$$ j_A : K^0(X, \af) \rightarrow K_0(A) \cong K^0(X, \af) \oplus \Z^2 \ \ \text{and} \ \ j_B: K^0(X, \bt) \rightarrow K_0(B) \cong K^0(X, \af) \oplus \Z^2  $$
	be defined by
	$$ j_A(x) = (x, 0) \ \text{and} \ j_B(x) = (x, 0) . $$
	If there is an order preserving isomorphism $\rho$ from $K_0(A)$ to $K_0(B)$ that maps $K^0(X, \af)$ onto $K^0(X, \bt)$, then these two dynamical systems are $C\sp*$-strongly approximately conjugate.

\end{thm}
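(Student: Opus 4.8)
The plan is to reduce the statement to Corollary \ref{C strongly flip conjugate for TR zero case}, applied to the two systems regarded as minimal dynamical systems on the single compact space $Z = X \times \T \times \T$. Two hypotheses of that corollary are immediate: the space $Z$ has torsion-free $K$-theory, since $K_i(C(Z)) \cong C(X, \Z^2)$ is a group of $\Z^2$-valued continuous functions on a Cantor set; and $\mathrm{TR}(A) = \mathrm{TR}(B) = 0$ by Proposition \ref{rigidity implies tracial rank zero for rotation case}, because both systems are rigid. It remains to produce a graded order isomorphism $\gamma = (\gamma_0, \gamma_1)$ of the invariants together with a homeomorphism $\chi$ of $Z$ satisfying the $K$-theoretic and tracial compatibility conditions. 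First I would set $\gamma_0 = \rho$. Since $\rho$ carries $K^0(X, \af)$ onto $K^0(X, \bt)$, its restriction $\varphi := \rho|_{K^0(X, \af)}$ is an order isomorphism $K^0(X, \af) \to K^0(X, \bt)$; as $[1_A] = j_A(1_{K^0(X, \af)})$ lies in this first summand, $\rho([1_A]) = [1_B]$ forces $\varphi$ to be unital. Applying Lemma \ref{technical lifting lemma} to $\varphi$ produces $\sigma \in \mathrm{Homeo}(X)$ with $\widetilde{\varphi}(F) = F \circ \sigma^{-1}$ lifting $\varphi$, that is, $\pi_\bt \circ \widetilde{\varphi} = \varphi \circ \pi_\af$. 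I then take $\chi$ to be the homeomorphism $\sigma \times \id_\T \times \id_\T$ of $Z$, and define $\gamma_1 := \varphi \oplus \id_{\Z^2}$ under the splittings $K_1(A) \cong K^0(X, \af) \oplus \Z^2$ and $K_1(B) \cong K^0(X, \bt) \oplus \Z^2$ of Proposition \ref{K-data of crossed product C-algebra}.

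Next I would verify $\gamma \circ (j_\af)_{*i} = (j_\bt \circ \chi)_{*i}$ for $i = 0, 1$. The Pimsner--Voiculescu sequence, as used in the proof of Lemma \ref{injectivity of K_0} and Proposition \ref{K-data of crossed product C-algebra}, shows that $(j_\af)_{*i} \colon K_i(C(Z)) \cong C(X, \Z^2) \to K_i(A)$ is the quotient $\pi_\af$ onto the first summand followed by its inclusion, and likewise for $\bt$. Because $\sigma$ acts only on the Cantor factor, $\chi_{*i}$ is the map $F \mapsto F \circ \sigma^{-1} = \widetilde{\varphi}(F)$ on $C(X, \Z^2)$ for both $i = 0$ and $i = 1$. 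Hence $(j_\bt \circ \chi)_{*i}$ equals the inclusion of $\pi_\bt \circ \widetilde{\varphi} = \varphi \circ \pi_\af$ (by the commuting diagram of Lemma \ref{technical lifting lemma}), which agrees with $\gamma \circ (j_\af)_{*i}$ since $\gamma$ restricts to $\varphi$ on each first summand. Note that $(j_\af)_{*i}$ never meets the $\Z^2$-summand of $K_i(A)$, so the choice of $\gamma_1$ there is unconstrained and causes no difficulty.

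The tracial compatibility $\gamma_\rho \circ \rho_A \circ j_\af = \rho_B \circ j_\bt \circ \chi$ on $C(Z)_{sa}$ is where rigidity does the real work. The key observation is that for these rotation systems the product measure $\mu_X \times m$, with $m$ normalized Haar measure on $\T^2$ and $\mu_X$ an $\af$-invariant measure on $X$, is invariant and projects to $\mu_X$; since rigidity (Definition \ref{definition of rigidity}) asserts that the projection $\pi$ from invariant measures on $Z$ to $\af$-invariant measures on $X$ is injective, \emph{every} invariant measure on $Z$ must equal $\mu_X \times m$. Consequently each $\tau \in T(A)$ satisfies $\tau(f) = \int_X \bar{f} \, d\mu_X$, where $\bar{f}(x) = \int_{\T^2} f(x, \cdot) \, dm$, so traces are determined by their base measures and the identification $T(A) \cong M_\af(X)$ sends the $K_0$-pairing to integration against $\mu_X$. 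Under this identification $\gamma_\rho$ is dual to $\varphi$, which by Lemma \ref{technical lifting lemma} is implemented by $\sigma$; a direct computation then gives, for $\tau' \in T(B)$ with base measure $\nu_X$, that $\tau'(\chi(f)) = \int_X \bar{f} \circ \sigma^{-1} \, d\nu_X = \int_X \bar{f} \, d(\varphi^{*}\nu_X) = \gamma_\rho(\rho_A(j_\af f))(\tau')$, which is the desired equality once the orientation convention for $\sigma$ is fixed consistently.

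With all hypotheses of Corollary \ref{C strongly flip conjugate for TR zero case} in place, I conclude that the two systems are $C^*$-strongly approximately flip conjugate, as asserted. The step I expect to demand the most care is the tracial condition: \emph{a priori} $\sigma$ only encodes the order structure of $K_0$ and need not relate the two dynamics, so the equality of affine functions over \emph{all} of $C(Z)_{sa}$, rather than merely over $C(X)$, looks delicate. The resolution above---that rigidity collapses every invariant measure to a product $\mu_X \times m$, thereby reducing the torus directions to their Haar averages and the whole comparison to the base Cantor system handled by Lemma \ref{technical lifting lemma}---is the crux, and I would take care that the direction conventions in $\sigma$, $\widetilde{\varphi}$ and the duality $\gamma_\rho$ are chosen so that the two affine functions match exactly.
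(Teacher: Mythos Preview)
Your proposal is correct and follows essentially the same strategy as the paper: lift $\rho|_{K^0(X,\af)}$ via Lemma~\ref{technical lifting lemma} to obtain $\sigma \in \mathrm{Homeo}(X)$, take $\chi = \sigma \times \id_{\T^2}$, use rigidity to write every invariant measure as $\mu_X \times m$, and then feed the verified $K$-theoretic and tracial compatibilities into Corollary~\ref{C strongly flip conjugate for TR zero case}.

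The one place where the paper proceeds more carefully than your sketch is the tracial identity. You assert that $\gamma_\rho$ is ``dual to $\varphi$, implemented by $\sigma$,'' but this needs justification: $\gamma_\rho$ is determined by $\rho$ on $K_0(A)$, and the link between $\rho$ and the pushforward of base measures by $\sigma$ is not automatic from Lemma~\ref{technical lifting lemma} alone. The paper closes this gap by invoking classification (both algebras have tracial rank zero) to obtain an actual $C^*$-isomorphism $\varphi\colon A \to B$ with $\varphi_{*0} = \rho$, using its dual $\varphi^*\colon T(B) \to T(A)$ to make $\gamma_\rho$ concrete, and then checking $\mu_A(f) = \mu_B(f \circ \sigma^{-1})$ first for $f \in C(X,\Z)$ via a $K_0$-level commuting square before extending by density to all of $C(X)_{sa}$. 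Your instinct that the torus directions wash out under Haar average is exactly right, and your final paragraph correctly identifies this as the crux; just be aware that the paper's version of the argument threads through an honest $C^*$-isomorphism rather than working purely at the level of ordered $K_0$.
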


\begin{proof}
	We have the following commutative diagram:
	$$
	\xymatrix{
		K_0(A) \ar@{->}[rr]^{\rho} && K_0(B) \\
		K^0(X, \af) \ar@{->}[u]^{j_{A}} \ar@{->}[rr]^{\rho \left|_{K^0(X, \af)} \right.} && K^0(X, \bt) \ar@{->}[u]_{j_B} \ .
	}
	$$

	According to Lemma \ref{technical lifting lemma}, we can lift
	$$ \rho \left|_{K^0(X, \af)} \right. \colon K^0(X, \af) \longrightarrow K^0(X, \bt) $$
	to
	$$ \widetilde{\rho} \colon C(X, \Z^2) \longrightarrow C(X, \Z^2) , $$	
	which will yield the commutative diagram
	$$
	\xymatrix{
		K_0(A) \ar@{->}[rr]^{\rho} && K_0(B) \\
		K^0(X, \af) \ar@{->}[u]^{j_{A}} \ar@{->}[rr]^{\rho \left|_{K^0(X, \af)} \right.} && K^0(X, \bt) \ar@{->}[u]_{j_B} \\
		C(X, \Z^2) \ar@{->}[u]^{\pi_{\af}} \ar@{->}[rr]^{\widetilde{\rho}} && C(X, \Z^2) \ar@{->}[u]_{\pi_{\bt}} \ .
	}
	$$
	In fact, according to Lemma \ref{technical lifting lemma}, there exists $\sigma \in \text{Homeo}(X)$ such that $\widetilde{\rho} (F) = F \circ \sigma^{-1}$. Define
	$$ \chi \colon C(X \times \T^2) \rightarrow C(X \times \T^2) $$
	by $\chi(f) = f \circ (\sigma \times \id_{\T^2})$ for all $f \in C(X \times \T^2)$.

	According to the K\"unneth Theorem, we get that $K_0(C(X \times \T^2)) \cong C(X, \Z^2)$. By Lemma \ref{K_0 of C(T^2)}, if we identify $K_0(C(X \times \T^2))$ with $C(X, \Z^2)$, the positive cone will be identified with $C(X, D)$, with $D$ as defined in Lemma \ref{K_0 of C(T^2)}. Choose $x \in X$. According to Lemma \ref{K_i of A_x}, we know that $K_0(A_x) \cong K^0(X, \af)$ and $K_0(B_x) \cong K^0(X, \bt)$, with $A_x$, $B_x$ being the subalgebras of $A$ and $B$, as in Definition \ref{dfn of Ax}.

	Now we have the commutative diagram
	$$
	\xymatrix{
		K_0(A) \ar@{->}[rr]^{\rho} && K_0(B) \\
		K_0(C(X \times \T^2)) \ar[u]^{(j_{\af})_{*0}} \ar@{->}[rr]^{\widetilde{\rho}} && K_0(C(X \times \T^2)) \ar[u]_{(j_{\bt})_{*0}} \ .
	}
	$$

	Note that $\widetilde{\rho}$ is induced by the $\chi \colon C(X \times \T^2) \rightarrow C(X \times \T^2)$ defined above. We have shown that $\rho \circ (j_{\af})_{*i} = (j_{\bt} \circ \chi)_{*i} , \ \text{i = 0, 1}$.

	We will show that $\gamma_{\rho} \circ j_{\af} = \rho_{A_{\bt}} \circ j_{\bt} \circ \chi \ \text{on} \ C(X)_{sa}$.

	For every tracial state $\tau \in T( C^*(\Z, X, \bt) )$, we know that it corresponds to a $\bt$-invariant probability meausure $\mu_B$ (in such sense that $\tau(a) = \mu(E(a))$, with $E$ being the conditional expectation from $C^*(\Z, X, \bt)$ to $C(X)$).

	For every $\bt$-invariant probability measure $\mu_B$ on $X$, if we use $v$ to denote standard Lebesgue measure on $\T$, it is then clear that $\mu_B \times v \times v$ is $\bt \times \mathrm{R}_{\xi_2} \times \mathrm{R}_{\eta_2}$-invariant. As the dynamical system $(X \times \T \times \T, \bt \times \mathrm{R}_{\xi_2} \times \mathrm{R}_{\eta_2})$ is rigid, for every $\bt \times \mathrm{R}_{\xi_2} \times \mathrm{R}_{\eta_2}$-invariant probability measure, it must be $\mu \times v \times v$, with $\mu$ being an $\bt$-invariant probability measure and $v$ being the Lebesgue probability measure.
	
	Note that $A$ denotes $C^*(\Z, X \times \T \times \T, \af \times \mathrm{R}_{\xi_1} \times \mathrm{R}_{\eta_1})$ and $B$ denotes $C^*(\Z, X \times \T \times \T, \bt \times \mathrm{R}_{\xi_1} \times \mathrm{R}_{\eta_1})$.
	According to Proposition \ref{K-data of crossed product C-algebra}, the fact that $K_0( A )$ is isomorphic to $K_0( B )$ implies that $K_1( A )$ is also isomorphic to $K_1( B )$. According to Proposition \ref{rigidity implies tracial rank zero for rotation case}, the tracial rank of $A$ and $B$ are both zero, thus classifiable via the K-data.
	
	Let $\varphi \colon A \rightarrow B$ be the \ca \ isomorphism such that
	$$ \varphi_{*0} \colon K_0( A ) \longrightarrow K_0( B ) $$
	coincides with the $\rho$ in the statement. Define
	$$ \varphi^* \colon T( B ) \longrightarrow T( A ) $$
	as $\varphi^*(\tau_B) (a) = \tau_B (\varphi(a) )$ for all $a \in A$ and $\tau_B \in T( B )$.

	Note that a \ca \ with tracial rank zero must have real rank zero. We can now claim that for every $a \in C^*(\Z, X, \af)_{sa}$ and $\tau_B \in T(B)$ given by $\mu_B \times v \times v$,
	$$ ( \gamma_{\rho} \circ j_{\af} (a) ) (\tau_B) = \varphi^*(\tau_B) (a) . $$

	Consider
	$$ a = f \otimes g \otimes h \in C(X \times \T \times \T)_{sa} \subset A_{sa} $$
	with $f \in C(X)_{sa}, g \in C(\T)_{sa}$ and $h \in C(\T)_{sa}$, and use $\tau_A$ to denote $\varphi^*(\tau_B)$. As $\af \times \mathrm{R}_{\xi_1} \times \mathrm{R}_{\eta_1}$ is rigid, there exists an $\af$-invariant measure $\mu_A$ such that $\tau_A (a) = (\mu_A \times v \times v) ( E( a ) )$, with $E$ being the conditional expectation from $A$ to $C(X \times \T \times \T)$ and $v$ being the Lebesgue measure on the circle.
	It follows that $( \gamma_{\rho} \circ j_{\af} (a) ) (\tau_B) = \tau_A (a) = \mu_A (f) \cdot v(g) \cdot v(h)$.

	As for $( (\rho_{A_{\bt}} \circ j_{\bt} \circ \chi) (a) ) (\tau_B)$, we know from the definition that
	$$ ( (\rho_{A_{\bt}} \circ j_{\bt} \circ \chi) (a) ) (\tau_B) = \tau_B ( \chi (f \otimes g \otimes h) ) =  (\mu_B \times v \times v) (\chi(f \otimes g \otimes h )). $$
	Recall the definition of $\chi$. We have
	$$ (\mu_B \times v \times v) (\chi(f \otimes g \otimes h )) = \mu_B(f \circ \sigma^{-1}) \cdot v(g) \cdot v(h). $$
	If we can show that $\mu_B(f \circ \sigma^{-1}) = \mu_A(f)$, then it follows that
	$$ (\mu_B \times v \times v) (\chi(f \otimes g \otimes h )) = \mu_A(f) \cdot v(g) \cdot v(h) = (\mu_A \times v \times v) ( f \otimes g \otimes h ) , $$
	and we can then get
	$$ \gamma_{\rho} \circ j_{\af} = \rho_{A_{\bt}} \circ j_{\bt} \circ \chi \ \text{on} \ C(X \times \T^2)_{sa} . $$

	We will show that for all $f \in C(X, \Z)$ and $\mu_A, \mu_B$ as given above, we have $\mu_B(f \circ \sigma^{-1}) = \mu_A(f)$. If that is done, noting that the $\C$-linear span of $C(X, \Z)$ is dense in $C(X)_{sa}$, we get $\mu_B(f \circ \sigma^{-1}) = \mu_A(f)$ for all $f \in C(X)$.

	According to our notation, for $g \in C(X)$, we have
	\begin{align*}
	\mu_A( g ) & = (\mu_A \times v \times v) ( g \otimes \id_{\T} \otimes \id_{\T} ) \\
		& = \tau_A( g \otimes \id_{\T} \otimes \id_{\T} )  \\
		& = \ph^*(\tau_B) ( g \otimes \id_{\T} \otimes \id_{\T} )  \\
		& = \tau_B ( \ph( g \otimes \id_{\T} \otimes \id_{\T} ) )  .
	\end{align*}
	

	According to digram (\ref{diag 2}) in the proof of Lemma \ref{technical lifting lemma}, we have the commutative diagram
	\begin{equation} \label{diag 3}
		\xymatrix{
			K_0( C(X) ) \ar@{->}[rr]^{\widetilde{\varphi_0}} \ar[d]_{\pi'_{\af}} && K_0( C(X) ) \ar[d]^{\pi'_{\bt}} \\
			K_0( C^*(\Z, X, \af) ) \ar@{->}[rr]^{\varphi_0} && K_0( C^*(\Z, X, \bt) ) \ \ \ ,
		}
	\end{equation}
	where $C^*(\Z, X, \af)$ and $C^*(\Z, X, \bt)$ are the crossed product \ca s of dynamical systems $(X, \af)$ and $(X, \bt)$ respectively, $\widetilde{\varphi_0}$, $\varphi_0$ are order preserving isomorphisms, and $\widetilde{\varphi_0}$ agrees with $\chi$ as a map from $C(X, \Z)$ to $C(X, \Z)$.
	
	By the proof of Lemma \ref{technical lifting lemma}, for all $f \in C(X, \Z)$, if we identify $C(X, \Z)$ with $K_0( C(X) )$, we get
	$$ \widetilde{\varphi_0}(f) = f \circ \sigma^{-1} . $$
	
	From the commutative diagram (\ref{diag 3}), we can conclude that (although we cannot claim that $\varphi(f \otimes \id_{\T} \otimes \id_{\T}) = \chi(f) \otimes \id_{\T} \otimes \id_{\T}$)
	$$ \tau_B ( \varphi(f \otimes \id_{\T} \otimes \id_{\T} ) ) = \tau_B ( \chi(f) \otimes \id_{\T} \otimes \id_{\T} ) . $$
	As $\chi(f) = f \circ \sigma^{-1}$, it follows that
	\begin{align*}
	\mu_A( f ) & = (\mu_A \times v \times v) ( f \otimes \id_{\T} \otimes \id_{\T} ) \\
		& = \tau_A( f \otimes \id_{\T} \otimes \id_{\T} )  \\
		& = \ph^*(\tau_B) ( f \otimes \id_{\T} \otimes \id_{\T} )  \\
		& = \tau_B ( \ph( f \otimes \id_{\T} \otimes \id_{\T} ) )  \\
		& = \tau_B ( \chi(f) \otimes \id_{\T} \otimes \id_{\T} ) \\
		& = \mu_B( \chi(f) ) \\
		& = \mu_B( f \circ \sigma^{-1} ) .
	\end{align*}
	
	Now we have that $\mu_A( f ) = \mu_B( f \circ \sigma^{-1})$ for all $f \in C(X, \Z)$. Note that the $\C$-linear span of $C(X, \Z)$ is dense in $C(X)$, we get
	$$ \mu_A( f ) = \mu_B( f \circ \sigma^{-1}) \ \text{for all f} \ \in C(X)_{sa} . $$

	As both dynamical systems $\af \times \mathrm{R}_{\xi_1} \times \mathrm{R}_{\eta_1}$ and $\bt \times \mathrm{R}_{\xi_2} \times \mathrm{R}_{\eta_2}$ are rigid, by Proposition \ref{rigidity implies tracial rank zero for rotation case}, we have $\mathrm{TR}(A) = \mathrm{TR}(B) = 0$. According to Corollary \ref{C strongly flip conjugate for TR zero case}, these two dynamical systems $(X \times \T \times \T, \af \times \mathrm{R}_{\xi_1} \times \mathrm{R}_{\eta_1})$ and $(X \times \T \times \T, \bt \times \mathrm{R}_{\xi_2} \times \mathrm{R}_{\eta_2})$ are $C\sp*$-strongly approximately conjugate.
\end{proof}

\vspace{1cm}

	We studied the weakly approximate conjugacy between to dynamical systems $\af \times \mathrm{R}_{\xi_1} \times \mathrm{R}_{\eta_1}$ and $\bt \times \mathrm{R}_{\xi_2} \times \mathrm{R}_{\eta_2}$ and give an if and only if condition for the weakly approximate conjugacy.

	For minimal homeomorphisms $\af \times \mathrm{R}_{\xi_1} \times \mathrm{R}_{\eta_1}$ and $\bt \times \mathrm{R}_{\xi_2} \times \mathrm{R}_{\eta_2}$, the following lemma shows that whether they are weakly approximately conjugate or not is determined by $\af$ and $\bt$ only, and has nothing to do with $\mathrm{R}_{\xi_i}$ and $\mathrm{R}_{\eta_i}$ for $i = 1, 2$.

\begin{lem} \label{iff condition for weak approximate conjugacy of rotation case}

	Let $(X, \af)$ and $(X, \bt)$ be two minimal Cantor dynamical systems. For continuous maps $\xi_1, \xi_2, \eta_1, \eta_2 \colon X \rightarrow \T$, $(X \times \T \times \T, \af \times \mathrm{R}_{\xi_1} \times \mathrm{R}_{\eta_1})$ and $(X \times \T \times \T, \bt \times \mathrm{R}_{\xi_2} \times \mathrm{R}_{\eta_2})$ are weakly approximately conjugate if and only if $(X, \af)$ and $(X, \bt)$ are weakly approximately conjugate.

\end{lem}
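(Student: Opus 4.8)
The statement is an equivalence, so I treat the two implications separately; the forward direction is soft, and all the real work is in the converse.

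\textbf{The ``only if'' direction.} The point is that every homeomorphism of $X\times\T^2$ has skew--product form. Since $X$ is totally disconnected and $\T^2$ is connected, the connected components of $X\times\T^2$ are exactly the fibres $\{x\}\times\T^2$; any homeomorphism permutes components and hence descends to the component space, which is canonically $X$. Thus each homeomorphism $\widetilde\sigma$ of $X\times\T^2$ satisfies $\pi_X\circ\widetilde\sigma=\sigma_X\circ\pi_X$ for a unique $\sigma_X\in\mathrm{Homeo}(X)$, where $\pi_X$ is the projection (this is the form recorded in Section \ref{Sec Introduction and Notation}). Given families $\widetilde\sigma_n,\widetilde\gamma_n$ witnessing weak approximate conjugacy of the two product systems, I would set $\sigma_n=(\widetilde\sigma_n)_X$ and $\gamma_n=(\widetilde\gamma_n)_X$, and then test the defining estimate $\dist(F\circ\widetilde\sigma_n\circ\Phi_1,\,F\circ\Phi_2\circ\widetilde\sigma_n)\to 0$ on the functions $F=f\circ\pi_X$ with $f\in C(X)$. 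The torus coordinates drop out of such $F$, leaving $\dist(f\circ\sigma_n\circ\af,\,f\circ\bt\circ\sigma_n)\to 0$ for all $f\in C(X)$, and symmetrically for $\gamma_n$, so $(X,\af)$ and $(X,\bt)$ are weakly approximately conjugate.

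\textbf{The ``if'' direction: setup.} Given $\sigma_n,\gamma_n\in\mathrm{Homeo}(X)$ witnessing weak approximate conjugacy of the bases, I would lift $\sigma_n$ to a skew product that is a \emph{rotation} on each fibre, $\widetilde\sigma_n(x,t_1,t_2)=(\sigma_n(x),\,t_1+a_n(x),\,t_2+b_n(x))$ with $a_n,b_n\in C(X,\T)$ still to be chosen. Writing $\Phi_1=\af\times\mathrm{R}_{\xi_1}\times\mathrm{R}_{\eta_1}$ and $\Phi_2=\bt\times\mathrm{R}_{\xi_2}\times\mathrm{R}_{\eta_2}$, a direct computation of $\widetilde\sigma_n\circ\Phi_1$ and $\Phi_2\circ\widetilde\sigma_n$ shows that the $X$--coordinate discrepancy is $d_X(\sigma_n\af(x),\bt\sigma_n(x))$, the first torus discrepancy is $(a_n\circ\af-a_n)(x)-(\xi_2\circ\sigma_n-\xi_1)(x)$, and the second is $(b_n\circ\af-b_n)(x)-(\eta_2\circ\sigma_n-\eta_1)(x)$. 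The $X$--part is uniformly small: weak approximate conjugacy of $(X,\af)$ and $(X,\bt)$ is, by a standard compactness argument, equivalent to $\sup_x d_X(\sigma_n\af(x),\bt\sigma_n(x))\to 0$. It therefore remains only to choose $a_n,b_n$ solving, within $1/n$ in the uniform norm, the two cohomological equations $a\circ\af-a\approx\xi_2\circ\sigma_n-\xi_1$ and $b\circ\af-b\approx\eta_2\circ\sigma_n-\eta_1$.

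\textbf{The crux.} Everything reduces to the following fact, which I expect to be the main obstacle: for a minimal Cantor system $(X,\af)$, \emph{every} $\psi\in C(X,\T)$ lies in the uniform closure of the coboundaries $\{a\circ\af-a:a\in C(X,\T)\}$. The plan for this is: since $X$ is zero--dimensional, $\psi$ lifts to $\widehat\psi\in C(X,\R)$, reducing the problem to the additive equation $\widehat a\circ\af-\widehat a\approx\widehat\psi-g$ with $g\in C(X,\Z)$ free to choose; a real continuous function is a uniform limit of additive coboundaries if and only if it integrates to $0$ against every $\af$--invariant measure, so it suffices to pick an integer--valued $g$ with $\mu(g)\approx\mu(\widehat\psi)$ simultaneously for all invariant $\mu$. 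The simultaneous approximation is available because the image of $K_0$ of the Cantor crossed product in $\mathrm{Aff}(T)$ is dense (equivalently, minimal Cantor systems carry clopen sets of arbitrarily small positive measure, so clopen measure values are dense in $\R$), handled via the Bratteli--Vershik/dimension--group description of the invariant measures as in the arguments of \cite{LM1}, \cite{LM3}. Applying this to $\psi=\xi_2\circ\sigma_n-\xi_1$ and $\psi=\eta_2\circ\sigma_n-\eta_1$ yields the required $a_n,b_n$.

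\textbf{Conclusion.} With these choices all three discrepancies above are uniformly $<1/n$, so $\sup d_{X\times\T^2}(\widetilde\sigma_n\Phi_1,\Phi_2\widetilde\sigma_n)\to 0$, whence $\dist(F\circ\widetilde\sigma_n\circ\Phi_1,\,F\circ\Phi_2\circ\widetilde\sigma_n)\to 0$ for every $F\in C(X\times\T^2)$ by uniform continuity; building $\widetilde\gamma_n$ from $\gamma_n$ in the same way supplies the reverse family. The hardest step is squarely the crux lemma, in particular the uniform (rather than merely measure--theoretic) control of the cocycle--matching equation and the simultaneous treatment of all invariant measures when $(X,\af)$ is not uniquely ergodic; the rotation--fibre ansatz and the bookkeeping around it are routine once that is established. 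This also explains the clean form of the statement: over a Cantor base every cocycle is an approximate coboundary, so the fibre rotations $\mathrm{R}_{\xi_i}$ and $\mathrm{R}_{\eta_i}$ carry no weak--approximate--conjugacy information beyond that of the base.
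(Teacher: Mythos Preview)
Your proposal is correct, and the setup coincides with the paper's: both directions use the projection $\pi_X$ for ``only if'', and for ``if'' both take the fibre-rotation ansatz $\widetilde\sigma_n=(\sigma_n,\mathrm{R}_{a_n},\mathrm{R}_{b_n})$ and reduce to approximately solving $a\circ\af-a\approx\xi_2\circ\sigma_n-\xi_1$ (and similarly for $b$). The divergence is in how that approximate cohomological equation is solved. The paper does it constructively: after arranging $\af(X_{s,j})=\bt(X_{s,j})$ on a Kakutani--Rokhlin partition with tall towers, it sets $s_1(x)=\sum_{j=1}^{k}(\xi_2-\xi_1)(\af^{-j}(x))-k\,G_1(x)/h(s)$ on $X_{s,k}$, where $G_1$ is a bounded real lift of the full-tower orbit sum, and checks by hand that the resulting error is $O(1/h(s))$ off the roof and zero on it. Your route is the abstract one: lift to $C(X,\R)$, use the Hahn--Banach fact that $\overline{\{f-f\circ\af\}}=\{g:\mu(g)=0\ \forall\,\mu\in M_\af(X)\}$, and then invoke density of $\rho(K_0(C^*(\Z,X,\af)))$ in $\mathrm{Aff}(T)$ (equivalently, that the Cantor crossed product is simple AF, hence real rank zero) to manufacture the integer correction $g$.

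Both arguments are valid; the paper's is self-contained and elementary, while yours is shorter but imports the $K_0$/real-rank-zero machinery (whose proof, of course, ultimately runs through the same K--R tower estimates). One practical advantage of the paper's explicit construction is that the specific form of the conjugating maps $\id_X\times\mathrm{R}_{s_1}\times\mathrm{R}_{s_2}$ is reused verbatim in the proof of Theorem~\ref{approximate K-conjugacy for rotation case}, where one needs to track the $K$-theory of the induced asymptotic morphism; your existence argument would need to be unwound there anyway.
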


\begin{proof}
	The ``if" part:

	For every $\ep > 0$, we will show that there exists $\sigma_n \in \text{Homeo}(X \times \T \times \T)$ such that
	$$ \dist(\sigma_n \circ \af \circ \sigma_n^{-1} , \bt) < \ep . $$

	As $(X, \bt)$ is a minimal Cantor dynamical system, there exists a Kakutani-Rokhlin partition
	$$ \{ X_{s, k} \colon 1 \leq s \leq n, 0 \leq k < h(s) \} $$
	such that $h(s) > 5 / \ep$, and $\diam(X_{s, j}) < \ep / 5$, where $\diam(X_{s, j})$ is defined to be $\sup_{x, y \in X_{s, j}} \dist(x, y)$.

	For any two clopen sets $X_{s_1, j_1}$ and $X_{s_2, j_2}$ in the Kakutani-Rokhlin partition, there exists \\ $\delta_{s_1, j_1; s_2, j_2} > 0$ such that if $x, y \in X_{s_1, j_1} \bigsqcup X_{s_2, j_2}$ and $\dist(x, y) < \delta_{s_1, j_1; s_2, j_2}$, then either $x, y \in X_{s_1, j_1}$ or $x, y \in X_{s_2, j_2}$.

	Let $\delta = \min \delta_{s, j; s', j'}$, where $X_{s, j}$ and $X_{s', j'}$ traverse through all pairs of  distinct clopen sets in the Kakutani-Rokhlin partition above.

	As $(X, \af)$ and $(X, \bt)$ are weakly approximately conjugate, there exists $\gamma_n \in \text{Homeo}(X)$
 such that
 	$$\dist(\gamma \circ \af \circ \gamma^{-1} (x), \beta(x)) < \delta . $$
	According to the definition of $\delta$, it follows that for every $X_{s, j}$ in the Kakutani-Rokhlin partition above, we have
	$$ \gamma \circ \af \circ \gamma^{-1} (X_{s, j}) = \bt(X_{s, j}) . $$

	Without loss of generality (replacing $\af$ with $\gamma \circ \af \circ \gamma^{-1}$), we can assume that $\af$ and $\bt$ satisfies
	$$ \af (X_{s, j}) = \bt(X_{s, j}) . $$

	Identify $\T$ with $\R / \Z$, and define $\pi$ by $\pi \colon \R \rightarrow \R / \Z, t \mapsto t + \Z$. For all $x \in X_{s, 0}$, define $h(x) = 0$. For $x \in X_{s, k}$ with $0 < k < h(s)$, define
	$$ f_1(x) = \displaystyle \sum_{j = 1}^k (\xi_2 - \xi_1) (\af^{-j} (x)) . $$
	As $\xi_1$ and $\xi_2$ are both in $C(X, \T)$, it follows that the above defined $f_1$ is a continuous function from $X$ to $\T$.

	For $x \in X_{s, k}$, define
	$$ g_1(x) = \displaystyle\sum_{j = 1}^{h(s)} (\xi_2 - \xi_1) (\af^{-j} ( \af^{h(s) - k}(x) )). $$
	It is also clear that $g_1 \in C(X, \T)$.

	As $X$ is totally disconnected, we can divide $X$ into $\bigsqcup_{k = 1}^N X_k$, with every $X_k$ being a clopen subset of $X$ satisfying $\dist(h(x), h(y)) < \frac{1}{4}$ for $x, y$ in the same $X_k$. For $g_1 \left|_{X_k} \right.$, we can lift it to continuous function $G_{1, k} \colon X_k \rightarrow [0 - \frac{1}{4}, 1 + \frac{1}{4}]$ satisfying $g_1 \left|_{X_k} \right. = \pi \circ G_{1, k}$.

	Define $G_1 \colon X \rightarrow \R$ by setting $G_1(x)$ to be $G_{1, k}(x)$ if $x \in X_k$. It is then easy to check that $G_1$ is a lifting of $g_1$ satisfying
	$$ g_1 = \pi \circ G_1 \ \text{and} \ G_1(x) \in [0 - \frac{1}{4}, 1 + \frac{1}{4}] \ \text{for all x} \ \in X . $$
	
	For $x \in X_{s, k}$, define
	$$ s_1(x) = f_1(x) - \Frac{G_1(x) \cdot k}{h(s)} + \Z . $$
	Similarly, define $f_2(x) = 0$ if $x \in X_{s, 0}$ and
	$$ f_2(x) = \displaystyle \sum_{j = 1}^k (\eta_2 - \eta_1) (\af^{-j} (x)) $$
	for $x \in X_{s, k}$ with $0 < k < h(s)$. Define
	$$ g_2(x) = \displaystyle\sum_{j = 1}^{h(s)} (\eta_2 - \eta_1) \left( \af^{-j} \left( \af^{h(s) - k}(x) \right) \right) . $$
	
	As $X$ is totally disconnected, we can find a lifting $G_2 \in C(X, \R)$ such that
	$$ g_2 = \pi \circ G_2 \ \text{and} \ G_2(x) \in \left[0 - \frac{1}{4}, 1 + \frac{1}{4} \right] $$
	for all $x \in X$.

	For $x \in X_{s, k}$, define
	$$ s_2(x) = f_2(x) - \Frac{G_2(x) \cdot k}{h(s)} + \Z . $$
	For the $s_1$ and $s_2$ we have defined, it is easy to check that they are continuous function from $X$ to $\R / \Z$. According to our identification, we can regard $s_1$ and $s_2$ as functions in $C(X, \T)$.

	We will show that $(\id_X \times \mathrm{R}_{s_1} \times \mathrm{R}_{s_2})$ will approximately conjugate $\af \times \mathrm{R}_{\xi_1} \times \mathrm{R}_{\eta_1}$ and $\bt \times \mathrm{R}_{\xi_2} \times \mathrm{R}_{\eta_2}$.

	For every $(x, t_1, t_2) \in X \times \T \times \T$, we have
	$$
	\begin{array}{l}
		(\id_x \times \mathrm{R}_{s_1} \times \mathrm{R}_{s_2}) \circ (\af \times \mathrm{R}_{\xi_1} \times \mathrm{R}_{\eta_1}) \circ (\id_x \times \mathrm{R}_{s_1} \times \mathrm{R}_{s_2})^{-1} (x, t_1, t_2)  \\
		=  (\id_x \times \mathrm{R}_{s_1} \times \mathrm{R}_{s_2}) \circ (\af \times \mathrm{R}_{\xi_1} \times \mathrm{R}_{\eta_1}) (x, t_1 - s_1(x), t_2 - s_2(x) )  \\
		= (\id_x \times \mathrm{R}_{s_1} \times \mathrm{R}_{s_2}) (\af(x), t_1 - s_1(x) + \xi_1(x), t_2 - s_2(x) + \eta_1(x)) \\
		= ( \af(x), t_1 + \xi_1(x) - s_1(x) + s_1(\af(x)), t_2 + \eta_1(x) - s_2(x) + s_2(\af(x)) ) ,
	\end{array}
	$$
	and it is clear that
	$$ (\bt \times \xi_2 \times \eta_2) (x, t_1, t_2) = (\bt(x), t_1 + \xi_2(x), t_2 + \eta_2(x)) . $$

	As $ \af (X_{s, j}) = \bt(X_{s, j})$ and $\diam(X_{s, j}) < \ep / 5$, we have $\dist(\af(x), \bt(x)) < \ep / 5$ for all $x \in X$.
	Consider the distance between $t_1 + \xi_1(x) - s_1(x) + s_1(\af(x))$ and $t_1 + \xi_2(x)$. We get
	$$ | t_1 + \xi_1(x) - s_1(x) + s_1(\af(x)) - ( t_1 +  \xi_2(x)) | = | s_1(\af(x)) -s_1(x) + \xi_1(x) - \xi_2(x) | . $$
	According to the definition of $s_1$, if $x \in X_{s, h(s)}$ (that is, $x$ is on the roof), then
	\begin{align*}
		s_1(x) & = \displaystyle\sum_{j = 1}^{h(s)} (\xi_2 - \xi_1) \left( \af^{-j} (x) \right) - G_1(x) \vspace{1mm} \\
			& = \displaystyle\sum_{j = 1}^{h(s)} (\xi_2 - \xi_1) \left( \af^{-j} (x) \right) - \displaystyle\sum_{j = 0}^{h(s)} (\xi_2 - \xi_1) ( \af^{-j} (x) ) \\
			& = - (\xi_2 - \xi_1) (x) \\
			& = 0 .
	\end{align*}
	We know that $s_1 \left( \af(x) \right) = 0$ as $(\af^{- h(s)}) (x) \in X_{s, 0}$. It is then clear that
	$$ | s_1(\af(x)) - s_1(x) + \xi_1(x) - \xi_2(x) | = 0 $$
	if $x$ is in the roof set.

	If $x$ is not in the roof, in other words, for $x \in X_{s, k}$ with $0 \leq k < h(s) - 1$, we have
	\begin{align*}
		s_1(\af(x)) - s_1(x) & = (\xi_2 - \xi_1) (x) -  \Frac{G_1(x)}{h(s)} .
	\end{align*}
	As $G_1(x) \in [0 - \frac{1}{4}, 1 + \frac{1}{4}]$ for all $x$, and we have $h(s) > 5 / \ep$ for all $s$, it then follows that
	$$ | s_1(\af(x)) - s_1(x) + \xi_1(x) - \xi_2(x) | < 2 \ep / 5 \ \text{for all} \ x \in X . $$
	Similarly, we have
	$$ | t_2 + \eta_1(x) - s_2(x) + s_2(\af(x)) - ( t_2 +  \eta_2(x)) | = | s_2(\af(x)) - s_2(x) + \eta_1(x) - \eta_2(x) | $$
	and
	$$ | s_2(\af(x)) - s_2(x) + \eta_1(x) - \eta_2(x) | < 2 \ep / 5 \ \text{for all} \ x \in X . $$

	So far, we have proved that
	$$
	\begin{array}{l}
		\dist \left( (\id_x \times \mathrm{R}_{s_1} \times \mathrm{R}_{s_2}) \circ (\af \times \mathrm{R}_{\xi_1} \times \mathrm{R}_{\eta_1}) \circ (\id_x \times \mathrm{R}_{s_1} \times \mathrm{R}_{s_2})^{-1}, \bt \times \mathrm{R}_{\xi_2} \times \mathrm{R}_{\eta_2} \right)  \\
		 \hspace{3cm} < \ep / 5 + 2 \ep / 5 + 2 \ep / 5 \\
		 \hspace{3cm} = \ep .
	\end{array}
	$$
	As we can construct such conjugacy maps for all $\ep > 0$, it follows that $\af \times \mathrm{R}_{\xi_1} \times \mathrm{R}_{\eta_1}$ is weakly approximately conjugate to $\bt \times \mathrm{R}_{\xi_2} \times \mathrm{R}_{\eta_2}$ if $\af$ is weakly approximately conjugate to $\bt$.

\vspace{2mm}

	The ``only if" part.

	If  a sequence of $\sigma_n$ in $\text{Homeo}(X \times \T^2)$ approximately conjugates $\af \times \mathrm{R}_{\xi_1} \times \mathrm{R}_{\eta_1}$
	to $\bt \times \mathrm{R}_{\xi_2} \times \mathrm{R}_{\eta_2}$, as $X$ is totally disconnected, we can write $\sigma_n$ as $\gamma_n \times \varphi$, with $\gamma_n \in \text{Homeo}(X)$ and $\varphi \colon X \rightarrow \text{Homeo} (\T^2)$ being a continuous map.

	Let $P \colon X \times \T^2 \rightarrow X$ be defined by $P(x, (t_1, t_2)) = x$ (the canonical projection onto $X$).  We can easily check that
	$$ P ( (\sigma_n \circ (\af \times \mathrm{R}_{\xi_1} \times \mathrm{R}_{\eta_1}) \circ \sigma_n^{-1}) ( x, (t_1, t_2) ) ) = (\gamma_n \circ \af \circ \gamma_n^{-1}) (x) . $$
	As $(\sigma_n \circ (\af \times \mathrm{R}_{\xi_1} \times \mathrm{R}_{\eta_1}) \circ \sigma_n^{-1}) \longrightarrow \bt \times \mathrm{R}_{\xi_2} \times \mathrm{R}_{\eta_2}$, we have
	$$ P ( (\sigma_n \circ (\af \times \mathrm{R}_{\xi_1} \times \mathrm{R}_{\eta_1}) \circ \sigma_n^{-1}) ( x, (t_1, t_2) ) ) \longrightarrow P( (\bt \times \mathrm{R}_{\xi_2} \times \mathrm{R}_{\eta_2}) ( x, (t_1, t_2) )  ) , $$
	which then implies that
	$$ (\gamma_n \circ \af \circ \gamma_n^{-1}) (x) \longrightarrow \bt(x) \ \text{for all} \ x \in X . $$
\end{proof}


\begin{sloppypar}
	From Lemma \ref{iff condition for weak approximate conjugacy of rotation case}, we know that the if and only if condition for $\af \times \mathrm{R}_{\xi_1} \times \mathrm{R}_{\eta_1}$ and $\bt \times \mathrm{R}_{\xi_2} \times \mathrm{R}_{\eta_2}$ to be weakly approximately conjugate is that $\af$ and $\bt$ are weakly approximately conjugate.

	One might be wondering whether we have weak approximate conjugacy between $\af \times \mathrm{R}_{\xi_1} \times \mathrm{R}_{\eta_1}$ and $\bt \times \mathrm{R}_{\xi_2} \times \mathrm{R}_{\eta_2}$, can we expect to have the isomorphism between \ca s \ $C^*(\Z, X \times \T \times \T, \af \times \mathrm{R}_{\xi_1} \times \mathrm{R}_{\eta_1})$ and $C^*(\Z, X \times \T \times \T, \bt \times \mathrm{R}_{\xi_2} \times \mathrm{R}_{\eta_2})$?

	Generally speaking, weak approximate conjugacy is not enough to imply that the corresponding crossed product \ca s are isomorphic. Examples can be found in \cite{M1}, \cite{LM1} and \cite{LM3}. As guessed by Lin in \cite{LM1}, if we strengthen the definition of weak approximate conjugacy (in the sense that those conjugacies will induce an isomorphism of K-data of these two crossed product \ca s), this might be equivalent to the isomorphism of two crossed product \ca s.

	That ``strengthened" version of weak approximate conjugacy is called approximate K-conjugacy (see \cite{Lin6}, \cite{LM1}, \cite{LM2}, \cite{LM3}). Before the definition of approximate K-conjugacy is given, the definition of asymptotic morphism will be given and a technical result needs to be mentioned.

\end{sloppypar}

\vspace{2mm}

\begin{dfn}

	A sequence of contractive completely positive linear maps $\{ \varphi_n \}$ from \ca \ $A$ to \ca \ $B$ is said to be an asymptotic morphism, if
	$$ \lim_{n \rightarrow \infty} \norm{\varphi_n(a b) - \varphi_n(a) \varphi_n(b)} = 0 \ \text{for all} \  a, b \in A . $$

\end{dfn}

\begin{prp} \label{weak approximate conjugacy implies asymptotic morphisms} \cite{Lin6}

	Let $(X, \af)$ and $(X, \bt)$ be two dynamical systems. If there exists a sequence of homeomorphisms $\sigma_n \colon X \rightarrow X$ such that $\lim_{n \rightarrow \infty} \dist(\sigma_n \circ \af \circ \sigma_n^{-1}, \bt) = 0$, then for a sequence of unitaries $\{ z_n \}$ in $A_{\af}$ with
	$$ \lim_{n \rightarrow \infty} \norm{ z_n j_{\af}(f) - j_{\af} (f) z_n } = 0 \ \text{for all} \ f \in C(X) , $$
	there exists a unital asymptotic morphism $\{ \varphi_n^{\sigma} \}$ from $A_{\bt}$ to $A_{\af}$ such that
	$$ \lim_{n \rightarrow \infty} \norm{\psi_n^{\sigma}(u_{\bt}) - u_{\af} z_n} = 0 \ \text{and} $$
	$$ \lim_{n \rightarrow \infty} \norm{\psi_n^{\sigma}(j_{\bt}(f)) - j_{\af} (f \circ \sigma_n)} = 0 $$
	for all $f \in C(X)$.

\end{prp}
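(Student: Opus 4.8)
The plan is to build the asymptotic morphism directly on the two canonical generators of the crossed product $A_{\bt} = C^*(\Z, X, \bt)$, namely the copy $j_{\bt}(C(X))$ and the implementing unitary $u_{\bt}$, and then to verify that the defining covariance relation of the crossed product is respected in the limit. First I would fix, for each $n$, the unital $*$-homomorphism $\pi_n \colon C(X) \to A_{\af}$ given by $\pi_n(f) = j_{\af}(f \circ \sigma_n)$ (a genuine $*$-homomorphism, since $f \mapsto f \circ \sigma_n$ is an automorphism of $C(X)$ followed by the embedding $j_{\af}$), together with the unitary $W_n = u_{\af} z_n \in U(A_{\af})$. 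The candidate $\psi_n^{\sigma}$ is the map sending $j_{\bt}(f) \mapsto \pi_n(f)$ and $u_{\bt} \mapsto W_n$; with this choice the two displayed limits will hold essentially by construction, so the real content is that the family $\{\psi_n^{\sigma}\}$ can be realised as honest unital completely positive contractive maps on all of $A_{\bt}$ that are asymptotically multiplicative.

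The key computation is the asymptotic covariance $\norm{W_n \pi_n(f) W_n^* - \pi_n(f \circ \bt)} \to 0$ for each fixed $f \in C(X)$ (recall from Definition \ref{dfn of Ax} that the implementing unitaries satisfy $u_{\bt} j_{\bt}(f) u_{\bt}^* = j_{\bt}(f \circ \bt)$ and $u_{\af} j_{\af}(g) u_{\af}^* = j_{\af}(g \circ \af)$). I would estimate
\begin{align*}
	W_n \pi_n(f) W_n^* &= u_{\af}\, z_n\, j_{\af}(f\circ\sigma_n)\, z_n^*\, u_{\af}^* \\
		&\approx u_{\af}\, j_{\af}(f\circ\sigma_n)\, u_{\af}^* = j_{\af}(f\circ\sigma_n\circ\af),
\end{align*}
where the middle approximation uses the asymptotic centrality of $z_n$, and then compare $j_{\af}(f\circ\sigma_n\circ\af)$ with the target $\pi_n(f\circ\bt) = j_{\af}(f\circ\bt\circ\sigma_n)$. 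Since $X$ is compact, $f$ is uniformly continuous, and the hypothesis $\dist(\sigma_n\circ\af\circ\sigma_n^{-1}, \bt) \to 0$ gives $\sup_x d(\sigma_n(\af(x)), \bt(\sigma_n(x))) \to 0$, hence $\norm{f\circ\sigma_n\circ\af - f\circ\bt\circ\sigma_n}_{\infty} \to 0$. Combining the two estimates yields the covariance relation in the limit. Asymptotic multiplicativity on an arbitrary product $ab$ with $a = j_{\bt}(f)u_{\bt}^k$ and $b = j_{\bt}(g)u_{\bt}^l$ then reduces, after moving the factors $u_{\af}$ through the $j_{\af}$'s (which is exact) and the factors $z_n$ through the $j_{\af}$'s (the only source of error), to finitely many instances of the same two ingredients.

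The hard part will be two intertwined technical points. The first is that in the reduction just described the commutators one must control are of the form $[z_n, j_{\af}(h_n)]$ with $h_n = f\circ\sigma_n$, $g\circ\bt^{-k}\circ\sigma_n$, and similar $\sigma_n$-twisted functions, so the functions themselves vary with $n$; the bare pointwise asymptotic centrality of $z_n$ does not immediately give vanishing commutators along such a moving sequence. I would address this by first approximating each of the fixed functions $f, g$ within $\ep$ by finite $\C$-linear combinations of characteristic functions of clopen subsets of $X$, thereby reducing everything to controlling $[z_n, j_{\af}(p)]$ for the resulting clopen projections $p$, and then organising the whole estimate one finite generating set and one tolerance $\ep > 0$ at a time, extracting the asymptotic morphism by the standard reindexing over an exhausting sequence of finite sets together with $\ep \downarrow 0$.

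The second point is the passage from the asymptotically covariant pair $(\pi_n, W_n)$ to genuine unital completely positive contractive maps defined on all of $A_{\bt}$. Here I would invoke the usual construction of the completely positive map attached to an (approximate) covariant representation: compress a faithful nondegenerate regular representation of $A_{\bt}$ by a quasicentral approximate unit, or equivalently by a finite Rokhlin tower adapted to $\bt$, so that the resulting maps are automatically unital and completely positive while inheriting asymptotic multiplicativity from the covariance estimate above. Both of these devices are precisely those carried out in \cite{Lin6}; assembled, they produce the required unital asymptotic morphism $\{\psi_n^{\sigma}\}$ from $A_{\bt}$ to $A_{\af}$ satisfying $\lim_{n}\norm{\psi_n^{\sigma}(u_{\bt}) - u_{\af} z_n} = 0$ and $\lim_{n}\norm{\psi_n^{\sigma}(j_{\bt}(f)) - j_{\af}(f\circ\sigma_n)} = 0$ for all $f \in C(X)$.
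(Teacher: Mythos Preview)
The paper does not actually prove this proposition: its entire proof reads ``This is Proposition 3.1 in \cite{Lin6}.'' So there is no argument in the paper to compare your sketch against; you have reconstructed (more than) what the author chose to include, and you correctly attribute the machinery to \cite{Lin6} at the end.

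Your outline is the standard one and is along the right lines, but the first ``hard part'' you flag is genuinely delicate and your proposed fix does not close it. Approximating a fixed $f$ by a finite $\C$-linear combination of characteristic functions $1_{U_i}$ gives $f\circ\sigma_n \approx_{\ep} \sum c_i\,1_{\sigma_n^{-1}(U_i)}$, and the projections $1_{\sigma_n^{-1}(U_i)}$ still depend on $n$; so you have only traded one moving sequence of functions for another, and the pointwise hypothesis $\lim_n\norm{[z_n, j_{\af}(g)]}=0$ for each fixed $g$ does not directly control $[z_n, j_{\af}(1_{\sigma_n^{-1}(U_i)})]$. In \cite{Lin6} this is handled by the reindexing you allude to (working one finite set and one tolerance at a time and passing to a suitable subsequence), which is the correct device; just be aware that the clopen-approximation step by itself is not what resolves the issue. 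With that understood, your plan matches the source the paper cites.
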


\begin{proof}
	This is Proposition 3.1 in \cite{Lin6}. \qedhere

\end{proof}

	Now we can give the definition of approximate K-conjugacy between two dynamical systems $(X, \af)$ and $(X, \bt)$.

\begin{dfn} \label{definition of approximate K-conjugacy}

	For two minimal dynamical systems $(X, \af)$ and $(Y, \bt)$, with $X$ and $Y$ being compact metrizable spaces, we say that $(X, \af)$ and $(Y, \bt)$ are approximately K-conjugate if there exist homeomorphisms $\sigma_n \colon X \rightarrow Y$, $\tau_n \colon Y \rightarrow X$, and an isomorphism
	$$ \rho \colon K_*( C^*(\Z, Y, \bt) ) \rightarrow K_*( C^*(\Z, X, \af) ) $$ between $K$-groups such that
	$$ \sigma_n \circ \af \circ \sigma_n^{-1} \rightarrow \bt, \ \ \tau_n \circ \bt \circ \tau_n^{-1} \rightarrow \af , $$
	and the associated discrete asymptotic morphisms $\psi_n \colon B \rightarrow A$ and $\varphi_n \colon A \rightarrow B$ induce the isomorphisms $\rho$ and $\rho^{-1}$ respectively.

\end{dfn}

\vspace{2mm}

	\textbf{Remark:} According to Proposition \ref{weak approximate conjugacy implies asymptotic morphisms}, the weak approximate conjugacy maps will induce asymptotic morphisms. But it is not generally true that the asymptotic morphisms will induce a homomorphism of $K_0$ and $K_1$ data. In Definition \ref{definition of approximate K-conjugacy}, those approximate conjugacies must not only induce a pair of homomorphisms between $K_i(A)$ and $K_i(B)$, in addition, these homomorphisms must be a pair of isomorphisms that are inverses of each other.

	For the classical case of minimal Cantor dynamical systems, it is shown in \cite{LM3} that two Cantor minimal dynamical systems are approximately K-conjugate if and only if the corresponding crossed product \ca s are isomorphic. For the case of $(X \times \T, \af \times \mathrm{R}_{\xi})$, with $\af \in \text{Homeo}(X)$ being minimal homeomorphism and $\xi \colon X \rightarrow \T$ being a continuous map, similar results are obtained in Theorem 7.8 of \cite{LM1}.

	Based on Theorem \ref{C*-strong approximate flip conjugacy} and Lemma \ref{iff condition for weak approximate conjugacy of rotation case}, we will give an if and only if condition for approximate K-conjugacy between $\af \times \mathrm{R}_{\xi_1} \times \mathrm{R}_{\eta_1}$ and $\bt \times \mathrm{R}_{\xi_2} \times \mathrm{R}_{\eta_2}$.

\begin{thm} \label{approximate K-conjugacy for rotation case}

	Let $X$ be the Cantor set. Let $\af, \bt \in \text{Homeo}(X)$ be minimal homeomorphisms, and let $\xi_1, \xi_2, \eta_1, \eta_2 \colon X \rightarrow \T$ be continuous map such that both $\af \times \mathrm{R}_{\xi_1} \times \mathrm{R}_{\eta_1}$ and $\bt \times \mathrm{R}_{\xi_2} \times \mathrm{R}_{\eta_2}$ are minimal rigid homeomorphism of $X \times \T \times \T$ (as in Definition \ref{definition of rigidity}). Use $A$ to denote the crossed product \ca \ corresponding  to the minimal system $(X \times \T \times \T, \af \times \mathrm{R}_{\xi_1} \times \mathrm{R}_{\eta_1})$, and $B$ to denote the one corresponding to $(X \times \T \times \T, \bt \times \mathrm{R}_{\xi_2} \times \mathrm{R}_{\eta_2})$. Use $K^0(X, \af)$ to denote $C(X, \Z) / \{ f - f \circ \af^{-1} \colon f \in C(X, \Z^2) \}$ and $K^0(X, \bt)$ to denote $C(X, \Z) / \{ f - f \circ \bt^{-1} \colon f \in C(X, \Z^2) \}$.

	The following are equivalent:

	1) $(X \times \T \times \T, \af \times \mathrm{R}_{\xi_1} \times \mathrm{R}_{\eta_1})$ and $(X \times \T \times \T, \bt \times \mathrm{R}_{\xi_2} \times \mathrm{R}_{\eta_2})$ are approximately K-conjugate,

	2) There is an order isomorphism $\rho \colon K_0(B) \rightarrow K_0(A)$ that maps $K^0(X, \bt)$ to $K^0(X, \af)$.

\end{thm}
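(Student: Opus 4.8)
The plan is to prove the two implications separately, with the reverse direction (2)$\Rightarrow$(1) carrying essentially all of the weight. Both directions rest on machinery already assembled: Lemma~\ref{technical lifting lemma}, Lemma~\ref{iff condition for weak approximate conjugacy of rotation case}, Theorem~\ref{C*-strong approximate flip conjugacy}, Proposition~\ref{weak approximate conjugacy implies asymptotic morphisms}, and the Pimsner--Voiculescu description of $K_0$ in Proposition~\ref{K-data of crossed product C-algebra}.

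For (1)$\Rightarrow$(2) I would start from Definition~\ref{definition of approximate K-conjugacy}. The homeomorphisms $\sigma_n \colon X \times \T \times \T \to X \times \T \times \T$ implementing the weak approximate conjugacy split as $\sigma_n = \gamma_n \times \varphi_n$ with $\gamma_n \in \mathrm{Homeo}(X)$, since $X$ is totally disconnected; in particular $\gamma_n \circ \af \circ \gamma_n^{-1} \to \bt$. By Proposition~\ref{weak approximate conjugacy implies asymptotic morphisms} the induced asymptotic morphism $\psi_n \colon B \to A$ satisfies $\psi_n(j_{\bt}(f)) \approx j_{\af}(f \circ \sigma_n)$, so $\psi_n$ carries $j_{\bt}(C(X\times\T^2))$ approximately into $j_{\af}(C(X \times \T^2))$. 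Passing to $K_0$, the isomorphism $\rho$ induced by $\{\psi_n\}$ maps the image of $(j_{\bt})_{*0}$ into the image of $(j_{\af})_{*0}$. By the analysis in Proposition~\ref{K-data of crossed product C-algebra} these images are exactly the summands $K^0(X,\bt) = \coker(\id_{*0} - \bt_{*0})$ and $K^0(X,\af) = \coker(\id_{*0} - \af_{*0})$. Applying the same reasoning to $\rho^{-1}$, induced by the asymptotic morphisms $\varphi_n$ attached to $\tau_n$, gives the reverse inclusion, so $\rho(K^0(X,\bt)) = K^0(X,\af)$ and (2) holds.

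For (2)$\Rightarrow$(1) I would first restrict the given order isomorphism to $\rho|_{K^0(X,\bt)} \colon K^0(X,\bt) \to K^0(X,\af)$ and feed it into Lemma~\ref{technical lifting lemma}; this produces a $\sigma \in \mathrm{Homeo}(X)$ and, through the intermediate group $\overline{K^0(X,\af)} = C(X,\Z)/\{g - g\circ\af^{-1}\}$, a unital order isomorphism of the dimension groups of the Cantor systems $(X,\af)$ and $(X,\bt)$. By the Giordano--Putnam--Skau theory \cite{GPS} together with \cite{LM3}, such an order isomorphism forces $(X,\af)$ and $(X,\bt)$ to be weakly approximately conjugate, and Lemma~\ref{iff condition for weak approximate conjugacy of rotation case} upgrades this to weak approximate conjugacy of the product systems, yielding the sequences $\sigma_n, \tau_n$ demanded by Definition~\ref{definition of approximate K-conjugacy}. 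Since both systems are rigid, Proposition~\ref{rigidity implies tracial rank zero for rotation case} gives $\mathrm{TR}(A)=\mathrm{TR}(B)=0$, so Theorem~\ref{C*-strong approximate flip conjugacy} applied to $\rho^{-1} \colon K_0(A) \to K_0(B)$ produces $C^*$-strong approximate conjugacy, i.e.\ isomorphisms $\varphi_n \colon A \to B$ whose $KL$-class induces $\rho^{-1}$ compatibly with the embeddings of $C(X)$. The remaining task is to reconcile the two pictures: the asymptotic morphisms attached to $\sigma_n,\tau_n$ via Proposition~\ref{weak approximate conjugacy implies asymptotic morphisms} must be shown to induce precisely $\rho$ and $\rho^{-1}$. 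I would do this by comparing them with the classified isomorphisms $\varphi_n$ and invoking the uniqueness theorem for maps into tracial-rank-zero algebras (as in \cite{Lin6} and \cite{LM1}) to correct the induced maps on $K_*$, after perturbing $\sigma_n$ by approximately inner automorphisms; this realizes Definition~\ref{definition of approximate K-conjugacy}.

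The main obstacle is precisely this last reconciliation step. Weak approximate conjugacy alone (Lemma~\ref{iff condition for weak approximate conjugacy of rotation case}) depends only on $\af$ and $\bt$ and sees nothing of the $K$-theoretic data carried by the cocycles, while $C^*$-strong approximate conjugacy (Theorem~\ref{C*-strong approximate flip conjugacy}) supplies $\rho$ but through genuine isomorphisms rather than conjugacy-induced asymptotic morphisms. Guaranteeing that a single sequence of homeomorphisms both approximately intertwines the dynamics and induces the prescribed $\rho$ on $K_*$ is where the torsion-freeness of $K_i(A_x)$ (Corollary~\ref{tracial rank of A_x is torsion free}), the splitting of $K_0(A)$ from Proposition~\ref{K-data of crossed product C-algebra}, and the uniqueness theorem all have to be used in concert; I expect the bookkeeping of the $KL$-class and of trace-compatibility, rather than any single conceptual difficulty, to be the most delicate part.
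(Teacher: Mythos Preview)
Your overall architecture matches the paper's: (1)$\Rightarrow$(2) via the fact that the asymptotic morphisms send $j_{\bt}(C(X\times\T^2))$ near $j_{\af}(C(X\times\T^2))$, and (2)$\Rightarrow$(1) by combining Lemma~\ref{technical lifting lemma}, weak approximate conjugacy of the Cantor bases (Theorem~5.4 of \cite{LM3}), its upgrade via Lemma~\ref{iff condition for weak approximate conjugacy of rotation case}, and Theorem~\ref{C*-strong approximate flip conjugacy}. You also correctly isolate the reconciliation step as the crux.

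Where you diverge from the paper is in how that step is executed. The paper does not invoke an abstract uniqueness theorem to align the two pictures, nor does it ``perturb $\sigma_n$ by approximately inner automorphisms'' (the $\sigma_n$ are homeomorphisms of the space, not of the algebra, so that phrase is not quite meaningful). Instead it works by hand: given the $C^*$-isomorphism $\psi\colon B\to A$ and a fine Kakutani--Rokhlin partition, it builds an explicit unitary $W=\sum_{s,k}1_{X(s,k)}v^{-k}v_s^{k}u^{k}\in A$, where the $v_s$ come from applying the approximate-unitary-equivalence theorem (Theorem~3.4 of \cite{Lin5}) in each corner $1_{Y_{s,j}}A1_{Y_{s,j}}$ to match the two embeddings of $C(\T^2)$ differing by the rotation $e^{2\pi i G_k/H(s)}$. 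One then checks directly that $\mathrm{Ad}\,W\circ\psi$ sends $j_{\bt}(f)$ close to $j_{\af}(f\circ\sigma)$ and sends $u_B$ close to $u_A y$ with $y$ almost commuting with $C(X\times\T^2)$. This last point is exactly what pins down the $KL$-class: in Proposition~\ref{weak approximate conjugacy implies asymptotic morphisms} the asymptotic morphism depends on a \emph{choice} of almost-central unitaries $z_n$, and the computation of $W^*vW$ identifies the correct $z_n$ so that the conjugacy-induced asymptotic morphism has the same $KL$-class as $\psi$. Your plan would work if you replace the vague appeal to uniqueness by this explicit bridge; the point is not to modify the conjugacy maps but to select the auxiliary unitaries in Proposition~\ref{weak approximate conjugacy implies asymptotic morphisms} correctly.
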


\begin{proof}
	1) $\Rightarrow$ 2) :

	If $(X \times \T \times \T, \af \times \mathrm{R}_{\xi_1} \times \mathrm{R}_{\eta_1})$ and $(X \times \T \times \T, \bt \times \mathrm{R}_{\xi_2} \times \mathrm{R}_{\eta_2})$ are approximately K-conjugate, according to the definition of approximate K-conjugacy (Definition \ref{definition of approximate K-conjugacy}), there exists $\sigma_n \in \text{Homeo}(X \times \T \times \T)$ such that
	$$ \dist(\sigma_n \circ (\af \times \mathrm{R}_{\xi_1} \times \mathrm{R}_{\eta_1}) \circ \sigma_n^{-1}, \bt \times \mathrm{R}_{\xi_2} \times \mathrm{R}_{\eta_2}) \longrightarrow 0 , $$
	and the discrete asymptotic morphism induced by $\{ \sigma_n \colon n \in \N \}$ will yield an isomorphism from $K_*(B)$ to $K_*(A)$.

	That is, there exists an isomorphism
	$$ \phi_0 \colon (K_0(B), K_0(B)_+, [1_B], K_1(B)) \rightarrow (K_0(A), K_0(A)_+, [1_A], K_1(A)) . $$
	Define $\phi$ to be the restriction of $\phi_0$ on $K_0(A)$. We just need to show that $\phi$ maps $K^0(X, \bt)$ to $K^0(X, \af)$.



	\removeme[How to get this? It is not trivial] According to the Pimsner-Voiculescu six-term exact sequence (as in the proof of Proposition \ref{K-data of crossed product C-algebra}), we have
	$$ (j_{\bt})_0 ( C(X \times \T \times \T) ) \cong K^0(X, \bt) = C(X, \Z^2) / \{ f - f \circ \af^{-1} \colon f \in C(X, \Z^2) \} . $$

	As $\af \times \mathrm{R}_{\xi_1} \times \mathrm{R}_{\eta_1}$ and $\bt \times \mathrm{R}_{\xi_2} \times \mathrm{R}_{\eta_2}$ are approximately K-conjugate, for given projection $p \in M_{\infty} (B)$, there exists $N \in \N$ such that for all $m, n > N$, we have $[p \circ \sigma_n] = [p \circ \sigma_m]$ in $K_0(A)$.

	It is obvious that $[p \circ \sigma_n] \in (j_{\af})_* ( C(X \times \T \times \T) )$. Then we can conclude that the isomorphism $\rho$ induced by the conjugacy maps will map $K^0(X, \bt)$ to $K^0(X, \af)$.

\vspace{4mm}

	2) $\Rightarrow$ 1) :

	It is easy to check that 2) implies the following commutative diagram:

	$$
	\xymatrix{
		K_0(B) \ar@{->}[rr]^{\displaystyle\rho} && K_0(A) \\
		K^0(X, \bt) \ar@{->}[rr]_{\displaystyle \rho \left|_{K^0(X, \bt)} \right.} \ar@{->}[u]^{(\displaystyle j_{\bt})_{*0}} &&  K^0(X, \af) \ar@{->}[u]_{\displaystyle (j_{\af})_{*0}} \ .
	}
	$$
	According to Theorem \ref{C*-strong approximate flip conjugacy}, the two minimal homeomophisms $\af \times \mathrm{R}_{\xi_1} \times \mathrm{R}_{\eta_1}$ and $\bt \times \mathrm{R}_{\xi_2} \times \mathrm{R}_{\eta_2}$ are $C^*$-strongly flip conjugate.

	The map $\rho$ above induces an order preserving isomorphism between $K^0(X, \bt)$ (which is isomorphic to $C(X, \Z^2) / \{ f - f \circ \bt^{-1} \}$, with order described as in Lemma \ref{K_i of A_x}) and $K^0(X, \af)$ (which is isomorphic to $C(X, \Z^2) / \{ f - f \circ \af^{-1} \}$, with order described as in Lemma \ref{K_i of A_x}). Note that
	$$ K_0( C^*(\Z, X, \af) ) \cong C(X, \Z) / \{ g - g \circ \af^{-1} \colon g \in C(X, \Z) \} , $$
	with
	$$ K_0( C^*(\Z, X, \af) )_+ \cong C(X, \Z) / \{ g - g \circ \af^{-1} \colon g \in C(X, \Z), g \geq 0 \} . $$
	
	It follows that there is an order isomorphism
	$$
	\begin{array}{l}
	\widetilde{\rho} \colon ( K_0( C^*(\Z, X, \bt) ) , K_0( C^*(\Z, X, \bt) )_+, [1_{C^*(\Z, X, \bt)}] ) \\
	\hspace{1cm} \longrightarrow ( K_0( C^*(\Z, X, \af) ) , K_0( C^*(\Z, X, \af) )_+, [1_{C^*(\Z, X, \af)}] ) .
	\end{array}
	$$
	According to Theorem 5.4 of \cite{LM3}, $(X, \af)$ and $(X, \bt)$ are approximately K-conjugate. Thus they are weakly approximately conjugate.

	For any $\ep > 0$ and any finite subset $\CalF \subset C(X \times \T \times \T)$, as $\bt$ is minimal, we can find Kakutani-Rokhlin partition
	$$ \CalP = \{ X(s, k) \colon s \in S, 1 \leq k \leq H(s) \} $$
	such that $H(s) > \Frac{32 \pi}{\ep}$ for all $s \in S$ and
	$\diam(X(s, k)) < \Frac{\ep}{16}$.

	As $C(X \times \T_1 \times \T_2)$ is generated by
	$$ \{ 1_{D}, z_1, z_2 \colon D \ \text{is a clopen subset of } \ X, z_i \ \text{is the identity function on} \  T_i \} , $$
	without loss of generality, we can assume that
	$$ \CalF = \{ 1_{X(s, k)}, z_1 1_{X(s, k)}, z_2 1_{X(s, k)} \colon s \in S, 1 \leq k \leq H(s) \}. $$

	The fact that $(X, \af)$ and $(X, \bt)$ are approximately K-conjugate implies that there exist $\{ \sigma_n \in \text{Homeo}(X) : n \in \N \}$ such that
	$$ \sigma_n \circ \af \circ \sigma_n^{-1} \longrightarrow \bt . $$
	By choosing $n$ large enough, just as in the proof of the ``if" part of Theorem \ref{iff condition for weak approximate conjugacy of rotation case},  we get
	$$ (\sigma_n \circ \af \circ \sigma_n^{-1}) ( X(s, k) ) = \bt( X(s, k) ) \ \text{for} \ s \in S, 1 \leq k \leq H(s) . $$
	Without loss of generality, we can assume that
	$$ \af ( X(s, k) ) = \bt( X(s, k) ) \ \text{for} \ s \in S, 1 \leq k \leq H(s) . $$

	As in the proof of ``if" part of Theorem \ref{iff condition for weak approximate conjugacy of rotation case}, there exist maps $\{ \id_X \times \mathrm{R}_{g_n} \times \mathrm{R}_{h_n} \}_{n \in \N}$ such that
	$$ (\id_X \times \mathrm{R}_{g_n} \times \mathrm{R}_{h_n}) \circ (\af \times \mathrm{R}_{\xi_1} \times \mathrm{R}_{\eta_1}) \circ (\id_X \times \mathrm{R}_{g_n} \times \mathrm{R}_{h_n})^{-1} \longrightarrow (\bt \times \mathrm{R}_{\xi_2} \times \mathrm{R}_{\eta_2}) , $$
	with all the $g_n, h_n \colon X \rightarrow \T$ being continuous functions as defined in the proof of Theorem \ref{iff condition for weak approximate conjugacy of rotation case}.

	We will show that the conjugacy maps $\{ \id_X \times \mathrm{R}_{g_n} \times \mathrm{R}_{h_n} \colon n \in \N \}$ will induce an isomorphism between $K_*(B)$ and $K_*(A)$.

	The idea is like this: We know that these two dynamical systems $\af \times \mathrm{R}_{\xi_1} \times \mathrm{R}_{\eta_1}$ and $\bt \times \mathrm{R}_{\xi_2} \times \mathrm{R}_{\eta_2}$ are $C^*$-strongly flip conjugate. Thus there exists $\psi_n \colon B \rightarrow A$ such that the following diagram approximately commutes:
	$$
	\xymatrix{
	B  \ar@{->}[rr]^{\displaystyle\psi_n} && A \\ \\
	C(X \times \T \times \T) \ar@{->}[uu]^{j_B} \ar@{->}[rr]^{\displaystyle\chi_n} && C( X \times \T \times \T ) \ar@{->}[uu]_{j_A}  .
	}
	$$

	As we had assumed that (without loss of generality) $\af ( X(s, k) ) = \bt( X(s, k) ) \ \text{for} \ s \in S, k = 1, \ldots, H(s)$, the $\chi_n$ in the diagram above satisfies
	$$ \dist(\chi_n(x), x) < \diam(X(s, k)) < \ep / M $$
	for $x \in X(s, k)$. In other words, restricted on $C(X \times \T \times \T)$, $\chi_n$ is close to the identity map.

	Note that $\{ \psi_n \}$ are isomorphisms and $[\psi_n] = [\psi_m]$ in $KL(B, A)$ for $m, n$ large enough. If we can find $W_n \in U(A)$ such that $f \circ \sigma_n$ is close to $W_n^* \psi_n(f) W_n$ in $A$, and $W_n^* \psi_n (u_B) W_n$ is close to $u_A z_n$ in $A$, where $z_n$ is a unitary element that ``almost" commutes with $C(X \times \T \times \T)$, then it follows that the conjugacy maps $\{ \id_X \times \mathrm{R}_{g_n} \times \mathrm{R}_{h_n} \colon n \in \N \}$ will induce an isomorphism between $K_*(B)$ and $K_*(A)$.

	The complete proof is as below:




	Let $g_1, g_2, f_1, f_2$ be as defined in the proof of Lemma \ref{iff condition for weak approximate conjugacy of rotation case}, and let
$$ \CalF_1 = \{ g_i \cdot 1_{X(s, k)}, f_i \cdot 1_{X(s, k)} \colon s \in S, 1 \leq k \leq H(s) \} . $$

	We can further divide $\af^{-1} ( X(s, 1) )$ into the disjoint union of clopen sets $Y(s, 1)$, $Y(s,2)$, $\ldots$, $Y(s, N(s))$, and choose $x_{s, j} \in Y(s, j)$ such that
	$$ | f(x) - f(x_{s, j}) | < \ep / 16 \ \text{for all} \ f \in \CalF_1, 1 \leq j \leq N(s), s \in S . $$
	Let $G_1, G_2$ be the same as the one defined in the proof of Theorem \ref{iff condition for weak approximate conjugacy of rotation case}.  That is, $G_1$ is the lifting of $g_1(x) = \displaystyle\sum_{j = 1}^{h(s)} (\xi_2 - \xi_1) (\af^{-j} ( \af^{h(s) - k}(x) ))$, $G_2$ is the lifting of $g_2(x) = \displaystyle\sum_{j = 1}^{h(s)} (\eta_2 - \eta_1) (\af^{-j} ( \af^{h(s) - k}(x) ))$, and $G_i(x) \in [0 - \frac{1}{4}, 1 + \frac{1}{4}]$. As both $G_1, G_2$ are path connected to the zero function, it is clear that
	$$ [z_i \cdot 1_{Y(s, j)}] = [z_i \cdot e^{- i 2 \pi G_k / H(s)} \cdot 1_{Y(s, j)}] $$
	in $K_1(A)$ for $i = 1, 2$ and $k = 1, 2$.

	Let
	$$ \iota_{s, j} \colon C(1_{Y_{s, j}} \times \T \times \T) \longrightarrow 1_{Y_{s, j}} \cdot A \cdot 1_{Y_{s, j}} $$
	be the inclusion map. Let two homomorphisms
	$$ \Delta_{s, j}, \ \delta_{s, j} \ \colon C(\T^2) \longrightarrow C(1_{Y_{s, j}} \times \T \times \T) $$
	be defined by
	$$ \Delta_{s, j} (f) = \id_{Y(s, j)} \otimes f $$
	and
	$$ \delta_{s, j} (f) ( x, z_1, z_2 ) = \id_{Y_{s, j}}(x) \cdot f(z_1 \cdot e^{i 2 \pi G_1(x_{s, j}) / H(s)}, z_2 \cdot e^{i 2 \pi G_2(x_{s, j}) / H(s)}) . $$
	Consider the maps
	$$ \iota_{s, j} \circ \Delta_{s, j}, \ \iota_{s, j} \circ \delta_{s, j} \ \colon C(\T^2) \longrightarrow 1_{Y_{s, j}} \cdot A \cdot 1_{Y_{s, j}} . $$
	It is clear that these two maps are monomorphisms.

	By Proposition \ref{rigidity implies tracial rank zero for rotation case}, $\mathrm{TR}(A) = 0$, and it follows that $\mathrm{TR}(1_{Y_{s, j}} \cdot A \cdot 1_{Y_{s, j}}) = 0$.

	As $G_1, G_2$ are contractible, we can claim that
	$$ \left[ \iota_{s, j} \circ \Delta_{s, j} \right] = \left[ \iota_{s, j} \circ \delta_{s, j} \right] \ \text{in} \ KL(C(\T^2), 1_{Y_{s, j}} \cdot A \cdot 1_{Y_{s, j}}) . $$

	For every $f \in 1_{Y_{s, j}} \cdot A \cdot 1_{Y_{s, j}}$, and for every tracial state $\tau$ on $1_{Y_{s, j}} \cdot A \cdot 1_{Y_{s, j}}$, consider $\tau( (\iota_{s, j} \circ \Delta_{s, j}) (f) )$ and $\tau( (\iota_{s, j} \circ \delta_{s, j}) (f) )$. By Lemma \ref{identification of cut-down}, we can regard $1_{Y_{s, j}} \cdot A \cdot 1_{Y_{s, j}}$ as the crossed product \ca \ of the induced minimal homeomorphism of $Y_{s, j} \times \T \times \T$. \removeme[I believe it is true, prove it!] As $\af \times \mathrm{R}_{\xi} \times \mathrm{R}_{\eta}$ is rigid, it follows that the traces on $1_{Y_{s, j}} \cdot A \cdot 1_{Y_{s, j}}$ also corresponds to such measures like $\mu \times v$, with $v$ being the Lebesgue measure on the torus.

	Now we have
 	\begin{align*}
 	\tau \left( (\iota_{s, j} \circ \Delta_{s, j}) (f) \right) & = \tau \left( \id_{Y(s, j)} \otimes f \right) \\
		& = \mu(Y(s, j)) \cdot \displaystyle \int_{\T^2} f \left( (z_1, z_2) \right) \, \mathrm{d} v \vspace{2mm} \\
		& = \mu(Y(s, j)) \cdot \displaystyle \int_{\T^2} f \left (z_1 \cdot e^{i 2 \pi G_1(x_{s, j}) / H(s)}, z_2 \cdot e^{i 2 \pi G_2(x_{s, j}) / H(s)} \right) \ \mathrm{d} v \\
		& = \tau \left( (\iota_{s, j} \circ \delta_{s, j}) (f) \right) .
	\end{align*}

 	As $\mathrm{TR}(1_{Y_{s, j}} \cdot A \cdot 1_{Y_{s, j}}) = 0$, $[\iota_{s, j} \circ \Delta_{s, j}] = [\iota_{s, j} \circ \delta_{s, j}]$ and
	$$ \tau( (\iota_{s, j} \circ \Delta_{s, j}) (f) ) = \tau( (\iota_{s, j} \circ \delta_{s, j}) (f) ) $$
	for all $\tau \in T(1_{Y_{s, j}} \cdot A \cdot 1_{Y_{s, j}})$. According to Theorem 3.4 of \cite{Lin5}, the two monomorphisms $\iota_{s, j} \circ \Delta_{s, j}$ and $\iota_{s, j} \circ \delta_{s, j}$ are approximately unitarily equivalent. Thus there exists a unitary element $v_{s, j} \in 1_{Y_{s, j}} \cdot A \cdot 1_{Y_{s, j}}$ such that
$$ \norm{v_{s, j}^* z_i q_{s, j} v_{s, j} - z_i e^{- i 2 \pi G_i(x_{s, j}) / H(s) \cdot 1_{Y_{s, j}}}} < \ep / (16 K) \ \ \text{for all} \ s \in S, 1 \leq k \leq H(s), 1 \leq j \leq N(s) .$$

	Let $v_s = \displaystyle \sum_{j = 1}^{N(s)} v_{s, j}$. As $Y_{s, 1}, Y_{s, 2}, \ldots, Y_{s, N(s)}$ are mutually disjoint, we have
	\begin{align*}
		\norm{(v_s^k)^* z_i f(x) 1_{\af^{-1}( X(s, 1) )} v_s^k - z e^{- 2 \pi k G_i(x) / H(s)} f(x) 1_{\af^{-1}( X(s, 1) )} } & < \ep / 16 + K \ep / (16 K) + \ep / 16 \\
			& <\ep / 4 .
	\end{align*}
	for all $f \in \CalF_1, s \in S$.

	Let
	$$ \CalF_2 = \CalF \cup \{ 1_{Y_{s, j}} ,  z_i 1_{Y_{s, j}}, z f 1_{\af^{-1}( X(s, 1) )} \colon f \in \CalF_1, s \in S, 1 \leq k \leq H(s)  \} . $$
	
	As $\af \times \mathrm{R}_{\xi} \times \mathrm{R}_{\eta}$ is $C^*$-strongly flip conjugate to $\af \times \mathrm{R}_{\xi} \times \mathrm{R}_{\eta}$, for any $\delta > 0$, and for the $\CalF_2 \subset C(X \times \T \times \T)$, there exists a \ca \ isomorphism $\psi \colon B \rightarrow A$ such that
	$$ \norm{ \psi( j_{\bt}(f) ) - j_{\af}(f) } < \delta \ \text{and} \ \norm{\psi(u_B)^* j_{\af}(f) \psi(u_B) - j_{\af} (f \circ \bt)} < \delta \ \text{for all} \ f \in \CalF_2 . $$

	Note that $1_{X(s, k)}$, for $s \in S$ and $1 \leq k \leq H(s)$, are mutually orthogonal projections and add up to $1_B$, and $\{ 1_{X(s, k)} \colon s \in S, 1 \leq k \leq H(s) \} \subset \CalF_2$. According to the perturbation lemma \cite[Lemma 2.5.7]{Lin2}, by taking $\delta$ to be small enough, the fact that $\norm{ \psi( j_{\bt}(f) ) - j_{\af}(f) } < \delta$ will imply that there exists $v \in U(A)$ such that
	$$ v \approx_{\ep / (16 K^2)} \psi(u_B) $$
	and
	$$ v^* 1_{X(s, k)} v = 1_{X(s, k)} \circ \bt \ \text{and} \ \norm{v^* f v - f \circ \bt} < \ep / (4 K) \ \text{for all} f \in \CalF_2. $$

	Define $W = \displaystyle \sum_{s \in S} \sum_{k = 1}^{H(s)} 1_{X(s, k)} v^{-k} v_s^k u^k$. Then we can check that
	\begin{align*}
	W^* W & = \displaystyle \left( \sum_{s \in S} \sum_{k = 1}^{H(s)} 1_{X(s, k)} v^{-k} v_s^k u^k \right)^* \cdot \sum_{s' \in S} \sum_{k' = 1}^{H(s)} 1_{X(s', k')} v^{-k'} v_{s'}^{k'} u^{k'} \vspace{2mm} \\
		& = \displaystyle \sum_{s \in S} \sum_{k = 1}^{H(s)} \left( u^{-k} v_s^{-k} v^{k} 1_{X(s, k)}  1_{X(s, k)} v^{-k} v_s^k u^k \right) \vspace{2mm} \\
		& = \displaystyle \sum_{s \in S} \sum_{k = 1}^{H(s)}  u^{-k} v_s^{-k} 1_{\af^{-1}( X(s, 1) )} v_s^k u^k  \vspace{2mm} \\
		& = \displaystyle \sum_{s \in S} \sum_{k = 1}^{H(s)}  u^{-k} 1_{\af^{-1}( X(s, 1) )} u^k  \vspace{2mm} \\
		& = \displaystyle \sum_{s \in S} \sum_{k = 1}^{H(s)}  1_{\af^k ( \af^{-1}( X(s, 1) ) )} \vspace{2mm} \\
		& = \displaystyle \sum_{s \in S} \sum_{k = 1}^{H(s)}  1_{X(s, k)} \vspace{2mm} \\
		& = 1_A .
	\end{align*}

	As $\mathrm{TR}(A) = 0$, we have $\mathrm{tsr}(A) = 1$. Thus $W^* W = 1_A$ implies that $W W^* = 1_A$. So far, it is checked that $W$ is a unitary element in $A$.

	As
	$$ \norm{(v_s^k)^* z_i f(x) 1_{\af^{-1}( X(s, 1) )} v_s^k - z e^{- 2 \pi k G_i(x) / H(s)} f(x) 1_{\af^{-1}( X(s, 1) )} } < \ep / 4 $$
	and
	$$\norm{v^* f v - f \circ \bt} < \ep / (4 K) \ \text{for all} \ f \in \CalF_2 \ \text{and} \ \text{for all} \  f \in \CalF_2 , $$
	 we have
 	\begin{align*}
 	W^* z_i 1_{X(s, k)} W & = \displaystyle \left( \sum_{s_1 \in S} \sum_{k_1 = 1}^{H(s_1)} 1_{X(s_1, k_1)} v^{-k_1} v_{s_1}^{k_1} u^{k_1} \right)^* z_i 1_{X(s, k)} \left( \displaystyle \sum_{s_2 \in S} \sum_{k = 1}^{H(s_2)} 1_{X(s_2, k_2)} v^{-k_2} v_{s_2}^{k_2} u^{k_2} \right) \vspace{2mm} \\
	& = \displaystyle \left( \sum_{s_1 \in S} \sum_{k_1 = 1}^{H(s_1)}  u^{-k_1} v_{s_1}^{-k_1} v^{k_1} 1_{X(s_1, k_1)} \right) z_i 1_{X(s, k)} \left( \displaystyle \sum_{s_2 \in S} \sum_{k_2 = 1}^{H(s_2)} 1_{X(s_2, k_2)} v^{-k_2} v_{s_2}^{k_2} u^{k_2} \right) \vspace{2mm} \\
	& = \displaystyle u^{-k} v_s^{-k} v^k 1_{X(s, k)} z_i 1_{X(s, k)} 1_{X_{s, k}} v^{-k} v_s^k u^k \\
	& = \displaystyle u^{-k} v_s^{-k} v^k (z_i 1_{X(s, k)}) v^{-k} v_s^k u^k \\
	& \approx_{\ep / (4 K)} \displaystyle u^{-k} v_s^{-k} \left( (z_i 1_{X(s, k)}) \circ \bt^{k} \right) v_s^k u^k \\
	& \approx_{\ep / (4 K) + \ep / 4} \displaystyle (z 1_{X(s, k)}) \circ \sigma ,
	\end{align*}
	where
	$$
	\begin{array}{lll}
	\sigma(x, t_1, t_2) & = & \\
 	& \left( x, t_1 + \left( \displaystyle \sum_{j = 1}^k \xi_2 \left( \af^{j - 1} ( \bt^{-k}(x) ) \right) - \xi_1 \left( \bt^{-j}(x) \right) \right) - k G_1(x) / H(s), \right. & \\
	& \left. t_2 + \left( \displaystyle \sum_{j = 1}^k \eta_2 \left( \af^{j - 1} ( \bt^{-k}(x) ) \right) - \eta_1 \left( \bt^{-j}(x) \right) \right) - k G_1(x) / H(s) \right) , &
 	\end{array}
	$$
	for $x \in X(s, k)$ with $s \in S$ and $1 \leq k \leq H(s)$.

	Then it follows that
	$$ \norm{W^* z_i 1_{X(s, k)} W - (z_i 1_{X(s, k)}) \circ \sigma} < K (\ep / 4 K) + \ep / 4 < \ep. $$

	Similar to the proof of Theorem \ref{iff condition for weak approximate conjugacy of rotation case}, we have
 	$$ \dist(\sigma \circ (\af \times \mathrm{R}_{\xi_1} \times \mathrm{R}_{\eta_1}) \sigma^{-1}, \bt \times \mathrm{R}_{\xi_2} \times \mathrm{R}_{\eta_2}) < \ep . $$

	Consider the map $\text{ad} W \circ \psi$, we have that
	$$ \norm{(\text{ad} W \circ \psi) (j_{\bt} (f) ) - j_{\af} (f \circ \sigma) } < \ep + \delta . $$

	If $(\text{ad} W \circ \psi)$ maps $u_B$ to $u_A$ or $u_A \cdot y$ such that $\norm{y f - f y} < \ep$ for all $f \in \CalF$, then it follows that the K-map induced by approximate conjugacy map $\sigma$ (restricted to $\CalF$) will coincide with $[\text{ad} W \circ \psi] \in KL(B, A)$.
 	
	In fact, we can check that
 	$$ W^* v^* W z_i 1_{X(s, k)} W^* v W \approx_d{\ep} u_A^* z_i 1_{X(s, k)} u_A , $$
	which then implies that $\norm{y f - f y} < \ep$ if we define $y = u_A^* (W^* v W) \in U(A)$.

 	As
 	$$(\text{ad} W \circ \psi) (u_B) = W \psi(u_B) W \approx_{\ep / (16 K^2)} W^* v W= u_A y , $$
we may claim that the K-map induced by approximate conjugacy map $\sigma$ (restricted to $\CalF$) will coincide with $[\text{ad} W \circ \psi] \in KL(B, A)$.

	As $C(X \times \T \times \T)$ is separable, by taking $\CalF$ to be large enough and $\ep \rightarrow 0$, it follows that the weak approximate conjugacy map $\sigma$ will induce an isomorphism from $K_i(B)$ to $K_i(A)$, which finishes the proof.
\end{proof}


\addcontentsline{toc}{chapter}{BIBLIOGRAPHY}

\end{document}